\documentclass[10 pt,a4paper]{article}

\pdfoutput=1                      

\usepackage[accsupp]{axessibility}    

\usepackage{latexsym,amsfonts,amsmath,amsthm,amssymb, graphicx, mathrsfs, fancyhdr, makeidx, amscd,  color, mathdesign, fontenc, stix}
%
%
%
\usepackage[all]{xy}
\usepackage[notref,notcite]{showkeys}

\newtheorem{theorem}{Theorem}[section]
\newtheorem{definition}[theorem]{Definition}
\newtheorem{proposition}[theorem]{Proposition}
\newtheorem{lemma}[theorem]{Lemma}
\newtheorem{remark}[theorem]{Remark}
\newtheorem{corollary}[theorem]{Corollary}
\newtheorem{example}[theorem]{Example}
\newtheorem*{question}{Question}
\newtheorem*{notation}{Notational agreements}
\numberwithin{equation}{section}
\numberwithin{theorem}{section}
\newtheorem*{convention}{Convention}

\addtolength{\textwidth}{1cm}

\begin{document}


\newcommand{\riem}{(M^m, \langle \, , \, \rangle)}
\newcommand{\Hess}{\mathrm{Hess}\, }
\newcommand{\hess}{\mathrm{hess}\, }
\newcommand{\cut}{\mathrm{cut}}
\newcommand{\ind}{\mathrm{ind}}
\newcommand{\ess}{\mathrm{ess}}
\newcommand{\longra}{\longrightarrow}
\newcommand{\eps}{\varepsilon}
\newcommand{\ra}{\rightarrow}
\newcommand{\vol}{\mathrm{vol}}
\newcommand{\di}{\mathrm{d}}
\newcommand{\R}{\mathbb R}
\newcommand{\C}{\mathbb C}
\newcommand{\Z}{\mathbb Z}
\newcommand{\N}{\mathbb N}
\newcommand{\HH}{\mathbb H}
\newcommand{\esse}{\mathbb S}
\newcommand{\bull}{\rule{2.5mm}{2.5mm}\vskip 0.5 truecm}
\newcommand{\binomio}[2]{\genfrac{}{}{0pt}{}{#1}{#2}} 
\newcommand{\metric}{\langle \, , \, \rangle}
\newcommand{\lip}{\mathrm{Lip}}
\newcommand{\loc}{\mathrm{loc}}
\newcommand{\diver}{\mathrm{div}}
\newcommand{\disp}{\displaystyle}
\newcommand{\rad}{\mathrm{rad}}
\newcommand{\mmetric}{\langle\langle \, , \, \rangle\rangle}
\newcommand{\sn}{\mathrm{sn}}
\newcommand{\cn}{\mathrm{cn}}
\newcommand{\ink}{\mathrm{in}}
\newcommand{\hol}{\mathrm{H\ddot{o}l}}
\newcommand{\capac}{\mathrm{cap}}
\newcommand{\bmo}{\{b <0\}}
\newcommand{\bmuo}{\{b \le 0\}}
\newcommand{\Fk}{\mathcal{F}_k}
\newcommand{\dist}{\mathrm{dist}}
\newcommand{\gr}{\mathcal{G}}
\newcommand{\grh}{\mathcal{G}^{h}}
\newcommand{\sgrh}{\mathscr{G}^h}
\newcommand{\sgrhe}{\mathscr{G}^{h_\eps}}
\newcommand{\sgr}{\mathscr{G}}
\newcommand{\Ricc}{\mathrm{Ric}}
\newcommand{\Sect}{\mathrm{Sec}}
\newcommand{\roy}{\mathsf{Ro}}
\newcommand{\roypsi}{\mathsf{Ro}_\psi}
\newcommand{\LL}{\mathcal{L}}
\newcommand{\Do}{\mathscr{D}_o}
\newcommand{\Dom}{\mathscr{D}_\Omega}
\newcommand{\BBB}{\mathscr{B}}

\newcommand{\vp}{\varphi}
\renewcommand{\div}[1]{{\mathop{\mathrm div}}\left(#1\right)}
\newcommand{\divphi}[1]{{\mathop{\mathrm div}}\bigl(\vert \nabla #1
\vert^{-1} \varphi(\vert \nabla #1 \vert)\nabla #1   \bigr)}
\newcommand{\nablaphi}[1]{\vert \nabla #1\vert^{-1}
\varphi(\vert \nabla #1 \vert)\nabla #1}
\newcommand{\modnabla}[1]{\vert \nabla #1\vert }
\newcommand{\modnablaphi}[1]{\varphi\bigl(\vert \nabla #1 \vert\bigr)
\vert \nabla #1\vert }
\newcommand{\ds}{\displaystyle}
\newcommand{\cL}{\mathcal{L}}
\newcommand{\essem}{\mathds{S}^m}
\newcommand{\erre}{\mathds{R}}
\newcommand{\errem}{\mathds{R}^m}
\newcommand{\enne}{\mathds{N}}
\newcommand{\acca}{\mathds{H}}

\newcommand{\cvett}{\Gamma(TM)}
\newcommand{\cinf}{C^{\infty}(M)}
\newcommand{\sptg}[1]{T_{#1}M}
\newcommand{\partder}[1]{\frac{\partial}{\partial {#1}}}
\newcommand{\partderf}[2]{\frac{\partial {#1}}{\partial {#2}}}
\newcommand{\ctloc}{(\mathcal{U}, \varphi)}
\newcommand{\fcoord}{x^1, \ldots, x^n}
\newcommand{\ddk}[2]{\delta_{#2}^{#1}}
\newcommand{\christ}{\Gamma_{ij}^k}
\newcommand{\ricc}{\operatorname{Ricc}}
\newcommand{\supp}{\operatorname{supp}}
\newcommand{\sgn}{\operatorname{sgn}}
\newcommand{\rg}{\operatorname{rg}}
\newcommand{\inv}[1]{{#1}^{-1}}
\newcommand{\id}{\operatorname{id}}
\newcommand{\jacobi}[3]{\sq{\sq{#1,#2},#3}+\sq{\sq{#2,#3},#1}+\sq{\sq{#3,#1},#2}=0}
\newcommand{\lie}{\mathfrak{g}}
\newcommand{\rp}{\erre\mathds{P}}
\newcommand{\II}{\operatorname{II}}
\newcommand{\gradh}[1]{\nabla_{H^m}{#1}}
\newcommand{\absh}[1]{{\left|#1\right|_{H^m}}}
\newcommand{\mob}{\mathrm{M\ddot{o}b}}
\newcommand{\mab}{\mathfrak{m\ddot{o}b}}
\newcommand{\foc}{\mathrm{foc}}
\newcommand{\F}{\mathcal{F}}
\newcommand{\Cf}{\mathcal{C}_f}
\newcommand{\cutf}{\mathrm{cut}_{f}}
\newcommand{\Cn}{\mathcal{C}_n}
\newcommand{\cutn}{\mathrm{cut}_{n}}
\newcommand{\Ca}{\mathcal{C}_a}
\newcommand{\cuta}{\mathrm{cut}_{a}}
\newcommand{\cutc}{\mathrm{cut}_c}
\newcommand{\cutcf}{\mathrm{cut}_{cf}}
\newcommand{\rk}{\mathrm{rk}}
\newcommand{\crit}{\mathrm{crit}}
\newcommand{\diam}{\mathrm{diam}}
\newcommand{\haus}{\mathscr{H}}
\newcommand{\po}{\mathrm{po}}
\newcommand{\grg}{\mathcal{G}^{(g)}}
\newcommand{\grj}{\mathcal{G}^{(j)}}
\newcommand{\Sph}{\mathbb{S}}
\newcommand{\BB}{\mathbb{B}}
\newcommand{\EE}{\mathbb{E}}
\newcommand{\Gia}{G^{(a)}}
\newcommand{\Gib}{G^{(b)}}
\newcommand{\tr}{\mathrm{Tr}}
\newcommand{\dou}{\mathrm{(VD)}}
\newcommand{\dous}{\mathrm{(VD)}}
\newcommand{\neuqp}{(\mathrm{NP}_{q,p})}
\newcommand{\neup}{(\mathrm{NP}_{p,p})}
\newcommand{\neuq}{(\mathrm{NP}_{q,q})}
\newcommand{\neuuno}{(\mathrm{NP}_{1,1})}
\newcommand{\neuunop}{(\mathrm{NP}_{1,p})}

\newcommand{\uniho}{\mathrm{(UO)}}
\newcommand{\vcom}{\mathrm{(VC)}}
\newcommand{\pvcom}{\mathrm{(PVC)}}
\newcommand{\tcr}{\textcolor{red}}
\newcommand{\tcb}{\textcolor{blue}}
\newcommand{\tcg}{\textcolor{green}}
\newcommand{\ol}{\overline}
\newcommand{\ul}{\underline}
\newcommand{\Hp}{\mathscr{H}_p}
\newcommand{\vut}{\{\varrho=t\}}
\newcommand{\vmt}{\{\varrho < t\}}
\newcommand{\vutp}{\{\varrho_p=t\}}
\newcommand{\vmtp}{\{\varrho_p \le t\}}
\newcommand{\vus}{\{\varrho=s\}}
\newcommand{\vms}{\{\varrho \le s\}}
\newcommand{\Ar}{\mathscr{A}}
\newcommand{\VV}{\mathscr{V}}
\newcommand{\metricN}{( \, , \, )}
\newcommand{\Po}{\mathscr{P}}
\newcommand{\Ha}{\mathscr{H}}
\newcommand{\So}{\mathscr{S}}
\newcommand{\Dou}{\mathscr{D}}
\newcommand{\Vo}{\mathscr{V}}
\newcommand{\RD}{\mathscr{R}}
\newcommand{\DM}{\mathcal{DM}}

\author{Luciano Mari \and Marco Rigoli \and Alberto G. Setti}
\title{\textbf{On the $1/H$-flow by $p$-Laplace approximation: new estimates via fake distances under Ricci lower bounds}}
\date{}
\maketitle

\scriptsize \begin{center} Dipartimento di Matematica, Universit\`a degl Studi di Torino,\\
Via Carlo Alberto 10, 10123 Torino (Italy)\\
E-mail: luciano.mari@unito.it
\end{center}

\scriptsize \begin{center} Dipartimento di Matematica,
Universit\`a
degli Studi di Milano,\\
Via Saldini 50, I-20133 Milano (Italy)\\
E-mail: marco.rigoli55@gmail.com
\end{center}

\scriptsize \begin{center} Dipartimento di Scienza e Alta Tecnologia,
Universit\`a degli Studi dell'Insubria,\\
Via Valleggio 11, I-22100 Como (Italy)\\
E-mail: alberto.setti@uninsubria.it
\end{center}

\normalsize

\begin{abstract}
In this paper we show the existence of weak solutions $w : M \ra \R$ to the inverse mean curvature flow starting from a relatively compact set (possibly, a point) on a large class of manifolds satisfying Ricci lower bounds. Under natural assumptions, we obtain sharp estimates for the growth of $w$ and for the mean curvature of its level sets, which are well behaved with respect to Gromov-Hausdorff convergence. The construction follows R. Moser's approximation procedure via the $p$-Laplace equation, and relies on new gradient and decay estimates for $p$-harmonic capacity potentials, notably for the kernel $\gr_p$ of $\Delta_p$. These bounds, stable as $p \ra 1$, are achieved by studying fake distances associated to capacity potentials and Green kernels. We conclude by investigating some basic isoperimetric properties of the level sets of $w$.
\end{abstract}

\tableofcontents

\section{Introduction}

\begin{notation}
\emph{We set $\R^+ \doteq (0,\infty)$, $\R^+_0 \doteq [0, \infty)$. Given two positive functions $f,g : U \ra \R$, we say that $f \asymp g$ on $U$ if $C^{-1}f \le g \le Cf$ on $U$ for some constant $C>0$, while we use $\sim$ to denote the usual asymptotic relation. Given sets $U, \Omega$, we write $U \Subset \Omega$ to denote that $U$ has compact closure in $\Omega$. If $(M,g)$ is a Riemannian manifold and $f: M \to \R$, we write $\Ricc \ge f$ to mean $\Ricc \ge fg$ in the sense of quadratic forms.
}
\end{notation}

The inverse mean curvature flow (IMCF) is an effective tool to study the geometry of manifolds $M$ whose behaviour at infinity is controlled in a precise way. This is the case, remarkably, of asymptotically flat and hyperbolic manifolds in general relativity. The purpose of the present paper is to show the existence of weak solutions to  the IMCF, with sharp estimates, on complete manifolds $(M^m, \metric)$ only satisfying mild conditions at infinity, making the tool amenable to study the geometry in the large of manifolds with a Ricci lower bound. In fact, we only impose a lower bound on the Ricci curvature, together with the validity of (weighted) Sobolev inequalities or some lower bound on the volume of balls centered at a fixed origin. In particular, no control on the sectional curvature is needed, a feature that makes our techniques  robust enough, for instance, to pass to limits with respect to pointed Gromov-Hausdorff convergence and produce an IMCF on Ricci limit spaces.\par

Classically, a family of two sided hypersurfaces $F : [0,T] \times \Sigma^{m-1} \ra M^m$ evolves by IMCF provided that
\begin{equation}\label{def_IMCF}
\frac{\partial F}{\partial t} = \frac{\nu_t}{\mathcal{H}_t},
\end{equation}
where $\nu_t$ is a choice of unit normal to $\Sigma_t = F(t, \Sigma)$ and $\mathcal{H}_t = \mathrm{tr}_{\Sigma_t}(\nabla \nu_t)$ is the mean curvature in the direction $-\nu_t$, which is assumed to be positive at the initial time. The possible formation of singularities when $\inf_{\Sigma_t} \mathcal{H}_t \ra 0$ prompted G. Huisken and T. Ilmanen \cite{huiskenilmanen} to introduce the notion of weak solutions to \eqref{def_IMCF}.
The strategy is to look for a proper function $w :  M \ra \R$ solving
\begin{equation}\label{def_1Lapla}
\Delta_1 w \doteq  \diver \left( \frac{\nabla w}{|\nabla w|} \right) = |\nabla w|
\end{equation}
in a suitable weak sense, and to consider the sets $\Sigma_t = \partial \{w<t\}$. We recall that $w : M \ra \R$ is said to be proper if its sublevel sets $\{w \le t\}$ are compact for each $t \in \R$. In particular, $\Sigma_t$ is also compact. If $w$ is smooth and $|\nabla w| \neq 0$ on $\Sigma_{t_0}$, then locally around $t_0$ the family $\Sigma_{t}$ is the unique smooth solution to  \eqref{def_IMCF}, and the mean curvature $\mathcal{H}_t = |\nabla w|$ points in the direction of $-\nabla w$ (hence, every $\Sigma_t$ has positive mean curvature). In \cite{huiskenilmanen}, the authors defined a weak solution to  the IMCF starting from a given relatively compact open subset $\Omega$ to be a function $w : M \ra \R$ satisfying
\begin{itemize}
\item[-] $w \in \lip_\loc(M)$, $\ \Omega = \{w<0\}$;
\item[-] for each $\phi \in \lip_\loc(M \backslash \Omega)$ with $\mathrm{supp}(\phi-w) \Subset M \backslash \overline{\Omega}$, and for each compact $K$ containing the support of $\phi-w$,
\begin{equation}\label{minimization_1Lapla}
\int_K |\nabla w| + w |\nabla w| \le \int_K |\nabla \phi| + \phi |\nabla w|.
\end{equation}
\end{itemize}
Subsolutions and supersolutions for \eqref{def_1Lapla} are defined accordingly, by requiring that \eqref{minimization_1Lapla} holds only for competitors $\phi$ satisfying $\phi \le w$, respectively $\phi \ge w$. They correspond to weak solutions to  $\Delta_1 w \ge |\nabla w|$, respectively $\Delta_1 w \le |\nabla w|$. The following important existence result holds.

\begin{theorem}[\cite{huiskenilmanen}, Thm. 3.1]\label{teo_HI}
Let $M^m$ be a complete manifold and let $\Omega \Subset M$ be a relatively compact, open set with $\partial \Omega \in C^1$. If there exists a proper, $\lip_\loc$ weak subsolution $\bar w$ for \eqref{def_1Lapla} defined outside of a relatively compact set, then there exists a proper solution $w$ to the IMCF with initial condition $\Omega$. This solution is unique in $M \backslash \Omega$.
\end{theorem}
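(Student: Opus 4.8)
The plan is to follow R.\ Moser's $p$-Laplacian regularization of \eqref{def_1Lapla}, which is the scheme underlying the rest of the paper; a classical alternative would be Huisken--Ilmanen's own elliptic regularization, replacing \eqref{def_1Lapla} by $\diver\bigl(\nabla w/\sqrt{|\nabla w|^2+\eps^2}\bigr) = \sqrt{|\nabla w|^2+\eps^2}$ and letting $\eps\to0$. For $p\in(1,2)$ I would first consider the exterior capacity potential: a $p$-harmonic $u_p$ on $M\setminus\overline\Omega$ with $u_p=1$ on $\partial\Omega$, $0<u_p<1$, and $u_p\to0$ at infinity. Setting $w_p\doteq-(p-1)\log u_p$, a direct computation gives $\Delta_p u_p=0\iff\Delta_p w_p=|\nabla w_p|^p$, so $w_p$ weakly solves the $p$-analogue of \eqref{def_1Lapla}, vanishes on $\partial\Omega$, and has level sets $\{w_p=t\}=\{u_p=e^{-t/(p-1)}\}$. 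The three steps are then: (i) produce $u_p$ using $\bar w$; (ii) prove estimates for $w_p$ that are uniform as $p\to1^+$; (iii) extract a limit $w$ and verify it is a proper weak solution with $\{w<0\}=\Omega$.

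For (i), I would exhaust $M$ by relatively compact open sets $\Omega_k$ with $\Omega\Subset\Omega_k$, solve $\Delta_p u_{p,k}=0$ in $\Omega_k\setminus\overline\Omega$ with $u_{p,k}=1$ on $\partial\Omega$ and $u_{p,k}=0$ on $\partial\Omega_k$ (here $\partial\Omega\in C^1$ is used to pose the problem and build boundary barriers near $\partial\Omega$), and pass to the limit: by the comparison principle $u_{p,k}$ increases in $k$ and stays in $[0,1]$, so $u_p\doteq\lim_k u_{p,k}$ is $p$-harmonic. The subsolution $\bar w$ enters precisely to exclude the degenerate case $u_p\equiv1$: from $\bar w$ one builds, for $p$ close to $1$, a positive $p$-supersolution of the exterior problem of the form $C\,e^{-\bar w/(p-1)}$ on $M\setminus B_R$ (this requires turning the $1$-subsolution into a genuine $p$-barrier, at the cost of an arbitrarily small perturbation of $\bar w$), and comparison then gives $u_p\le C\,e^{-\bar w/(p-1)}$. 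Hence $u_p$ is non-constant (so $M\setminus\Omega$ is $p$-nonparabolic) and $w_p\ge\bar w-(p-1)\log C$ off a compact set; since $\bar w$ is proper, so is $w_p$, uniformly in $p$.

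Step (ii) is the heart of the matter. I would aim at a locally uniform, $p$-stable Lipschitz bound $|\nabla w_p|\le C(K)$ on compacta $K\subset M\setminus\Omega$ — equivalently, a bound on $|\nabla u_p|/u_p$ that does not blow up as $p\to1$, which is delicate precisely because $u_p$ itself degenerates at infinity — together with a matching local upper bound on $w_p$ from comparison with explicit supersolutions growing linearly in the distance. These are exactly the gradient and decay estimates for $p$-harmonic capacity potentials and for the kernel $\gr_p$ announced in the abstract and established via fake distances. Granting them, Arzel\`a--Ascoli produces $w_{p_j}\to w$ in $C^0_\loc$ along a sequence $p_j\to1^+$, with $w\in\lip_\loc(M\setminus\Omega)$ and $w$ proper.

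For (iii) I would pass to the limit in the variational formulation: $w_p$ satisfies, on each domain $K$, a minimization property analogous to \eqref{minimization_1Lapla}, whose Euler--Lagrange equation is $\Delta_p w_p=|\nabla w_p|^p$ (Moser). Using the uniform gradient bound one has $|\nabla w_{p_j}|^{p_j-1}\to1$ where $|\nabla w|>0$ (and the contribution vanishes where the gradient does), while $v\mapsto\int_K|\nabla v|$ is lower semicontinuous and $|\nabla w_{p_j}|^{p_j}\to|\nabla w|$ weakly as measures; combining these, $w$ inherits the minimization \eqref{minimization_1Lapla}. Inside $\Omega$ one extends $w$ by any harmless interior construction so that $\{w<0\}=\Omega$, which does not affect the flow. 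Uniqueness in $M\setminus\Omega$ then follows from the comparison principle for proper weak solutions of \eqref{def_1Lapla}: given two solutions, testing the minimization property of each against the other and invoking properness forces equality. The principal obstacle throughout is the uniform-in-$p$ gradient estimate of step (ii); without it neither the compactness nor the limit passage in \eqref{minimization_1Lapla} goes through, and obtaining it under only a Ricci lower bound is exactly the work carried out with fake distances in the body of the paper.
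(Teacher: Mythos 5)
This statement is quoted from Huisken--Ilmanen and is not reproved in the paper; the proof you are implicitly competing with is their elliptic regularization, in which one solves the Dirichlet problems \eqref{approx_meancurv} on the compact sets $\{\bar w<L\}$, derives the interior gradient bound \eqref{HI_gradesti} (which on a fixed compact set only needs a comparison function $\rho$ for small radii, hence no global curvature hypothesis), and lets $\eps\to 0$ and then $L\to\infty$. Your proposal replaces this with Moser's $p$-Laplacian scheme, and this is where the gap lies: Theorem \ref{teo_HI} assumes only that $M$ is complete and that a proper $\lip_\loc$ subsolution $\bar w$ exists --- no Ricci lower bound, no Sobolev inequality, no control on the geometry at infinity. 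Your step (ii), the locally uniform, $p$-stable bound on $|\nabla w_p|=(p-1)|\nabla \log u_p|$, is exactly the estimate that is \emph{not} available in this generality. The known local estimates (Kotschwar--Ni, Wang--Zhang, the fake-distance bounds of this paper) all require at least a Ricci or sectional lower bound, and the Wang--Zhang constant even degenerates as $p\to 1$; obtaining such a bound on a bare complete manifold is an open problem, not a deferred technicality. You acknowledge the obstacle but then point to "the work carried out with fake distances in the body of the paper" to fill it --- that work needs hypothesis \eqref{Ricci_lower_intro}, which Theorem \ref{teo_HI} does not grant. As written, your argument proves (a version of) the paper's Theorems \ref{teo_main_L1sobolev_intro}--\ref{teo_main_relcompact}, not the cited Huisken--Ilmanen theorem.

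Two smaller points. First, in step (i) the conversion of the $\lip_\loc$ weak $1$-subsolution $\bar w$ into a $p$-supersolution $C e^{-\bar w/(p-1)}$ is itself nontrivial for merely Lipschitz $\bar w$ defined only variationally; Moser and Kotschwar--Ni handle this by smoothing/perturbation, and it deserves more than a parenthetical. Second, the uniqueness assertion is Huisken--Ilmanen's Theorem 2.2 and uses a cross-comparison of the two solutions on exhausting sublevel sets together with properness; your one-line sketch is in the right spirit but is a separate argument from the existence scheme. The fix for the main gap is to abandon the $p$-approximation here and run the compact elliptic regularization on $\{\bar w<L\}$, where local $C^{1}$ estimates for \eqref{approx_meancurv} hold for radii below a threshold depending only on the compact set.
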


The higher regularity of the flow was studied in \cite{huiskenilmanen_2}. As explained in \cite[Thm 2.2]{huiskenilmanen}, the properness of $w$ guarantees its uniqueness. Theorem \ref{teo_HI} was applied with remarkable success to study the Riemannian Penrose inequality in the setting of asymptotically flat manifolds \cite{huiskenilmanen}. In this case, as well as for asymptotically conical or hyperbolic manifolds, the existence of a proper subsolution $\bar w$ is easy to establish. However, on more general manifolds, barriers like $\bar w$ are much harder to find: {for instance, to produce a function $\bar w$ of the form $\bar {\rm  w}(r)$, with $r$ the distance from a smooth compact set $K$, the properness requirement forces the set $\mathscr{W} = \{ x : \bar {\rm  w}'(r(x)) >0\}$ to be non-empty. On $\mathscr{W}$, the inequality $\Delta_1 \bar w \geq |\nabla \bar w|$ becomes $\Delta r \ge \bar {\rm  w}'(r)$ which entails a bound for $\Delta r$ \emph{from below}.
Comparison theorems then call for an upper bound for the sectional curvature, together with the fact that the normal exponential map from $K$ be a diffeomorphism.} Conditions of this type are available, for suitable $K$, in the above relevant classes of manifolds, while they are definitely too restrictive in the setting that we are going to consider.

A further reason that makes their result hard to use in our setting regards the convergence procedure to construct $w$, that is based on local $C^1$ estimates, independent of $\eps>0$, for solutions $w_\eps$ of the approximating problems
\begin{equation}\label{approx_meancurv}
\left\{ \begin{array}{l}
\disp \diver \left( \frac{\nabla w_\eps}{\sqrt{\eps^2 + |\nabla w_\eps|^2}}\right) = \sqrt{ \eps^2 + |\nabla w_\eps|^2} \qquad \text{on } \, \{ \bar w < L\} \\[0.5cm]
w_\eps = 0 \quad \text{on } \, \partial \Omega, \qquad w_\eps = L-2 \quad \text{on } \, \partial \{\bar w<L\},
\end{array}\right.
\end{equation}
where $L \in \R^+$ and $\eps$ is suitably small. The gradient estimate proved by the authors, that is,
\begin{equation}\label{HI_gradesti}
|\nabla w(x)| \le \sup_{\partial \Omega \cap B_r(x)} \mathcal{H}_+ + \frac{C(m)}{r} \qquad \text{for a.e. } \, x \in M \backslash \Omega,
\end{equation}
is restricted to radii $r$ for which there exists $\rho \in C^2(B_r(x))$ touching from above, at $x$, the squared distance function $r_x^2$ from $x$ and satisfying $\nabla^2 \rho \le 3 \metric$ on $B_r(x)$. To guarantee the existence of $\rho$ and the validity of the above Hessian bound, assumptions stronger than a control on Ricci from below seem unavoidable. For instance, if $\rho = r_x^2$, the Hessian Comparison Theorem would require a sectional curvature lower bound. On the other hand, the arguments in \cite[Prop. 1.3]{rimovero} and \cite{imperimovero} could be used to construct $\rho$ on manifolds with positive injectivity radius and satisfying a two-sided control on Ricci. To the best of our knowledge, local gradient estimates for mean curvature type equations like \eqref{approx_meancurv} assuming only a Ricci lower bound are still unknown, and the problem seems challenging.

For these reasons, we adopt a different strategy and follow the beautiful idea described by R. Moser in \cite{moser, moser_2, moser_3} to approximate the solution to \eqref{def_1Lapla} via solutions to the $p$-Laplace equation. Namely, given $\Omega \Subset M$ and $p>1$, let $u_p$ be the $p$-capacity potential of $\Omega$, that is, the minimal positive solution to
\begin{equation}\label{p-potential}
\left\{ \begin{array}{l}
\Delta_p u_p \doteq \diver \left( |\nabla u_p|^{p-2} \nabla u_p \right) =0 \qquad \text{on } \, M \backslash \overline\Omega \\[0.4cm]
u_p = 1 \quad \text{on } \, \partial \Omega.
\end{array}\right.
\end{equation}
Under suitable conditions at infinity on $M$, $u_p \not \equiv 1$ and therefore $0< u_p < 1$ on $M \backslash \overline{\Omega}$ by the maximum principle. Changing variables according to $w_p = (1-p) \log u_p$, we obtain
\begin{equation}\label{eq_up_intro}
\left\{ \begin{array}{l}
\Delta_p w_p = |\nabla w_p|^p \qquad \text{on } \, M \backslash \overline\Omega, \\[0.4cm]
w_p = 0 \quad \text{on } \, \partial \Omega, \qquad w_p >0 \quad \text{on } \, M \backslash \overline\Omega.
\end{array}\right.
\end{equation}
The analogy between \eqref{def_1Lapla} and \eqref{eq_up_intro} suggests to construct $w$ as a locally uniform limit of $w_p$ as $p \ra 1$. In this case, passing to the limit in the weak formulation of \eqref{eq_up_intro} one easily deduces that $w$ is a weak solution to  the IMCF, provided that it never vanishes. In \cite{moser}, set in Euclidean space, the properness of $w$ is achieved by means of suitable barriers. Moser's approach to existence was later extended by B. Kotschwar and L. Ni \cite{kotschwarni} on manifolds with a lower sectional curvature bound, notably on manifolds with asymptotically non-negative sectional curvature. Since here no barrier is available, the properness of $w$ becomes a subtle issue, addressed via deep results by P. Li and L.F. Tam \cite{litam_harm} and I. Holopainen \cite{holopainen2}. The convergence to the solution as $p\to 1$ is based on a sharp local estimate for $|\nabla w_p|$, obtained in \cite{kotschwarni} by refining the Cheng-Yau's technique (see \cite{chengyau,yau,liwang}) to apply to $p$-harmonic functions. However, if $p\neq 2$, it seems difficult to modify their argument, as well as those in \cite{moser}, to avoid the requirement of a lower bound on the sectional curvature. Although some progress was recently made by different approaches, see \cite{wangzhang, sungwang}, this is still not enough for our purposes.
%
\subsection*{Our setting, the fake distance, and gradient estimates}
In the present work, we investigate the existence of the IMCF on complete manifolds $(M^m, \metric)$ of dimension $m \ge 2$ satisfying the Ricci lower bound
\begin{equation}\label{Ricci_lower_intro}
\Ricc \ge -(m-1) H(r) \qquad \text{on } \, M,
\end{equation}
where $r$ is the distance from some fixed origin $o \in M$ and $H \in C(\R^+_0)$ is such that
\begin{equation}\label{ipo_h_intro}
H \ge 0 \quad \text{and is non-increasing on $\R^+$.}
\end{equation}
We study the IMCF starting from a relatively compact, open set $\Omega$ with $C^2$ boundary, as well as the one starting from the origin $o$. The latter reveals to be particularly subtle in view of the singular nature of the initial data. A main technical tool is a new, sharp estimate for $|\nabla \log u|$, where $u$ is a positive $p$-harmonic function defined on an open subset of $M$, with special attention to the the case where the domain is the complement of $o$. To describe our results, we consider the model manifold $M_h$, which is diffeomorphic to $\R^m$ with polar coordinates $(t,\theta) \in \R^+ \times \mathbb{S}^{m-1}$ outside of the origin and with the radially symmetric metric
$$
\di t^2 + h(t)^2 \di \theta^2,
$$
where $\di \theta^2$ is the round metric of curvature $1$ on $\mathbb{S}^{m-1}$ and $h$ solves
$$
\left\{ \begin{array}{ll}
h'' = Hh & \quad \text{on } \, \R^+, \\[0.2cm]
h(0)=0, \quad h'(0)=1.
\end{array}\right.
$$
For instance, if $H(t) = 0$ then $h(t)=t$ and $M_h$ is Euclidean space $\R^m$, while if $H(t) = \kappa^2$ for a positive constant $\kappa$, then $M_h$ is Hyperbolic space of curvature $-\kappa^2$.
Denote by
$$
v_h(t) = \omega_{m-1}h(t)^{m-1}
$$
the volume of the sphere of radius $t$ centered at the origin of $M_h$, $\omega_{m-1}$ being the volume of $\mathbb{S}^{m-1}$. Condition \eqref{Ricci_lower_intro} allows us to relate $M$ to $M_h$ by means of standard comparison results, which will be extensively used in this paper.\par
Focusing on the singular case $\overline{\Omega} = \{o\}$ for ease of presentation, the role of the $p$-capacity potential in \eqref{p-potential} is played by the minimal positive Green kernel $\gr$ of $\Delta_p$ with pole at $o$, which solves $\Delta_p \gr = -\delta_o$ on $M$, where $\delta_o$ is the Dirac-delta measure at $o$. The existence of $\gr$ is one of the equivalent characterizations of the fact that $\Delta_p$ is non-parabolic on $M$, a property customarily introduced in terms of the $p$-capacity of compact sets and recalled in Section \ref{sec_preliminaries} below. By \eqref{Ricci_lower_intro} and  comparison (see Proposition \ref{prop_parabnonparab} below), $\Delta_p$ is non-parabolic on $M_h$ as well, equivalently
$$
\sgrh(t) \doteq \int_t^\infty v_h(s)^{-\frac{1}{p-1}} \di s < \infty.
$$
Indeed, $\sgrh(t)$ is the minimal positive Green kernel of $\Delta_p$ on $M_h$ with pole at the origin. To bound $|\nabla \log \gr|$, inspired by \cite{colding}, we reparametrize the level sets of $\gr$ in terms of a function that mimics the distance from $o$: since we restrict to $p \le m$, by classical work of Serrin \cite{Serrin_1, Serrin_2}, $\gr(x) \asymp \sgrh(r(x))$ in a neighbourhood of $o$.
In particular, both $\gr$ and $\sgrh(r)$ diverge as $r(x) \ra 0$, and therefore we can define implicitly  $\varrho : M \backslash \{o\} \ra \R^+$ by the formula
\begin{equation}\label{def_fake_intro}
\gr(x) = \sgrh\big( \varrho(x)\big).
\end{equation}
Note that $\varrho$ is proper if and only if $\gr(x) \ra 0$ as $x$ diverges. Clearly, $\varrho = r$ when $M = M_h$, and for this reason we call $\varrho$ a \emph{fake distance (from the point $o$)}. When needed, we will write $\gr_p,\varrho_p$ to emphasize their dependence on $p$. Since $w_p$ in \eqref{eq_up_intro} corresponds to $(1-p)\log \gr_p$, and $\varrho_p$ is a reparametrization of $\gr_p$, the estimates required to produce a solution to  the IMCF are equivalent to a local $C^1$ bound and a local lower bound on $\varrho_p$, both uniform for $p$ close to $1$. The latter serves to guarantee that $\varrho_p$ does not vanish identically in the limit $p \ra 1$.\par
In the literature, when $p=2$ fake distances have been used very successfully to study the geometry in the large of manifolds with non-negative Ricci curvature, see for instance \cite{cheegercolding,coldingmini,colding} and  references therein. Also, they have been independently considered in \cite{bmr5} to study the Yamabe problem on non-compact manifolds with nontrivial topology. Indeed, a key observation is the following identity, valid for each $\psi \in C^2(\R)$ with never vanishing first derivative:
\begin{equation}\label{deltapvarrho}
\Delta_p \psi(\varrho) = \left[ (p-1)\psi'' + \frac{v_h'(\varrho)}{v_h(\varrho)}\psi'\right] |\nabla \varrho|^p.
\end{equation}
Since the expression in square brackets is the $p$-Laplacian of $\psi(t)$ on the model $M_h$, \eqref{deltapvarrho} allows to radialize with respect to $\varrho$ in cases where an analogous procedure with respect to the distance $r$ would require the use of comparison theorems from below, hence binding topological assumptions. This is effective, for instance, when studying Yamabe type equations (cf. \cite{bmr5}, for $p=2$) or the validity of the compact support principle (cf. \cite[Sec. 7]{bmpr}). We refer to \cite[Sect. 2]{bmpr} for further information.\par
Finding global gradient estimates for $\varrho$ is needed both to produce the IMCF and to be able to exploit \eqref{deltapvarrho}. One of the main achievements of the present paper is the following sharp gradient estimate, see Theorem \ref{teo_good} below.

\begin{theorem}\label{teo_good_intro}
Assume that $M^m$ satisfies \eqref{Ricci_lower_intro} and \eqref{ipo_h_intro}, that $p \le m$ and that $\Delta_p$ is non-parabolic on $M$. Then, having defined $\varrho$ as in \eqref{def_fake_intro},
\begin{itemize}
\item[(i)] $|\nabla \varrho| \le 1$ on $M \backslash \{o\}$;
\item[(ii)] equality $|\nabla \varrho(x)|=1$ holds for some $x \in M \backslash \{o\}$ if and only if $\varrho = r$ and $M$ is the radially symmetric model $M_h$.
\end{itemize}
\end{theorem}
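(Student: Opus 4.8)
The plan is to realize $\varrho$ as a solution of a quasilinear equation modelled on the radial $p$-Laplacian of $M_h$, run a Bochner argument on $|\nabla\varrho|^2$, and close it with the Omori--Yau maximum principle, the sharp constant being forced by a Riccati identity. On the open set $\{\nabla\gr\ne 0\}$, where $\varrho$ is smooth, I would differentiate $\gr=\sgrh(\varrho)$ and use $|\sgrh'(t)|^{p-1}=v_h(t)^{-1}$ to get $|\nabla\gr|^{p-2}\nabla\gr=-v_h(\varrho)^{-1}|\nabla\varrho|^{p-2}\nabla\varrho$; taking divergences and recalling $\Delta_p\gr=0$ off $o$ yields
\[
\Delta_p\varrho=\frac{v_h'(\varrho)}{v_h(\varrho)}\,|\nabla\varrho|^p \qquad\text{on }\{\nabla\gr\ne 0\}
\]
(equivalently, specialize \eqref{deltapvarrho}). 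Writing $A(t)\doteq v_h'(t)/v_h(t)=(m-1)h'(t)/h(t)\ge 0$, the relation $h''=Hh$ gives the Riccati identity $A'+A^2/(m-1)=(m-1)H$; this is exactly the combination that, paired with $\Ricc\ge-(m-1)H(r)\metric$, makes the bound sharp with constant $1$, as is visible on $M_h$, where $\varrho=r$, $|\nabla\varrho|\equiv 1$, and all the inequalities below become equalities.

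Next, set $\varphi\doteq|\nabla\varrho|^2$. On $\{\varphi>0\}$ I would apply the Bochner formula
\[
\tfrac12\Delta\varphi=|\Hess\varrho|^2+\langle\nabla\varrho,\nabla\Delta\varrho\rangle+\Ricc(\nabla\varrho,\nabla\varrho),
\]
use the $\varrho$-equation to eliminate $\Delta\varrho$ and to compute $\langle\nabla\varrho,\nabla\Delta\varrho\rangle$, and re-express everything through the linearised operator $\mathcal L_\varrho$ of $\Delta_p$ at $\varrho$ (degenerate elliptic, locally uniformly elliptic on $\{\varphi>0\}$). Invoking the refined Kato inequality for functions with prescribed $p$-Laplacian --- this is where the hypothesis $p\le m$ enters, keeping the coefficient of $|\nabla|\nabla\varrho||^2$ of the right sign --- together with the Ricci bound and the Riccati identity, the Hessian and curvature terms combine so that $\varphi$ becomes a subsolution of $\mathcal L_\varrho$ up to a smooth drift $\langle X,\nabla\varphi\rangle$ and a favourable reaction term that degenerates only in the model case, with a strict gain where $\varphi>1$. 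Pinning down this differential inequality exactly, with the sharp constant surviving, is the technical core.

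For (i), note that since $H$ is continuous and non-increasing on $\R^+_0$ one has $\Ricc\ge-(m-1)H(0)\metric$ on all of $M$, so the Omori--Yau maximum principle holds on the complete manifold $M$. The pole is handled apart: by the sharp near-pole asymptotics of the $p$-fundamental solution (available for $p\le m$, cf. Serrin), $\gr(x)\sim\sgrh(r(x))$ with matching gradients as $x\to o$, whence $\varrho(x)/r(x)\to 1$ and $\limsup_{x\to o}|\nabla\varrho(x)|\le 1$. If $\sup_{M\setminus\{o\}}\varphi>1$, a maximising sequence cannot accumulate at $o$, so it either realises an interior maximum --- treated by the strong maximum principle applied to $\varphi$ via the previous step --- or escapes to infinity in $M$, where Omori--Yau supplies points $x_k$ with $\varphi(x_k)\to\sup\varphi$, $|\nabla\varphi(x_k)|\to 0$ and $\limsup\mathcal L_\varrho\varphi(x_k)\le 0$; substituting into the differential inequality contradicts $\sup\varphi>1$. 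The critical set $\{\nabla\gr=0\}$ is harmless, since $\varphi$ is continuous and any value $>0$ is attained, if at all, inside $\{\varphi>0\}$.

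For the rigidity (ii): if $|\nabla\varrho(x_0)|=1$ at some $x_0\in M\setminus\{o\}$, then $\varphi\equiv 1$ on the connected regular set by the strong maximum principle, and every inequality used above is an equality; equality in Kato forces $\Hess\varrho=\tfrac{A(\varrho)}{m-1}\big(\metric-\di\varrho\otimes\di\varrho\big)$, equality in the Ricci bound holds along $\nabla\varrho$, and $|\nabla\varrho|\equiv 1$ makes the integral curves of $\nabla\varrho$ unit-speed geodesics; integrating the Hessian identity along them and using $\varrho\to 0$ at $o$ identifies $(M,\metric)$ isometrically with $M_h$ and $\varrho$ with $r$ by a standard warped-product recognition, while the converse is immediate from \eqref{def_fake_intro}. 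The main obstacle is this Bochner step --- pushing the computation through the non-divergence cross-terms of $\Delta_p$ and extracting precisely the refined Kato inequality that, under $p\le m$, closes the differential inequality with the sharp constant and no spurious terms --- with a secondary difficulty in justifying the maximum principle across the critical set of $\gr$ and at the incomplete end $o$, where the sharp near-pole asymptotics are essential.
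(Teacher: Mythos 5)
Your proposal correctly identifies the overall architecture (the equation $\Delta_p\varrho=\frac{v_h'(\varrho)}{v_h(\varrho)}|\nabla\varrho|^p$, a Bochner inequality for $|\nabla\varrho|^2$ on the regular set, the Serrin/Kichenassamy--V\'eron asymptotics at the pole to get $\limsup_{x\to o}|\nabla\varrho|\le 1$, and rigidity via equality in the Hessian term), but it has two genuine gaps. The first is the Bochner step itself, which you defer as ``the technical core.'' The paper's Proposition \ref{prop_miracolo} is not a refined Kato inequality: it is an exact algebraic identity obtained by introducing the auxiliary function $F(\varrho)=\int_0^\varrho h^{1/\sqrt{p-1}}$ and the weight $h^\mu$ with $\mu=-\frac{mp-3p+2}{p-1}$, so that after completing the square via the $B$-traceless part of $\nabla^2F$ one is left with \emph{exactly} $|\nabla\varrho|^p\big[\Ricc(\nu,\nu)+(m-1)H(\varrho)|\nabla\varrho|^2\big]$ plus manifestly nonnegative terms, for \emph{every} $p\in(1,\infty)$. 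Your claim that $p\le m$ enters here through the sign of a Kato coefficient is incorrect; $p\le m$ is used only to guarantee that $\gr$ blows up at the pole (so $\varrho$ is defined and the boundary term at $o$ is controlled). Without producing the correct weight and auxiliary function, a naive Bochner-plus-Kato computation does not close with the sharp constant.

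The second gap is the closing maximum principle. The Omori--Yau principle under a Ricci lower bound controls $\Delta\varphi$ at the almost-maximum points, not $\mathrm{Tr}(A(\nabla\varrho)\nabla^2\varphi)$; the Hessian form would require sectional curvature bounds you do not have. More seriously, your ``favourable reaction term that degenerates only in the model case'' is $(m-1)H(\varrho)|\nabla\varrho|^p\,[\,|\nabla\varrho|^2-1\,]$, which vanishes \emph{identically} whenever $H\equiv 0$ (the case $\Ricc\ge 0$), so even a correct maximum principle at infinity yields $0\le 0$ and no contradiction from $\sup|\nabla\varrho|^2>1$. The paper avoids both problems with its Key Lemma \ref{lem_key}: when $\inf H>0$ it runs a weighted integral iteration (test functions $\lambda(|\nabla\varrho|^2)\psi^\eta|\nabla\varrho|^\alpha$ against the divergence-form inequality, Young's inequality, and a logarithmic iteration using the exponential volume growth coming from Bishop--Gromov) to show the superlevel sets $\{|\nabla\varrho|^2>1+\delta\}$ are empty; when $\inf H=0$ it approximates by models of curvature $-H-c$, proves the comparison $v_{h,c}(\varrho_c)\le v_h(\varrho)$ via Sturm comparison and the monotonicity of $|(\log\sgrh)'|$ from Lemma \ref{lem_ODE}, and lets $c\downarrow 0$. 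This two-case structure is essential and is absent from your argument. The rigidity discussion in your last paragraph is essentially the paper's, though you should also justify $\cut(o)=\emptyset$ (the paper does this via the everywhere-differentiability of $r$ and the density of multi-geodesic points in the cut locus) before integrating the Hessian identity.
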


The estimate is inspired by \cite[Thm. 3.1]{colding}, which deals with the case $p=2$ and $\Ricc \ge 0$, see also \cite{coldingmini_cv} for improvements. The proof in \cite{colding} exploits the linearity of $\Delta$ and the properness of $\varrho$, so it is not extendable to our setting. Nevertheless, as in \cite{colding}, our theorem relies on a new (and, somehow, surprising) Bochner formula for some singular operator associated to $\varrho$, see Proposition \ref{prop_miracolo}.  The key maximum principle used to conclude that $|\nabla \varrho| \le 1$ is Lemma \ref{lem_key} below. It is related to \cite[Chapter 4]{prsmemoirs}, and it is quite versatile: for instance, it directly implies a sharp estimate for $|\nabla \log u|$ when $u$ is a $p$-harmonic function defined on an open set $\Omega$ and having a controlled growth, see Theorem \ref{teo_bellagradiente} below. This complements a result in \cite{sungwang}.\par
%
\subsection*{Main existence results}
Suppose that $\Delta_p$ is non-parabolic on $M$ for every $p$ sufficiently close to $1$, and let $\varrho_p$ be the fake distance associated to $\Delta_p$ with pole at $o$. In Section \ref{sec_convergence}, we study a sequential limit $\varrho_1 = \lim_{p\ra 1} \varrho_p$, searching for conditions that guarantee the positivity and properness of $\varrho_1$. In view of \eqref{def_fake_intro}, this is achieved by finding decay estimates and Harnack inequalities for the kernel $\gr_p$ of $\Delta_p$ that are well behaved as $p \ra 1$. The problem is addressed in Section \ref{sec_proper}, whose main results are sharp  estimates for $\gr_p$ (Theorems \ref{teo_sobolev} and \ref{theorem-decayGreen2}), suited to guarantee the properness of $\varrho_1$, and a sharp Harnack inequality (Theorem \ref{teo_harnack}), robust enough to ensure that $\varrho_1>0$ on $M \backslash \{o\}$. These results are of independent interest.\par

We now describe our main existence theorems for the IMCF. The first one, Theorem \ref{teo_main_L1sobolev}, holds on  manifolds supporting an isoperimetric inequality, a family that encompasses the relevant Examples \ref{ex_minimal} to \ref{ex_roughisometric} below.

\begin{theorem}\label{teo_main_L1sobolev_intro}
Let $M^m$ be connected, complete, non-compact and satisfying \eqref{Ricci_lower_intro}, \eqref{ipo_h_intro} together with the $L^1$ Sobolev inequality
\begin{equation}\label{sob_intro}
\left( \int |\psi|^{ \frac{m}{m-1}}\right)^{\frac{m-1}{m}} \le \So_{1} \int |\nabla \psi| \qquad \forall \, \psi \in \lip_c(M).
\end{equation}
Then, the function $\varrho_1$ is positive, $1$-Lipschitz and proper on $M$, and for each $x \in M \backslash \{o\}$
$$
v_h^{-1} \left( \frac{r(x)^{m-1}}{\So_1^m 2^{m^2-1}}\right) \le \varrho_1(x) \le r(x).
$$
Moreover, $w(x) = (m-1)\log h(\varrho_1)$ is a weak solution to  the IMCF on $M \backslash \{o\}$ satisfying the mean curvature estimate
\begin{equation}\label{esti_nablau_intro}
|\nabla w| \le (m-1)e^{-\frac{w}{m-1}} h' \left( h^{-1} \left( e^{\frac{w}{m-1}} \right) \right) \qquad \text{a.e. on } \, M \backslash \{o\}.
\end{equation}
\end{theorem}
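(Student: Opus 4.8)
The plan is to realize $\varrho_1$ as a locally uniform limit of the fake distances $\varrho_p$ as $p\to 1^+$, to read off its stated properties from estimates that are uniform in $p$, and then to pass to the limit in the $p$-Laplace equation \eqref{eq_up_intro} to recognize the inverse mean curvature flow. First I would turn \eqref{sob_intro} into an $L^p$ inequality: testing \eqref{sob_intro} against $|\psi|^\gamma$ with $\gamma=p(m-1)/(m-p)>1$ and applying H\"older's inequality gives, for every $p\in(1,m)$,
\[
\Big(\int|\psi|^{\frac{mp}{m-p}}\Big)^{\frac{m-p}{mp}}\le \So_1\,\frac{p(m-1)}{m-p}\Big(\int|\nabla\psi|^p\Big)^{1/p}\qquad\forall\,\psi\in\lip_c(M),
\]
with constant $\So_p=\So_1\,p(m-1)/(m-p)\to\So_1$ as $p\to1^+$. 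Such an inequality implies that $\Delta_p$ is non-parabolic on $M$, so the minimal positive kernel $\gr_p$ exists for all $p$ close to $1$; non-parabolicity of $\Delta_p$ on $M_h$ ($\sgrh_p<\infty$, $\sgrh_p(0^+)=+\infty$) is automatic for $p<m$ since $h(t)\ge t$ gives $v_h(s)\ge\omega_{m-1}s^{m-1}$. Hence $\varrho_p$ in \eqref{def_fake_intro} is well defined for $p$ near $1$. By Theorem \ref{teo_good_intro}(i) each $\varrho_p$ is $1$-Lipschitz on $M\backslash\{o\}$, and since $\gr_p(x)\to+\infty$, hence $\varrho_p(x)\to0$, as $x\to o$, it extends to a $1$-Lipschitz function on $M$ with $\varrho_p(o)=0$; in particular $\varrho_p\le r$.

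For the lower bound I would invoke the decay estimate of Theorem \ref{teo_sobolev}, which rests on an inequality of the above type and yields $\gr_p(x)\le\sgrh_p\big(v_h^{-1}(r(x)^{m-1}/(\So_1^m 2^{m^2-1}))\big)$ with constant independent of $p$; as $\sgrh_p$ is strictly decreasing, \eqref{def_fake_intro} gives
\[
v_h^{-1}\!\Big(\frac{r(x)^{m-1}}{\So_1^m 2^{m^2-1}}\Big)\ \le\ \varrho_p(x)\ \le\ r(x)\qquad\text{on }M\backslash\{o\},
\]
uniformly in $p$ (positivity of the limit can also be secured through the sharp Harnack inequality of Theorem \ref{teo_harnack}). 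The family $\{\varrho_p\}$ being equi-$1$-Lipschitz, Ascoli--Arzel\`a extracts, along a subsequence, $\varrho_p\to\varrho_1$ locally uniformly on $M$; $\varrho_1$ is $1$-Lipschitz and inherits the displayed two-sided bound, hence is positive on $M\backslash\{o\}$ and proper, its sublevel sets lying in closed — thus compact — balls of $M$.

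To identify $w$, Laplace's method applied to $\sgrh_p(t)=\int_t^\infty v_h(s)^{-1/(p-1)}\di s$ gives, uniformly for $t$ in compact subsets of $(0,\infty)$,
\[
\sgrh_p(t)\sim(p-1)\frac{v_h(t)^{1-\frac1{p-1}}}{v_h'(t)},\qquad (1-p)\log\sgrh_p(t)\longra\log v_h(t),\qquad (p-1)\frac{v_h(t)^{-\frac1{p-1}}}{\sgrh_p(t)}\longra\frac{v_h'(t)}{v_h(t)},
\]
with uniform bounds on compacts. Since $\varrho_p$ ranges in a fixed compact subset of $(0,\infty)$ over any $K\Subset M\backslash\{o\}$, $w_p=(1-p)\log\gr_p=(1-p)\log\sgrh_p(\varrho_p)$ converges locally uniformly on $M\backslash\{o\}$ to $\log v_h(\varrho_1)=\log\omega_{m-1}+(m-1)\log h(\varrho_1)$, while $|\nabla w_p|=(p-1)\,v_h(\varrho_p)^{-1/(p-1)}\sgrh_p(\varrho_p)^{-1}|\nabla\varrho_p|$ stays locally uniformly bounded. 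Because a constant shift carries weak solutions of the IMCF to weak solutions, it suffices to prove that $\lim_{p}w_p$ — equivalently $w:=(m-1)\log h(\varrho_1)$ — is a weak solution of the IMCF. Now $w_p=(1-p)\log\gr_p$ solves $\Delta_p w_p=|\nabla w_p|^p$ on $M\backslash\{o\}$ (cf. \eqref{eq_up_intro}), whence, by convexity of $s\mapsto|s|^p/p$ and the equation, for every $v\in\lip_\loc$ with $v-w_p$ compactly supported in some $K\Subset M\backslash\{o\}$,
\[
\int_K\tfrac1p|\nabla w_p|^p+w_p|\nabla w_p|^p\ \le\ \int_K\tfrac1p|\nabla v|^p+v|\nabla w_p|^p.
\]
With the uniform gradient bound in hand, the convergence of $w_p$ to a weak solution of \eqref{def_1Lapla} follows along the lines of Moser and of Kotschwar--Ni (see Section \ref{sec_convergence}): one feeds in $v=\phi-(\lim_p w_p)+w_p$ for a fixed competitor $\phi$, forces the energy convergence $\int_K|\nabla w_p|^p\to\int_K|\nabla(\lim_p w_p)|$ (which upgrades weak-$*$ to $L^1_\loc$ convergence of the gradients), and passes to the limit above to get \eqref{minimization_1Lapla} for $\lim_p w_p$, hence for $w$. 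Since $\varrho_1>0$ off $o$, $w$ is finite and locally Lipschitz on $M\backslash\{o\}$, so it is a genuine weak solution of the IMCF there.

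Finally, the mean curvature estimate is immediate: from $w=(m-1)\log h(\varrho_1)$ one has $\nabla w=(m-1)\frac{h'(\varrho_1)}{h(\varrho_1)}\nabla\varrho_1$, so $|\nabla\varrho_1|\le1$ (Theorem \ref{teo_good_intro}(i) in the limit) gives $|\nabla w|\le(m-1)h'(\varrho_1)/h(\varrho_1)$ a.e.; as $h$ is strictly increasing, $h(\varrho_1)=e^{w/(m-1)}$ and $\varrho_1=h^{-1}(e^{w/(m-1)})$, and substituting yields exactly \eqref{esti_nablau_intro}. I expect the genuinely delicate points to be: (i) guaranteeing that the decay estimate of Theorem \ref{teo_sobolev} and the Laplace-type asymptotics for $\sgrh_p$ are uniform in $p$ near $1$ on compact subsets of $M\backslash\{o\}$ — this is what keeps $\varrho_1$ positive and proper in the limit; and (ii) the $p\to1$ convergence step itself, namely upgrading the locally uniform convergence of $w_p$ to convergence of the gradients, so as to pass to the limit in the nonlinear, $p$-degenerating variational inequality.
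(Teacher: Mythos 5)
Your proposal follows essentially the same route as the paper: derive the $L^p$ Sobolev inequality from \eqref{sob_intro} with constant $\So_p\to\So_1$, deduce non-parabolicity, get $\varrho_p\le r$ and the $1$-Lipschitz bound from Theorem \ref{teo_good_intro}, extract $\varrho_1$ by Ascoli--Arzel\`a, pass to the limit $(1-p)\log\sgrh_p(t)\to\log v_h(t)$, and invoke Moser's scheme to identify the limit as a weak IMCF solution; the rephrasing of $|\nabla\varrho_1|\le 1$ as \eqref{esti_nablau_intro} is exactly the paper's.

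One intermediate claim is too strong, though the conclusion survives. Theorem \ref{teo_sobolev} gives $\gr_p(x)\le C_{p,m}^{1/(p-1)}r(x)^{-(m-p)/(p-1)}$ with $C_{p,m}$ bounded as $p\to1$; it does \emph{not} give, for fixed $p$, the bound $\gr_p(x)\le\sgrh_p\bigl(v_h^{-1}(r(x)^{m-1}/(\So_1^m2^{m^2-1}))\bigr)$, and hence the displayed lower bound for $\varrho_p$ ``uniformly in $p$'' is false for fixed $p$ (already in $\R^m$ the constants do not match, and $C_{p,m}^{1/(p-1)}$ blows up while the model kernel carries a factor $(p-1)/(m-p)$). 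The clean lower bound only emerges after raising the decay estimate to the power $p-1$ and letting $p\to1$, using that $\gr_p(x)^{p-1}\to 1/v_h(\varrho_1(x))$ and $C_{p,m}\to\So_1^m2^{m^2-1}$ (this is the paper's Lemma \ref{cor_measure} combined with the argument of Proposition \ref{prop_crucial} with $R=\infty$). Since you already have all the ingredients — uniformly bounded constants and the limit identity — this is a repairable imprecision rather than a structural gap; your Laplace-method asymptotics for $\sgrh_p$ are a valid alternative derivation of the same limit the paper obtains via the $L^{1/(p-1)}\to L^\infty$ norm convergence.
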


In fact, \eqref{esti_nablau_intro} is equivalent to $|\nabla \varrho_1|\le 1$, and if $\Ricc \ge -(m-1) \kappa^2$ for some $\kappa \in \R^+_0$ it takes the simple form
\begin{equation}\label{esti_H_con}
|\nabla w| \le (m-1)e^{-\frac{w}{m-1}}\sqrt{ \kappa^2 e^{\frac{2w}{m-1}} + 1} \qquad \text{a.e. on } \, M\backslash\{o\}.
\end{equation}
The bound is sharp, and attained with equality by the flow of concentric spheres starting from a point in Euclidean and Hyperbolic space. If the IMCF is smooth, one can deduce \eqref{esti_H_con}, but only for a  nonsingular initial condition, as a consequence of the parabolic maximum principle applied to the equation for $\Delta \mathcal{H}$. However, we found no such estimates available for weak solutions. Theorem \ref{teo_main_L1sobolev_intro} can also be applied to manifolds with asymptotically nonnegative Ricci curvature, namely, those satisfying \eqref{Ricci_lower_intro} and \eqref{ipo_h_intro} with
$$
\int_0^\infty t H(t) \di t < \infty.
$$
In this case, under the validity of \eqref{sob_intro} the fake distance $\varrho_1$ is of the order of $r$, see Remark \ref{rem_asiricci}.\par

Our second result focuses on manifolds with non-negative Ricci curvature and, more generally, manifolds supporting global doubling and weak $(1,1)$-Poincar\'e inequalities, see Theorem \ref{teo_main_riccimagzero} below.
\begin{theorem}\label{teo_main_riccimagzero_intro}
Let $M^m$ be a connected, complete non-compact manifold with $\Ricc \ge 0$. Assume that there exist $C_{\RD}$ and $b \in (1,m]$ such that
\begin{equation}\label{eq_reversevol_222_intro}
\forall t \ge s >0, \qquad \frac{|B_t|}{|B_s|} \ge C_{\RD} \left( \frac{t}{s}\right)^b,
\end{equation}
where balls are centered at a fixed origin $o$. Then the fake distance $\varrho_1$ is positive and proper on $M \backslash \{o\}$. Moreover, there exist constants $C, \bar C$ depending on $C_{\RD}, b,m$, with $\bar C$ also depending on a lower bound for the volume $|B_1|$, such that
$$
\left\{ \begin{array}{ll}
\disp C \left[\inf_{t \in (1, r(x))} \frac{|B_t|}{t^m} \right]^{\frac{1}{m-1}} r(x) \le \varrho_1(x) \le r(x) & \quad \text{on } \, M \backslash B_1, \\[0.5cm]
\disp \bar C r(x) \le \varrho_1(x) \le r(x) & \quad \text{on } \, B_1,\\[0.3cm]
|\nabla \varrho_1| \le 1 & \quad \text{a.e. on } \, M.
\end{array}\right.
$$
Furthermore, $w = (m-1)\log \varrho_1$ is a solution to  the IMCF issuing from $o$ and satisfying
$$
|\nabla w| \le (m-1)e^{-\frac{w}{m-1}} \qquad \text{a.e. on } \, M \backslash \{o\}.
$$
\end{theorem}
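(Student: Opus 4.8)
The plan is to run the general machinery developed in the previous sections in the special case $H\equiv 0$, so that the model $M_h$ is Euclidean $\R^m$, $h(t)=t$, $v_h(t)=\omega_{m-1}t^{m-1}$, and $\sgrh_p(t)=c_{m,p}\, t^{-\frac{m-p}{p-1}}$ for $p<m$ (with the logarithmic kernel at $p=m$). First I would check that the hypothesis \eqref{eq_reversevol_222_intro}, together with $\Ricc\ge 0$, places $M$ in the class covered by Theorem \ref{teo_main_riccimagzero} — i.e.\ that \eqref{eq_reversevol_222_intro} forces $\Delta_p$ to be non-parabolic for all $p$ near $1$ (the reverse volume doubling with exponent $b>1$ gives $\int^\infty v(t)^{-1/(p-1)}\di t<\infty$ by a direct integral estimate, hence non-parabolicity via Proposition \ref{prop_parabnonparab}), and that Bishop--Gromov gives the matching upper bound $|B_t|\le\omega_{m-1}\int_0^t s^{m-1}\di s$, so $b\le m$ is automatic. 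Then Theorem \ref{teo_good_intro} applies for every such $p$, giving $|\nabla\varrho_p|\le 1$ on $M\setminus\{o\}$; this bound is uniform in $p$, so it passes to the sequential limit $\varrho_1=\lim_{p\to1}\varrho_p$ and yields $|\nabla\varrho_1|\le1$ a.e.\ on $M$, which in particular makes $\varrho_1$ (locally Lipschitz, hence) continuous.

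The heart of the argument is the two-sided control on $\varrho_1$. The upper bound $\varrho_1\le r$ is immediate from Theorem \ref{teo_good_intro}(i) once one knows $\varrho_1$ vanishes at $o$: since $|\nabla\varrho_1|\le1$ and $\varrho_1\to0$ along $r\to0$ (this is the content of the Harnack/decay analysis near the pole, using $\gr_p(x)\asymp\sgrh_p(r(x))$ from Serrin's estimates), integrating the gradient bound along minimizing geodesics from $o$ gives $\varrho_1(x)\le r(x)$. The lower bounds are the substantive part. For $x\in M\setminus B_1$ I would invoke the sharp decay estimate for $\gr_p$ (Theorem \ref{theorem-decayGreen2}, in the $\Ricc\ge0$ doubling setting) to get $\gr_p(x)\ge \text{const}\cdot\int_{r(x)}^\infty |B_t|^{-1/(p-1)}\di t$ with a constant stable as $p\to1$; combining this with the definition $\gr_p(x)=\sgrh_p(\varrho_p(x))$ and the explicit form of $\sgrh_p$, one solves for $\varrho_p(x)$ and, after sending $p\to1$ (here the $L^1$-type Green estimate, i.e.\ the "$p\to1$" limit giving $\gr_1(x)\asymp\int_{r(x)}^\infty |B_t|^{-1}\,|B_t|\cdot t^{?}$ — more precisely the expression $\big[\inf_{t\in(1,r(x))}|B_t|/t^m\big]^{1/(m-1)}r(x)$ emerges from estimating $\int_{r(x)}^\infty v_h(t)^{-1/(p-1)}\di t$ against $\int |B_t|^{-1/(p-1)}\di t$ and taking the infimum out), obtain the stated bound $C\big[\inf_{t\in(1,r(x))}|B_t|/t^m\big]^{1/(m-1)}r(x)\le\varrho_1(x)$. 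For $x\in B_1$ the analogous near-pole estimate, which additionally sees the value $|B_1|$, produces $\bar Cr(x)\le\varrho_1(x)$ with $\bar C$ depending also on a lower bound for $|B_1|$. Positivity of $\varrho_1$ on $M\setminus\{o\}$ is then a consequence of these lower bounds (a positive lower bound on compacta away from $o$) together with the Harnack inequality Theorem \ref{teo_harnack} to propagate positivity, ruling out $\varrho_1\equiv0$. Properness follows because $\varrho_1(x)\ge C(\cdots)r(x)$ and the reverse doubling \eqref{eq_reversevol_222_intro} forces $\inf_{t\in(1,r(x))}|B_t|/t^m$ to stay bounded below by $C_\RD|B_1|$ along any divergent sequence (as $b\le m$), so $\varrho_1\to\infty$ as $x$ diverges; hence sublevel sets of $\varrho_1$ are compact.

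Finally, for the IMCF statement I would set $w=(m-1)\log\varrho_1$ — note that in the Euclidean model $h(\varrho)=\varrho$, so the general formula $w=(m-1)\log h(\varrho_1)$ of Theorem \ref{teo_main_L1sobolev_intro} collapses to exactly this. Passing to the limit $p\to1$ in the weak formulation of \eqref{eq_up_intro} for $w_p=(1-p)\log\gr_p$ (equivalently for the reparametrization through $\varrho_p$ using identity \eqref{deltapvarrho}), one gets that $w$ is a weak solution of $\Delta_1 w=|\nabla w|$ on $M\setminus\{o\}$ in the sense of \eqref{minimization_1Lapla}; here the positivity and properness of $\varrho_1$ just established are precisely what guarantees $w$ is well-defined, proper, and does not degenerate in the limit. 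The gradient (equivalently, mean-curvature) bound is a direct computation: $|\nabla w|=(m-1)\varrho_1^{-1}|\nabla\varrho_1|\le(m-1)\varrho_1^{-1}=(m-1)e^{-w/(m-1)}$ a.e., which is the asserted estimate and matches \eqref{esti_H_con} with $\kappa=0$.

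The step I expect to be the main obstacle is extracting the lower bound for $\varrho_1$ with constants that remain positive and explicit as $p\to1$: the decay estimates for $\gr_p$ and the Harnack inequality must be quantitatively stable in $p$ near $1$ (a non-trivial point, since many classical $p$-Green-function bounds blow up as $p\to1$), and one has to carefully track how the exponent $1/(p-1)$ interacts with the reverse-doubling bound \eqref{eq_reversevol_222_intro} so that, in the limit, the $\frac1{m-1}$ power and the infimum $\inf_{t\in(1,r(x))}|B_t|/t^m$ appear exactly as stated rather than some weaker averaged quantity. This is where Theorems \ref{theorem-decayGreen2} and \ref{teo_harnack} do the decisive work, and verifying that their constants are $p$-stable in the $\Ricc\ge0$ doubling setting is the crux.
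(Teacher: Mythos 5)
Your overall strategy is exactly the paper's: the statement is obtained by specializing Theorem \ref{teo_main_riccimagzero} to $H\equiv 0$, $h(t)=t$, noting that $\Ricc\ge 0$ gives $\dous$ with $C_{\Dou}=2^m$ (so $\nu=m$) and $\neuuno$ via Buser's inequality, and then reading off the estimates with $v_h(t)=\omega_{m-1}t^{m-1}$. Your treatment of the gradient bound, the limit $\varrho_p\to\varrho_1$, the identification $w=(m-1)\log\varrho_1$, and the computation $|\nabla w|=(m-1)\varrho_1^{-1}|\nabla\varrho_1|\le(m-1)e^{-w/(m-1)}$ all match the paper. However, two steps as you have written them would fail. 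First, your mechanism for non-parabolicity is wrong: checking $\int^\infty v_h(t)^{-1/(p-1)}\,\di t<\infty$ only shows that the \emph{model} $\R^m$ is non-parabolic, and Proposition \ref{prop_parabnonparab}(ii) transfers non-parabolicity to $M$ only from a model from \emph{above}, which requires an upper bound on the radial sectional curvature that is not available under $\Ricc\ge 0$. The correct route (and the one the paper uses) is that $\dous$, $\neuuno$ and the reverse doubling \eqref{eq_reversevol_222_intro} yield the weighted Sobolev inequality of Theorem \ref{theorem-tewo2}, and non-parabolicity then follows from Theorem \ref{teo_sobolev} / Theorem \ref{theorem-decayGreen2}.

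Second, your properness argument contains a false claim: $\inf_{t\in(1,r(x))}|B_t|/t^m$ is \emph{not} bounded below by $C_{\RD}|B_1|$ when $b<m$; from \eqref{eq_reversevol_222_intro} one only gets $|B_t|/t^m\ge C_{\RD}|B_1|\,t^{b-m}$, which tends to $0$ as $t\to\infty$ whenever $b<m$. Properness is nevertheless true because the \emph{product} in the lower bound satisfies
\[
\Big[\inf_{t\in(1,r(x))}\tfrac{|B_t|}{t^m}\Big]^{\frac{1}{m-1}} r(x) \;\ge\; \big(C_{\RD}|B_1|\big)^{\frac{1}{m-1}}\, r(x)^{\frac{b-1}{m-1}} \;\longrightarrow\; \infty,
\]
precisely because $b>1$; this is the content of \eqref{eq_lo_bound} in the paper. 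With these two corrections (and recalling that the $\inf$ over $(0,2r(x))$ produced by Theorem \ref{theorem-decayGreen2} is converted to the $\inf$ over $(1,r(x))$ using doubling and volume comparison on $B_1$, and that the bound on $B_1$ comes from the nondegeneracy Proposition \ref{prop_crucial} with $R=1$), your argument coincides with the paper's proof.
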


\begin{remark}
\emph{The estimates in Theorem \ref{teo_main_riccimagzero_intro} pass to limits with respect to pointed Gromov-Hausdorff convergence $(M_k^m, \metric_k, o_k) \ra (X, \di_X, o)$ whenever \eqref{eq_reversevol_222_intro} holds uniformly in $k$ and the sequence satisfies the noncollapsing condition $|B_1(o_k) | \ge \upsilon$ for each $k$, for some constant $\upsilon >0$.
}
\end{remark}

\begin{remark}
\emph{Condition \eqref{eq_reversevol_222_intro} is implied by the inequality
$$
\tilde C^{-1} t^b \le |B_t| \le \tilde C t^b \qquad \forall \, t \ge 1,
$$
for some constant $\tilde C > 1$. Indeed, $C_{\RD}$ turns out to depend only on $\tilde C, m$ and on a lower bound on Ricci on $B_6$, see Remark \ref{rem_reversevol} below.
}
\end{remark}

Our last result considers the IMCF starting from a relatively compact domain.

\begin{theorem}\label{teo_main_relcompact}
Let the assumptions of either Theorem \ref{teo_main_L1sobolev_intro} or Theorem \ref{teo_main_riccimagzero_intro} be satisfied. Fix $\Omega \Subset M$ with $C^2$ boundary and containing the origin $o$, and define the fake inner and outer radii
$$
R_i = \sup \Big\{ t : \{ \varrho_1 < t\} \subset \Omega \Big\}, \qquad R_o = \inf \Big\{ t : \Omega \subset \{\varrho_1 < t\} \Big\},
$$
with $\varrho_1= \lim_{p \ra 1} \varrho_p$ the fake distance issuing from $o$. Then, there exists a unique, proper solution $w : M \ra \R$ to the IMCF starting from $\Omega$, satisfying
$$
\left\{ \begin{array}{ll}
(i) & \disp \log v_h\big(\varrho_1(x)\big) - \log v_h\big( R_o\big) \le w(x) \le \log v_h\big(\varrho_1(x)\big) - \log v_h\big( R_i\big) \\[0.4cm]
(ii) & \disp |\nabla w| \le \max \left\{ (m-1)\sqrt{H(R_i)}, \ \max_{\partial \Omega} \mathcal{H}_+ \right\},
\end{array}\right.
$$
with $\mathcal{H}_+(x) = \max\{ \mathcal{H}(x),0\}$ the positive part of the mean curvature of $\partial \Omega$ in the inward direction.
\end{theorem}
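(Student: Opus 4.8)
The plan is to deduce the existence of the IMCF from $\Omega$ by applying the Huisken--Ilmanen existence result (Theorem \ref{teo_HI}), using as the required proper subsolution $\bar w$ the function $w = (m-1)\log h(\varrho_1)$ (in the Sobolev case) or $w = (m-1)\log\varrho_1$ (in the Ricci $\ge 0$ case) obtained from Theorem \ref{teo_main_L1sobolev_intro} or Theorem \ref{teo_main_riccimagzero_intro} on $M \backslash \{o\} \supset M \backslash \overline\Omega$. First I would verify the barrier hypothesis: since $o \in \Omega$ and $\partial\Omega$ is $C^2$, the solution $w$ of the IMCF issuing from $o$ is defined on a neighborhood of $M \backslash \overline\Omega$, it is $\lip_\loc$ and proper there (by the properness of $\varrho_1$ established in the cited theorems), and it is a weak subsolution of \eqref{def_1Lapla} outside a relatively compact set. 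Theorem \ref{teo_HI} then yields a proper weak solution $w$ of the IMCF with initial condition $\Omega$, unique in $M \backslash \Omega$. The properness of $w$ already gives uniqueness via \cite[Thm. 2.2]{huiskenilmanen}.

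For the quantitative estimate $(i)$, the idea is comparison with the fake-distance solution via the minimization property \eqref{minimization_1Lapla}. Recall that if $\varrho_1$ issuing from $o$ gives a solution, then on $M\backslash\{o\}$ the function $\tilde w := \log v_h(\varrho_1)$ satisfies $\Delta_1 \tilde w = |\nabla \tilde w|$ weakly (this is the exponent normalization making $\Sigma_t = \{\varrho_1 < t\}$ flow by IMCF, and it is consistent with the identity \eqref{deltapvarrho} passed to the limit $p\to1$). On $M \backslash \overline\Omega$ both $w$ and suitable vertical translates $\tilde w - \log v_h(R_i)$, $\tilde w - \log v_h(R_o)$ of $\tilde w$ are weak solutions; by the definitions of $R_i,R_o$ we have $\{\varrho_1 < R_i\}\subset\Omega\subset\{\varrho_1<R_o\}$, so on $\partial\Omega$ the translated barriers sit on the correct sides of $0 = w|_{\partial\Omega}$. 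The comparison principle for proper weak sub/supersolutions of the IMCF (which follows from feeding the two competitors into \eqref{minimization_1Lapla}, as in \cite[Lemma 6.1 and Thm. 2.2]{huiskenilmanen}) then sandwiches $w$ between the two translates, which is exactly $(i)$.

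For the gradient bound $(ii)$, I would apply the Huisken--Ilmanen-type a priori estimate, but now in the robust form available to us: by Theorem \ref{teo_good_intro} (equivalently Theorem \ref{teo_bellagradiente}), each approximating $w_p$ (hence, passing to the limit, $w$) satisfies a local $C^1$ bound governed by $|\nabla\varrho_p|\le 1$ on one side and by $\max_{\partial\Omega}\mathcal{H}_+$ on the other, the latter entering exactly as in \eqref{HI_gradesti} through the boundary data on $\partial\Omega$. Concretely, on the region $\{\varrho_1 > R_i\}\supset M\backslash\Omega$ the level-set mean curvature of the barrier $\tilde w$ is $|\nabla\tilde w| = \tfrac{v_h'(\varrho_1)}{v_h(\varrho_1)}|\nabla\varrho_1| \le \tfrac{v_h'(R_i)}{v_h(R_i)} = (m-1)\tfrac{h'(R_i)}{h(R_i)}$, and using $H$ non-increasing together with $h''=Hh$ one checks $h'/h \le \sqrt{H(0)}$ is too crude — instead the sharp bound $h'(t)/h(t)\le$ (value at the relevant radius) combined with $\mathcal{H}_+$ on $\partial\Omega$ gives the stated $\max\{(m-1)\sqrt{H(R_i)},\ \max_{\partial\Omega}\mathcal{H}_+\}$; the term $(m-1)\sqrt{H(R_i)}$ is the clean upper bound for $(m-1)h'/h$ coming from the Riccati comparison $h''=Hh \le H(R_i) h$ on $[R_i,\infty)$ with $H$ non-increasing. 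The maximum over the two quantities reflects that $|\nabla w|$ is controlled either by the interior model geometry (away from $\partial\Omega$) or by the boundary mean curvature (near $\partial\Omega$), and the maximum principle for $|\nabla w|$ — our Lemma \ref{lem_key} applied to the relevant $p$-approximations and then let $p\to1$ — propagates the larger of the two.

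The main obstacle I expect is the comparison step for $(i)$: one must be careful that $\tilde w - \log v_h(R_o)$ and $\tilde w - \log v_h(R_i)$ are genuinely admissible competitors in \eqref{minimization_1Lapla} on $M\backslash\overline\Omega$ (including the behavior near $\partial\Omega$ where $\varrho_1$ need not be smooth and the level sets need not be $C^1$), and that the uniqueness/comparison machinery of \cite{huiskenilmanen} applies verbatim given only that $\varrho_1$ is $1$-Lipschitz rather than smooth; this is where the exponential-of-$\log v_h$ normalization and the weak formulation must be handled with the same care as in the limit argument of Section \ref{sec_convergence}. The gradient estimate near $\partial\Omega$ also requires knowing that the $C^2$ regularity of $\partial\Omega$ is enough to run the $\mathcal{H}_+$ barrier argument uniformly in $p$, which should follow from the standard boundary barrier construction for \eqref{p-potential} combined with Theorem \ref{teo_good_intro}.
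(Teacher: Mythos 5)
Your route differs from the paper's in two essential places, and the difference matters. The paper does not invoke Theorem \ref{teo_HI} at all: it constructs $w$ directly \`a la Moser as $w=\lim_{p\ra 1}(1-p)\log u_p$, where $u_p$ is the $p$-capacity potential of $(\Omega,M)$, and it proves $(i)$ entirely at the level of $p$-harmonic functions. By classical comparison, $\big[\sgrh_p(R_i^{(p)})\big]^{-1}\gr_p \le u_p \le \big[\sgrh_p(R_o^{(p)})\big]^{-1}\gr_p$ with $R_i^{(p)},R_o^{(p)}$ the fake $p$-radii of $\Omega$; taking logarithms, multiplying by $1-p$ and using Lemma \ref{cor_measure} gives $(i)$ in the limit. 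Your alternative — comparing $w$ with the translates $\log v_h(\varrho_1)-\log v_h(R_i)$ and $\log v_h(\varrho_1)-\log v_h(R_o)$ inside the weak variational formulation \eqref{minimization_1Lapla} — is exactly where the gap sits, and you correctly identify it as the main obstacle but do not close it. The Huisken--Ilmanen comparison/uniqueness machinery is formulated for solutions with nested regular initial conditions; here one of the two competitors issues from the point $o$ and is only known as a locally uniform limit of the $\varrho_p$, so its admissibility as a competitor near $\partial\Omega$ and the validity of the comparison principle are precisely what would have to be proved. In effect, the honest way to close that gap is to descend to the $p$-level, which is what the paper does from the start; the $p$-level comparison is classical and the limit is controlled by Lemma \ref{cor_measure}.

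For $(ii)$ your mechanism is essentially the paper's, but the exposition wobbles. The interior constant $(m-1)\sqrt{H(R_i)}$ does not come from bounding $|\nabla\tilde w|=\tfrac{v_h'(\varrho_1)}{v_h(\varrho_1)}|\nabla\varrho_1|$ (which only gives $(m-1)h'(R_i)/h(R_i)\ge (m-1)\sqrt{H(R_i)}$); it comes from applying Theorem \ref{teo_bellagradiente} to $u_p$ on $M\backslash\overline\Omega$ with the \emph{constant} lower bound $\Ricc\ge -(m-1)H(R_i^{(p)})\metric$, valid there because $\varrho_p\le r$ forces $r\ge R_i^{(p)}$ outside $\Omega$ and $H$ is non-increasing; multiplying the resulting bound $\tfrac{m-1}{p-1}\sqrt{H(R_i^{(p)})}$ by $p-1$ gives the stated constant. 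The boundary term is the second issue: the bound by $\max_{\partial\Omega}\mathcal{H}_+$ does not follow from the interior-tangent-ball barrier of Proposition \ref{prop_boundarygradient}; the paper needs the boundary gradient estimate of \cite[Prop. 3.1]{kotschwarni}, which gives $(p-1)\max_{\partial\Omega}|\nabla\log u_p|\le \max_{\partial\Omega}\mathcal{H}_+ +\eps$ uniformly as $p\ra 1$. You should cite or reprove that estimate explicitly rather than asserting that a "standard barrier construction" suffices.
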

We stress that the bounds in $(i)$ and $(ii)$ above are explicit, because Theorems \ref{teo_main_L1sobolev_intro} or \ref{teo_main_riccimagzero_intro} allow to effectively estimate $\varrho_1$, hence $R_o$ and $R_i$, in terms of the distance from $o$. The inequality in $(ii)$ should be compared to \eqref{HI_gradesti}, and in fact it could be strengthened to include a decay of $|\nabla w|$ in terms of $\varrho_1$. The interested reader is referred to Remark \ref{rem_concluding} below, where computations for a quadratically decaying lower bound on the Ricci tensor are worked out in detail.\par
In the final Section \ref{sec_isoperimetry}, we study some basic properties of the foliation by $\{\varrho_1 < t\}$. In particular, we prove that the isoperimetric profile of $\{\varrho_1 < t\}$ is, as one might expect, below that of geodesic balls centered at the origin in the model $M_h$, see Theorem \ref{teo_inzero_rho2}.\\
\par
The present paper is meant to be the first step of a broader project. The original motivation for this work was our desire to understand possible links between the recent monotonicity formulas found by  Colding and Colding-Minicozzi in \cite{colding, coldingmini_cv}, for manifolds with non-negative Ricci curvature, and the monotonicity of Hawking-type masses in General Relativity. It is tempting to ask whether one could, somehow, ``bridge" the two via the use of the $p$-Laplace equation, and see whether the new formulas could provide further insight into the geometry of manifolds with Ricci lower bounds or of spacelike slices in General Relativity. In this respect,
foliations by level sets of solutions to  $p$-Laplace equations {had  already been considered} by J. Jeziersky and J. Kijovski to prove special cases of the Riemannian Penrose inequality on asymptotically flat spaces, see \cite{jezierski, jeziekijo, jeziekijo2} and the works of P. Chru\'sciel \cite{chrusciel, chrusciel2}. Furthermore, quite recently, monotonicity formulas similar to those in \cite{colding, coldingmini_cv}, obtained with a different approach, {have been used to find new geometric inequalities} on Euclidean space (\cite{agofogamazzie_2,fogamazziepina}, using the $p$-Laplacian), on static manifolds \cite{agomazzie, borghinimazzieri}, and on manifolds with $\Ricc \ge 0$ (see \cite{agofogamazzie}).

\vspace{0.4cm}

\noindent \textbf{Acknowledgements.} The authors thank Felix Schulze, Virginia Agostiniani, Mattia Fogagnolo and Lorenzo Mazzieri for various discussions about the IMCF and the results in \cite{agomazzie, fogamazziepina, agofogamazzie}. They also express their gratitude to Tobias Colding, Shouhei Honda, Frank Morgan, Andrea Pinamonti, Yehuda Pinchover, Pekka Koskela and Ilkka Holopainen for useful comments. The first author thanks the Math Departments of the Universidade Federal do Cear\'a and of the Scuola Normale Superiore, where most of this work was carried out, for the wonderful environment.\\[0.2cm]
\noindent \textbf{Financial support: } SNS\_RB\_MARI and SNS17\_B\_MARI of the SNS, PRONEX 2015 ``N\'ucleo de An\'alise Geom\'etrica e Aplicac\~oes" (Proc. No. PR2-0054-00009.01.00/11), PRIN 2015 2015KB9WPT\_001, GNAMPA 2017 ``Equazioni differenziali non lineari" (L. Mari); PRIN 2015 ``Real and Complex Manifolds:
Geometry, Topology and Harmonic Analysis'' (M. Rigoli, A.G. Setti). GNAMPA group ``Equazioni differenziali e
sistemi dinamici'' (A.G. Setti).


\section{Preliminaries: capacitors and the Green kernel}\label{sec_preliminaries}

Let $(M^m, \metric)$ be complete, fix an origin $o \in M$ and let $r$ be the distance from $o$. Hereafter, a geodesic ball $B_r$ will always be considered to be centered at $o$, unless otherwise specified. Let $p \in (1,\infty)$, and consider the $p$-Laplace operator $\Delta_p$ on an open set $\Omega$, possibly the entire $M$.


It is convenient to briefly recall some terminology and basic results (we refer the reader to \cite{HKM,holopainen, holopainen3, troyanov, troyanov2, pigolasettitroyanov} for a thorough discussion). Given a pair of open sets $K \Subset \Omega$, the $p$-capacity of the capacitor $(K, \Omega)$ is by definition
$$
\capac_p(K,\Omega) = \inf \left\{ \int_\Omega |\nabla \psi|^p \ \ : \ \ \psi \in \lip_c(\Omega), \ \psi\ge 1 \ \text{ on } \, K\right\}.
$$
If $K$ and $\Omega$ have smooth enough boundary (locally Lipschitz suffices) and are relatively compact, the infimum coincides with the energy $\|\nabla u\|^p_p$, where $u$ is the unique solution to 
$$
\left\{ \begin{array}{l}
\Delta_pu =0 \qquad \text{on  } \, \Omega \backslash K, \\[0.2cm]
u=0 \quad \text{on } \, \partial \Omega, \qquad u=1 \quad \text{on } \, \partial K,
\end{array}\right.
$$
extended with $u\equiv 1$ on $K$, called the $p$-capacity potential of $(K, \Omega)$. If $\Omega$ has non-compact closure, or if it has irregular boundary, by exhausting $\Omega$ with a family of smooth open sets $\Omega_j$ satisfying
\begin{equation}\label{def_exhaustion}
\disp K \Subset \Omega_j \Subset \Omega_{j+1} \Subset \Omega \quad \text{for each } \, j \ge 1, \qquad \bigcup_{j=1}^{\infty} \Omega_j = \Omega,
\end{equation}
the sequence $\{u_j\}$ of the $p$-capacity potentials of $(K, \Omega_j)$ converges to a limit $u : \Omega \ra (0,1]$ which is independent of the chosen exhaustion, is equal to $1$ on $K$,  satisfies $\Delta_p v=0$ on $\Omega \backslash K$, and is still called the $p$-capacity potential of $(K, \Omega)$. Furthermore, $\capac_p(K, \Omega) = \|\nabla u\|_p^p$  (cf. \cite{pigolasettitroyanov}). We say that $\Delta_p$ is non-parabolic on $\Omega$ if $\capac_p(K, \Omega)>0$ for some (equivalently, every) $K \Subset \Omega$, that is, the $p$-capacity potential $u$ of $(K, \Omega)$ is not identically $1$. From \cite{holopainen,holopainen3,troyanov,troyanov2}, this is equivalent to the existence, for each fixed $o \in \Omega$, of a positive Green kernel $\gr$ with pole at $o$, namely, of a positive distributional solution to  $\Delta_p \gr = -\delta_o$ on $\Omega$:
\begin{equation}\label{weakdef}
\int_\Omega |\nabla \gr|^{p-2} \langle \nabla \gr, \nabla \psi \rangle = \psi(o) \qquad \forall \,  \psi \in \lip_c(\Omega).
\end{equation}
A kernel $\gr$ was constructed in \cite{holopainen, holopainen3} starting with an increasing exhaustion $\{\Omega_j\}$ of smooth domains of $\Omega$ and related Green kernels $\gr_{j}$ with pole at $o$ and Dirichlet boundary conditions on $\partial \Omega_j$. We call such a $\gr$ a Dirichlet Green kernel. The existence of each $\gr_{j}$ was shown in \cite[Thm. 3.19]{holopainen} for $p \in (1,m]$, and in \cite{holopainen3} for $p>m$.
The convergence of $\gr_j$ to a finite limit and its equivalence to the non-parabolicity of $\Delta_p$ on $\Omega$ can be found in \cite[Thm. 3.27]{holopainen}. We shall prove that a comparison theorem holds for Green kernels (cf. Corollary \ref{teo_confronto_nuclei} below), and therefore that the Dirichlet Green kernel of each open set $\Omega$, constructed by exhaustion as above, is unique and minimal among positive solutions to \eqref{weakdef}.

\begin{remark}
\emph{Interestingly, the construction in \cite[Thm. 3.25]{holopainen} in fact produces an increasing sequence $\{\gr_j\}$ even without appealing to a comparison result for Green kernels. 
}
\end{remark}


Assume that $\partial \Omega$ is locally Lipschitz, let $\psi \in C^1(\Omega)$, $\ell > 0$ and consider \eqref{weakdef} with test function $\psi \eta(\gr)$, where
$$
\eta \equiv 0 \ \text{on } \, [0,\ell-\eps], \quad \eta \equiv 1 \ \text{on } \, [\ell, \infty), \quad \eta(s) = \eps^{-1}(s-\ell+\eps) \ \text{on } \, (\ell-\eps, \ell).
$$
The regularity of $\partial \Omega$ guarantees that $\gr = 0$ there continuously, thus $\psi \eta(\gr) \in \lip_c(\Omega)$ is admissible as a test function. Letting $\eps \ra 0$ and using the coarea's formula we deduce that
\begin{equation}\label{ide_Gr}
\begin{array}{lcl}
\psi(o) & = & \disp \int_{\{\gr > \ell\}} |\nabla \gr|^{p-2} \langle \nabla \gr, \nabla \psi \rangle + \int_{\{\gr = \ell\}} |\nabla \gr|^{p-1} \psi
\end{array}
\end{equation}
holds for almost every $\ell \in \R^+$. Similarly, the identity
\begin{equation}\label{ide_Gr_2}
\begin{array}{lcl}
0 & = & \disp \int_{\{\gr < \ell\}} |\nabla \gr|^{p-2} \langle \nabla \gr, \nabla \psi \rangle - \int_{\{\gr = \ell\}} |\nabla \gr|^{p-1} \psi
\end{array}
\end{equation}
holds for every $\psi \in C^1(\Omega \backslash \{o\})$ and a.e. $\ell$.

%

Hereafter, we set
\begin{equation}\label{def_mup}
\disp \mu(r) = \left\{ \begin{array}{ll}
\disp \omega_{m-1}^{- \frac{1}{p-1}}\left(\frac{p-1}{m-p}\right) r^{- \frac{m-p}{p-1}} & \qquad \text{if } \, p < m \\[0.5cm]
\disp \omega_{m-1}^{- \frac{1}{m-1}}(-\log r) & \qquad \text{if } \, p=m.
\end{array}\right.
\end{equation}
When $p<m$, note that $\mu(|x|)$ is the Dirichlet Green kernel of $\Delta_p$ on $\R^m$.

\subsection*{Basic comparison theory for Green kernels}

Let $o \in M$ be a fixed point, let $r$ be the distance from $o$ and denote by $\Do$ the maximal domain of normal coordinates centered at $o$. We define the radial sectional curvature of $M$ as the function
	\[
	\begin{array}{l}
	\disp \Sect_\rad \ : \ \mathscr{D}_o \backslash\{o\} \ra \R, \\[0.2cm]
	\Sect_\rad(x) = \max \Big\{ \Sect(X \wedge \nabla r) \ : \ X \in \nabla r(x)^\perp, \ |X|=1 \Big\}.
	\end{array}
	\]
For $R_\infty \in (0, \infty]$, let
$$
h \in C^2([0,R_\infty)), \qquad h>0 \ \text{ on } \, (0,R_\infty), \qquad h(0)=0, \qquad h'(0)=1.
$$
The model manifold $M_h$ is, by definition, $B_{R_\infty}(0)\subset \R^m$ endowed with the metric which in polar coordinates $(t,\theta) \in \R^+ \times \Sph^{m-1}$ centered at the origin is given by
$$
\metric_h = \di t^2 + h(t)^2\di \theta^2,
$$
where $\di\theta^2$  is the round metric on the unit sphere. The radial sectional curvature of $M_h$ is given by $H(t) \doteq - h''(t)/h(t)$. Alternatively, a model can be equivalently described by specifying $H \in C(\R^+_0)$, recovering $h$ as the unique solution to 
\begin{equation}\label{eq_h_uguale}
\left\{ \begin{array}{lcl}
h'' - Hh = 0 \qquad \text{on } \, \R^+ \\[0.2cm]
h(0)=0, \quad h'(0)=1,
\end{array}\right.
\end{equation}
and letting
$$
R_\infty = \sup\{ t \ : h>0 \ \text{on } (0,t) \} \le \infty.
$$
The model is (metrically) complete if and only if $R_\infty = \infty$. Given $M_h$, we denote by
$$
v_h(t) = \omega_{m-1}h(t)^{m-1}, \quad V_h(t) = \int_0^t v_h(s)\di s \qquad \text{ for }t \in [0,R_\infty)
$$
the volume of the geodesic spheres $\partial B_t^{h}$ and balls $B_t^{h}$ of radius $t$ centered at the origin, respectively. In the particular case where $H$ is a constant, the corresponding solution to  \eqref{eq_h_uguale} is
$$
h(s) = \left\{ \begin{array}{lll}
\disp \frac{\sin(\kappa t)}{\kappa} & \quad \text{if } \, H = -\kappa^2 < 0, & \quad \text{with } \, R_\infty = \pi/\kappa \\[0.3cm]
\disp t & \quad \text{if } \, H=0, & \quad \text{with } \, R_\infty = \infty\\[0.3cm]
\disp \frac{\sinh(\kappa t)}{\kappa} & \quad \text{if } \, H = \kappa^2 > 0, & \quad \text{with } \, R_\infty = \infty.
\end{array}\right.
$$
We shall always be concerned with models with $H \ge 0$, and, if $H=\kappa^2$, with a slight abuse of notation, we simply denote the volume of geodesic spheres, respectively balls, by $v_\kappa(t)$ and $V_\kappa(t)$.

\begin{definition}\label{def_modelabovebelow}
Let $M$ be a complete Riemannian manifold of dimension $m \ge 2$, fix $o \in M$ and let $B_R^*(o)=B_R(o)\backslash\{o\}$.
\begin{itemize}
\item[-] We say that $B_R^{h} \subset M_h$ is a model from below for $M$ if
\begin{equation}\label{assu_ricciperdefvarrho}
\Ricc(\nabla r, \nabla r) \ge -(m-1)H(r) \qquad \text{on } \, \Do \cap B^*_R(o),
\end{equation}
\item[-] We say that $B_R^{h} \subset M_h$ is a model from above for $M$ if
\begin{equation}\label{assu_sectioperdefvarrho}
B_R^*(o) \subset \Do, \qquad \Sect_\rad \le -H(r) \qquad \text{on } \, B_R^*(o).
\end{equation}
\end{itemize}
\end{definition}
The Dirichlet Green kernel of $-\Delta_p$ on $B^h_R \subset M_h$ with singularity at the origin is
\begin{equation}\label{def_greenmodel_R}
\sgrh_{R}(t) = \int_{t}^{R} v_h(s)^{-\frac{1}{p-1}} \di s
\end{equation}
and, if $R = R_\infty = \infty$, we simply write
\begin{equation}\label{def_greenmodel}
\sgrh(t) = \int_{t}^{\infty} v_h(s)^{-\frac{1}{p-1}}\di s.
\end{equation}
The finiteness of $\sgrh(t)$ is equivalent to the non-parabolicity of $\Delta_p$ on $M_h$. Note also that $\sgrh(0^+)$ is finite if and only if $p>m$. As a consequence of the comparison theory for the distance function (cf. \cite[Chapter 2]{prs}), one obtains the following result which will be repeatedly used:

\begin{proposition}\label{prop_compagreenmodel}
Let $M$ be a complete manifold, $o \in M$ and $r(x) = \dist(x,o)$.
\begin{itemize}
\item[$(i)$] If $B_R^h \subset M_h$ is a model from below for $M$, then the transplanted function
\begin{equation}\label{def_grh_transplanted}
\grh_R(x) = \sgrh_R\big(r(x)\big)
\end{equation}
satisfies $\Delta_p \grh_R \ge -\delta_o$ on $B_R(o)$.
\item[$(ii)$] If $B_R^h \subset M_h$ is a model from above for $M$, then $\grh_R$ in \eqref{def_grh_transplanted} satisfies $\Delta_p \grh_R \le -\delta_o$ on $B_R(o)$.
\end{itemize}
\end{proposition}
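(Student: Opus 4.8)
The plan is to reduce the claim to the classical Laplacian comparison theorem for the distance function, combined with the explicit form of radial $p$-harmonic functions on the model $M_h$. The proof splits naturally into three steps.

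\textbf{Step 1 (computation away from the pole).} On the set where $r$ is smooth --- that is, $\Do\cap B_R^*(o)$, which in case $(ii)$ is all of $B_R^*(o)$ --- I would first record that, since $(\sgrh_R)'(t)=-v_h(t)^{-1/(p-1)}$ and $|\nabla r|\equiv1$, one has $\nabla\grh_R=-v_h(r)^{-1/(p-1)}\nabla r$, hence $|\nabla\grh_R|^{p-2}\nabla\grh_R=-v_h(r)^{-1}\nabla r$ and, expanding the divergence,
$$
\Delta_p\grh_R=\diver\!\bigl(-v_h(r)^{-1}\nabla r\bigr)=v_h(r)^{-1}\Bigl(\frac{v_h'(r)}{v_h(r)}-\Delta r\Bigr).
$$
Thus the sign of $\Delta_p\grh_R$ is governed by the position of $\Delta r$ relative to $v_h'(r)/v_h(r)=(m-1)h'(r)/h(r)$, which is exactly what the Laplacian comparison theorem controls (cf. \cite[Ch.~2]{prs}): condition \eqref{assu_ricciperdefvarrho} gives $\Delta r\le v_h'(r)/v_h(r)$, hence $\Delta_p\grh_R\ge0$, while \eqref{assu_sectioperdefvarrho} gives $\Delta r\ge v_h'(r)/v_h(r)$, hence $\Delta_p\grh_R\le0$.

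\textbf{Step 2 (from pointwise to weak inequalities on $B_R^*(o)$).} In case $(ii)$ there is nothing to do: $B_R^*(o)\subset\Do$, so $r$ is smooth and the inequality holds classically. In case $(i)$, $r$ is only Lipschitz across $\cut(o)$, but there $\Delta r$ carries a nonpositive singular part; by the Calabi trick $\Delta r\le v_h'(r)/v_h(r)$ --- and hence $\Delta_p\grh_R\ge0$ --- persists in the distributional sense on all of $B_R^*(o)$. Concretely, I would test the weak Laplacian comparison inequality against $v_h(r)^{-1}\phi$ for $\phi\ge0$ in $\lip_c(B_R^*(o))$, using the pointwise identity
$$
v_h(r)^{-1}\langle\nabla r,\nabla\phi\rangle=\bigl\langle\nabla r,\nabla\bigl(v_h(r)^{-1}\phi\bigr)\bigr\rangle+v_h(r)^{-2}v_h'(r)\,\phi,
$$
which is precisely what identifies $\int|\nabla\grh_R|^{p-2}\langle\nabla\grh_R,\nabla\phi\rangle=-\int v_h(r)^{-1}\langle\nabla r,\nabla\phi\rangle$ with the classical distributional comparison.

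\textbf{Step 3 (accounting for $\delta_o$).} Since $v_h(r)^{-1}\asymp r^{-(m-1)}$ near $o$, the flux field $|\nabla\grh_R|^{p-2}\nabla\grh_R=-v_h(r)^{-1}\nabla r$ lies in $L^1_{\loc}(B_R)$, so the weak formulation makes sense. Given $\psi\in\lip_c(B_R)$ with $\psi\ge0$, I would pick a radial cutoff $\chi_\eps$ vanishing on $B_\eps$, equal to $1$ off $B_{2\eps}$, with $0\le\chi_\eps'\le C/\eps$, and decompose
$$
\int_{B_R}|\nabla\grh_R|^{p-2}\langle\nabla\grh_R,\nabla\psi\rangle=\lim_{\eps\to0}\Bigl[-\!\int v_h(r)^{-1}\langle\nabla r,\nabla(\chi_\eps\psi)\rangle+\int v_h(r)^{-1}\psi\,\langle\nabla r,\nabla\chi_\eps\rangle\Bigr].
$$
The first term has the sign from Step 2 ($\le0$ in case $(i)$, $\ge0$ in case $(ii)$, since $\chi_\eps\psi\ge0$ lies in $\lip_c(B_R^*(o))$); the second, by the coarea formula, equals $\int_\eps^{2\eps}\chi_\eps'(t)\bigl(v_h(t)^{-1}\!\int_{\{r=t\}}\psi\,\di\sigma\bigr)\di t$, and since $|\{r=t\}|\sim v_h(t)$ as $t\to0^+$, $\psi$ is continuous at $o$, and $\int_\eps^{2\eps}\chi_\eps'=1$, it converges to $\psi(o)$. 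Hence the left-hand side is $\le\psi(o)$ in case $(i)$ and $\ge\psi(o)$ in case $(ii)$, which is exactly the weak meaning of $\Delta_p\grh_R\ge-\delta_o$, respectively $\Delta_p\grh_R\le-\delta_o$.

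The remaining bookkeeping --- the exhaustion, the local $L^1$ bound, the elementary asymptotics of $v_h$ and of $|\{r=t\}|$ near $0$ --- is routine. The only genuinely delicate point is Step 2 in case $(i)$: transporting the Calabi-type cut-locus argument from $\Delta$ to the nonlinear operator $\Delta_p$. I expect this to be the main (though mild) obstacle, and it is dissolved by the substitution $\phi\mapsto v_h(r)^{-1}\phi$, which converts the weak sign of $\Delta_p\grh_R$ into the standard distributional inequality $\Delta r\le v_h'(r)/v_h(r)$.
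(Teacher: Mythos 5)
Your proof is correct and is precisely the argument the paper has in mind: the paper states Proposition \ref{prop_compagreenmodel} without proof, citing only "the comparison theory for the distance function (cf. \cite[Chapter 2]{prs})", and your three steps (the pointwise identity $\Delta_p\grh_R=v_h(r)^{-1}\bigl(v_h'(r)/v_h(r)-\Delta r\bigr)$, the transfer of the distributional Laplacian comparison across $\cut(o)$ via the substitution $\phi\mapsto v_h(r)^{-1}\phi$, and the cutoff computation producing $\psi(o)$) supply exactly the details that reduction requires. No gaps.
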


To compare a Green kernel $\gr$ with $\sgrh_R(r)$, we need a precise description of the behaviour of $\gr$ near its singularity. For $p \le m$, in our needed generality J. Serrin in \cite[Thm 12]{Serrin_1} showed that
\begin{equation}\label{boundserrin}
\gr(x,o) \asymp \mu\big(r(x)\big)
\end{equation}
as $r(x)= \mathrm{dist}(x,o) \ra 0$, with $\mu$ as in \eqref{def_mup}. However, for our purposes we need to know both the asymptotic behaviour of $\gr$ and $\nabla \gr$ near the singularity. In Euclidean setting, the problem was considered by S. Kichenassamy and L. Veron in \cite{kichenveron} and in \cite[pp. 243-251]{veron}, and their technique, based on a blow-up procedure, can be adapted to manifolds. A complete proof of the following result can be found in \cite{mrs}.


\begin{theorem}\label{teo_localsingular}
For $p \le m$, let $\gr$ be a Green kernel for $\Delta_p$ on an open set $\Omega \subset M^m$ containing $o$. Then, $\gr$ is smooth in a punctured neighbourhood of $o$ and, as $x \ra o$,
\begin{equation}
\begin{array}{ll}
(1) & \quad \gr \sim \mu(r), \\[0.2cm]
(2) & \quad | \nabla \gr - \mu'(r) \nabla r | = o\big( \mu'(r)\big),\\[0.3cm]
(3) & \quad \text{if } \, p < m, \quad \big| \nabla^2 \gr - \mu''(r) \di r \otimes \di r - \frac{\mu'(r)}{r} \big( \metric - \di r \otimes \di r\big) \big| = o\big( \mu''(r)\big).
\end{array}
\end{equation}
\end{theorem}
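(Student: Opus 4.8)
The plan is to prove (1)--(3) by a blow-up (parabolic rescaling) argument centered at $o$, using Serrin's a priori bound \eqref{boundserrin} as the compactness input, in the spirit of Kichenassamy--Veron \cite{kichenveron} and of the singular analysis in \cite{colding}. Work in normal coordinates on a ball $B_\rho(o) \Subset \Omega$, so that $\gr$ is $p$-harmonic on $B_\rho^*(o)$ and, since $p \le m$, $\gr(x) \ra +\infty$ as $x \ra o$. For $\lambda \in (0, \rho/4)$ put $\Phi_\lambda(y) = \exp_o(\lambda y)$, define $v_\lambda(y) = \mu(\lambda)^{-1}\gr(\Phi_\lambda(y))$, and let $g_\lambda$ be the pullback of $\lambda^{-2}\metric$ under $\Phi_\lambda$. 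In normal coordinates $g_\lambda(y) = \delta + O(\lambda^2|y|^2)$, so $g_\lambda \ra \delta$ in $C^\infty_\loc$ as $\lambda \ra 0$; since the equation $\Delta_p u = 0$ is invariant under constant rescalings of the metric and under pullback, each $v_\lambda$ is $g_\lambda$-$p$-harmonic on every fixed annulus $\{a < |y| < b\}$ once $\lambda$ is small; and, by \eqref{boundserrin} and the explicit form \eqref{def_mup} of $\mu$ (which gives $\mu(\lambda|y|)/\mu(\lambda) = |y|^{-(m-p)/(p-1)}$ if $p<m$, and $\ra 1$ if $p = m$), one has $C^{-1} \le v_\lambda \le C$ on that annulus, uniformly in $\lambda$.

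Next I would invoke the interior $C^{1,\alpha}$ regularity theory for $p$-harmonic functions (DiBenedetto, Tolksdorf, Uraltseva), whose estimates depend only on the ellipticity of $g_\lambda$ and on $\sup |v_\lambda|$, to obtain uniform $C^{1,\alpha}$ bounds for $v_\lambda$ on compact subsets of any annulus. Passing to a subsequence $\lambda_j \ra 0$ and diagonalizing over a sequence of annuli exhausting $\R^m\backslash\{0\}$, one extracts $v_0 \in C^{1,\beta}_\loc(\R^m\backslash\{0\})$, positive, $p$-harmonic for the Euclidean metric, with $v_0 \asymp \mu(|y|)$; in particular the singularity at $0$ is non-removable (for $p<m$ because $v_0$ is unbounded near $0$, for $p=m$ because, as below, the conserved flux is nonzero). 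The conserved flux identity \eqref{ide_Gr} with $\psi$ a cut-off equal to $1$ near $o$ reads $\int_{\{\gr = \ell\}}|\nabla \gr|^{p-1} = 1$ for a.e. large $\ell$; rescaling it under $\Phi_\lambda$ and using that the $C^1$ convergence $v_\lambda \ra v_0$ forces convergence of the regular level sets, one computes $\int_{\{v_0 = \ell\}}|\nabla v_0|^{p-1} = \omega_{m-1}\bigl(\tfrac{m-p}{p-1}\bigr)^{p-1}$, i.e. the flux of $v_0$ equals that of the Euclidean kernel $\mu(|y|)$ exactly, with constant $1$.

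Now comes what I expect to be the main obstacle: identifying $v_0$. By the classification of positive $p$-harmonic functions with an isolated singularity (Serrin \cite{Serrin_1, Serrin_2}, Kichenassamy--Veron \cite{kichenveron}), such a $v_0$ must be radially symmetric about $0$, hence $v_0(y) = c\,|y|^{-(m-p)/(p-1)}$ for $p<m$ (resp.\ $c$ constant for $p=m$); this radial symmetry is a genuinely global statement, proved via comparison/uniqueness against the explicit radial solutions and the strong maximum principle, and it is the step where the fine analysis of \cite{kichenveron, mrs} is needed. The flux normalization above then forces $c = 1$, so $v_0(y) = \mu(\lambda|y|)/\mu(\lambda)$ in the limit; in particular the blow-up limit is \emph{unique}, which upgrades the subsequential convergence to full convergence of the family $\{v_\lambda\}$. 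Unravelling the rescaling — writing $x = \exp_o(r(x)\theta)$ with $|\theta|=1$ and reading off $v_{r(x)}(\theta)$ and $\nabla_y v_{r(x)}(\theta)$ — yields $\gr(x)/\mu(r(x)) \ra 1$, i.e.\ (1), and $|\nabla\gr(x) - \mu'(r(x))\nabla r(x)| = o(|\mu'(r(x))|)$, i.e.\ (2). (For $p=m$ the profile $v_0$ is constant, so to obtain (2) one runs the same scheme after subtracting the constant $\gr(\Phi_\lambda(e_1))$, so that the rescaled functions converge to the logarithmic profile $c\log(1/|y|)$ and $\nabla_y$ of the rescaling converges to its gradient.)

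For (3), assume $p < m$, so $|\nabla v_0| = |\mu'(|y|)| > 0$ on every annulus; since $v_\lambda \ra v_0$ in $C^1_\loc$, for $\lambda$ small $|\nabla v_\lambda|$ is bounded away from $0$ and $\infty$ on compact subsets of any annulus, so there the equation $\diver_{g_\lambda}(|\nabla v_\lambda|^{p-2}\nabla v_\lambda) = 0$ is uniformly elliptic with smooth coefficients. Elliptic Schauder theory then gives uniform $C^{k,\alpha}$ bounds on $v_\lambda$ for every $k$, upgrading the convergence to $C^\infty_\loc(\R^m\backslash\{0\})$; differentiating twice, passing to the limit and unravelling the rescaling gives the Hessian expansion, with the coefficient of $\metric - \di r\otimes\di r$ equal to $\mu'(r)/r$ as for the Euclidean profile $\mu(|y|)$, and the error from $g_\lambda - \delta = O(\lambda^2|y|^2)$ (equivalently $\metric_{ij} = \delta_{ij} + O(r^2)$ in normal coordinates) contributing only an $o(\mu''(r))$ term — this is exactly (3). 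The exclusion of $p = m$ there is genuine: for the $m$-Laplacian the fundamental solution is logarithmic and the leading blow-up profile is constant, so the second-order structure would require a further, finer expansion. Finally, smoothness of $\gr$ in a punctured neighbourhood of $o$ follows a posteriori from (2): it forces $|\nabla\gr| \neq 0$ on $B_{\rho_0}^*(o)$ for $\rho_0$ small, hence the $p$-Laplace equation is non-degenerate there and standard elliptic regularity applies.
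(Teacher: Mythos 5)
The paper does not actually prove this theorem: it observes that Serrin's bound \eqref{boundserrin} gives $\gr \asymp \mu(r)$, states that the Kichenassamy--Veron blow-up technique ``can be adapted to manifolds'', and defers the complete argument to the companion paper \cite{mrs}. Your proposal is precisely that adaptation, carried out along the intended lines: rescale in normal coordinates, use Serrin's two-sided bound plus Tolksdorf--DiBenedetto interior $C^{1,\alpha}$ estimates for compactness, identify the Euclidean blow-up limit via the classification of isolated singularities of $p$-harmonic functions, pin the constant with the conserved flux from \eqref{ide_Gr}, and upgrade to $C^\infty$ where the gradient is nonvanishing to get the Hessian expansion. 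For $p<m$ the outline is sound, the order-of-magnitude bookkeeping in (2) and (3) checks out (the metric error $\metric_{ij}=\delta_{ij}+O(r^2)$ contributes $O(r)\,|\mu'(r)| = o(\mu''(r))$), and you correctly locate the real content in the Euclidean classification step; one small ordering remark is that the flux should be passed to the limit in its weak, divergence-form version (or only after $v_0$ has been identified as radial), since a priori the level sets of $v_0$ need not be regular.

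The genuine thin spot is the borderline case $p=m$ in items (1) and (2). There the unsubtracted blow-up $v_\lambda = \gr\circ\Phi_\lambda/\mu(\lambda)$ only recovers $C^{-1}\le \liminf v_\lambda \le \limsup v_\lambda \le C$, i.e.\ Serrin's bound again, not $\gr\sim\mu(r)$; and the subtracted family $\gr(\Phi_\lambda(y))-\gr(\Phi_\lambda(e_1))$ that you propose has no a priori compactness from what you have established: Serrin's bound only controls its oscillation on the annulus $A(\lambda a,\lambda b)$ by $(C-C^{-1})\log(1/\lambda)$, and the $C^{1,\alpha}$ estimate applied to $v_\lambda$ only yields $|\nabla\gr|\le C\log(1/r)/r$, both of which diverge. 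The uniform bound $\operatorname{osc}_{A(\lambda a,\lambda b)}\gr\le C(a,b)$ (equivalently $|\nabla\gr|\le C/r$, or $\gr-\gamma\mu\in L^\infty$) is exactly the delicate estimate of Serrin and Kichenassamy--Veron for the conformal exponent, and it must be imported or proved before your scheme closes; as written, the parenthetical for $p=m$ assumes the conclusion of the compactness step rather than supplying it. Once that bound is in hand, the rest of your argument (flux normalization $\gamma^{m-1}=1$, integration of the radial derivative to recover $\gr(\Phi_\lambda(e_1))\sim\mu(\lambda)$) does go through.
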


\begin{remark}
\emph{The above theorem does not contain the full strength of Kichenassamy-Veron's result, in particular we do not claim that $\gr - \mu(r) \in L^\infty$ near the origin. Indeed, even for the kernel $\sgrh$ of a model with curvature $H(0) \neq 0$, a direct computation shows that $\sgrh - \mu \not\in L^\infty$ when $3p \le m+2$.
}
\end{remark}

As a direct consequence, we obtain the next comparison theorem which can be found in \cite[Thm. 2.1]{kichenveron} when $M = \R^m$.

\begin{corollary}\label{teo_confronto_nuclei}
Fix $p \in (1,\infty)$, let $\Omega_1 \subset \Omega_2 \Subset M$ be open domains containing $o$, and let $\gr_j$ be a Dirichlet Green kernel for $\Delta_p$ on $\Omega_j$, $j \in \{1,2\}$. Then, $\gr_1 \le \gr_2$. In particular, the Dirichlet Green kernel of an open set, if it exists, is unique.
\end{corollary}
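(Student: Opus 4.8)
The plan is to prove the comparison $\gr_1 \le \gr_2$ by a maximum-principle argument on $\Omega_1 \backslash \{o\}$, using Theorem \ref{teo_localsingular} to control the behaviour near the singularity and the boundary behaviour of the exhausting kernels to control what happens near $\partial \Omega_1$. First I would reduce to the case of the kernels built by exhaustion as in the Convention: write $\gr_1 = \lim_j \gr_{1,j}$ and $\gr_2 = \lim_k \gr_{2,k}$, where $\gr_{i,j}$ is the Green kernel of $\Delta_p$ on a smooth domain $\Omega_{i,j}$ with Dirichlet boundary data, the $\Omega_{i,j}$ exhausting $\Omega_i$. Since both limits are monotone increasing and independent of the chosen exhaustion, it suffices to show $\gr_{1,j} \le \gr_{2}$ for each $j$ (then let $j \to \infty$); and for this it is enough, fixing $j$, to compare $\gr_{1,j}$ with $\gr_{2,k}$ on $\Omega_{1,j}$ for $k$ large enough that $\Omega_{1,j} \Subset \Omega_{2,k}$, and then let $k \to \infty$. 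So the heart of the matter is: if $\Omega' \Subset \Omega''$ are smooth bounded domains both containing $o$, with Green kernels $g', g''$ (Dirichlet on their respective boundaries), then $g' \le g''$ on $\Omega'$.

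For this core comparison I would argue as follows. Both $g'$ and $g''$ are positive, $p$-harmonic on $\Omega' \backslash \{o\}$ (respectively $\Omega'' \backslash \{o\}$), and by Theorem \ref{teo_localsingular}(1) they satisfy $g' \sim \mu(r) \sim g''$ as $x \to o$; in particular $g'/g'' \to 1$ at $o$, so for every $\eps > 0$ the function $(1+\eps)g'' - g'$ is positive on a punctured neighbourhood of $o$. On $\partial \Omega'$ we have $g' = 0$ while $g'' > 0$ (it is positive in all of $\Omega'' \backslash \{o\} \supset \partial \Omega'$, by the strong maximum principle, since $g'' \ge 0$ and is not identically zero), hence $(1+\eps) g'' - g' > 0$ on $\partial \Omega'$ as well. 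Thus $(1+\eps) g'' - g'$ is positive on the topological boundary of the region $\Omega' \backslash \overline{B_\delta(o)}$ for $\delta$ small, where both functions are $p$-harmonic and $C^1$. Applying the comparison principle for the $p$-Laplacian on this bounded domain (two $p$-harmonic functions ordered on the boundary stay ordered inside — here with the constant $(1+\eps)g''$ playing against $g'$; note $(1+\eps)g''$ is $p$-harmonic since $\Delta_p$ is $(p-1)$-homogeneous) gives $g' \le (1+\eps) g''$ on $\Omega' \backslash B_\delta(o)$. Letting $\delta \to 0$ and then $\eps \to 0$ yields $g' \le g''$ on $\Omega' \backslash \{o\}$, and by continuity on all of $\Omega'$.

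Finally, the uniqueness statement: if $\gr$ and $\tilde{\gr}$ are two Green kernels for $\Delta_p$ on the \emph{same} open set $\Omega \Subset M$, constructed by the exhaustion procedure, then applying the above with $\Omega_1 = \Omega_2 = \Omega$ (comparing the exhausting kernels $\gr_{j}$ with $\tilde{\gr}$ and vice versa) gives $\gr \le \tilde{\gr}$ and $\tilde{\gr} \le \gr$, hence equality. The main obstacle I anticipate is purely technical: making the comparison principle near the puncture rigorous, i.e. handling the fact that $\Omega''$ need not be relatively compact in $M$ if one works directly with $\gr_2$ rather than its exhaustion — which is exactly why I first pass to the smooth bounded exhausting domains where the classical $p$-Laplacian comparison principle (e.g. from \cite{HKM}) applies cleanly — together with verifying that the asymptotics of Theorem \ref{teo_localsingular} are strong enough to dominate $g'$ by $(1+\eps)g''$ near $o$ uniformly. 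One should also check that $\mu(r) \to \infty$ as $r \to 0$ when $p \le m$ (true by \eqref{def_mup}), so that the argument "localizes" the singularity correctly; for $p > m$ the kernel stays bounded near $o$ and the comparison near the puncture is even easier, but must be noted separately.
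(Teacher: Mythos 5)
Your proposal is correct and, for $p\le m$, is essentially the paper's own argument: compare $\gr'$ with $(1+\eps)\gr_2$, use the asymptotics of Theorem \ref{teo_localsingular} to keep the comparison region away from the pole, apply the standard comparison principle there, and let $\eps\to 0$ (the paper works on the set $\Omega'\cap\{\gr'>(1+\eps)\gr_2\}$ rather than excising a ball $B_\delta(o)$, and for $p>m$ it simply invokes $W^{1,p}$-regularity across the pole together with standard comparison, as you anticipate). The only substantive difference is that your preliminary reduction of $\gr_2$ to an exhaustion kernel is unnecessary and costs generality: since Theorem \ref{teo_localsingular} applies to an arbitrary distributional Green kernel and your comparison region is compactly contained in $\Omega_2\setminus\{o\}$ (where such a kernel is positive, $p$-harmonic and locally $C^{1,\alpha}$), the argument runs verbatim with $g''=\gr_2$ arbitrary, which is what gives the minimality of the exhaustion kernel among all positive solutions of \eqref{weakdef} — the stronger form of uniqueness the paper is after.
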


\begin{proof}
We prove that the Dirichlet Green kernel $\gr'$ of a smooth domain $\Omega' \Subset \Omega_1$ satisfies $\gr' \le \gr_2$, and the thesis follows by letting $\gr' \uparrow \gr_1$. If $p > m$, it is enough to apply standard comparison (cf. \cite[Thm. 3.4.1]{pucciserrin}), since it is known that both $\gr', \gr_2 \in W^{1,p}(\Omega')$ (cf. \cite{mrs} for a quick proof). If $p \le m$, for every $\eps>0$ we compare $\gr'$ and $(1+\eps) \gr_2$ on the set $\Omega'_\eps = \Omega' \cap \{ \gr' > (1+\eps) \gr_2\}$, which we assume to be non-empty. Theorem \ref{teo_localsingular} guarantees that $o \not \in \overline{\Omega'_\eps}$, so $\gr', \gr_2 \in W^{1,p}(\Omega'_\eps)$ and $\gr' \le (1+\eps)\gr_2$ by comparison, contradicting the definition of $\Omega_\eps'$. Hence, $\Omega'_\eps = \emptyset$, and we let $\eps \ra 0$ to conclude.
\end{proof}

\begin{remark}
\emph{The result admits a generalization to $p$-Laplace operators with a potential: in Euclidean setting, see \cite[Thm. 5.4]{pinchovertintarev} for $p \le m$ and \cite[Cor. 1.1]{pinchofraas} for $p>m$.
}
\end{remark}


With the same technique, we can also compare $\gr$ to kernels of models from above and below, improving on \cite{kura}.

\begin{corollary}[\textbf{Comparison}]\label{teo_confronto_conmodel}
Let $(M, \metric)$ be a complete manifold, fix $p \in (1,\infty)$ and let $\gr$ be the Green kernel for $\Delta_p$ on an open domain $\Omega$ containing $o$.
\begin{itemize}
\item[$(i)$] Suppose that $B_R^{h} \subset M_h$ is a model from below for $M$ and $B_R(o) \subset \Omega$. Then,
$$
\gr(x) \ge \disp \sgrh_R\big(r(x)\big) \qquad \forall \, x \in B_R(o).
$$
\item[$(ii)$] Suppose that $B_R^{h} \subset M_h$ is a model from above for $M$ and $B_R(o) \subset \Omega$. Then,
$$
\gr(x) \le \disp \sgrh_R\big(r(x)\big) + \| \gr\|_{L^\infty(\partial B_R(o))} \qquad \forall \, x \in B_R(o).
$$
\end{itemize}
\end{corollary}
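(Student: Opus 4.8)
The plan is to adapt the $(1+\eps)$-comparison device used in the proof of Corollary \ref{teo_confronto_nuclei}, now comparing $\gr$ with the transplanted model kernel $\grh_R(x)=\sgrh_R(r(x))$, for which the one-sided differential inequalities $\Delta_p\grh_R\ge-\delta_o$ (model from below) and $\Delta_p\grh_R\le-\delta_o$ (model from above) are provided by Proposition \ref{prop_compagreenmodel}. Throughout one works on the punctured ball $B_R^*(o)=B_R(o)\setminus\{o\}\subset\Omega\setminus\{o\}$, noting that $\overline{B_R(o)}$ is compact, that $\gr$ is positive and continuous on $\Omega\setminus\{o\}$ (hence bounded and bounded away from $0$ on the compact set $\partial B_R(o)$), and that $\gr$ and $\grh_R$ are locally Lipschitz on $B_R^*(o)$, hence in $W^{1,p}_{\loc}(B_R^*(o))$. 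The key input near the singularity is the matched behaviour $\gr(x)\sim\mu(r(x))$ from Theorem \ref{teo_localsingular} and, since $\sgrh_R$ is itself the Green kernel of $\Delta_p$ on the model $B_R^h$, $\sgrh_R(t)\sim\mu(t)$ as $t\to 0^+$; hence $\grh_R/\gr\to 1$ as $x\to o$. As in Corollary \ref{teo_confronto_nuclei}, the case $p>m$ is easier and postponed, so assume $p\le m$.

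For $(i)$, fix $\eps\in(0,1)$ and put $U_\eps=\{x\in B_R^*(o):\grh_R(x)>(1+\eps)\gr(x)\}$. On $\partial B_R(o)$ we have $\grh_R=\sgrh_R(R)=0<(1+\eps)\gr$, so a neighbourhood of $\partial B_R(o)$ avoids $U_\eps$; near $o$, $\grh_R/\gr\to 1<1+\eps$, so a punctured neighbourhood of $o$ avoids $U_\eps$. Thus $\overline{U_\eps}$ is a compact subset of $B_R^*(o)$ and $\grh_R=(1+\eps)\gr$ on $\partial U_\eps$. On $U_\eps$, since $o\notin\overline{U_\eps}$, Proposition \ref{prop_compagreenmodel}$(i)$ gives $\Delta_p\grh_R\ge 0$ weakly, while $\Delta_p\big((1+\eps)\gr\big)=(1+\eps)^{p-1}\Delta_p\gr=0$; both functions lie in $W^{1,p}(U_\eps)$, $U_\eps$ is bounded, and they coincide on $\partial U_\eps$, so the comparison principle (\cite[Thm. 3.4.1]{pucciserrin}) yields $\grh_R\le(1+\eps)\gr$ on $U_\eps$, contradicting the definition of $U_\eps$ unless $U_\eps=\emptyset$. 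Hence $\sgrh_R(r(x))=\grh_R(x)\le(1+\eps)\gr(x)$ on $B_R^*(o)$ for every $\eps>0$; letting $\eps\to 0$ proves $(i)$ on $B_R(o)$.

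For $(ii)$, set $\Lambda=\|\gr\|_{L^\infty(\partial B_R(o))}\in(0,\infty)$; since adding a constant does not alter $\Delta_p$, Proposition \ref{prop_compagreenmodel}$(ii)$ gives $\Delta_p(\grh_R+\Lambda)\le-\delta_o$ on $B_R(o)$, hence $\Delta_p(\grh_R+\Lambda)\le 0$ on $B_R^*(o)$, while $\grh_R+\Lambda=\Lambda\ge\gr$ on $\partial B_R(o)$ and $\gr/(\grh_R+\Lambda)\to 1$ at $o$. Fixing $\eps\in(0,1)$ and arguing as above with $V_\eps=\{x\in B_R^*(o):\gr(x)>(1+\eps)(\grh_R(x)+\Lambda)\}$ — disjoint from a neighbourhood of $\partial B_R(o)$ because there $\gr\le\Lambda<(1+\eps)(\grh_R+\Lambda)$, and from a punctured neighbourhood of $o$ because $\gr/(\grh_R+\Lambda)\to 1$ — one finds that $\overline{V_\eps}$ is compact in $B_R^*(o)$, that $\gr=(1+\eps)(\grh_R+\Lambda)$ on $\partial V_\eps$, and that on $V_\eps$ one has $\Delta_p\big((1+\eps)(\grh_R+\Lambda)\big)=(1+\eps)^{p-1}\Delta_p(\grh_R+\Lambda)\le 0=\Delta_p\gr$; the comparison principle forces $V_\eps=\emptyset$, and $\eps\to 0$ gives $\gr(x)\le\sgrh_R(r(x))+\Lambda$ on $B_R(o)$. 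Finally, when $p>m$ both $\gr$ and $\grh_R$ belong to $W^{1,p}(B_R(o))$ and $-\delta_o$ lies in $W^{-1,p'}$, so in each item the conclusion follows from a single application of the comparison principle on $B_R(o)$ itself — comparing $\grh_R$, resp. $\grh_R+\Lambda$, with $\gr$, using the equations of Proposition \ref{prop_compagreenmodel} and the ordering on $\partial B_R(o)$ — without recourse to the auxiliary factor; we omit the details.

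The main obstacle is the singularity at $o$: for $p\le m$ both $\gr$ and the model kernel blow up there, so a direct comparison is unavailable, and the purpose of the factor $(1+\eps)$, combined with the sharp asymptotics of Theorem \ref{teo_localsingular} and the matching behaviour $\sgrh_R\sim\mu$ (which in turn rests on $h(t)\sim t$ as $t\to 0$), is precisely to keep the competitor sets $U_\eps$, $V_\eps$ at positive distance from $o$, so that the classical $p$-Laplace comparison principle applies on a genuine annular domain. A secondary point worth checking is that the one-sided inequalities of Proposition \ref{prop_compagreenmodel} hold distributionally across the cut locus of $o$, which is where the comparison theory for the distance function is invoked.
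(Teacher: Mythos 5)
Your proof is correct and follows essentially the same route the paper indicates: the $(1+\eps)$-comparison device from Corollary \ref{teo_confronto_nuclei}, with Theorem \ref{teo_localsingular} (and the matching asymptotics $\sgrh_R\sim\mu$) keeping the competitor sets away from the pole, and Proposition \ref{prop_compagreenmodel} supplying the sub/supersolution properties of the transplanted kernel.
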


We conclude with

\begin{proposition}\label{prop_parabnonparab}
Let $(M, \metric)$ be a complete, non-compact manifold, and fix $p \in (1,\infty)$.
\begin{itemize}
\item[$(i)$] If $\Delta_p$ is non-parabolic on $M$, then every model from below $M_h$ satisfies $R_\infty = \infty$ and $\Delta_p$ is non-parabolic on $M_h$, namely,
	\begin{equation}\label{mod_nonp}
v_h(s)^{-\frac{1}{p-1}} \in L^1(\infty).
	\end{equation}
\item[(ii)] If $M$ admits a model from above $M_h$ satisfying $R_\infty = \infty$ and whose $p$-Laplacian $\Delta_p$ is non-parabolic, then $\Delta_p$ is non-parabolic on $M$.
\end{itemize}
\end{proposition}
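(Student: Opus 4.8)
The plan is to reduce both statements to the comparison results for Green kernels established above (Proposition \ref{prop_compagreenmodel} and Corollary \ref{teo_confronto_conmodel}), together with the characterization of non-parabolicity in terms of the existence of a positive Green kernel. Recall that $\Delta_p$ is non-parabolic on an open manifold $N$ if and only if it admits a positive Green kernel with pole at any fixed point, equivalently $\capac_p(K,N)>0$ for some (every) compact $K$; and on a model $M_h$ with $R_\infty=\infty$ this amounts exactly to the integrability condition $v_h(s)^{-1/(p-1)}\in L^1(\infty)$, i.e. finiteness of $\sgrh$ in \eqref{def_greenmodel}. So in both parts the task is to transfer a positive-capacity / finite-kernel statement between $M$ and $M_h$.

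For part $(i)$, suppose $\Delta_p$ is non-parabolic on $M$, so the Green kernel $\gr$ of $\Delta_p$ on $M$ with pole at $o$ exists and is finite on $M\backslash\{o\}$. Let $M_h$ be any model from below. First I would argue $R_\infty=\infty$: if $R_\infty<\infty$ the transplanted comparison function of Proposition \ref{prop_compagreenmodel}$(i)$ is not available on all of $M$, so instead one uses that \eqref{assu_ricciperdefvarrho} with $R=R_\infty$ finite forces, by the Bishop--Gromov-type consequences of the Laplacian comparison $\Delta r \le v_h'(r)/v_h(r)$, that $M$ would have finite volume and indeed be compact (the model being incomplete caps off the distance function), contradicting non-compactness of $M$; hence $R_\infty=\infty$ and $M_h$ is complete. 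Then, by Corollary \ref{teo_confronto_conmodel}$(i)$ applied with $\Omega=M$ and $R\uparrow\infty$, we get $\gr(x)\ge \sgrh(r(x))$ for all $x$, and in particular $\sgrh(r(x))\le \gr(x)<\infty$, which is precisely \eqref{mod_nonp}. Alternatively, and perhaps more cleanly, I would transplant a capacity potential: if $\sgrh\equiv\infty$ (model parabolic), then $\grh_R(x)=\sgrh_R(r(x))$ satisfies $\Delta_p\grh_R\ge -\delta_o$ on $B_R(o)$ by Proposition \ref{prop_compagreenmodel}$(i)$, and letting $R\to\infty$ one builds an unbounded positive $p$-superharmonic function on $M\backslash\{o\}$ with the wrong normalization, contradicting the existence of the finite kernel $\gr$ via the minimality/comparison Corollary \ref{teo_confronto_nuclei}. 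I expect the first route (Corollary \ref{teo_confronto_conmodel}$(i)$) to be the shortest to write.

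For part $(ii)$, suppose $M$ admits a model from above $M_h$ with $R_\infty=\infty$ and $\Delta_p$ non-parabolic on $M_h$, i.e. $\sgrh<\infty$. Here I would exhaust $M$ by smooth domains $\Omega_j$ containing $\overline{B_R(o)}$ and consider the kernels $\gr_j$ of $\Delta_p$ on $\Omega_j$ with pole at $o$. Since $B_R^h$ is a model from above, Corollary \ref{teo_confronto_conmodel}$(ii)$ gives $\gr_j(x)\le \sgrh_R(r(x))+\|\gr_j\|_{L^\infty(\partial B_R(o))}$ on $B_R(o)$; but $\sgrh_R\le\sgrh$ and, by the Harnack-type control / boundedness of $\gr_j$ away from $o$ uniformly in $j$ (which follows since the $\gr_j$ increase to a limit which is finite precisely when $\Delta_p$ is non-parabolic — this is the point to be careful about), the sequence $\gr_j$ stays locally bounded away from $o$, hence converges to a finite positive kernel $\gr$ on $M$, so $\Delta_p$ is non-parabolic on $M$. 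The cleanest packaging is: were $\Delta_p$ parabolic on $M$, then $\gr_j\uparrow\infty$ locally uniformly on $M\backslash\{o\}$ (parabolicity is equivalent to divergence of the exhausting kernels, cf. \cite[Thm. 3.27]{holopainen}), but the bound from Corollary \ref{teo_confronto_conmodel}$(ii)$ together with the sub-mean-value / maximum principle on an annulus $B_R(o)\backslash B_{R/2}(o)$ forces $\|\gr_j\|_{L^\infty(\partial B_R(o))}$ to control, and be controlled by, $\gr_j$ on a fixed smaller ball, giving a uniform bound and the desired contradiction.

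The main obstacle I anticipate is part $(ii)$: Corollary \ref{teo_confronto_conmodel}$(ii)$ only yields an inequality up to the additive boundary term $\|\gr\|_{L^\infty(\partial B_R(o))}$, so one cannot directly read off finiteness of the kernel of $M$ from finiteness of $\sgrh$ — one must first know that this boundary term stays bounded along the exhaustion, which is exactly a disguised form of the non-parabolicity one is trying to prove. The resolution is the standard interior Harnack inequality for nonnegative $p$-harmonic functions (the $\gr_j$ are $p$-harmonic on $M\backslash\{o\}$): it bounds $\sup_{\partial B_R(o)}\gr_j$ by $C\inf_{\partial B_R(o)}\gr_j$, and on the annulus $B_{2R}(o)\backslash B_{R/2}(o)$ one compares $\gr_j$ with $\sgrh_{R'}(r)$ to close the loop. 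Care is also needed for $p=m$, where $\sgrh(0^+)=\infty$ and the $p=m$ branch of $\mu$ in \eqref{def_mup} must be used, but the structure of the argument is unchanged.
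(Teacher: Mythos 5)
Your part $(i)$ is correct and is essentially the paper's proof: $R_\infty<\infty$ is ruled out by the Laplacian comparison theorem (which would force $M$ to be compact), and then Corollary \ref{teo_confronto_conmodel}$(i)$ gives $\sgrh_R(r(x))\le\gr(x)<\infty$ for every $R$, whence \eqref{mod_nonp} by letting $R\to\infty$.

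Part $(ii)$, however, has a genuine gap. You correctly identify that Corollary \ref{teo_confronto_conmodel}$(ii)$ with finite $R$ only controls $\gr_j$ up to the additive term $\|\gr_j\|_{L^\infty(\partial B_R(o))}$, but your proposed resolution does not actually close the loop: the interior Harnack inequality bounds $\sup_{\partial B_R}\gr_j$ by $C\inf_{\partial B_R}\gr_j$, and you then need a bound on $\inf_{\partial B_R}\gr_j$ that is \emph{uniform in $j$} — which is precisely the non-parabolicity you are trying to prove. The claim that a comparison with $\sgrh_{R'}(r)$ on an annulus ``forces $\|\gr_j\|_{L^\infty(\partial B_R(o))}$ to control, and be controlled by, $\gr_j$ on a fixed smaller ball'' is asserted but not justified; any such comparison from above on the annulus again drags in the boundary values of $\gr_j$, so the argument as written is circular. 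The correct route is to avoid the boundary term altogether by working with $R=\infty$: since $M_h$ is a model from above with $R_\infty=\infty$, Definition \ref{def_modelabovebelow} gives $B_R^*(o)\subset\Do$ for every $R$, so Proposition \ref{prop_compagreenmodel}$(ii)$ makes $\sgrh(r(x))$ a globally defined, \emph{finite} (by non-parabolicity of $M_h$) positive supersolution of $\Delta_p u=-\delta_o$ on all of $M$. For an exhaustion $\{\Omega_j\}$ with kernels $\gr_j$, one has $\gr_j=0\le\sgrh(r)$ on $\partial\Omega_j$, so the $\eps$-comparison argument of Corollary \ref{teo_confronto_nuclei} (comparing $\gr_j$ with $(1+\eps)\sgrh(r)$ on the set where the former exceeds the latter, which stays away from $o$ by Theorem \ref{teo_localsingular}) yields $\gr_j\le\sgrh(r)$ on $\Omega_j$ with no additive term. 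The increasing limit $\gr=\lim_j\gr_j$ is therefore finite, and $\Delta_p$ is non-parabolic on $M$.
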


\begin{proof}
$(i)$. If $R_\infty < \infty$, the Laplacian comparison theorem (\cite[Thm. 2.4]{prs}) would imply that $M$ is compact with $\mathscr{D}_o \subset B_{R_\infty}(o)$, a contradiction. By $(i)$ in Corollary \ref{teo_confronto_conmodel}, for each $R$ it holds $\sgrh_R(r(x)) \le \gr(x)$, and letting $R \ra \infty$ we get \eqref{mod_nonp}. Item $(ii)$ similarly follows by applying $(ii)$ in Corollary \ref{teo_confronto_conmodel} to the kernel of $B_R(o)$ and letting $R \to \infty$.
\end{proof}

\begin{convention}
Hereafter, we will say for short that $\gr$ is the Green kernel of $\Delta_p$ on $\Omega$ if it is the Dirichlet Green kernel (the indication of the pole is omitted when no confusion arises).
\end{convention}

\subsection{The fake distance}\label{subsec_fake}

We shall assume the following:

\begin{itemize}
\item[$(\Hp)$] $(M^m, \metric)$ is complete, non-compact and, fixed $o \in M$ and writing $r(x)=\dist(x,o)$,
\begin{equation}\label{assu_ricciperconfronto}
\Ricc \ge -(m-1) H(r) \qquad \text{on } \, M,
\end{equation}
for some $0 \le H \in C(\R^+_0)$. Moreover, $\Delta_p$ is non-parabolic on $M$.
\end{itemize}

If $M$ satisfies $(\Hp)$, then by Proposition \ref{prop_parabnonparab} the solution $h \in C^2(\R^+_0)$ of \eqref{eq_h_uguale} is positive on $\R^+$ and $\Delta_p$ is non-parabolic on $M_h$, that is,
\begin{equation}\label{mod_nonparab}
v_h^{- \frac{1}{p-1}}\in L^1(\infty).
\end{equation}
Also, note that $h$, and therefore 
$v_h$, are monotone increasing and diverging as $t \ra \infty$. In what follows, we shall be interested in non-increasing $H$, in which case the following two useful properties hold. 

\begin{lemma}\label{lem_ODE}
Let $0 \le H \in C(\R^+_0)$ be non-increasing. Then, $v_h'/v_h$ and $|(\log \sgrh)'|$ have negative derivatives on $\R^+$ (the latter, for each $p>1$).
\end{lemma}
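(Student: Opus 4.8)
The plan is to derive both monotonicity statements from a single sharp inequality for the model function $h$, together with an elementary integral comparison. Note first that $h>0$ and $h'\ge 1$ on $\R^+$, because $h''=Hh\ge 0$ and $h'(0)=1$, and that $v_h=\omega_{m-1}h^{m-1}$ gives $v_h'/v_h=(m-1)\,h'/h$.

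\emph{Monotonicity of $v_h'/v_h$.} It suffices to show $(h'/h)'<0$ on $\R^+$. Differentiating and using $h''=Hh$, one gets $(h'/h)'=\big(Hh^2-(h')^2\big)/h^2$, so the claim is equivalent to $\phi\doteq (h')^2-Hh^2>0$ on $\R^+$. The function $\phi$ is continuous on $\R^+_0$ with $\phi(0)=1$, and cancelling the classical term $2h'h''=2Hhh'$ one finds that its distributional derivative is $\di\phi=-h^2\,\di H$. Since $H$ is non-increasing, $\di H\le 0$ as a measure, hence $\phi$ is non-decreasing and $\phi\ge\phi(0)=1$ on $\R^+_0$; dividing by $h^2>0$ yields $(h'/h)'=-\phi/h^2<0$.

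\emph{Monotonicity of $|(\log\sgrh)'|$.} Assume $v_h^{-1/(p-1)}\in L^1(\infty)$ so that $\sgrh$ is finite (this is implicit in the statement, and is guaranteed e.g. under $(\Hp)$ by Proposition \ref{prop_parabnonparab}). From $(\sgrh)'=-v_h^{-1/(p-1)}<0$ and $\sgrh>0$ one has $|(\log\sgrh)'|=-(\sgrh)'/\sgrh=v_h^{-1/(p-1)}/\sgrh=:q>0$, and the logarithmic derivative is $q'=q\big(q-\tfrac{1}{p-1}v_h'/v_h\big)$. Since $q>0$, it remains to prove the strict inequality $q(t)<\tfrac{1}{p-1}\,v_h'(t)/v_h(t)$, equivalently
\[
\int_t^\infty v_h(s)^{-\frac{1}{p-1}}\,\di s\;>\;(p-1)\,\frac{v_h(t)^{\,1-\frac{1}{p-1}}}{v_h'(t)}.
\]
To obtain this, set $c\doteq v_h'(t)/v_h(t)>0$; by the previous step $s\mapsto v_h'(s)/v_h(s)$ is \emph{strictly} decreasing, so $\log v_h(s)-\log v_h(t)=\int_t^s v_h'/v_h<c(s-t)$ for $s>t$, i.e. $v_h(s)<v_h(t)e^{c(s-t)}$. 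Raising to the power $-\tfrac{1}{p-1}$ and integrating over $(t,\infty)$ gives precisely the displayed bound, with strict inequality since the pointwise bound is strict on a set of full measure (and both sides are finite). Hence $q'<0$ on $\R^+$.

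The one genuinely delicate point is the monotonicity of $\phi$ under the mere continuity of $H$; I would handle it by reading $\di\phi=-h^2\,\di H$ as an inequality of Radon measures and invoking the fact that a continuous function with non-negative distributional derivative is non-decreasing. The remaining ingredients — the Riccati-type identity for $q$ and the exponential comparison — are routine.
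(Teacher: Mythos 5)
Your proof is correct, and it takes a genuinely different route from the paper's. For the first claim the paper argues by contradiction: a zero of $(v_h'/v_h)'$ at $t_0$ forces, via the Riccati equation, $(v_h'/v_h)(t_0)=(m-1)\sqrt{H(t_0)}$, which is then ruled out by Sturm comparison with the constant-curvature model on $(0,t_0)$. You instead prove directly that $\phi=(h')^2-Hh^2$ is non-decreasing with $\phi(0)=1$, so $(h'/h)'=-\phi/h^2\le -h^{-2}<0$; amusingly, this is exactly the computation the paper performs later (in the remark on the instability of $\partial\BB_r$ in Section \ref{sec_isoperimetry}), just not in the proof of this lemma. Your worry about differentiating $H$ is legitimate but easily avoidable even without distribution theory: for $t_1<t_2$ one has $(h'(t_2))^2-(h'(t_1))^2=\int_{t_1}^{t_2}2Hhh'\ge H(t_2)\bigl(h(t_2)^2-h(t_1)^2\bigr)$, whence $\phi(t_2)\ge (h'(t_1))^2-H(t_2)h(t_1)^2\ge\phi(t_1)$. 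For the second claim the paper again argues by contradiction, showing that a zero of $\chi'$ forces $\chi'\ge\eps\chi^2$ eventually and hence finite-time blow-up; you instead establish the pointwise strict inequality $\chi<\tfrac{1}{p-1}v_h'/v_h$ by integrating the exponential bound $v_h(s)<v_h(t)e^{c(s-t)}$ (a monotone-ratio comparison in the spirit of \cite[Prop. 4.12]{bmr2}, which the paper invokes elsewhere), and then read off $\chi'<0$ from the Riccati identity. Your argument is more quantitative — it yields the explicit bounds $\phi\ge1$ and $\chi<\tfrac{1}{p-1}v_h'/v_h$, the latter being precisely the inequality used later when comparing with $\tfrac{m-1}{p-1}\kappa$ — while the paper's is shorter on prerequisites, relying only on Sturm comparison and ODE blow-up. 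You are also right, and more explicit than the paper, that the finiteness of $\sgrh$ (i.e.\ $v_h^{-1/(p-1)}\in L^1(\infty)$) must be implicitly assumed for the second statement to make sense.
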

\begin{proof}
The behaviour of $v_h$ at zero guarantees that $\{t : (v_h'/v_h)'(t) < 0 \} \neq \emptyset$. Assume by contradiction that $(v_h'/v_h)'(t_0)=0$ for some $t_0 \in \R^+$. Then, the Riccati equation
\begin{equation}\label{eq_riccati}
\left( \frac{h'}{h}\right)' + \left( \frac{h'}{h}\right)^2 = H
\end{equation}
implies the equality $(v_h'/v_h)(t_0) = (m-1)\kappa$, where we have set $\kappa \doteq \sqrt{H(t_0)}$. Because $H$ is non-increasing, by Sturm comparison on $(0,t_0]$ with the model of curvature $-\kappa^2$ and volume $v_\kappa$ we deduce that $v_h'/v_h \ge v_\kappa'/v_\kappa$ on $(0,t_0]$. However, 
	\[
	\forall \, t \in \R^+, \ \ \frac{v_\kappa'(t)}{v_\kappa(t)} = \left\{ \begin{array}{ll}
	(m-1)\kappa \coth(\kappa t) > (m-1)\kappa & \quad \text{if } \, \kappa >0, \\[0.2cm]
	(m-1)t^{-1} > 0 & \quad \text{if } \, \kappa = 0,
	\end{array}\right.
	\]
contradiction. To show the second part of the statement, set for convenience $\chi(t) = |(\log \sgrh)'(t)|$, and note that $\chi > 0$ on $\R^+$. Differentiating,
	\begin{equation}\label{eq_chi}
	\chi' = \chi \left[ \chi - \frac{1}{p-1} \frac{v_h'}{v_h} \right].	
	\end{equation}
Suppose that $\chi'(t_0) \ge 0$ for some $t_0 \in \R^+$; since $v_h'/v_h$ has negative derivative on $\R^+$,  inspection of the ODE shows that $\chi'>0$ on $(t_0,\infty)$. Therefore, having fixed $t_1>t_0$ there exists $\eps > 0$ such that the term in brackets in \eqref{eq_chi} is greater than $\eps \chi$ on $[t_1,\infty)$. By comparison, $\chi$ lies above the solution $\bar \chi$ to $\bar \chi' = \eps \bar\chi^2$ on $[t_1, \infty)$. However, $\bar \chi$ explodes in finite time, contradiction.
\end{proof}

We are ready to define the fake distance.

\begin{definition}\label{def_fake}

Let $M$ satisfy $(\Hp)$ for some $p \in (1,m]$ and origin $o \in M$, and let $\gr$ be the Green kernel with pole at $o$. The \emph{fake distance} $\varrho : M\backslash\{o\} \ra \R^+_0$ is implicitly defined as $\gr(x) = \sgrh\big( \varrho(x)\big)$, that is,
\begin{equation}\label{def_bxy}
\gr(x) = \int_{\varrho(x)}^{\infty} v_h(s)^{-\frac{1}{p-1}} \di s \qquad\mbox{on } \, M \backslash \{o\}.
\end{equation}
\end{definition}
Observe that, because of \eqref{boundserrin} and $\sgrh(0^+) = +\infty$, $\varrho$ is well defined and positive on $M \backslash \{o\}$, and can be extended by continuity with $\varrho(o)=0$. Furthermore, by \cite{tolksdorff}, $\varrho$ is locally in $C^{1,\alpha}$ on $M \backslash \{o\}$. Corollary \ref{teo_confronto_conmodel} easily implies the following
%
%
\begin{proposition}\label{prop_basiccomp}
Let $M$ satisfy $(\Hp)$ for some $p \in (1,m]$ and origin $o$, and let $\varrho$ be the fake distance associated to the kernel $\gr$ of $\Delta_p$. Then, $\varrho \le r$ on $M$.
\end{proposition}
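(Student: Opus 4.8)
The plan is to prove $\varrho \le r$ directly from the implicit definition \eqref{def_bxy} by comparing the Green kernel $\gr$ with the transplanted model kernel. The key point is that, although $M$ need not admit a global model from below in the sense of Definition \ref{def_modelabovebelow}(ii) (that would require a sectional curvature bound), the Ricci lower bound \eqref{assu_ricciperconfronto} does give, via the Laplacian comparison theorem, exactly what is needed for the inequality $\Delta_p \grh_R \ge -\delta_o$ to hold on every ball $B_R(o)$. Concretely, fix $x \in M \backslash \{o\}$ and choose any $R > r(x)$. Since \eqref{assu_ricciperconfronto} says $\Ricc(\nabla r,\nabla r) \ge -(m-1)H(r)$ on all of $\Do \cap B_R^*(o)$, the ball $B_R^h \subset M_h$ is a model from below for $M$ in the sense of Definition \ref{def_modelabovebelow}, so Proposition \ref{prop_compagreenmodel}(i) gives $\Delta_p \grh_R \ge -\delta_o$ on $B_R(o)$, where $\grh_R(y) = \sgrh_R(r(y))$.

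Next I would invoke Corollary \ref{teo_confronto_conmodel}(i): since $B_R^h$ is a model from below for $M$ and $B_R(o) \subset M$, the Green kernel $\gr$ of $\Delta_p$ on $M$ (with pole $o$) satisfies
\begin{equation*}
\gr(x) \ge \sgrh_R\big(r(x)\big) = \int_{r(x)}^{R} v_h(s)^{-\frac{1}{p-1}} \di s.
\end{equation*}
Now let $R \to \infty$. By Proposition \ref{prop_parabnonparab}(i), non-parabolicity of $\Delta_p$ on $M$ forces $R_\infty = \infty$ and $v_h^{-1/(p-1)} \in L^1(\infty)$, so the right-hand side converges monotonically to $\sgrh(r(x)) = \int_{r(x)}^\infty v_h(s)^{-1/(p-1)}\di s$. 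Hence $\gr(x) \ge \sgrh(r(x))$ for every $x \in M \backslash \{o\}$.

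Finally I would combine this with \eqref{def_bxy}: we have $\sgrh(\varrho(x)) = \gr(x) \ge \sgrh(r(x))$. Since $s \mapsto \sgrh(s) = \int_s^\infty v_h^{-1/(p-1)}$ is strictly decreasing on $\R^+$ (because $v_h > 0$ there), the inequality $\sgrh(\varrho(x)) \ge \sgrh(r(x))$ yields $\varrho(x) \le r(x)$, as claimed; the case $x = o$ is covered by the continuous extension $\varrho(o) = 0 = r(o)$. The only mild subtlety — and the one place one must be slightly careful — is that Corollary \ref{teo_confronto_conmodel}(i) is stated for $\gr$ the Green kernel on a domain $\Omega$ containing $B_R(o)$; here $\Omega = M$, and one uses that the comparison in that corollary was obtained precisely by the exhaustion procedure defining $\gr$, so passing $R \to \infty$ is legitimate. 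There is no real obstacle; this is an immediate corollary of the comparison machinery already set up, which is presumably why the authors label it a Proposition with a one-line proof.
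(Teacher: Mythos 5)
Your argument is correct and is essentially the paper's own proof: apply Corollary \ref{teo_confronto_conmodel}(i) (with $R \to \infty$, justified by Proposition \ref{prop_parabnonparab}) to get $\gr \ge \sgrh(r)$, then use the strict monotonicity of $\sgrh$ together with the defining identity $\gr = \sgrh(\varrho)$. The only slip is cosmetic: a ``model from below'' is item (i), not (ii), of Definition \ref{def_modelabovebelow}, and it is exactly the Ricci condition \eqref{assu_ricciperdefvarrho}, so no sectional bound enters anywhere.
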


\begin{proof}
Corollary \ref{teo_confronto_conmodel} and the definition of $\varrho$ imply
$$
\sgrh\big( \varrho(x)\big) = \gr(x) \ge \sgrh\big(r(x)\big) \qquad \text{on } \, M \backslash \{o\},
$$
and the conclusion follows since $\sgrh$ is decreasing.
\end{proof}

Differentiating shows that $\varrho$ satisfies the following identities:

\begin{equation}\label{identi_varrho}
\begin{array}{ll}
\disp \nabla \varrho = -v_h(\varrho)^{\frac{1}{p-1}} \nabla \gr & \qquad \text{on } \, M \backslash \{o\}, \\[0.3cm]
\disp \Delta_p \varrho = \frac{v_h'(\varrho)}{v_h(\varrho)}|\nabla \varrho|^p & \qquad \text{weakly on } \, M\backslash\{o\}
\end{array}
\end{equation}

and, therefore, for each $\psi \in C^2(\R)$ with $\psi' \neq 0$,

\begin{equation}\label{bella!!!}
\disp \Delta_p \big[\psi(\varrho)\big] = \left[v_h^{-1}\left( v_h |\psi'|^{p-2}\psi'\right)'\right](\varrho) |\nabla \varrho|^p.
\end{equation}

As remarked in the Introduction, $v_h^{-1}\left( v_h |\psi'|^{p-2}\psi'\right)'$ is the expression of the $p$-Laplacian of the radial function $\psi$ in the model $M_h$, making it possible to radialize with respect to $\varrho$.

%
%
%
%
%
%

\subsection{Gradient estimates}

\begin{proposition}[\textbf{Near the singularity}]\label{prop_nearminm}
Assume $(\Hp)$ for some $p \in (1, m)$, and define $\varrho$ as in \eqref{def_bxy}. Then, $\varrho$ is smooth in a punctured neighbourhood of $o$ and
\begin{equation}\label{asinto_varrho}
\begin{array}{c}
\disp \varrho(x) \sim r(x), \quad |\nabla \varrho(x)- \nabla r(x)| \ra 0 \qquad \text{as } \, x \ra o, \\[0.3cm]
\disp \varrho \nabla^2 \varrho - \metric + \di r \otimes \di r \ra 0 \qquad \text{as a quadratic form, as } \, x \ra o.
\end{array}
\end{equation}
\end{proposition}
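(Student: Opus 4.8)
The plan is to obtain \eqref{asinto_varrho} directly from the corresponding asymptotics for the Green kernel $\gr$ in Theorem~\ref{teo_localsingular}, by differentiating the defining identity \eqref{def_bxy} and carefully comparing with the exact expansion of the model kernel $\sgrh$. First I would record that, since $p<m$, Theorem~\ref{teo_localsingular} gives $\gr\sim\mu(r)$, $|\nabla\gr-\mu'(r)\nabla r|=o(\mu'(r))$ and the analogous Hessian estimate, and that the smoothness of $\gr$ in a punctured neighbourhood of $o$, together with $\nabla\gr\ne 0$ near $o$ (which follows from the gradient asymptotics, as $\mu'(r)\ne 0$), transfers through the implicit relation $\gr=\sgrh(\varrho)$ and the fact that $\sgrh$ is a smooth strictly decreasing diffeomorphism onto $(0,\infty)$, so $\varrho$ is smooth near $o$.

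The core computation is to expand $\sgrh$ near $t=0$. Since $v_h(t)=\omega_{m-1}h(t)^{m-1}$ with $h(t)=t+o(t)$ (indeed $h(t)=t-\tfrac{H(0)}{6}t^3+\dots$), one has $v_h(s)^{-1/(p-1)}=\omega_{m-1}^{-1/(p-1)}s^{-\frac{m-1}{p-1}}(1+o(1))$, whence $\sgrh(t)=\mu(t)(1+o(1))$, $\sgrh'(t)=-v_h(t)^{-1/(p-1)}=\mu'(t)(1+o(1))$, and $\sgrh''(t)=\mu''(t)(1+o(1))$ (here one uses that the correction terms coming from $h$ are of higher order). Differentiating $\gr=\sgrh(\varrho)$ gives $\nabla\gr=\sgrh'(\varrho)\nabla\varrho$, i.e. $\nabla\varrho=\nabla\gr/\sgrh'(\varrho)$; combining $\gr\sim\mu(r)$ with $\sgrh\sim\mu$ and monotonicity of both sides forces $\varrho\sim r$ first, and then the gradient estimate $|\nabla\gr-\mu'(r)\nabla r|=o(\mu'(r))$ together with $\sgrh'(\varrho)\sim\mu'(\varrho)\sim\mu'(r)$ (using $\varrho\sim r$ and the regular variation of $\mu'$) yields $|\nabla\varrho-\nabla r|\to 0$. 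For the Hessian, I would differentiate once more: $\nabla^2\gr=\sgrh''(\varrho)\,\di\varrho\otimes\di\varrho+\sgrh'(\varrho)\nabla^2\varrho$, solve for $\nabla^2\varrho$, and substitute the expansions of $\sgrh',\sgrh''$ and the just-proved estimates $\varrho\sim r$, $\nabla\varrho=\nabla r+o(1)$, together with the model identity $\mu''(r)=\mu'(r)\big(-\tfrac{m-1}{p-1}\big)\tfrac1r$ and the Hessian asymptotics from Theorem~\ref{teo_localsingular}(3), namely $\nabla^2\gr=\mu''(r)\di r\otimes\di r+\tfrac{\mu'(r)}{r}(\metric-\di r\otimes\di r)+o(\mu''(r))$. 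Matching the radial and tangential parts, the coefficient of $\metric-\di r\otimes\di r$ in $\nabla^2\varrho$ comes out to be $\tfrac{\mu'(r)}{r\,\sgrh'(\varrho)}(1+o(1))=\tfrac1r(1+o(1))$, while the radial coefficient is $o(1/r)$ after cancellation; multiplying by $\varrho\sim r$ gives $\varrho\nabla^2\varrho=\metric-\di r\otimes\di r+o(1)$ as a quadratic form.

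The main obstacle I expect is the bookkeeping of error terms: Theorem~\ref{teo_localsingular} only gives the $o(\cdot)$-relative asymptotics (not, e.g., $\gr-\mu(r)\in L^\infty$, as the Remark warns), so I must be careful never to divide two quantities that are each merely $o$ of a reference quantity without controlling the ratio — in particular the step $\sgrh'(\varrho)\sim\mu'(r)$ needs $\varrho/r\to 1$ \emph{with} the regular variation of $\mu'$ (a pure power), and the Hessian step requires the leading-order cancellation between $\sgrh''(\varrho)\di\varrho\otimes\di\varrho$ and the radial part of $\nabla^2\gr$ to be genuine, i.e. one must verify that the discrepancy is $o(\mu''(r))$ and not merely $O(\mu''(r))$. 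This is where the precise (third-order) expansion $h(t)=t-\tfrac{H(0)}{6}t^3+O(t^5)$, and hence the control of the subleading term in $\sgrh$, is used; everything else is a routine—if slightly tedious—asymptotic matching, carried out componentwise in the orthogonal splitting $T_xM=\R\nabla r\oplus\nabla r^\perp$.
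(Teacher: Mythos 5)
Your proposal is correct and follows essentially the same route as the paper: smoothness and $\varrho\sim r$ from Theorem \ref{teo_localsingular}(1)--(2), the gradient estimate by differentiating $\gr=\sgrh(\varrho)$ (the paper writes this as $\nabla\varrho=-v_h(\varrho)^{1/(p-1)}\nabla\gr$, which is the same identity since $\sgrh'=-v_h^{-1/(p-1)}$) and comparing with $\mu'(r)\nabla r$, and the Hessian estimate by differentiating once more and using Theorem \ref{teo_localsingular}(3) together with $v_h'(t)/v_h(t)\sim(m-1)/t$, the radial terms cancelling to order $o(1/r)$ exactly as you describe. The only minor remark is that the third-order expansion $h(t)=t-\tfrac{H(0)}{6}t^3+\dots$ you invoke is not needed: the relative asymptotics $h(t)\sim t$, $h'(t)\to1$ suffice, because all comparisons are multiplicative and the $o(\mu''(r))$ error from Theorem \ref{teo_localsingular}(3) already absorbs the subleading corrections.
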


\begin{proof}
By $(2)$ in Theorem \ref{teo_localsingular}, $|\nabla \gr|>0$ in a punctured neighbourhood of $o$, so $\gr$ (hence $\varrho$) is smooth there. Using $(1)$ in Theorem \ref{teo_localsingular} we deduce that $\varrho(x) \sim r(x)$ as $x \ra o$. According to the first identity in  \eqref{identi_varrho},
\begin{equation}\label{gradrho}
\nabla \varrho = -v_h(\varrho)^{\frac{1}{p-1}} \nabla \gr(x).
\end{equation}
By $(2)$ in Theorem \ref{teo_localsingular}
$$
\begin{array}{lcl}
o\big( |\mu'(r)|\big) & = & \disp \big| \nabla \gr - \mu'(r) \nabla r\big| =
\left| v_h(\varrho)^{-\frac{1}{p-1}}\left(\nabla r -\nabla \varrho \right) -
\left(v_h(\varrho)^{-\frac1{p-1}} + \mu'(r)\right) \nabla r \right| \\
&\ge & v_h(\varrho)^{-\frac{1}{p-1}}|\nabla \varrho - \nabla r| -
|\mu'(r)| \left|v_h(\varrho)^{-\frac1{p-1}}\mu'(r)^{-1} + 1\right|,
\end{array}
$$
so, dividing  through by $|\mu'(r)|$ and rearranging we deduce that
$$
 \frac{v_h(\varrho)^{-\frac1{p-1}}}{|\mu'(r)|}|\nabla \varrho - \nabla r|\leq \frac{o(|\mu'(r)|)}{|\mu'(r)|} +
 \left|v_h(\varrho)^{-\frac1{p-1}}\mu'(r)^{-1} + 1\right|
$$
and, using $\mu'(r) \sim - v_h(r)^{-\frac{1}{p-1}} \sim - v_h(\varrho)^{-\frac{1}{p-1}}$,
we conclude that  $|\nabla \varrho - \nabla r| \ra 0$.

To show the Hessian estimates, we differentiate \eqref{gradrho} to deduce
$$
\nabla^2 \varrho = \frac{1}{p-1} \frac{v_h'(\varrho)}{v_h(\varrho)} \di \varrho \otimes \di \varrho - v_h(\varrho)^{\frac{1}{p-1}} \nabla^2 \gr
$$
which, together with $3)$ in Theorem \ref{teo_localsingular}, gives
$$
\begin{array}{l}
\disp \frac{1}{|\mu'(r)|} \left[ \mu''(r) \di r \otimes \di r + \frac{\mu'(r)}{r} \big( \metric -\di r \otimes \di r \big) \right] = \frac{\nabla^2 \gr}{|\mu'(r)|} + o \left( \left|\frac{\mu''(r)}{\mu'(r)}\right| \right) \\[0.4cm]
\qquad = \disp \big(1+ o(1)\big) \left[ \frac{1}{p-1}\frac{v_h'(\varrho)}{v_h(\varrho)} \di \varrho \otimes \di \varrho - \nabla^2 \varrho \right] + o\left( \frac{1}{r} \right).
\end{array}
$$
The third formula in \eqref{asinto_varrho} follows multiplying by $\varrho$ and using that $v_h'(t)/v_h(t) \sim (m-1)/t$ as $t \ra 0$.

\end{proof}

As a consequence of the above proposition, the singularity of $\varrho$ at the origin is mild enough to guarantee that the second identity in \eqref{identi_varrho} holds weakly on the entire $M$:
$$
\Delta_p \varrho = \frac{v_h'(\varrho)}{v_h(\varrho)}|\nabla \varrho|^p \qquad \text{weakly on } \, M.
$$

We next search for global gradient estimates for $\varrho$. For $X \in TM$, $X \neq 0$ define the linearization of the $p$-Laplacian $A(X) : TM \ra TM$ as
$$
A(X) = |X|^{p-2} \left( \mathrm{Id} + (p-2) \left\langle \cdot, \frac{X}{|X|} \right\rangle \frac{X}{|X|}\right).
$$
The eigenvalues of $A(X)$ are $(p-1)|X|^{p-2}$ in the direction of $X$, and $|X|^{p-2}$ in the orthogonal complement. Define also $\metric_B$ as the $(2,0)$-version of $A(X)^{-1/2}$, and note that $\metric_B$ is a metric for each $X \neq 0$. Norms and traces with respect to $\metric_B$ will be denoted with $|\cdot |_B, \mathrm{Tr}_B$. Setting $\nu = X/|X|$ and considering an orthonormal frame $\{e_i, \nu\}$, $1 \le i \le m-1$ for $\metric$ with dual coframe $\{\theta^j, \theta^\nu\}$, for every covariant $2$-tensor $C$ we can write
\begin{equation}\label{ide_basicheimpo}
\begin{array}{lcl}
\metric_B & = & \disp |X|^{-\frac{p-2}{2}} \Big\{ (p-1)^{-1/2} \theta^\nu \otimes \theta^\nu + \sum_j \theta^j \otimes \theta^j \Big\} \\[0.4cm]
\mathrm{Tr}_B C & = & \disp |X|^{\frac{p-2}{2}} \Big\{ \sqrt{p-1}C_{\nu\nu} + \sum_j C_{jj} \Big\} \\[0.4cm]
|C|^2_B & = & \disp \disp |X|^{p-2}\Big\{ (p-1)C^2_{\nu\nu} + p \sum_j C_{\nu j}^2 + \sum_{i,j} C_{ij}^2\Big\}.
\end{array}
\end{equation}

The following Bochner formula is basically a rewriting, in a form more suitable for our application, of \cite[Lem. 2.1]{kotschwarni}, see also \cite[Prop. 7]{nabervalto}. We provide a quick proof for the sake of completeness.

\begin{proposition}\label{bochner_basic}
Let $p \in (1,\infty)$, $U \subset M$ be an open set and let $F \in C^3(U)$ with $|\nabla F| >0$ on $U$. Then,
\begin{equation}\label{bochner}
\begin{array}{l}
\disp \frac{1}{2} \diver \left( A(\nabla F) \nabla |\nabla F|^2 \right) = \\[0.5cm]
\qquad \qquad = \big| \nabla^2 F\big|^2_B + \Ricc(\nabla F, \nabla F)|\nabla F|^{p-2} + \langle \nabla \Delta_p F, \nabla F \rangle
\end{array}
\end{equation}
on $U$, where $B = B(\nabla F)$.
\end{proposition}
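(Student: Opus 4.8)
The plan is to derive \eqref{bochner} from the \emph{classical} Bochner identity for $\Delta$ by a purely algebraic bookkeeping of powers of $|\nabla F|$, exploiting that the only nonlinear quantities in play are $|\nabla F|$ and the scalar function $\beta \doteq \nabla^2 F(\nabla F,\nabla F) = \tfrac12\langle\nabla|\nabla F|^2,\nabla F\rangle$. Throughout I set $W \doteq |\nabla F|^2$; since $|\nabla F|>0$ and $F\in C^3(U)$, every identity below is a genuine identity of continuous functions on $U$. First I would record two elementary pointwise identities: expanding the divergence in the definition of $\Delta_p$,
\[
\Delta_p F = |\nabla F|^{p-2}\Delta F + (p-2)|\nabla F|^{p-4}\beta ,
\]
and, from the definition of $A(\nabla F)$ together with $\langle\nabla W,\nabla F\rangle = 2\beta$,
\[
A(\nabla F)\nabla W = |\nabla F|^{p-2}\nabla W + 2(p-2)|\nabla F|^{p-4}\beta\,\nabla F .
\]

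Next I would take the divergence of the last display, handling the two summands separately. For the first, $\tfrac12\diver\bigl(|\nabla F|^{p-2}\nabla W\bigr) = \tfrac12|\nabla F|^{p-2}\Delta W + \tfrac{p-2}{4}|\nabla F|^{p-4}|\nabla W|^2$, and here I substitute the classical Bochner formula $\tfrac12\Delta W = |\nabla^2 F|^2 + \Ricc(\nabla F,\nabla F) + \langle\nabla\Delta F,\nabla F\rangle$; this is the \emph{only} place where curvature and a commutation of third derivatives enter. For the second summand, the product rule (using again $\langle\nabla W,\nabla F\rangle = 2\beta$) gives
\[
(p-2)\diver\bigl(|\nabla F|^{p-4}\beta\,\nabla F\bigr) = (p-2)\Bigl[(p-4)|\nabla F|^{p-6}\beta^2 + |\nabla F|^{p-4}\langle\nabla\beta,\nabla F\rangle + |\nabla F|^{p-4}\beta\,\Delta F\Bigr],
\]
where one notes that $\beta$ is an honest scalar, so $\nabla\beta$ needs no commutation. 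Differentiating the pointwise formula for $\Delta_p F$ above and pairing with $\nabla F$, one gets
\[
\langle\nabla\Delta_p F,\nabla F\rangle = |\nabla F|^{p-2}\langle\nabla\Delta F,\nabla F\rangle + (p-2)|\nabla F|^{p-4}\beta\,\Delta F + (p-2)(p-4)|\nabla F|^{p-6}\beta^2 + (p-2)|\nabla F|^{p-4}\langle\nabla\beta,\nabla F\rangle ,
\]
which is \emph{exactly} the sum of all the third- and lower-order terms produced above (the $\langle\nabla\Delta F,\nabla F\rangle$-term from the first summand together with the bracket from the second). Collecting, I arrive at
\[
\tfrac12\diver\bigl(A(\nabla F)\nabla W\bigr) = |\nabla F|^{p-2}|\nabla^2 F|^2 + |\nabla F|^{p-2}\Ricc(\nabla F,\nabla F) + \langle\nabla\Delta_p F,\nabla F\rangle + \tfrac{p-2}{4}|\nabla F|^{p-4}|\nabla W|^2 .
\]

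Finally I would identify the first and last terms with $|\nabla^2 F|_B^2$. Using the adapted orthonormal frame $\{e_i,\nu\}$ with $\nu = \nabla F/|\nabla F|$, one has $|\nabla^2 F|^2 = F_{\nu\nu}^2 + 2\sum_j F_{\nu j}^2 + \sum_{i,j}F_{ij}^2$ and $|\nabla W|^2 = 4|\nabla F|^2\bigl(F_{\nu\nu}^2 + \sum_j F_{\nu j}^2\bigr)$, whence
\[
|\nabla F|^{p-2}|\nabla^2 F|^2 + \tfrac{p-2}{4}|\nabla F|^{p-4}|\nabla W|^2 = |\nabla F|^{p-2}\Bigl[(p-1)F_{\nu\nu}^2 + p\sum_j F_{\nu j}^2 + \sum_{i,j}F_{ij}^2\Bigr] = |\nabla^2 F|_B^2
\]
by the third line of \eqref{ide_basicheimpo}, giving \eqref{bochner}. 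I do not expect a real obstacle here: the argument is short once organized this way, with $F\in C^3$ being exactly what is needed to invoke classical Bochner and to give meaning to $\nabla\Delta F$. The only delicate point is keeping the coefficients $(p-2),(p-4)$ and the exponents of $|\nabla F|$ consistent through the two product-rule expansions, since it is precisely the leftover $\tfrac{p-2}{4}|\nabla F|^{p-4}|\nabla W|^2$ that must conspire with $|\nabla^2 F|^2$ to produce the weighted Hessian norm $|\nabla^2 F|_B^2$ and not a spurious term.
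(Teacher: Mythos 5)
Your proposal is correct: the computations of $\Delta_p F$, $A(\nabla F)\nabla W$, the two divergence expansions, the identification of the third-order block with $\langle\nabla\Delta_p F,\nabla F\rangle$, and the final recombination of $|\nabla^2F|^2$ with $\tfrac{p-2}{4}|\nabla F|^{p-4}|\nabla W|^2$ into $|\nabla^2F|_B^2$ via \eqref{ide_basicheimpo} all check out. This is essentially the same argument as the paper's proof, merely organized with the scalar $\beta=\nabla^2F(\nabla F,\nabla F)$ in place of the paper's $\langle\nabla|\nabla F|,\nu\rangle$.
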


\begin{proof}
Let $\{e_i, \nu\}$ be an adapted orthonormal frame with $\nu = \nabla F/|\nabla F|$. We first compute
\begin{equation}\label{www}
\begin{array}{lcl}
\disp \langle \nabla \Delta_p F, \nabla F \rangle & = & \disp (p-2)^2 |\nabla F|^{p-2} \langle \nabla |\nabla F|, \nu \rangle^2 + (p-2)|\nabla F|^{p-2} \langle \nabla \langle \nabla |\nabla F|, \nu \rangle, \nabla F \rangle  \\[0.3cm]
& & \disp + (p-2) \langle \nabla|\nabla F|, \nu \rangle |\nabla F|^{p-2}\Delta F + |\nabla F|^{p-2} \langle \nabla \Delta F, \nabla F \rangle \\[0.3cm]
& = & (p-2) \langle \nabla |\nabla F|, \nu \rangle  \Delta_p F  + (p-2)|\nabla F|^{p-2} \langle \nabla \langle \nabla |\nabla F|, \nu \rangle, \nabla F \rangle \\[0.3cm]
& & \disp + |\nabla F|^{p-2} \langle \nabla \Delta F, \nabla F \rangle.
\end{array}
\end{equation}
On the other hand
$$
\begin{array}{l}
\disp \frac{1}{2} \diver \left( A(\nabla F) \nabla |\nabla F|^2 \right) = \frac{1}{2} \diver \Big( |\nabla F|^{p-2} \nabla |\nabla F|^2 + 2(p-2)|\nabla F|^{p-2} \langle \nabla |\nabla F|, \nu \rangle \nabla F  \Big) \\[0.3cm]
\quad = \disp (p-2)|\nabla F|^{p-2} \big|\nabla |\nabla F|\big|^2 + \frac{1}{2} |\nabla F|^{p-2} \Delta |\nabla F|^2 + (p-2) |\nabla F|^{p-2} \langle \nabla \langle \nabla |\nabla F|, \nu \rangle, \nabla F \rangle \\[0.3cm]
\qquad \disp + (p-2) \langle \nabla |\nabla F|, \nu \rangle \Delta_p F.
\end{array}
$$
Replacing the last two terms by means of \eqref{www}, using the standard Bochner formula for the Laplacian, the identity $\nabla |\nabla F| = F_{\nu j} e_j + F_{\nu\nu} \nu$, and \eqref{ide_basicheimpo} we infer
$$
\begin{array}{l}
\disp \frac{1}{2} \diver \left( A(\nabla F) \nabla |\nabla F|^2 \right) \\[0.3cm]
\qquad = \disp (p-2)|\nabla F|^{p-2} \big|\nabla |\nabla F|\big|^2 + |\nabla F|^{p-2} \Big[ \frac{\Delta |\nabla F|^2}{2} - \langle \nabla \Delta F, \nabla F \rangle \Big] + \langle \nabla \Delta_p F, \nabla F \rangle \\[0.3cm]
\qquad = \disp (p-2)|\nabla F|^{p-2} \big|\nabla |\nabla F|\big|^2 + |\nabla F|^{p-2} \Big[ |\nabla^2 F|^2  + \Ricc(\nabla F, \nabla F) \Big] + \langle \nabla \Delta_p F, \nabla F \rangle \\[0.3cm]
\qquad = \disp |\nabla F|^{p-2} \Big[ \sum_{i,j = 1}^{m-1} F_{ij}^2 + p \sum_{j=1}^{m-1} F_{\nu j}^2 + (p-1)F_{\nu\nu}^2\Big] + |\nabla F|^{p-2}\Ricc(\nabla F, \nabla F) + \langle \nabla \Delta_p F, \nabla F \rangle \\[0.3cm]
\qquad = \disp \big| \nabla^2 F\big|^2_B + |\nabla F|^{p-2}\Ricc(\nabla F, \nabla F) + \langle \nabla \Delta_p F, \nabla F \rangle,
\end{array}
$$
as claimed.
\end{proof}

The next is the main, new Bochner formula.

\begin{proposition}\label{prop_miracolo}
For $p \in (1,\infty)$, let $u$ be a positive solution to  $\Delta_p u =0$ in an open set $\Omega \subset M^m$. Fix a model $M_h$ with radial curvature $-H(r)$ and such that $\Delta_p$ is non-parabolic on $M_h$, and define $\varrho$ according to
$$
u(x) = \sgrh\big(\varrho(x)\big) = \int_{\varrho(x)}^\infty v_h(s)^{-\frac{1}{p-1}} \di s.
$$
If $p>m$, also assume that $u < \sgrh(0)$ on $\Omega$. Set
\begin{equation}\label{def_mu}
\mu = -\frac{mp -3p+2}{p-1}, \qquad F(t) = \int_0^t h(s)^{\frac{1}{\sqrt{p-1}}} \di s.
\end{equation}
Then, on $\big\{ |\nabla \varrho|>0\big\}$ and denoting with $\nu = \nabla \varrho/|\nabla \varrho|$,
\begin{equation}\label{bochner_miracolo}
\begin{array}{l}
\disp \frac 12 \disp h^{-\mu} \diver \left( h^{\mu} A(\nabla \varrho) \nabla |\nabla \varrho|^2 \right) = \\[0.5cm]
\disp \qquad \qquad (F')^{-p} \left| \nabla^2 F - \frac{\tr_B \nabla^2 F}{m}\metric_B \right|^2_B \\[0.5cm]
\qquad \qquad \disp + \frac{1}{m} \left[ \sqrt{p-1} - (p-1) \right]^2|\nabla \varrho|^{p-2} \big[\nabla^2 \varrho(\nu,\nu)^2\big] \\[0.5cm]
\qquad \qquad \disp + |\nabla \varrho|^p\Big[\Ricc(\nu, \nu) + (m-1)H|\nabla \varrho|^2\Big]
\end{array}
\end{equation}
where,  with a slight abuse of notation, $F = F(\varrho)$, $B = B(\nabla F)$ and $H,h$ are evaluated at $\varrho$.
\end{proposition}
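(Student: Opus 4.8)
The plan is to apply the Bochner formula of Proposition~\ref{bochner_basic} not to $\varrho$ directly but to the auxiliary function $F(\varrho)$, and then to translate every term back into quantities built from $\varrho$. First note that on $\{|\nabla\varrho|>0\}$ one has $\{|\nabla\varrho|>0\}=\{|\nabla u|>0\}$, since $u=\sgrh(\varrho)$ forces $|\nabla\varrho|=v_h(\varrho)^{1/(p-1)}|\nabla u|$; there $u$ solves a non-degenerate quasilinear elliptic equation, hence is $C^\infty$ by elliptic regularity, whence $\varrho=(\sgrh)^{-1}(u)$ and $F(\varrho)$ are $C^\infty$ and $|\nabla F(\varrho)|=F'(\varrho)|\nabla\varrho|=h(\varrho)^{1/\sqrt{p-1}}|\nabla\varrho|>0$, so Proposition~\ref{bochner_basic} applies with $F=F(\varrho)$. (When $p>m$, the standing assumption $u<\sgrh(0)$ is exactly what makes $(\sgrh)^{-1}(u)$, hence $\varrho$, well defined.) Since $F(\varrho)$ is a function of the $p$-harmonic $u$, the computation behind \eqref{bella!!!} gives the explicit identity $\Delta_p[F(\varrho)]=\beta(\varrho)|\nabla\varrho|^p$ with $\beta=v_h^{-1}\bigl(v_h(F')^{p-1}\bigr)'=\bigl(m-1+\sqrt{p-1}\,\bigr)(h'/h)(F')^{p-1}$, and differentiating it yields $\langle\nabla\Delta_p F,\nabla F\rangle$ in closed form as well.

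Next I would expand both sides of \eqref{bochner}, applied to $F=F(\varrho)$, via the chain rule. On the left, from $\nabla F=F'(\varrho)\nabla\varrho$ one gets $A(\nabla F)=(F')^{p-2}A(\nabla\varrho)$ and, using $A(\nabla\varrho)\nabla\varrho=(p-1)|\nabla\varrho|^{p-2}\nabla\varrho$,
\[
A(\nabla F)\nabla|\nabla F|^2=(F')^p\,A(\nabla\varrho)\nabla|\nabla\varrho|^2+2(p-1)(F')^{p-1}F''|\nabla\varrho|^p\nabla\varrho,
\]
so that $\tfrac{1}{2}\diver\bigl(A(\nabla F)\nabla|\nabla F|^2\bigr)$ is $\tfrac{1}{2}(F')^p\diver\bigl(A(\nabla\varrho)\nabla|\nabla\varrho|^2\bigr)$ plus corrections which, after division by $(F')^p$, involve only $(h'/h)$, $(h'/h)^2$, $\nabla^2\varrho(\nu,\nu)$ and $\Delta\varrho=(m-1)(h'/h)|\nabla\varrho|^2-(p-2)\nabla^2\varrho(\nu,\nu)$. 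The weight $h^{\mu}$ is put in precisely to absorb the leading such correction, through
\[
\tfrac{1}{2}h^{-\mu}\diver\bigl(h^{\mu}A(\nabla\varrho)\nabla|\nabla\varrho|^2\bigr)=\tfrac{1}{2}\diver\bigl(A(\nabla\varrho)\nabla|\nabla\varrho|^2\bigr)+\mu(p-1)(h'/h)|\nabla\varrho|^p\nabla^2\varrho(\nu,\nu).
\]
On the right of \eqref{bochner} I would use the orthogonal splitting $|\nabla^2 F|_B^2=\bigl|\nabla^2 F-\tfrac{\tr_B\nabla^2 F}{m}\metric_B\bigr|_B^2+\tfrac{1}{m}(\tr_B\nabla^2 F)^2$ (valid because $|\metric_B|_B^2=m$); the role of the exponent $1/\sqrt{p-1}$ defining $F$ is that it forces
\[
\tr_B\nabla^2 F=m\,(h'/h)(F')^{p/2}|\nabla\varrho|^{(p+2)/2}+\bigl(\sqrt{p-1}-(p-1)\bigr)(F')^{p/2}|\nabla\varrho|^{(p-2)/2}\nabla^2\varrho(\nu,\nu),
\]
so that $\tfrac{1}{m}(\tr_B\nabla^2 F)^2$, divided by $(F')^p$, yields precisely the bonus term $\tfrac{1}{m}[(p-1)^{1/2}-(p-1)]^2|\nabla\varrho|^{p-2}\nabla^2\varrho(\nu,\nu)^2$, together with one term $m(h'/h)^2|\nabla\varrho|^{p+2}$ and one cross term in $(h'/h)|\nabla\varrho|^p\nabla^2\varrho(\nu,\nu)$; and $\Ricc(\nabla F,\nabla F)|\nabla F|^{p-2}=(F')^p|\nabla\varrho|^p\Ricc(\nu,\nu)$.

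It then remains to collect. Dividing the resulting identity by $(F')^p$, all terms of the shape $(h'/h)^2|\nabla\varrho|^{p+2}$ cancel identically; the terms of the shape $(h'/h)|\nabla\varrho|^p\nabla^2\varrho(\nu,\nu)$ cancel precisely for $\mu=-(mp-3p+2)/(p-1)$, which forces the value of $\mu$ in \eqref{def_mu}; and the remaining curvature contributions — those from $\langle\nabla\Delta_p F,\nabla F\rangle$ and from the divergence of the chain-rule correction — combine, using the Riccati relation $(h'/h)'=H-(h'/h)^2$ (equivalently $h''=Hh$), to leave exactly $(m-1)H|\nabla\varrho|^{p+2}$. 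What survives is exactly the right-hand side of \eqref{bochner_miracolo}; in fact the computation gives equality on $\{|\nabla\varrho|>0\}$, since the full $|\nabla^2 F|_B^2$, including the off-diagonal part $p\sum_j F_{\nu j}^2$, is accounted for inside the first, traceless-Hessian, summand. The one genuine difficulty is the bookkeeping: terms of the three shapes $(h'/h)^2|\nabla\varrho|^{p+2}$, $(h'/h)|\nabla\varrho|^p\nabla^2\varrho(\nu,\nu)$ and $H|\nabla\varrho|^{p+2}$ pour in from four independent sources — the $h^{\mu}$-weighted divergence, the chain-rule corrections inside $A(\nabla F)\nabla|\nabla F|^2$, the trace square $\tfrac{1}{m}(\tr_B\nabla^2 F)^2$, and $\langle\nabla\Delta_p F,\nabla F\rangle$ — and one must verify that their coefficients conspire, which is exactly what pins down both the exponent $1/\sqrt{p-1}$ defining $F$ and the value of $\mu$.
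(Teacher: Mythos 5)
Your proposal is correct and follows essentially the same route as the paper: apply the Bochner identity of Proposition \ref{bochner_basic} to $F(\varrho)$, split $|\nabla^2 F|_B^2$ into its traceless part plus $\tfrac{1}{m}(\tr_B\nabla^2 F)^2$, compute $\tr_B\nabla^2 F$, $A(\nabla F)\nabla|\nabla F|^2$ and $\langle\nabla\Delta_p F,\nabla F\rangle$ by the chain rule, and let the weight $h^{\mu}$ and the Riccati equation absorb the cross and curvature terms. Your intermediate formulas (for $\Delta_p F$, $\tr_B\nabla^2 F$, and the weighted divergence identity) match the paper's \eqref{eq_deltapF}, \eqref{eq_trBnabla2F} and \eqref{sec}, and your observation that the computation actually yields equality is consistent with the paper's derivation.
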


\begin{remark}
\emph{It is interesting to compare our formula with the integral identities in \cite{agofogamazzie_2} and in  \cite[Thm. 3.4]{fogamazziepina} in Euclidean setting: the latter follows from a different viewpoint, nevertheless still inspired by the use of "fake distance" type functions. In the linear case, similar identities were obtained in \cite{coldingmini_cv} and in \cite{agofogamazzie, borghinimazzieri}.
}
\end{remark}

\begin{proof}
Let $\{e_i,\nu\}$ be an orthonormal frame with $\nu = \nabla \varrho/|\nabla \varrho|$. Let $\{F_{\nu\nu}, F_{\nu j}, F_{ij}\}$ be the components of $\nabla^2 F$ in the basis $\{e_i,\nu\}$.
From
$$
\nabla F = F' \nabla \varrho, \qquad \nabla^2 F = F'' \di \varrho \otimes \di \varrho + F' \nabla^2 \varrho
$$
We get
$$
F_{\nu\nu} = F''|\nabla \varrho|^2 + F' \varrho_{\nu\nu}, \qquad F_{ij} = F'\varrho_{ij}, \qquad F_{\nu j} = F' \varrho_{\nu j}.
$$
Moreover, the definition of $F$ implies
\begin{equation}\label{eq_tracciaF}
\begin{array}{l}
\disp |\nabla F|^{-\frac{p-2}{2}} \tr_B(\nabla^2F) = \disp \sqrt{p-1}F_{\nu\nu} + \sum_j F_{jj} \\[0.4cm]
\qquad  \disp = \sqrt{p-1}F''|\nabla \varrho|^2 + F'\left[\sqrt{p-1}\varrho_{\nu\nu} + \sum_j \varrho_{jj}\right] \\[0.4cm]
\qquad \disp = \disp F' \left\{ \frac{h'}{h}|\nabla \varrho|^2 + \left[ (p-1)\varrho_{\nu\nu} + \sum_j \varrho_{jj}\right] + \left[\sqrt{p-1}-(p-1)\right] \varrho_{\nu\nu}\right\}.
\end{array}
\end{equation}
Expanding the expression for $\Delta_p \varrho$ and using \eqref{identi_varrho}, we get
$$
(p-1)\varrho_{\nu\nu} + \sum_j \varrho_{jj} = |\nabla \varrho|^{2-p}\Delta_p \varrho = \frac{v_h'}{v_h}|\nabla \varrho|^2 = (m-1)\frac{h'}{h}|\nabla \varrho|^2
$$
and from \eqref{eq_tracciaF} we deduce
\begin{equation}\label{eq_trBnabla2F}
\tr_B(\nabla^2 F) = (F')^{\frac{p}{2}}|\nabla \varrho|^{\frac{p-2}{2}}\left\{ m \frac{h'}{h}|\nabla \varrho|^2 + \left[\sqrt{p-1}-(p-1)\right] \varrho_{\nu\nu}\right\}.
\end{equation}
We next examine $|\nabla^2 F|^2_B$. By Bochner's formula \eqref{bochner_basic},
$$
|\nabla^2F|^2_B = \disp  \disp \frac{1}{2} \diver \left( A(\nabla F) \nabla |\nabla F|^2 \right) - (F')^p \Ricc(\nabla \varrho, \nabla \varrho)|\nabla \varrho|^{p-2} - \langle \nabla \Delta_p F, \nabla F \rangle,
$$
hence using the identities
\begin{equation}\label{ide_use}
\begin{array}{lcl}
\disp A(\nabla F)\nabla|\nabla F|^2 & = & \disp (F')^p A(\nabla \varrho)\nabla |\nabla \varrho|^2 + 2(p-1)(F')^{p-1}F''|\nabla \varrho|^p \nabla \varrho \\[0.5cm]
& = & \disp (F')^p \left\{ A(\nabla \varrho)\nabla |\nabla \varrho|^2 + 2\sqrt{p-1} \frac{h'}{h}|\nabla \varrho|^2 \left( |\nabla \varrho|^{p-2}\nabla \varrho\right) \right\} \\[0.5cm]
\disp \langle A(\nabla \varrho) \nabla |\nabla \varrho|^2, \nabla \varrho \rangle  & = & \disp 2(p-1)|\nabla \varrho|^p \varrho_{\nu\nu}
\end{array}
\end{equation}
we get
\[
\begin{array}{lcl}
|\nabla^2F|^2_B & = & \disp  \disp \frac{1}{2} \diver \left( (F')^p \left\{ A(\nabla \varrho)\nabla |\nabla \varrho|^2 + 2\sqrt{p-1} \frac{h'}{h}|\nabla \varrho|^2 \left( |\nabla \varrho|^{p-2}\nabla \varrho\right) \right\}\right) \\[0.5cm]
& & \disp - (F')^p \Ricc(\nabla \varrho, \nabla \varrho)|\nabla \varrho|^{p-2} - \langle \nabla \Delta_p F, \nabla F \rangle \\[0.5cm]
 & = & \disp  \disp \frac{1}{2}(F')^p \diver \left( A(\nabla \varrho)\nabla |\nabla \varrho|^2\right) \\[0.5cm]
& & \disp + \frac{p}{2}(F')^{p-1}F'' \langle A(\nabla \varrho) \nabla |\nabla \varrho|^2, \nabla \varrho \rangle + \sqrt{p-1} (F')^p \frac{h'}{h} |\nabla \varrho|^2 \Delta_p \varrho \\[0.5cm]
& & \disp + \sqrt{p-1}(F')^p\frac{h'}{h}|\nabla \varrho|^{p-2} \langle \nabla |\nabla \varrho|^2, \nabla \varrho \rangle + \sqrt{p-1}(F')^p \left(\frac{h'}{h}\right)' |\nabla \varrho|^{p+2} \\[0.5cm]
& & \disp + \sqrt{p-1}p(F')^{p-1}F''|\nabla \varrho|^{p+2} \frac{h'}{h} \\[0.5cm]
& & \disp - (F')^p \Ricc(\nu,\nu)|\nabla \varrho|^{p} - \langle \nabla \Delta_p F, \nabla F \rangle,
\end{array}
\]

that is,

\begin{equation}\label{eq_nabla2F}
\begin{array}{lcl}
\disp |\nabla^2F|^2_B & = & \disp  \disp \frac{1}{2}(F')^p \diver \left( A(\nabla \varrho)\nabla |\nabla \varrho|^2\right) \\[0.5cm]
& & \disp + p(F')^{p}\sqrt{p-1}\frac{h'}{h}|\nabla \varrho|^{p} \varrho_{\nu\nu} + \sqrt{p-1}(m-1) (F')^p \left(\frac{h'}{h}\right)^2|\nabla \varrho|^{p+2} \\[0.5cm]
& & \disp + 2\sqrt{p-1}(F')^p\frac{h'}{h}|\nabla \varrho|^{p} \varrho_{\nu\nu} + \sqrt{p-1}(F')^p \left(\frac{h'}{h}\right)' |\nabla \varrho|^{p+2} \\[0.5cm]
& & \disp + p(F')^{p}\left(\frac{h'}{h}\right)^2|\nabla \varrho|^{p+2} - (F')^p \Ricc(\nu,\nu)|\nabla \varrho|^{p} - \langle \nabla \Delta_p F, \nabla F \rangle. \\[0.5cm]
\end{array}
\end{equation}

Next, we compute
\begin{equation}\label{eq_deltapF}
\Delta_p F
 = \disp \left[(m-1) + \sqrt{p-1}\right](F')^{p-1}\frac{h'}{h}|\nabla \varrho|^p,
\end{equation}
hence differentiating and using $\langle \nabla |\nabla \varrho|, \nabla \varrho \rangle = |\nabla \varrho| \varrho_{\nu\nu}$ we get
\begin{equation}\label{eq_nabladeltapF}
\begin{array}{lcl}
\langle \nabla \Delta_p F, \nabla F\rangle  & = & \disp \left[(m-1) + \sqrt{p-1}\right]\left\{ (F')^p \frac{h'}{h} \langle \nabla |\nabla \varrho|^p, \nabla \varrho \rangle \right.\\[0.4cm]
& & \disp \left. + (F')^p\left( \frac{h'}{h}\right)' |\nabla \varrho|^{p+2} + (p-1)(F')^{p-1} F'' \frac{h'}{h}|\nabla \varrho|^{p+2} \right\} \\[0.4cm]
 & = & \disp \left[(m-1) + \sqrt{p-1}\right](F')^p\left\{ \frac{h'}{h} p|\nabla \varrho|^p \varrho_{\nu\nu} \right. \\[0.4cm]
& & \disp + \left. \left[ \left( \frac{h'}{h}\right)' + \sqrt{p-1}\left(\frac{h'}{h}\right)^2\right]|\nabla \varrho|^{p+2} \right\}. \\[0.4cm]
\end{array}
\end{equation}
Putting together \eqref{eq_nabladeltapF}, \eqref{eq_trBnabla2F} and \eqref{eq_nabla2F}, we obtain
\begin{equation}\label{good!}
\begin{array}{l}
\disp (F')^{-p}\left|\nabla^2F - \frac{\tr_B(\nabla^2 F)}{m}\metric_B\right|^2_B \\[0.5cm]
\qquad \disp = (F')^{-p} \left\{|\nabla^2 F|_B^2 - \frac{[\tr_B(\nabla^2 F)]^2}{m}\right\} \\[0.5cm]
\qquad \disp = \frac{1}{2}\diver \left( A(\nabla \varrho)\nabla |\nabla \varrho|^2\right) + p\sqrt{p-1}\frac{h'}{h}|\nabla \varrho|^{p} \varrho_{\nu\nu} + \sqrt{p-1}(m-1) \left(\frac{h'}{h}\right)^2|\nabla \varrho|^{p+2} \\[0.5cm]
\qquad \disp + 2\sqrt{p-1}\frac{h'}{h}|\nabla \varrho|^{p} \varrho_{\nu\nu} + \sqrt{p-1}\left(\frac{h'}{h}\right)' |\nabla \varrho|^{p+2} + p\left(\frac{h'}{h}\right)^2|\nabla \varrho|^{p+2} - \Ricc(\nu,\nu)|\nabla \varrho|^{p} \\[0.5cm]
\qquad \disp - \left[(m-1) + \sqrt{p-1}\right]\left\{ \frac{h'}{h} p|\nabla \varrho|^p \varrho_{\nu\nu} + \left[ \left( \frac{h'}{h}\right)' + \sqrt{p-1}\left(\frac{h'}{h}\right)^2\right]|\nabla \varrho|^{p+2} \right\} \\[0.5cm]
\qquad \disp - \frac{1}{m} |\nabla \varrho|^{p-2}\left\{ m \frac{h'}{h}|\nabla \varrho|^2 + \left[\sqrt{p-1}-(p-1)\right] \varrho_{\nu\nu}\right\}^2.
\end{array}
\end{equation}
Simplifying, we deduce
\begin{equation}\label{verygood}
\begin{array}{l}
\disp (F')^{-p}\left|\nabla^2F - \frac{\tr_B(\nabla^2 F)}{m}\metric_B\right|^2_B \\[0.5cm]
\qquad \disp = \frac{1}{2}\diver \left( A(\nabla \varrho)\nabla |\nabla \varrho|^2\right) + (-mp+3p-2)\frac{h'}{h}|\nabla \varrho|^p \varrho_{\nu\nu} - \Ricc(\nu,\nu)|\nabla \varrho|^{p} \\[0.4cm]
\qquad \disp - |\nabla \varrho|^{p+2}(m-1) \left[ \left( \frac{h'}{h}\right)' + \left( \frac{h'}{h}\right)^2 \right]  - \frac{1}{m}\left[\sqrt{p-1}-(p-1)\right]^2 \varrho_{\nu\nu}^2.
\end{array}
\end{equation}
Inserting the Riccati equation \eqref{eq_riccati} and the identity
\begin{equation}\label{sec}
\begin{array}{lcl}
\disp h^{-\mu} \diver\big( h^\mu A(\nabla \varrho)\nabla |\nabla \varrho|^2\big) & = & \disp \diver\big(A(\nabla \varrho)\nabla |\nabla \varrho|^2\big) + \mu \frac{h'}{h} \langle A(\nabla \varrho)\nabla|\nabla \varrho|^2, \nabla \varrho \rangle \\[0.5cm]
& = & \disp \diver\big(A(\nabla \varrho)\nabla |\nabla \varrho|^2\big) + 2\mu(p-1) \frac{h'}{h}|\nabla \varrho|^p \varrho_{\nu\nu}
\end{array}
\end{equation}
into \eqref{verygood}, and recalling the definition of $\mu$ in \eqref{def_mu}, we obtain the desired \eqref{bochner_miracolo}.
\end{proof}

\begin{lemma}[\textbf{Key Lemma}]\label{lem_key}
Let $M^m$ be complete. Let $p \in (1,\infty)$, and let $0 \le H \in C(\R^+_0)$ be non-increasing. Consider a model $M_h$ with radial curvature $-H(r)$, and assume that $\Delta_p$ is non-parabolic on $M_h$. Let $u$ be a positive solution to  $\Delta_p u =0$ in an open set $\Omega \subset M$, possibly the entire $M$, and define $\varrho$ according to
$$
u(x) = \sgrh\big(\varrho(x)\big) = \int_{\varrho(x)}^\infty v_h(s)^{-\frac{1}{p-1}}\di s.
$$
When $p>m$, also suppose that $u < \sgrh(0)$ on $\Omega$. Assume that
\begin{equation}\label{eq_lowerricci_conrho}
(i) \quad \inf_M \Ricc > -\infty, \qquad (ii) \quad  \Ricc \ge -(m-1)H(\varrho) \qquad \text{on } \, \Omega
\end{equation}
and that either 
\begin{itemize}
\item[(a)] $m = 2 \le p$, or
\item[(b)] $(a)$ fails and one of the following conditions is satisfied for some sequence $R_j \to \infty$:
\[
\begin{array}{ll}
\log \|u\|_{L^\infty(\Omega \cap B_{R_j})} = o(R_j^2) & \quad \text{if } \, p < m, \\[0.2cm]
\|u\|_{L^\infty(\Omega \cap B_{R_j})} = o(R_j^2) & \quad \text{if } \, p = m, \\[0.2cm]
\|u\|_{L^\infty(\Omega)} < \sgrh(0) & \quad \text{if } \, p > m.
\end{array}
\]
\end{itemize}
Then,
\begin{equation}\label{limsup}
\sup_\Omega |\nabla \varrho| \le \max \Big\{ 1, \limsup_{x \ra \partial \Omega}|\nabla \varrho(x)|\Big\},
\end{equation}
where we set
\[
\limsup_{x \ra \partial \Omega}|\nabla \varrho(x)| \doteq \inf\Big\{ \sup_{\Omega \backslash \overline{V}} |\nabla \varrho| \ : \ V \ \text{open whose closure in $M$ satisfies } \, \overline{V} \subset \Omega \Big\}.
\]
In particular, if $\partial \Omega = \emptyset$ then $|\nabla \varrho| \le 1$.
\end{lemma}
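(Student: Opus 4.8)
The plan is to feed the new Bochner inequality of Proposition~\ref{prop_miracolo} into a maximum principle applied to $\phi := |\nabla\varrho|^2$ on the super-level set $U := \{|\nabla\varrho| > 1\}$. On $U$ one has $|\nabla u| > 0$, so $u$ and $\varrho$ are smooth there and \eqref{bochner_miracolo} holds classically. The two ``square'' terms on its right-hand side are manifestly nonnegative, while the last one, using $\Ricc \ge -(m-1)H(\varrho)$, $H \ge 0$ and $|\nabla\varrho|^2 > 1$ on $U$, satisfies
$$
|\nabla\varrho|^p\big[\Ricc(\nu,\nu) + (m-1)H(\varrho)|\nabla\varrho|^2\big] \ \ge\ (m-1)H(\varrho)\,|\nabla\varrho|^p\big(|\nabla\varrho|^2-1\big)\ \ge\ 0 .
$$
Hence, writing $L_0 w := \diver\big(h(\varrho)^{\mu} A(\nabla\varrho)\nabla w\big)$ with $\mu$ as in \eqref{def_mu}, we obtain $L_0\phi \ge 0$ on $U$; moreover, since the eigenvalues $(p-1)|\nabla\varrho|^{p-2}$ and $|\nabla\varrho|^{p-2}$ of $A(\nabla\varrho)$ are pinched between positive constants on any relatively compact piece of $U$ (where $1 < |\nabla\varrho|$ is also bounded above) and $h(\varrho)>0$ is smooth there, $L_0$ is a locally uniformly elliptic divergence-form operator with locally smooth coefficients on $U$.

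I would then argue by contradiction. Put $\Lambda := \max\{1, \limsup_{x\ra\partial\Omega}|\nabla\varrho|\}$ and suppose $\sup_\Omega|\nabla\varrho| > \Lambda$, so $\sup_\Omega\phi > \Lambda^2 \ge 1$; fix $\tau \in (\Lambda^2,\sup_\Omega\phi)$. The set $\{\phi > \tau\}$ is nonempty, open and contained in $U$. Since $\sqrt\tau > \limsup_{x\ra\partial\Omega}|\nabla\varrho|$, the very definition of that quantity yields an open $V$ with $\overline V \subset \Omega$ and $\sup_{\Omega\setminus\overline V}|\nabla\varrho| < \sqrt\tau$; thus $\{\phi \ge \tau\} \subseteq \overline V \subset \Omega$, i.e.\ the relevant super-level set stays away from $\partial\Omega$, its relative boundary in $\Omega$ being $\{\phi = \tau\}$ with boundary value $\tau < \sup_\Omega\phi$.

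If $\sup_\Omega\phi$ is attained, say at $x_0$ (necessarily $x_0\in U$), the strong maximum principle forces $\phi$ constant, equal to $\sup_\Omega\phi$, on the connected component $U_0$ of $U$ through $x_0$; since $U_0$ is relatively closed in $U$ and $\phi \equiv \sup_\Omega\phi > 1$ on $\overline{U_0}\cap U$, one gets $\partial U_0\cap\Omega = \emptyset$, so $U_0$ is a connected component of $\Omega$ with $|\nabla\varrho|$ equal to a constant $> 1$. If $\partial U_0 \ne \emptyset$ then $\partial U_0 \subseteq \partial\Omega$ and hence $\limsup_{x\ra\partial\Omega}|\nabla\varrho| \ge \sup_\Omega|\nabla\varrho|$, contradicting $\sup_\Omega|\nabla\varrho| > \Lambda$; otherwise $U_0$ is a connected component of the complete manifold $M$, equality holds throughout \eqref{bochner_miracolo}, all three nonnegative terms vanish, and the last one together with \eqref{eq_lowerricci_conrho} gives $\Ricc(\nu,\nu) \equiv -(m-1)|\nabla\varrho|^2 H(\varrho)$ while $\Ricc(\nu,\nu) \ge -(m-1)H(\varrho)$, so $H(\varrho)\equiv 0$ on $U_0$; vanishing of the ``Hessian'' term then pins $\nabla^2\varrho = \tfrac{h'(\varrho)}{h(\varrho)}\big(|\nabla\varrho|^2\metric - \di\varrho\otimes\di\varrho\big)$, whence $\varrho$ has no critical points and is affine with nonzero slope along the (complete) normal geodesics of its level sets, contradicting $\varrho > 0$. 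In every case we reach a contradiction, and taking $\Lambda = 1$ gives the final assertion for $\partial\Omega = \emptyset$.

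The remaining, and delicate, situation is when $\sup_\Omega\phi$ is \emph{not} attained: then $\overline V$ above cannot be relatively compact (otherwise $\overline{\{\phi>\tau\}}$ would be compact and the maximum attained on it, at an interior point or on $\{\phi=\tau\}$, a contradiction either way), so $\Omega$ is non-compact and $\{\phi>\tau\}$ escapes to infinity. Here I would invoke a weak maximum principle ``at infinity'' for $L_0$-subsolutions, in the spirit of \cite[Chapter~4]{prsmemoirs}: $\phi$ is bounded above, $L_0\phi\ge 0$ on $\{\phi>\tau\}$ with $L_0$ uniformly elliptic there, and the principle forces $\sup\phi\le\tau$. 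Its applicability rests on completeness of $M$, on the \emph{constant} Ricci lower bound $\Ricc\ge -(m-1)H(0)\metric$ coming from $H$ non-increasing, and on controlling the weight via $\varrho\le r$ (Proposition~\ref{prop_basiccomp}), which bounds $h(\varrho)^\mu$ below by a function of $r$ and thus the relevant volume/capacity integrals through Bishop--Gromov. Making this step work uniformly as $p\ra 1$ and with no a priori information on the properness of $\varrho$, across the possible degeneracy of $L_0$ at infinity, is the point I expect to be the main obstacle — which is why the statement deserves to be isolated as a ``Key Lemma''.
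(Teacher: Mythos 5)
Your setup is the same as the paper's: feed \eqref{bochner_miracolo} and \eqref{eq_lowerricci_conrho} into the weighted divergence operator $L_0 w=\diver(h(\varrho)^\mu A(\nabla\varrho)\nabla w)$ on the super-level sets of $|\nabla\varrho|^2$. But the heart of the lemma is exactly the case you defer: when the supremum is not attained and $\{\phi>\tau\}$ escapes to infinity, you ``invoke a weak maximum principle at infinity for $L_0$-subsolutions'' without proving it, and there is no off-the-shelf version of such a principle for this weighted, quasilinear, anisotropic operator with constants that survive as $p\ra 1$. The paper's proof \emph{is} the construction of that principle: it first assumes $H_*=\inf H>0$, so that on $U_\delta=\{|\nabla\varrho|^2>1+\delta\}$ the Bochner inequality yields the strictly positive lower bound $L_0\phi\ge 2c_0|\nabla\varrho|^{p+2}h(\varrho)^\mu$ (not merely $L_0\phi\ge 0$), and then runs a Caccioppoli/iteration argument with test functions $\lambda(|\nabla\varrho|^2)\psi^\eta|\nabla\varrho|^\alpha$, choosing $\alpha\sim R^2$ so that $\log I(R)\gtrsim R^2$, which contradicts the at-most-exponential bound $I(R)\le e^{b_2R}$ coming from Sturm and Bishop--Gromov comparison. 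With only $L_0\phi\ge 0$ — which is all you have when $\inf H=0$, e.g.\ $\Ricc\ge 0$ — no such quantitative gain is available, and a weak maximum principle would amount to a Liouville/parabolicity statement for $L_0$ that is not true in general and is certainly not established by your remarks on ellipticity and $\varrho\le r$ (which, moreover, is not available here: $u$ is an arbitrary positive $p$-harmonic function on $\Omega$, not necessarily a Green kernel, so Proposition \ref{prop_basiccomp} does not apply).

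The second missing ingredient is precisely how the paper reduces $\inf H=0$ to $\inf H>0$: one replaces $H$ by $H+c$, introduces the fake distance $\varrho_c$ built from the perturbed model, proves the comparison $v_{h,c}(\varrho_c)\le v_h(\varrho)$ (hence $|\nabla\varrho_c|\le|\nabla\varrho|$) via the Sturm-type inequality $|(\log\sgrh_c)'|\ge|(\log\sgrh)'|$ and the monotonicity from Lemma \ref{lem_ODE}, applies the first part to $\varrho_c$ on $\Omega_c$, and lets $c\downarrow 0$, taking care of the boundary terms $\partial\Omega_c\cap\Omega$ when $p>m$. None of this appears in your proposal. Your treatment of the attained case and the rigidity discussion are fine in spirit (though not needed for \eqref{limsup} itself), but as written the proof of the lemma is incomplete at its decisive step.
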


\begin{remark}\label{rem_importante}
\emph{Bound \eqref{eq_lowerricci_conrho} automatically holds in the following relevant cases:
\begin{itemize}
\item[1)] $\inf_M \Ricc > -\infty$ and $\Ricc \ge -(m-1) \kappa^2$ on $\Omega$, for some constant $\kappa \ge 0$, choosing $H(t) = \kappa^2$. If $\kappa = 0$, we further assume that $p < m$ in order for $\Delta_p$ to be non-parabolic on $M_h$;
\item[2)] $M$ satisfies $(\Hp)$ for some $p \in (1,m]$ and non-increasing $H$, $\Omega \subset M \backslash \{o\}$ and $u$ is the restriction to $\Omega$ of the Green kernel of $M$ with pole at $o$. Indeed, by Proposition \ref{prop_basiccomp}, the fake distance $\varrho$ associated to $u$ satisfies $\varrho \le r$ and therefore
$$
\Ricc \ge -(m-1)H(r) \ge -(m-1)H(\varrho) \qquad \text{on } \, M.
$$
\end{itemize}
}
\end{remark}

\begin{proof}
%
%
Assume that
	\begin{equation}\label{eq_contrad_nabla}
	\sup_\Omega |\nabla \varrho|^2 > \max \Big\{ 1, \limsup_{y \to \partial \Omega}|\nabla \varrho(y)|^2 \Big\}, \qquad \Big( \sup_\Omega |\nabla \varrho|^2 > 1 \ \ \  \text{if \, } \Omega = M\Big). 	
	\end{equation}
Then, we can pick $\delta_0>0$ such that, for each $\delta \in [\delta_0, \sup_\Omega |\nabla \varrho|^2-1)$, the set
\begin{equation}\label{def_Udelta}
U_\delta = \Big\{|\nabla \varrho|^2 >1+\delta\Big\}
\end{equation}
is non-empty and $\overline{U}_\delta\cap \partial \Omega=\emptyset$. We remark that $|\nabla \varrho| \in C^\infty(\overline{U}_\delta)$, by the regularity of $u$ on the complementary of its stationary points. Suppose first that $H_* \doteq \inf H > 0$. Inserting \eqref{eq_lowerricci_conrho} into \eqref{bochner_miracolo} shows that the following inequality holds on $U_\delta$:
\begin{equation}\label{bochner_miracolo_upper_2}
\begin{array}{lcl}
\disp \frac{1}{2} h^{-\mu} \diver\big( h^{\mu} A(\nabla \varrho) \nabla |\nabla \varrho|^2 \big) & \ge & \disp |\nabla \varrho|^p(m-1)H(\varrho)\Big[|\nabla \varrho|^2-1\Big] \\[0.4cm]
& \ge & (m-1)H_*|\nabla \varrho|^{p+2}\left[\frac{\delta}{\delta+1}\right] \\[0.4cm]
& \ge & \disp c_0|\nabla \varrho|^{p+2},
\end{array}
\end{equation}
where $c_0 = (m-1)\delta_0/(1+\delta_0)H_*$. 
%
For $R \ge 1$, pick $\psi \in C^2_c(B_{2R}(o))$ and $\lambda \in C^1(\R)$
satisfying
$$
\begin{array}{ll}
\disp 0 \le \psi \le 1 \ \ \text{ on } \, M, \qquad \psi \equiv 1 \ \  \text{ on } \, B_R(o), \qquad |\nabla \psi| \le \frac{8}{R} \psi^{1/2} \\[0.3cm]
0 \le \lambda \le 1 \ \  \text{ on } \, \R, \qquad \mathrm{supp}(\lambda) \subset (1+\delta, \infty), \qquad  \lambda'\ge 0 \ \ \text{ on } \, \R.
\end{array}
$$
For $\eta, \alpha \ge 1$ to be chosen later, we use the test function
$$
\varphi = \lambda(|\nabla \varrho|^2) \psi(x)^\eta |\nabla \varrho|^\alpha \in C^1_c(U_\delta).
$$
in the weak definition of \eqref{bochner_miracolo_upper_2}. Writing $A = A(\nabla \varrho)$, $\lambda = \lambda(|\nabla \varrho|^2)$
we get
\begin{equation}\label{array_basic}
\begin{array}{l}
\disp \frac{\alpha}{2} \int \langle A \nabla |\nabla \varrho|^2, \lambda |\nabla \varrho|^{\alpha-1} \psi^\eta \nabla |\nabla \varrho| \rangle h(\varrho)^{\mu} + c_0 \int \lambda \psi^\eta |\nabla \varrho|^{\alpha+p+2} h(\varrho)^{\mu}\\[0.5cm]
\qquad \le \quad \disp - \frac{\eta}{2} \int \psi^{\eta-1} \lambda |\nabla \varrho|^\alpha \langle A\nabla|\nabla \varrho|^2, \nabla \psi \rangle h(\varrho)^{\mu} - \frac{1}{2} \int \lambda' \psi^\eta \langle A\nabla |\nabla \varrho|^2, \nabla |\nabla \varrho|^2 \rangle h(\varrho)^{\mu}\\[0.5cm]
\qquad \le \quad \disp - \frac{\eta}{2} \int \psi^{\eta-1} \lambda |\nabla \varrho|^\alpha \langle A\nabla|\nabla \varrho|^2, \nabla \psi \rangle h(\varrho)^{\mu},
\end{array}
\end{equation}
where, in the last inequality, we used $\lambda' \ge 0$ and the non-negativity of $A$. From the expression of the eigenvalues of $A$,
$$
\langle A \nabla |\nabla \varrho|^2, \nabla |\nabla \varrho| \rangle = \disp\frac{\langle A \nabla |\nabla \varrho|^2, \nabla |\nabla \varrho|^2 \rangle}{2|\nabla \varrho|} \ge  \disp \frac{\min\{1,p-1\}}{2} |\nabla \varrho|^{p-3} \big| \nabla |\nabla \varrho|^2 \big|^2
$$
while, by Cauchy-Schwarz inequality,
$$
\begin{array}{lcl}
\disp \langle A \nabla |\nabla \varrho|^2, \nabla \psi \rangle & \le & \disp \Big\{\langle A \nabla |\nabla \varrho|^2, \nabla |\nabla \varrho|^2 \rangle \Big\}^{1/2} \Big\{ \disp \langle A \nabla \psi, \nabla \psi\rangle\Big\}^{1/2} \\[0.5cm]
& \le & \disp \max\{1,p-1\} |\nabla \varrho|^{p-2} \big| \nabla |\nabla \varrho|^2\big| |\nabla \psi| \\[0.5cm]
& \le & \disp \frac{8\max\{1,p-1\}}{R} |\nabla \varrho|^{p-2} \big| \nabla |\nabla \varrho|^2\big| \psi^{1/2}
\end{array}
$$
Substituting into \eqref{array_basic} we obtain
\begin{equation}\label{array_basic_2}
\begin{array}{l}
\disp \frac{\alpha}{4} \min\{1,p-1\} \int \psi^\eta \lambda |\nabla \varrho|^{p+\alpha -4} \big| \nabla |\nabla \varrho|^2 \big|^2 h(\varrho)^{\mu}+ c_0 \int \lambda \psi^\eta |\nabla \varrho|^{\alpha+p+2} h(\varrho)^{\mu} \\[0.5cm]
\qquad \le \quad \disp \frac{4\eta \max\{1,p-1\}}{R} \int \psi^{\eta-\frac{1}{2}} \lambda |\nabla \varrho|^{\alpha +p-2} \big| \nabla |\nabla \varrho|^2 \big|h(\varrho)^{\mu}
\end{array}
\end{equation}
By Young's inequality,
$$
2\psi^{\eta-\frac{1}{2}} \lambda |\nabla \varrho|^{\alpha +p-2} \big| \nabla |\nabla \varrho|^2 \big| \le \tau \psi^\eta \lambda|\nabla \varrho|^{\alpha + p-4} \big| \nabla |\nabla \varrho|^2\big|^2 + \frac{1}{\tau} \psi^{\eta-1} \lambda|\nabla \varrho|^{\alpha+p},
$$
whence, choosing
$$
\tau = \frac{\alpha R \min\{1,p-1\}}{8\eta \max\{1,p-1\}}
$$
and inserting into \eqref{array_basic_2}, we deduce the existence of a constant $c_p$ which depends only on $p$ such that
\begin{equation}\label{array_basic_3}
c_0 \int \lambda \psi^\eta |\nabla \varrho|^{\alpha+p+2} h(\varrho)^{\mu}\le c_p \frac{\eta^2}{\alpha R^2} \int \lambda \psi^{\eta-1}|\nabla \varrho|^{\alpha+p}h(\varrho)^{\mu}.
\end{equation}
We next apply Young's inequality again with exponents
$$
q = \frac{p+\alpha+2}{p+\alpha}, \qquad q' = \frac{p+\alpha+2}{2}
$$
and a free parameter $\bar \tau$ to obtain
$$
\psi^{\eta-1} |\nabla \varrho|^{p+\alpha} \le \frac{\bar \tau^q}{q} \psi^\eta |\nabla \varrho|^{p+\alpha+2} + \frac{1}{q'\bar\tau^{q'}} \psi^{\eta - q'}.
$$
We choose $\eta = 2q' = p+\alpha+2$ and $\bar \tau$ such that
$$
c_p \frac{\eta^2}{\alpha R^2} \frac{\bar \tau^q}{q} = \frac{c_0}{2},
$$
so that, inserting into \eqref{array_basic_3} and rearranging, we deduce that there exists a constant $c_1= c_1(c_0, c_p)$ such that
\begin{equation}\label{array_basic_4}
\begin{array}{lcl}
\disp \int \lambda \psi^\eta |\nabla \varrho|^{\alpha+p+2}h(\varrho)^{\mu} & \le & \disp \frac{c_0}{p+\alpha} \left[ \frac{2 c_p}{c_0} \frac{\eta^2}{\alpha R^2} \frac{p+\alpha}{p+\alpha+2} \right]^{\frac{p+\alpha+2}{2}} \int \lambda \psi^{\eta/2} h(\varrho)^{\mu}\\[0.5cm]
& \le & \disp \left[ \frac{c_1(p+\alpha+2)}{R^2}\right]^{\frac{p+\alpha+2}{2}} \int \lambda \psi^{\eta/2}h(\varrho)^{\mu}.
\end{array}
\end{equation}
Set
$$
I(R) = \int_{B_R} \lambda h(\varrho)^{\mu}.
$$
Choose $\lambda$ close enough to the indicator function of $(1+\delta,\infty)$ to guarantee that $I(R_0)>0$ for some $R_0>0$. Taking into account the definition of $\psi$ and the fact that $|\nabla \varrho|^2 \ge 1+\delta$ on the support of $\lambda \psi$, \eqref{array_basic_4} yields
$$
I(R) \le \left[\frac{c_1(p+\alpha+2)}{R^2(1+\delta)}\right]^{\frac{p+\alpha+2}{2}} I(2R).
$$
Choosing $\alpha$ to satisfy
$$
\frac{c_1(p+\alpha+2)}{R^2(1+\delta)} = \frac{1}{e}, \qquad \text{so that }\qquad \frac{p+\alpha+2}{2} = \frac{R^2(1+\delta)}{2c_1 e},
$$
we get
$$
I(R) \le e^{- \frac{p+\alpha+2}{2}} I(2R) = e^{-\frac{R^2(1+\delta)}{2 c_1 e}} I(2R).
$$
Iterating and taking logarithms as in \cite[Lem. 4.7]{prsmemoirs} shows that there exists $S>0$ independent of $R, \delta$ such that for each $R > 2R_0$,
\begin{equation}\label{limit_eq}
\frac{\log I(R)}{R^2} \ge \frac{\log I(R_0)}{R^2} + S \frac{(1+\delta)}{c_1}.
\end{equation}
To conclude our desired contradiction, we estimate $I(R)$. First, observe that if we can prove that  
\begin{equation}\label{eq_growth_hvarrho}
\liminf_{R \to \infty} \frac{\log \|h(\varrho)^\mu\|_{L^\infty(\Omega \cap B_R)}}{R^2} = 0, 
\end{equation}
then we reach a contradiction as follows: by \eqref{eq_lowerricci_conrho}, $(i)$ and Bishop-Gromov comparison theorem, recalling that $0 \le \lambda \le 1$ we get 
	\[
	\log I(R) \le \log |B_R| + \log \|h(\varrho)^\mu\|_{L^\infty(\Omega \cap B_R)} \le C_1 + C_2 R + \log \|h(\varrho)^\mu\|_{L^\infty(\Omega \cap B_R)}
	\]
for suitable constants $C_j$, so we can let $R \to \infty$ in \eqref{limit_eq} along a sequence realizing the liminf and conclude $0 \ge S(1+\delta)/c_1$, which is absurd. \\[0.2cm]
\noindent \textbf{Case (a).} Assumption $m=2, p \ge 2$ is equivalent to $\mu \ge 0$ in \eqref{def_mu}. If $\mu = 0$ then \eqref{eq_growth_hvarrho} trivially holds and the thesis follows. If $\mu >0$, $h(\varrho)^\mu$ is unbounded when $\varrho(x) \to \infty$, that is, when $u(x) \to 0$. The conclusion will follow from \eqref{eq_growth_hvarrho} via an approximation argument. We let $c>0$ and consider the function $\varrho_c$ defined by the identity
	\[
	u(x) + c = \int_{\varrho_c(x)}^\infty v_h(s)^{-\frac{1}{p-1}}\di s \qquad \text{on } \, \Omega_c \doteq \big\{ x \in \Omega : u(x) + c < \sgrh(0)\big\}.
	\]	
Notice that $\Omega_c \uparrow \Omega$ and $\varrho_c \uparrow \varrho$ pointwise in $\Omega$ as $c \downarrow 0$, whence 
	\[
	\Ricc \ge - (m-1)H(\varrho) \ge -(m-1)H(\varrho_c) \qquad \text{on } \, \Omega_c,
	\]
and moreover	
	\begin{equation}\label{eq_goodmono}
	|\nabla \varrho_c| = |\nabla u| v_h(\varrho_c)^{\frac{1}{p-1}} \le |\nabla u| v_h(\varrho)^{\frac{1}{p-1}} = |\nabla \varrho|.
	\end{equation}
Also, $\varrho_c \to \varrho$ locally in $C^1(\Omega)$, as can be argued by differentiating the very definitions of $\varrho_c$ and $\varrho$. By construction, $\varrho_c$ is bounded, whence \eqref{eq_growth_hvarrho} holds for $\varrho = \varrho_c$ and we can therefore deduce from the integral estimates leading to \eqref{limit_eq} with $\varrho_c$ replacing $\varrho$ the inequality
	\begin{equation}\label{eq_good_c}
	\sup_{\Omega_c} |\nabla \varrho_c| \le \max \Big\{ 1, \limsup_{\Omega_c \ni y \ra \partial \Omega_c}|\nabla \varrho_c(y)| \Big\}.
	\end{equation}
However, if $y \in \partial \Omega_c \cap \Omega$ then $\varrho_c(y) = 0$, and therefore $\nabla \varrho_c(y)=0$ by the first equality in \eqref{eq_goodmono}. Again by \eqref{eq_goodmono} we conclude  
	\[
	\limsup_{\Omega_c \ni y \ra \partial \Omega_c}|\nabla \varrho_c(y)| = \left\{ \begin{array}{ll}
	\disp \limsup_{\Omega_c \ni y \to \partial \Omega}|\nabla \varrho_c(y)| \le \limsup_{y \to \partial \Omega} |\nabla \varrho(y)| & \quad \text{if } \, \partial \Omega \neq \emptyset \\[0.3cm]
	0 & \quad \text{if } \, \Omega = M.
	\end{array}\right.
	\]	
Inserting the latter into \eqref{eq_good_c}, observing that $\varrho_c \to \varrho$ in $C^1_\loc(\Omega)$ guarantees that $|\nabla \varrho_c(x)| \to |\nabla \varrho(x)|$ for each $x \in \Omega$, and letting $c \to 0$ we get the desired \eqref{limsup} (with $|\nabla \varrho| \le 1$ if $\Omega = M$).\\[0.2cm]
\noindent \textbf{Case (b).} Since $(a)$ fails, in this case $\mu < 0$. Hence, to estimate $h(\varrho)^\mu$ we need to consider its behaviour as $\varrho(x) \to 0$, that is, as $u(x) \to +\infty$. If $p>m$, our $L^\infty$ condition on $u$ implies that $\varrho$ is bounded below by a positive constant, whence $h(\varrho)^\mu \in L^\infty(\Omega)$ and \eqref{eq_growth_hvarrho} holds. If $p \le m$, we know that $h(t) \sim t$ and $\sgrh(t) \sim \mu(t)$ as $t \to 0$, where $\mu(t)$ is as in \eqref{def_mup}. Hence, here exist constants $C_j= C_j(m,p,H)$ such that
	\[
	\|h(\varrho)^\mu\|_{L^\infty(\Omega \cap B_R)} \le \left\{ \begin{array}{ll}
	C_1 + C_2\|u\|_{L^\infty(\Omega \cap B_R)}^{\frac{|\mu|(p-1)}{m-p}} & \quad \text{if } \, p < m \\[0.4cm]
	C_1 + \exp \left\{ C_2 \|u\|_{L^\infty(\Omega \cap B_R)} \right\} & \quad \text{if } \, p = m,
	\end{array}\right.
	\]
and \eqref{eq_growth_hvarrho} follows by the growth conditions in $(b)$.\\[0.2cm]
Having proved the lemma for $H_*>0$, it remains to examine the case $H_* = 0$. Fix a small $c >0$, consider a model from below of curvature $-H(t)-c$, let $v_{h,c}$ be the volume of its geodesic spheres and let $\sgrh_c$ be its Green kernel. Note that
	\[
	\sgrh_c \uparrow \sgrh \qquad \text{in } \, C^1_\loc(\R^+) \quad \text{as } \, c \downarrow 0.
	\]
Define $\varrho_c$ as the fake distance associated to $u$ and $\sgrh_c$, the definition being meaningful on
	\[
	\Omega_c \doteq \Big\{ x \in \Omega \ : \ u(x) < \sgrh_c(0) \Big\} \subset \Omega.
	\]
Note that $\Omega_c \equiv \Omega$ if $p \le m$, since $\sgrh_c(0) = \infty$, while $\Omega_c \uparrow \Omega$ if $p>m$. Moreover, $\varrho_c \ra \varrho$ locally uniformly in $C^1(\Omega)$ and monotonically from below as $c \downarrow 0$, which implies
	\[
	|\nabla \varrho(x)| = \lim_{c \ra 0} |\nabla \varrho_c(x)|	\qquad \forall \, x \in \Omega.
	\]
We claim that
	\begin{equation}\label{eq_compvhtau}
	v_{h,c}(\varrho_c) \le v_h(\varrho) \qquad \text{on } \, \Omega_c.
	\end{equation}
We postpone for a moment its proof, and conclude the argument. Observe that 
	\[
	\Ricc \ge - (m-1)H(\varrho) \ge -(m-1)[H(\varrho_c) +c] \qquad \text{on } \, \Omega_c,
	\]
and that, under assumption $(b)$, when $p>m$ we can guarantee that $\|u\|_\infty < \sgrh_c(0)$ if $c$ is small enough (in this case, note that $\Omega_c \equiv \Omega$). Hence, we can apply the first part of the proof to $\varrho_c$ and use that $\Omega_c \uparrow \Omega$ to deduce, for each $x \in \Omega$,
	\begin{equation}\label{eq_lower_c}
	|\nabla \varrho(x)| = \lim_{c \ra 0} |\nabla \varrho_c(x)| \le \liminf_{c \ra 0} \max \Big\{ 1, \limsup_{\Omega_c \ni y \to \partial \Omega_c}|\nabla \varrho_c(y)|\Big\}.
	\end{equation}
Next, \eqref{eq_compvhtau} implies
	\[
	|\nabla \varrho_c| = |\nabla u| v_{h,c}(\varrho_c)^{\frac{1}{p-1}} \le |\nabla u| v_{h}(\varrho)^{\frac{1}{p-1}} = |\nabla \varrho| \qquad \text{on } \, \Omega_c.
	\]
Under assumption $(b)$, or if $(a)$ holds with $p = 2$, we have $\Omega_c \equiv \Omega$ and thus
	\[
	\limsup_{\Omega_c \ni y \ra \partial \Omega_c}|\nabla \varrho_c(y)| = \limsup_{y \ra \partial \Omega}|\nabla \varrho_c(y)| \le \limsup_{y \ra \partial \Omega}|\nabla \varrho(y)|,
	\]
which, together with \eqref{eq_lower_c}, implies \eqref{limsup}. If $(a)$ holds with $p > 2$, $\varrho_c$ vanishes on $\partial \Omega_c \cap \Omega$ and therefore
$$
|\nabla \varrho_c(x)| = |\nabla u(x)| v_{h,c}(\varrho_c(x))^{\frac{1}{p-1}} \ra 0 \qquad \text{as } \, x \ra \partial \Omega_c \cap \Omega,
$$
hence
	\[
	\limsup_{\Omega_c \ni y \ra \partial \Omega_c}|\nabla \varrho_c(y)| = \limsup_{\Omega_c \ni y \ra \partial \Omega}|\nabla \varrho_c(y)| \le \limsup_{y \ra \partial \Omega}|\nabla \varrho(y)|,
	\]
again proving \eqref{limsup}.\par
	To show \eqref{eq_compvhtau}, by Sturm comparison $v_{h,c}/v_h$ is increasing on $\R^+$, thus \cite[Prop. 4.12]{bmr2} (see also \cite[Lem. 4.11]{bmpr}) implies the inequality $|(\log \sgrh_c)'| \ge |(\log \sgrh)'|$ on $\R^+$, that is,
\[
\frac{v_{h,c}^{-\frac{1}{p-1}}}{\sgrh_c}(t) \ge \frac{v_{h}^{-\frac{1}{p-1}}}{\sgrh}(t) \qquad \forall \, t \in \R^+.
\]
We evaluate at $t = \varrho_c$ and use $\varrho_c \le \varrho$ together with the monotonicity of $|(\log \sgrh)'(t)|$ that follows from Lemma \ref{lem_ODE}, to deduce
\[
\frac{v_{h,c}(\varrho_c)^{-\frac{1}{p-1}}}{\sgrh_c(\varrho_c)} \ge \frac{v_{h}(\varrho_c)^{-\frac{1}{p-1}}}{\sgrh(\varrho_c)} \ge \frac{v_{h}(\varrho)^{-\frac{1}{p-1}}}{\sgrh(\varrho)}.
\]
Inequality \eqref{eq_compvhtau} follows since $\sgrh_c(\varrho_c)= \sgrh(\varrho) = u$, concluding the proof.
\end{proof}


We are ready to prove Theorem \ref{teo_good_intro} in the Introduction.
%
%
%
%
%

\begin{theorem}\label{teo_good}
Suppose that $M^m$ satisfies $(\Hp)$ for some $p \in (1,m]$ and
$$
H(t) \ge 0, \qquad H(t) \ \text{ non-increasing on $\R^+$.}
$$
Then, having defined $\varrho$ as in Definition \ref{def_fake},
\begin{itemize}
\item[(i)] $|\nabla \varrho| \le 1$ on $M \backslash \{o\}$.
\item[(ii)] Equality $|\nabla \varrho(x)|=1$ holds for some $x \in M \backslash \{o\}$ if and only if $\varrho = r$ and $M$ is the radially symmetric model $M_h$.
\end{itemize}
\end{theorem}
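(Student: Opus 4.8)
The plan is to read part~(i) directly off the Key Lemma (Lemma~\ref{lem_key}) once the boundary term is identified, and to obtain the rigidity in~(ii) from the equality case of the new Bochner formula (Proposition~\ref{prop_miracolo}) together with a strong maximum principle propagation argument. For~(i), I would apply Lemma~\ref{lem_key} with $\Omega = M\backslash\{o\}$ and $u = \gr$ the Green kernel with pole at $o$, so that the function $\varrho$ produced there coincides with the one of Definition~\ref{def_fake}. The standing hypotheses $(\Hp)$ with $p\in(1,m]$ and $H\ge 0$ non-increasing make $\Delta_p$ non-parabolic on $M_h$ (Proposition~\ref{prop_parabnonparab}), and the Ricci bound \eqref{eq_lowerricci_conrho} required by Lemma~\ref{lem_key} holds by Remark~\ref{rem_importante}(2), using $\varrho\le r$ from Proposition~\ref{prop_basiccomp} and the monotonicity of $H$. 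Since $\partial(M\backslash\{o\}) = \{o\}$, the quantity $\limsup_{x\to\partial\Omega}|\nabla\varrho|$ appearing in Lemma~\ref{lem_key} equals $\limsup_{x\to o}|\nabla\varrho(x)|$, which is $1$ by the local analysis at the pole: Proposition~\ref{prop_nearminm} for $p<m$, or directly items (1)--(2) of Theorem~\ref{teo_localsingular} combined with $\nabla\varrho = -v_h(\varrho)^{1/(p-1)}\nabla\gr$ for $p=m$, gives $|\nabla\varrho(x)|\to 1$ as $x\to o$. Lemma~\ref{lem_key} then yields $\sup_{M\backslash\{o\}}|\nabla\varrho|\le\max\{1,1\}=1$.

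For~(ii), the ``if'' direction is immediate since on $M_h$ one has $\varrho=r$ with $|\nabla r|\equiv 1$. For the converse, suppose $|\nabla\varrho(x_0)|=1$ at some $x_0$. Then $\nabla\gr(x_0)\neq 0$, so $\varrho$ is smooth near $x_0$, and by~(i) the function $w\doteq 1-|\nabla\varrho|^2$ is nonnegative and smooth near $x_0$ with $w(x_0)=0$. On $\{|\nabla\varrho|>0\}$ I would feed \eqref{eq_lowerricci_conrho} into \eqref{bochner_miracolo}, discard the first two nonnegative terms on the right, and use $\Ricc(\nu,\nu)\ge -(m-1)H(\varrho)$ to obtain
\[
\tfrac12\, h(\varrho)^{-\mu}\diver\!\bigl(h(\varrho)^{\mu}A(\nabla\varrho)\nabla|\nabla\varrho|^2\bigr)\ \ge\ -(m-1)H(\varrho)\,|\nabla\varrho|^{p}\,w .
\]
On a small neighbourhood $W\ni x_0$ one has $\tfrac12\le|\nabla\varrho|\le 1$ and $H(\varrho)$ bounded, so the divergence-form operator $L\,\cdot = h(\varrho)^{-\mu}\diver(h(\varrho)^{\mu}A(\nabla\varrho)\nabla\,\cdot\,)$ is uniformly elliptic with smooth coefficients on $W$; rewriting the inequality as $Lw\le Cw$ with a constant $C\ge 0$, and recalling $w\ge 0$, $w(x_0)=0$, the strong maximum principle forces $w\equiv 0$ on $W$. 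Hence $\{|\nabla\varrho|=1\}$ is open in $M\backslash\{o\}$; it is also closed there by continuity, and $M\backslash\{o\}$ is connected (as $m\ge 2$), so $|\nabla\varrho|\equiv 1$ on $M\backslash\{o\}$.

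It then remains to extract the rigidity. Since $|\nabla\varrho|\equiv 1$, $\nabla|\nabla\varrho|^2\equiv 0$, so $\nabla^2\varrho(\nu,\cdot)\equiv 0$ (in particular $\varrho_{\nu\nu}=\varrho_{\nu j}=0$), the left side of \eqref{bochner_miracolo} vanishes, and since each of the three right-hand terms is $\ge 0$, all three vanish identically. Vanishing of the first term gives $\nabla^2F=\tfrac1m(\tr_B\nabla^2F)\,\metric_B$, which via the component formulas \eqref{ide_basicheimpo} and $F_{ij}=F'\varrho_{ij}$ forces $\varrho_{ij}=\lambda\,\delta_{ij}$ on $\nu^{\perp}$; taking the trace and using $\Delta_p\varrho=\tfrac{v_h'(\varrho)}{v_h(\varrho)}|\nabla\varrho|^p$ from \eqref{identi_varrho} together with $\varrho_{\nu\nu}=0$ and $|\nabla\varrho|=1$ yields $(m-1)\lambda=\Delta\varrho=\Delta_p\varrho=(m-1)h'(\varrho)/h(\varrho)$, i.e. $\nabla^2\varrho=\tfrac{h'(\varrho)}{h(\varrho)}\bigl(\metric-\di\varrho\otimes\di\varrho\bigr)$ on $M\backslash\{o\}$. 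As $|\nabla\varrho|=1$ and $\nabla^2\varrho(\nu,\cdot)=0$, the integral curves of $\nabla\varrho$ are unit-speed geodesics along which $\varrho$ grows at unit rate; running the one through a given $x$ backwards, $\varrho$ decreases to $0$, and since $\varrho^{-1}(0)=\{o\}$ and the geodesic extends for all time by completeness, it reaches $o$ at parameter $\varrho(x)$, whence $r(x)\le\varrho(x)$; with Proposition~\ref{prop_basiccomp} this gives $\varrho\equiv r$. In particular $r$ is smooth on $M\backslash\{o\}$, so $\exp_o$ is a diffeomorphism, $\metric=\di r^2+g_r$ in polar coordinates, the Hessian identity becomes $\partial_r g_r=2\tfrac{h'(r)}{h(r)}g_r$, hence $g_r=h(r)^2 g_0$ for a fixed metric $g_0$ on $\Sph^{m-1}$, and letting $r\to 0$ and invoking Proposition~\ref{prop_nearminm} (so $g_r/r^2$ tends to the round metric while $h(r)^2/r^2\to 1$) we get that $g_0$ is round, i.e. $M=M_h$.

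The substantive content is~(ii): part~(i) is essentially a corollary of the Key Lemma. In~(ii) the delicate points I expect are, first, turning the degenerate, weighted inequality \eqref{bochner_miracolo} into a bona fide uniformly elliptic differential inequality for $1-|\nabla\varrho|^2$ near the equality point, so that a standard strong maximum principle applies; and second, the bookkeeping in the equality case of \eqref{bochner_miracolo} needed to isolate the model Hessian identity. The identification $M\cong M_h$ afterwards is routine. The one point to double-check is that the behaviour at $o$ used in~(i) and at the end of~(ii) is available also when $p=m$, where Proposition~\ref{prop_nearminm} is stated only for $p<m$; this should follow from items (1)--(2) of Theorem~\ref{teo_localsingular}.
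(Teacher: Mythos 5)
Your proposal is correct and follows essentially the same route as the paper: part (i) via Lemma \ref{lem_key} with $\Omega=M\backslash\{o\}$, Remark \ref{rem_importante}, and the boundary limit $|\nabla\varrho|\to 1$ at the pole from Theorem \ref{teo_localsingular}; part (ii) via the differential inequality for $1-|\nabla\varrho|^2$ from Proposition \ref{prop_miracolo}, a strong minimum principle, the resulting traceless-Hessian identity, and the flow of $\nabla\varrho$ by unit-speed geodesics reaching $o$. Your extra care in localizing the maximum principle where the operator is uniformly elliptic, and your polar-coordinate identification of $M_h$ (in place of the paper's cut-locus density argument), are just more explicit versions of the same steps.
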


\begin{proof}
$(i)$. Inequality $\varrho \le r$ holds because of Proposition \ref{prop_basiccomp}, hence $(2)$ in Theorem  \ref{teo_localsingular} guarantees
	\[
	|\nabla \varrho(x)| = v_h(\varrho(x))^{\frac{1}{p-1}} |\nabla \gr(x)| \le v_h(r(x))^{\frac{1}{p-1}} |\nabla \gr(x)| \ra 1	
	\]
as $x \ra o$. Thus, $\limsup_{x \ra o} |\nabla \varrho(x)| \le 1$. For $\delta>0$, fix a small ball $B$ around $o$ so that $|\nabla \varrho| \le 1+ \delta$ on $B \backslash \{o\}$. In view of Remark \ref{rem_importante}, and since $\gr$ is bounded on $M \backslash \overline{B}$ by its very construction and the maximum principle, we are in the position to apply Lemma \ref{lem_key} to conclude $|\nabla \varrho| \le 1+\delta$ on $M \backslash B$. Item $(i)$ follows by letting $\delta \to 0$ and shrinking $B$ accordingly. To show $(ii)$, we observe that because of \eqref{bochner_miracolo} and the monotonicity of $H$, the function $u = 1-|\nabla \varrho|^2 \ge 0$ solves
$$
\begin{array}{lcl}
\frac{1}{2} h^{-\mu} \diver\big( h^\mu A(\nabla \varrho)\nabla u\big) & \le & \disp -|\nabla \varrho|^{p} \Big[ \Ricc(\nu,\nu) + (m-1)H(\varrho)|\nabla \varrho|^2\Big] \\[0.2cm]
& \le & -(m-1)|\nabla \varrho|^{p} \Big[-H(r) + H(\varrho)|\nabla \varrho|^2\Big] \\[0.2cm]
& \le & \disp (m-1)H(r)|\nabla \varrho|^pu.
\end{array}
$$
If $u$ vanishes at some point, by the strong minimum principle $u \equiv 0$ on $M$, that is, $|\nabla \varrho| \equiv 1$. In this case, again by \eqref{bochner_miracolo} we deduce
\begin{equation}\label{lahessiana}
\nabla^2 F = \frac{1}{m} \mathrm{Tr}_B(\nabla^2 F) \metric_B \qquad \text{on } \, M,
\end{equation}
with $F$ as in \eqref{def_mu}. Since $|\nabla \varrho|=1$, the integral curves of the flow $\Phi_t$ of $\nabla \varrho$ are unit speed geodesics. For $x \in M\backslash\{o\}$, because of the completeness of $M$ the geodesic $\Phi_t(x)$ is defined on the maximal interval $(-\varrho(x), \infty)$, and $\lim_{t \ra - \varrho(x)}\Phi_t(x) = o$, being $o$ the unique zero of $\varrho$. Hence, $\Phi_t$ is a unit speed geodesic issuing from $o$ to $x$. If $x \in M\backslash \cut(o)$, it therefore holds $\varrho(x) = r(x)$, and by continuity $\varrho = r$ on $M$. The function $r$ is thus $C^{1}$ outside of $o$, and this implies $\cut(o)= \emptyset$, that is, $o$ is a pole of $M$. Indeed, the distance function $r$ is not differentiable at any point $y \in \cut(o)$ joined to $o$ by at least two minimizing geodesics. The set of such points is dense in $\cut(o)$ by \cite{bishop, wolter}, so $\cut(o) = \emptyset$ whenever $r$ is everywhere differentiable outside of $o$. Rewriting \eqref{lahessiana} in terms of $\nabla^2 \varrho = \nabla^2 r$ we get
\begin{equation}\label{eq_hessianr}
\nabla^2 r = \frac{h'(r)}{h(r)} \Big( \metric - \di r \otimes \di r \Big) \qquad \text{on } \, M \backslash \{o\}.
\end{equation}
Integrating along geodesics we deduce that $M$ is isometric to $M_h$.
\end{proof}

\begin{remark}[\textbf{Hardy weights}]
\emph{When rephrased in terms of $\gr$, the bound $|\nabla \varrho| \le 1$ becomes
\begin{equation}\label{esti_gradientG}
|\nabla \log \gr| \le |( \log \sgrh)'|\big(\varrho(x)\big).
\end{equation}
We mention that the function $|\nabla \log \gr|$ naturally appears as a weight in the Hardy inequality
$$
\left( \frac{p-1}{p}\right)^p \int |\nabla \log \gr|^p |\psi|^p \le \int |\nabla \psi|^p \qquad \forall \, \psi \in \lip_c(M),
$$
which holds on every manifold where $\Delta_p$ is non-parabolic (see \cite[Prop. 4.4]{bmr5}). Hence,  \eqref{esti_gradientG} can be somehow seen as a comparison theorem for Hardy weights. In this respect, it is worth to notice that the weight of $M_h$ transplanted to $M$, that is, the function
$$
\left( \frac{p-1}{p}\right)^p |( \log \sgrh)'|\big(r(x)\big),
$$
is a Hardy weight on $M$ provided that $M_h$ is a model from \emph{above} for $M$, see Section 5 in \cite{bmr5}. For a systematic study of Hardy weights and their role in geometric problems in the linear case $p=2$ we refer the reader to \cite{bmr2, bmr3, bmr4}.
}
\end{remark}

%
%
%
%

\subsection{The sharp gradient estimate for $p$-harmonics: another proof}
To illustrate the versatility of the key Lemma \ref{lem_key}, we consider the case where $p \in (1,\infty)$ and $u>0$ solves $\Delta_pu =0$ on the entire $M$, and we suppose that
$$
\Ricc \ge -(m-1)\kappa^2 \qquad \text{on } \, M,
$$
for some constant $\kappa \ge 0$. By a recent result in \cite{sungwang},
\begin{equation}\label{eq_uppergradient}
|\nabla \log u| \le \frac{(m-1)\kappa}{p-1} \qquad \text{on } \, M.
\end{equation}
The upper bound is sharp in view of warped product manifold $M = \R \times N^{m-1}$, for a compact $(N,\di s^2_N)$ with non-negative Ricci curvature, endowed with the metric $\metric = \di t^2 + e^{-2t}\di s^2_N$. A direct computation shows that $\Ricc \ge -(m-1)$ and
$$
u(x) = \exp\left\{ \frac{m-1}{p-1}t\right\} \qquad \text{is $p$-harmonic on $M$ with } \, |\nabla \log u| = \frac{m-1}{p-1}.
$$
To obtain their sharp global estimate, in \cite{sungwang} the authors rely on the next local gradient bound for $p$-harmonic functions in \cite{wangzhang}, which was proved via a subtle Moser iteration procedure:
\begin{equation}\label{esti_gad_chengyau}
|\nabla \log u| \le C_{m,p} \frac{1 + \kappa R}{R} \qquad \text{on } \, B_R(x),
\end{equation}
whenever $u$ is defined on $B_{4R}(x)$.

\begin{remark}
\emph{We underline that the constant $C_{m,p}$ in \eqref{esti_gad_chengyau} satisfies $(p-1)C_{m,p} \ra \infty$ as $p \ra 1$, which makes \eqref{esti_gad_chengyau} unsuitable for the limit procedures described in the next sections.
}
\end{remark}

As a consequence of Lemma \ref{lem_key}, we can give a direct proof of a version of \eqref{eq_uppergradient} which is valid for sets $\Omega \subset M$ with possibly non-empty boundary, under suitable growth assumptions on $u$ or if $m=2 \le p$. It should be stressed that adapting the proof of \cite{sungwang} to sets with boundary seems to be nontrivial since the upper bound in \eqref{esti_gad_chengyau} blows up as $R \ra 0$, while the global boundedness of $|\nabla \log u|$ which follows from \eqref{esti_gad_chengyau} for entire solutions plays a crucial role in the derivation of \eqref{eq_uppergradient}.
\begin{theorem}\label{teo_bellagradiente}
Let $M^m$ be a complete manifold, let $\Omega \subset M$ be an open set (possibly the entire $M$) and suppose that
\begin{equation}\label{ricci_per_Liouville}
\inf_M \Ricc > -\infty, \qquad \Ricc \ge -(m-1)\kappa^2 \qquad \text{on } \, \Omega,
\end{equation}
for some constant $\kappa \in \R^+_0$. Let $u>0$ solve $\Delta_pu =0$ on $\Omega$ for some $p \in (1,\infty)$, and assume that either $m = 2 \le p$ or that one of the following conditions holds for some sequence $R_j \to \infty$:
\[
\begin{array}{ll}
\log \|u\|_{L^\infty(\Omega \cap B_{R_j})} = o(R_j^2) & \quad \text{if } \, p < m, \\[0.2cm]
\|u\|_{L^\infty(\Omega \cap B_{R_j})} = o(R_j^2) & \quad \text{if } \, p = m, \\[0.2cm]
u \in L^\infty(\Omega) & \quad \text{if } \, p > m.
\end{array}
\]
Then,
$$
|\nabla \log u| \le \max\left\{ \frac{m-1}{p-1}\kappa,  \ \limsup_{x \ra \partial \Omega} |\nabla \log u|\right\}.
$$
\end{theorem}

\begin{proof}
The case $\kappa = 0$ can be handled by choosing a sequence $\kappa_j \downarrow 0$ and letting $j \ra \infty$ in the resulting estimate, so we can suppose without loss of generality that $\kappa>0$. For $c>0$ small, we define $\varrho_c$ by the formula
\begin{equation}\label{def_rhoc_grad}
c u(x) = \mathscr{G}^\kappa\big( \varrho_c(x)\big) = \int_{\varrho_c(x)}^\infty v_\kappa(s)^{-\frac{1}{p-1}} \di s \qquad \text{on } \, \Omega_c \doteq \big\{ x \in \Omega : c u(x) < \mathscr{G}^\kappa(0) \big\}.
\end{equation}
If $p \le m$, then $\Omega_c \equiv \Omega$ for each $c>0$ since $\sgrh(0)=+\infty$. If $p>m$ and $u$ is bounded, up to reducing $c$ we can guarantee that $c\|u\|_\infty < \sgrh_c(0)$, so $\Omega_c \equiv \Omega$ for small enough $c$. Summarizing, in our assumptions on $u$,$m$,$p$ we ensure $\Omega_c \equiv \Omega$ for small $c$ unless $m=2< p$ and $u$ is unbounded, a case which will be examined later. By its very definition, $\varrho_c>0$ on $\Omega$. Since we assume a constant lower bound on $\Ricc$, by Remark \ref{rem_importante}, setting $H(t) = \kappa^2$ assumption \eqref{eq_lowerricci_conrho} holds. Therefore, we can apply the key Lemma \ref{lem_key} to $\varrho_c$ and infer
\begin{equation}\label{bound_rhoc_grad}
\sup_\Omega |\nabla \varrho_c| \le \max\Big\{ 1, \limsup_{x \ra \partial \Omega} |\nabla \varrho_c(x)|\Big\}.
\end{equation}
Rephrasing in terms of $u$, we deduce from \eqref{bound_rhoc_grad} that for each $x \in \Omega$
\begin{equation}\label{eq_bella}
\begin{array}{lcl}
\disp |\nabla \log u(x)| & \le & \disp \disp |(\log \mathscr{G}^{\kappa})'|(\varrho_c(x)) \cdot \max \left\{ 1, \limsup_{y \ra \partial \Omega} \frac{|\nabla \log u(y)|}{|(\log \mathscr{G}^{\kappa})'|(\varrho_c(y))} \right\} \\[0.5cm]
& \le & \disp \disp |(\log \mathscr{G}^{\kappa})'|(\varrho_c(x)) \cdot \max \left\{ 1, \frac{p-1}{(m-1)\kappa}\limsup_{y \ra \partial \Omega}|\nabla \log u(y)| \right\}, \\[0.5cm]
\end{array}
\end{equation}
where the last inequality follows since $|(\log \mathscr{G}^\kappa)'|$ is decreasing on $\R^+$ because of Lemma \ref{lem_ODE}, and since
$$
|(\log \mathscr{G}^\kappa)'|(t) \downarrow \frac{m-1}{p-1}\kappa \qquad \text{as } \, t \ra \infty.
$$
By \eqref{def_rhoc_grad}, for each fixed $x \in M$, $\varrho_c(x) \ra \infty$ as $c \ra 0$. Taking limits in \eqref{eq_bella} as $c \ra 0$ we obtain
\begin{equation}\label{eq_bella_2}
\disp |\nabla \log u(x)| \le \disp \disp \frac{m-1}{p-1} \kappa \cdot \max \left\{ 1, \frac{p-1}{(m-1)\kappa}\limsup_{y \ra \partial \Omega} |\nabla \log u(y)| \right\},
\end{equation}
as claimed. If $m=2 <p$ and $u$ is unbounded, Lemma \ref{lem_key}, $(a)$ guarantees that
$$
|\nabla\varrho_c| \le \max \Big\{ 1, \limsup_{\Omega_c \ni y \ra \partial \Omega_c} |\nabla \varrho_c(y)| \Big\} \qquad \text{on } \, \Omega_c.
$$
However, $\varrho_c=0$ on $\partial \Omega_c \cap \Omega$, whence $|\nabla \varrho_c(x)| = c|\nabla u(x)| v_\kappa(\varrho_c(x))^{\frac{1}{p-1}} \to 0$ as $x \ra \partial \Omega_c \cap \Omega$ and we deduce
$$
|\nabla\varrho_c| \le \max \Big\{ 1, \limsup_{\Omega_c \ni y \ra \partial \Omega} |\nabla \varrho_c(y)| \Big\} \qquad \text{on } \, \Omega_c.
$$
The required conclusion then follows as in the case $\Omega_c \equiv \Omega$, once we observe that $\varrho_c(x) \ra \infty$ and $\Omega_c \uparrow \Omega$ as $c \ra 0$.
\end{proof}

\subsection{Capacitors and exterior domains}

Let $K$ be a $C^1$, relatively compact open set, and let $u$ be the $p$-capacity potential of $(K,M)$. Assuming the Ricci curvature bound \eqref{ricci_per_Liouville}, by Theorem \ref{teo_bellagradiente} a global estimate for $|\nabla \log u|$ reduces to an estimate on
$$
\sup_{\partial K} |\nabla \log u|.
$$
Barriers for $\log u$ on $\partial \Omega$ are described in \cite[Sect. 3]{kotschwarni} for $1< p< m$. Here, we extend the argument to every $p$ and slightly shorten the proof. We recall that the Dirichlet kernel for $\Delta_p$ on the ball $B_{R}$ in the model of curvature $-\kappa^2$ is given by
$$
\sgr^\kappa_{R}(t) \doteq \int_t^{R} v_\kappa(s)^{-\frac{1}{p-1}}\di s.
$$
\begin{proposition}\label{prop_boundarygradient}
Let $u$ be the $p$-capacity potential of $(K,M)$. Fix $x \in \partial K$ and define
$$
R_x = \sup \Big\{ r \ : \ B_r \subset \overline{K} \ \ \text{is a ball of radius $r$ with } \, x \in \partial B_r \Big\}.
$$
Suppose that $R_x>0$. Fix $\tau \in (0, \infty]$ and let  $\kappa \in \R^+_0$ such that
	\[
	\Ricc \ge -(m-1)\kappa^2 \qquad \text{on } \ \ B_{\tau}(K)= \{ y : \di(y,K) < \tau\}.
	\]
Then,
\begin{equation}\label{boundarygradient}
|\nabla \log u(x)| \le \big| (\log \sgr^{\kappa}_{R_x+\tau})'(R_x)\big|.
\end{equation}
\end{proposition}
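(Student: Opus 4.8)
The plan is to bound $|\nabla u(x)|$ — which equals $|\nabla\log u(x)|$ because $u(x)=1$ — by comparing $u$ near $x$ with an explicit radial barrier centred at the centre $z$ of the interior ball $B_{R_x}(z)$, and then reading off the normal derivative at $x$ by a Hopf-type boundary argument. First I would fix $z$ with $B_{R_x}(z)\subset\overline K$ and $x\in\partial B_{R_x}(z)$, and set $r_z=\dist(\cdot,z)$. Two elementary remarks: since $\partial K$ is $C^1$, the open ball $B_{R_x}(z)$ lies in $\mathrm{int}\,\overline K=K$, so $r_z\ge R_x$ on $\partial K$; and for any $y$ with $r_z(y)<R_x+\tau$ a minimizing geodesic from $z$ to $y$ meets $\overline{B_{R_x}(z)}\subset\overline K$, whence $\dist(y,K)<\tau$, so $B_{R_x+\tau}(z)\subset B_\tau(K)$ and therefore $\Ricc\ge-(m-1)\kappa^2$ on $B_{R_x+\tau}(z)$. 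I then work on $W=B_{R_x+\tau}(z)\setminus\overline K$, on which $v:=1-u$ is $p$-harmonic (since $\Delta_p(1-u)=-\Delta_p u=0$), with $0\le v<1$ and $v=0$ on $\partial K$, and take as barrier $\phi=\psi(r_z)$, where
$$
\psi(t)=1-\frac{\sgr^\kappa_{R_x+\tau}(t)}{\sgr^\kappa_{R_x+\tau}(R_x)}
$$
is the radial $p$-harmonic function on the model annulus of radii $R_x$ and $R_x+\tau$ with $\psi(R_x)=0$, $\psi(R_x+\tau)=1$, so that $\psi'=v_\kappa^{-1/(p-1)}/\sgr^\kappa_{R_x+\tau}(R_x)>0$ on $[R_x,R_x+\tau]$.

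The comparison step: where $r_z$ is smooth,
$$
\Delta_p\big(\psi(r_z)\big)=\big[(p-1)\psi''+\psi'\,\Delta r_z\big]|\psi'|^{p-2}\le\Big[(p-1)\psi''+\psi'\tfrac{v_\kappa'(r_z)}{v_\kappa(r_z)}\Big]|\psi'|^{p-2}=0,
$$
using the Laplacian comparison $\Delta r_z\le v_\kappa'(r_z)/v_\kappa(r_z)$ (legitimate since $\Ricc\ge-(m-1)\kappa^2$ on $B_{R_x+\tau}(z)$; see \cite[Chapter 2]{prs}), the positivity $\psi'>0$, and the ODE defining $\psi$; the inequality persists weakly across $\cut(z)$ because there the distributional $\Delta r_z$ only picks up a nonpositive singular part, exactly as in the proof of Proposition \ref{prop_compagreenmodel}. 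Hence $\phi$ is a weak supersolution of $\Delta_p$ on the bounded set $W$. On $\partial W\subset\partial B_{R_x+\tau}(z)\cup\partial K$ one has $\phi\ge v$: on the first piece $\phi=\psi(R_x+\tau)=1>v$, and on the second $\phi\ge\psi(R_x)=0=v$ (since $r_z\ge R_x$ on $\partial K$ and $\psi$ is increasing). The comparison principle for $\Delta_p$ (\cite[Thm. 3.4.1]{pucciserrin}) then yields $v\le\phi$ on $W$, and $v(x)=0=\psi(R_x)=\phi(x)$.

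For the gradient bound, let $n$ be the outward unit normal of $\partial K$ at $x$ and, for small $t>0$, $y_t=\exp_x(tn)\in W$. By the triangle inequality $r_z(y_t)\le R_x+t$, and $\psi$ increasing, so $v(y_t)\le\phi(y_t)\le\psi(R_x+t)$; dividing by $t$ and letting $t\to 0^+$ gives $\langle\nabla v(x),n\rangle\le\psi'(R_x)$ (here I use that $u$, being the $p$-capacity potential of the $C^1$ domain $K$, is $C^1$ up to $\partial K$). Since $u$ attains its maximal value $1$ along $\partial K$, $\nabla u(x)$ is a nonpositive multiple of $n$, so $\langle\nabla v(x),n\rangle=-\langle\nabla u(x),n\rangle=|\nabla u(x)|$; and $\psi'(R_x)=v_\kappa(R_x)^{-1/(p-1)}/\sgr^\kappa_{R_x+\tau}(R_x)=|(\log\sgr^\kappa_{R_x+\tau})'(R_x)|$. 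Recalling $u(x)=1$ this is the claim for $\tau<\infty$, and the case $\tau=\infty$ follows by letting $\tau\to\infty$, since $\sgr^\kappa_{R_x+\tau}(R_x)\uparrow\sgr^\kappa(R_x)$ and hence the right-hand side decreases to $|(\log\sgr^\kappa)'(R_x)|$.

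The two points needing care are: that the radial function $\psi(r_z)$ is a genuine weak supersolution of $\Delta_p$ on all of $W$, including across $\cut(z)$ — this is standard and handled exactly as in Proposition \ref{prop_compagreenmodel}, the favourable sign coming from $\psi'>0$; and the boundary-point step, which rests on the $C^1$-regularity of the capacity potential up to the $C^1$ hypersurface $\partial K$, and which can alternatively be carried out using only one-sided difference quotients of $u$ along the inward normal, thereby avoiding any appeal to boundary regularity.
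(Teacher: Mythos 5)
Your proof is correct and follows essentially the same route as the paper: both arguments transplant the model kernel $\sgr^\kappa_{R_x+\tau}(r_z)$ on the annulus around the interior tangent ball as a barrier (the paper uses it directly as a subsolution lying below $u$; you use the affinely equivalent supersolution $1-w$ lying above $1-u$), invoke Laplacian comparison plus the comparison principle, and read off the normal derivative at the tangency point $x$. The only differences are cosmetic — restricting the comparison to the bounded annular region rather than extending the barrier by zero, and spelling out the one-sided difference quotient — and your caveat about differentiability of $u$ at $\partial K$ is implicitly present in the paper's proof as well.
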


\begin{proof}
By continuity, for $R=R_x$ there is a ball $B_R(y) \subset \overline{K}$ that is tangent to $\partial K$ at $x$. Let $r = \dist(y,\cdot)$ and on $B_{R+\tau}(y)\subset B_\tau(K)$ we use as barrier the rescaled kernel
$$
w(x) = \frac{\sgr^\kappa_{R+\tau}(r(x))}{\sgr^\kappa_{R+\tau}(R)}.
$$
Then, $w$ is radially decreasing and $w(x)=1$, $w \le 1$ on $B_{R+\tau}(y) \backslash K$, $w=0$ on $\partial B_{R+\tau}(y)$. By the Laplacian comparison theorem from above, $\Delta_p w \ge 0$ weakly on the punctured ball $B_{R+\tau}(y)^*$, and extending $w$ with zero outside of $B_{R+\tau}$ we still obtain a subsolution. By comparison, $w \le u \le 1$ on $M \backslash K$, thus $\log w \le \log u \le 0$ in a neighbourhood of $x$. Since equality holds at $x$, evaluating along curves lying in $M\backslash K$ issuing from $x$  and taking derivatives yields
$$
|\nabla \log u(x)| \le |\nabla \log w(x)| = \big|( \log \sgr^\kappa_{R+\tau})'(R)\big|,
$$
as claimed.
\end{proof}

Combining Theorem \ref{teo_bellagradiente} and Proposition \ref{prop_boundarygradient}, we deduce

\begin{theorem}\label{teo_bellagradiente_global}
Let $M^m$ be a complete manifold satisfying
$$
\Ricc \ge -(m-1) \kappa^2,
$$
for some constant $\kappa \ge 0$. For $p \in (1,\infty)$, Let $u>0$ be the $p$-capacity potential of a capacitor $(K,M)$, where $K$ is a relatively compact $C^1$ open set such that the following quantity is positive:
$$
R = \inf_{x \in \partial K} \sup \Big\{ r \ : \ B_r \subset \overline{K} \ \ \text{is a ball with } \, x \in \partial B_r \Big\}.
$$
Then,
$$
|\nabla \log u| \le \max \left\{ \frac{m-1}{p-1}\kappa, \big| (\log \sgr^{\kappa})'(R)\big| \right\} \qquad \text{on } \, M.
$$
\end{theorem}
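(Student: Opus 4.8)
The plan is to obtain the statement as a direct combination of Theorem \ref{teo_bellagradiente} and Proposition \ref{prop_boundarygradient}, together with the monotonicity of the model weight. I will treat the case $\kappa>0$ in detail, so that $\sgr^\kappa=\sgr^\kappa_\infty$ is well defined; the borderline case $\kappa=0$ (in which $p<m$ is needed for the right-hand side to make sense) follows by the approximation $\kappa_j\downarrow 0$ already used in the proof of Theorem \ref{teo_bellagradiente}, exploiting the continuity of $\kappa\mapsto|(\log\sgr^\kappa)'(R)|$.

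First I would set $\Omega=M\setminus\overline K$. Since $K$ is a relatively compact $C^1$ open set, $\Omega$ is open, $\partial\Omega=\partial K$, and the $p$-capacity potential $u$ of $(K,M)$ is a positive solution of $\Delta_p u=0$ on $\Omega$. As the Ricci bound \eqref{ricci_per_Liouville} holds on all of $M\supset\Omega$, Theorem \ref{teo_bellagradiente} applies and gives
\[
|\nabla\log u|\le\max\Big\{\tfrac{m-1}{p-1}\kappa,\ \limsup_{x\to\partial\Omega}|\nabla\log u|\Big\}\qquad\text{on }\Omega,
\]
so the whole matter is reduced to bounding the boundary term by $\big|(\log\sgr^\kappa)'(R)\big|$.

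Next I would estimate $|\nabla\log u|$ on $\partial K$. Fix $x\in\partial K$ and let $R_x$ be the supremum of the radii $r$ for which there is a ball $B_r\subset\overline K$ with $x\in\partial B_r$; by the definition of $R$ we have $R_x\ge R>0$. Taking $\tau=\infty$ in Proposition \ref{prop_boundarygradient} (legitimate because \eqref{ricci_per_Liouville} holds on $B_\infty(K)=M$, and then $\sgr^\kappa_{R_x+\tau}=\sgr^\kappa$) yields $|\nabla\log u(x)|\le\big|(\log\sgr^\kappa)'(R_x)\big|$. By Lemma \ref{lem_ODE} applied to the (trivially non-increasing) function $H\equiv\kappa^2$, the map $t\mapsto\big|(\log\sgr^\kappa)'(t)\big|$ is decreasing on $\R^+$, hence $\big|(\log\sgr^\kappa)'(R_x)\big|\le\big|(\log\sgr^\kappa)'(R)\big|$. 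Thus $\sup_{\partial K}|\nabla\log u|\le\big|(\log\sgr^\kappa)'(R)\big|$.

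Finally I would transfer this pointwise boundary bound to the $\limsup$ appearing in Theorem \ref{teo_bellagradiente}: since $\partial K\in C^1$, boundary regularity for the $p$-Laplacian makes $u$, hence $|\nabla\log u|$, continuous up to $\partial K$, and then $\limsup_{x\to\partial\Omega}|\nabla\log u|\le\sup_{\partial K}|\nabla\log u|\le\big|(\log\sgr^\kappa)'(R)\big|$ follows by using, as competitors in the definition of the $\limsup$, the complements in $\Omega$ of a shrinking family of neighbourhoods of the compact set $\partial K$. Substituting back gives the claimed bound on $M\setminus K$, and it holds trivially on $K$, where $u\equiv 1$. There is no genuinely hard step here: all the analysis sits in Theorem \ref{teo_bellagradiente} and Proposition \ref{prop_boundarygradient}, and the only point needing a little care is this last, essentially topological, passage from the pointwise bound on $\partial K$ to the $\limsup$.
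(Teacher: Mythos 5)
Your proposal is correct and follows exactly the route the paper intends: the text introduces Theorem \ref{teo_bellagradiente_global} simply as the combination of Theorem \ref{teo_bellagradiente} and Proposition \ref{prop_boundarygradient} (with $\tau=\infty$ and $\sgr^{\kappa}_{R_x+\tau}=\sgr^{\kappa}$), together with $R_x\ge R$ and the monotonicity of $|(\log\sgr^{\kappa})'|$ from Lemma \ref{lem_ODE}, and gives no further proof. If anything you are more careful than the paper, which silently identifies the pointwise bound on $\partial K$ with the $\limsup$ from the interior appearing in Theorem \ref{teo_bellagradiente}.
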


\begin{remark}\label{rem_compacritical}
\emph{It is not hard to estimate the right hand side of \eqref{boundarygradient} with simpler functions. If $\Ricc \ge 0$ on $M$ (thus, necessarily $p < m$), computing $( \log \sgr^\kappa_{R})'$ for $\kappa=0$ yields
$$
|\nabla \log u| \le \frac{m-p}{p-1} \frac{1}{R} \qquad \text{on } \, M,
$$
which mildly improves the bound in \cite{kotschwarni}. On the other hand, when $\kappa >0$, the integral in $\sgr^\kappa$ is not explicitly computable except when $(m-1)/(p-1)$ is an integer, see Example 5.3 in \cite{bmr5}. Indeed, setting $\alpha = (m-1)/(p-1)$ and rescaling the metric so as to have $\kappa = 1$, computing $\sgr^\kappa$ amounts to integrating $\sinh^{-\alpha} t$. Using parameric hyperbolic coordinates $x = \tanh(t/2)$, this leads to a binomial integral of the type
$$
\int \frac{(1-x^2)^{\alpha-1}}{x^\alpha}
$$
which is computable in terms of elementary functions if and only if $\alpha \in \mathbb{Z}$. However, to obtain an explicit upper estimate for $|(\log \sgr^\kappa)'|$ one can use the following comparison result observed in \cite[Prop. 4.12]{bmr2}:
$$
\text{if } \ \ g/h \ \ \text{ is non-decreasing on $(a,b)$, then } \ \ (\log \sgr^g)' \ge (\log \sgrh)' \ \ \text{ on $(a,b)$.}
$$
If $p<m$, a simpler extimate was already given in \cite{kotschwarni}.
}
\end{remark}

\section{Properness of $\varrho$}\label{sec_proper}

The properness of $\varrho$, that is, the property that $\gr(x) \ra 0$ as $r(x) \ra \infty$, is a nontrivial fact intimately related to the geometry of $M$ at infinity. Conditions for its validity will be given in terms of global Sobolev type inequalities or in terms of volume doubling coupled with weak Poincar\'e inequalities. Since local Sobolev and Poincar\'e constants will often appear in the next sections when taking limits as $p \ra 1$, it is convenient to briefly recall their dependence on the geometry of relatively compact balls of $M$.

\begin{remark}\label{rem_localimportant}
\emph{Let $B_{6R} \Subset M^m$ be a relatively compact geodesic ball, and suppose that
\begin{equation}\label{eq_iporicci_local}
\Ricc \ge -(m-1)\kappa^2 \qquad \text{on } \, B_{6R},
\end{equation}
for some constant $\kappa \ge 0$. As before,  we denote  by $V_\kappa(t)$ the volume of a ball of radius $t$ in the space form of curvature $-\kappa^2$. By the Bishop-Gromov volume comparison and the convexity of $V_\kappa$,
\begin{equation}\label{dou_local}
\frac{|B_{2r}(x)|}{|B_r(x)|} \le \frac{V_\kappa(2r)}{V_\kappa(r)} \le \frac{V_\kappa(2R)}{V_\kappa(R)} \doteq C_{\Dou} \qquad \forall \, B_r(x) \Subset B_{R}.
\end{equation}
Furthermore, by \cite[Thm. 5.6.6]{saloff}, see also \cite[Thm. 1.4.1]{koreschoen}, there exists a constant $c_m$ such that, for each $p \in [1,\infty)$, the following weak $(p,p)$-Poincar\'e inequality holds for balls $B_{2r}(x) \subset B_{2R}$:
\begin{equation}\label{buser}
\disp \left\{ \fint_{B_r(x)} |\psi - \bar \psi_{B_r(x)}|^p \right\}^{\frac{1}{p}} \le \disp r \Po_{p,p} \left\{\fint_{B_{2r}(x)} |\nabla \psi|^p \right\}^{\frac{1}{p}} \qquad \forall \, \psi \in \lip(B_{2R}),
\end{equation}
where $\bar \psi_{B_r(x)}$ is the mean value of $\psi$ on $B_r(x)$ and where we set
\begin{equation}\label{poincare_constant}
\Po_{p,p} = \exp\left\{ \frac{c_m(1+ \kappa R)}{p}\right\}.
\end{equation}
Note that the result in \cite{saloff} is proved for $p=1$, and a minor modification using Jensen's inequality yields \eqref{buser} with the stated constant for every $p$. As a consequence of \eqref{buser} with $p=1$ and of Theorem 3.3.5 in \cite{saloff}, there exists $C_m>0$ such that the following $L^1$ Sobolev inequality with potential holds:
\begin{equation}\label{eq_soboconpote}
\left(\int |\psi|^{\frac{m}{m-1}} \right)^{\frac{m-1}{m}} \le \left[ \frac{C_{m}}{\Upsilon}\right] \left( \Po_{1,1} \int |\nabla \psi| + \frac{1}{R} \int |\psi| \right) \qquad \forall \, \psi \in C^\infty_c(B_R),
\end{equation}
where
$$
\Upsilon = \inf \left\{ \frac{|B_t(x)|}{V_0(t)} \ \ : \ \ t \in (0, R), \  x \in B_R\right\}.
$$
Observe that $\Upsilon$ can be estimated from below in terms of $|B_R|$ by using \eqref{eq_iporicci_local} and volume comparison:
\begin{equation}\label{upsilon_Upsilon}
\forall \, x \in B_R, \ t \in (0,R), \quad \frac{|B_t(x)|}{V_0(t)} \ge \frac{|B_{2R}(x)|}{V_\kappa(2R)}\frac{V_\kappa(t)}{V_0(t)} \ge \frac{|B_R|}{V_\kappa(2R)}.
\end{equation}
We next remove the potential part in \eqref{eq_soboconpote} with a slight variation of an argument in \cite[Cor. 1.1]{lischoen}: pick $y \in \partial B_{2R}$ and, setting $r_y = \dist(y,\cdot)$, consider the function
$$
\zeta(x) = \int^{3R}_{r_y(x)} \frac{V_\kappa(3R)- V_\kappa(t)}{v_\kappa(t)}\di t,
$$
with $v_\kappa(t), V_\kappa(t)$ the volume of geodesic spheres and balls, respectively, in the model of curvature $-\kappa^2$. A computation that uses the Laplacian comparison theorem and \eqref{eq_iporicci_local} gives $\Delta \zeta \ge 1$ weakly on $B_R$, thus for every $\psi \in \lip_c(B_R)$ we obtain
$$
\disp \int |\psi| \le \disp \int |\psi|\Delta \zeta = \int \langle \nabla \zeta, \nabla |\psi| \rangle \le \sup_{B_R}|\nabla \zeta| \int |\nabla \psi| \leq \disp \frac{V_\kappa(3R)}{v_\kappa(R)} \int |\nabla \psi|.
$$
Inserting into \eqref{eq_soboconpote} and using \eqref{upsilon_Upsilon} we infer the local $L^1$ Sobolev inequality
\begin{equation}\label{eq_sobosempote}
\begin{array}{lcl}
\disp \left(\int |\psi|^{\frac{m}{m-1}} \right)^{\frac{m-1}{m}} & \le & \disp \frac{C_{m}V_\kappa(2R)}{|B_R|} \left( \Po_{1,1} + \frac{V_\kappa(3R)}{R v_\kappa(R)} \right) \int |\nabla \psi| \\[0.5cm]
& = & \disp \So_{1,m}(R) \int |\nabla \psi| \qquad \forall \, \psi \in \lip_c(B_R)
\end{array}
\end{equation}
%
%
For each $p \in (1,m)$, inserting as a test function $|\psi|^{p\frac{m-1}{m-p}}$ and using H\"older inequality one readily deduces the $L^p$ Sobolev inequality
\begin{equation}\label{SobolevL1_local}
\left( \int |\psi|^{\frac{mp}{m-p}} \right)^{\frac{m-p}{m}} \le \So_{p,m}(R) \int |\nabla \psi|^p \qquad \forall \, \psi \in \lip_c(B_{R})
\end{equation}
where
$$
\So_{p,m}(R) = \left[ \frac{\So_{1,m}(R) p(m-1)}{m-p}\right]^p
$$
converges to $\So_{1,m}(R)$ as $p \ra 1$.
}
\end{remark}

\subsection{Properness under a Sobolev inequality}

We first examine the vanishing of $\gr$ at infinity under the validity of a global Sobolev inequality of the type
$$
\left( \int |\psi|^{ \frac{\nu p}{\nu-p}}\right)^{\frac{\nu-p}{\nu}} \le \So_{p,\nu} \int |\nabla \psi|^p \qquad \forall \, \psi \in \lip_c(M).
$$

To do so, we shall employ the De Giorgi-Nash-Moser iteration technique to obtain a uniform upper bound for $\gr$ on the entire $M$. For applications to the IMCF, it is important to keep track of the dependence on $p$ of the half-Harnack inequalities for positive subsolutions-supersolutions to $\Delta_p u =0$. This is done in \cite{moser_3}, and we present here a slightly different approach that yields a more explicit dependence of the constants on the geometry. As in \cite{moser_3}, we need to tweak the iteration to achieve bounds that behave nicely as $p \ra 1$. We begin with the following standard Caccioppoli Lemma, which can be found in \cite{rigolisalvatorivignati}.

\begin{lemma}\label{lem_caccio}
Let $A_0 \subset M$ be an open set, fix $p \in (1, \infty)$ and let $u \in C(A_0) \cap W^{1,p}_\loc(A_0)$ be non-negative on $A_0$. Fix $\bar q \in \R$. If either
\begin{itemize}
\item[(i)] $\Delta_p u \ge 0$ on $A_0$ and $\bar q > p-1$, or
\item[(ii)] $\Delta_p u \le 0$ on $A_0$ and $\bar q < p-1$,
\end{itemize}
then, for every $0 \le \phi \in C(A_0) \cap W^{1,p}_0(A_0)$,
\begin{equation}\label{caccioppoli}
\int \phi^p u^{\bar q-p}|\nabla u|^p \le \left| \frac{p}{\bar q-p+1}\right|^p \int u^{\bar q} |\nabla \phi|^p.
\end{equation}
\end{lemma}

We next consider the half-Harnack inequalities. We point out that Claim 1 below is what allows to obtain constants which are controlled as $p \ra 1$.

\begin{lemma}\label{lem_moser}
Let $A_\infty \Subset M$ and fix $T>0$ in such a way that $A_0 = B_T(A_\infty) \Subset M$. Suppose that the following Sobolev inequality holds:
\begin{equation}\label{sobolev_permoser}
\left( \int |\psi|^{ \frac{\nu p}{\nu-p}}\right)^{\frac{\nu-p}{\nu}} \le \So_{p,\nu} \int |\nabla \psi|^p \qquad \forall \, \psi \in \lip_c(A_0),
\end{equation}
for some $p \in (1,\infty)$, $\nu>p$ and $\So_{p,\nu}>0$.
\begin{itemize}
\item[(i)] \textbf{Subsolutions}. Fix $q > 0$. Then, there exists a constant $q_0$ with
\begin{equation}\label{ipo_subs}
0 < q_0 \le q < \frac{\nu q_0}{\nu-p}
\end{equation}
such that the following holds: if $u>0$ solves $\Delta_p u = 0$ on $A_0$, then
\begin{equation}\label{halfhar_sub}
\sup_{A_\infty} u \le \disp  (\So_{p,\nu} \bar C_{p,\nu})^{\frac{\nu}{p q_0}} T^{-\frac{\nu}{q_0}} |A_0|^{\frac{1}{q_0}} \left(\fint_{A_0}  u^{q}\right)^{\frac{1}{q}},
\end{equation}
with
\begin{equation}\label{Cpnu_mag0}
 \bar C_{p,\nu} = \left\{ \begin{array}{ll}
2^{\nu}3^p \frac{\nu^\nu}{p^p(\nu-p)^{\nu-p}} & \quad \text{if } \, q \in (0,p), \\[0.5cm]
2^{\nu} \left[ 1+ p\right]^p & \quad \text{if } \, q \ge p.
\end{array} \right.
\end{equation}
If $q \ge p$, then we can choose $q_0=q$ and \eqref{halfhar_sub} holds for weak solutions $0 \le u\in C(A_0) \cap W^{1,p}_\loc(A_0)$ to $\Delta_p u \ge 0$. 

\item[(ii)] \textbf{Supersolutions}. Fix $ q<0  $ and set $q_0=q$. If $0 < u \in C(A_0) \cap W^{1,p}_\loc(A_0)$ solves $\Delta_p u \le 0$, then
\begin{equation}\label{halfhar_super}
\inf_{A_\infty} u \ge \disp  (\So_{p,\nu} \bar C_{p,\nu})^{\frac{\nu}{p q_0}} T^{-\frac{\nu}{q_0}} |A_0|^{\frac{1}{q_0}} \left(\fint_{A_0}  u^{q}\right)^{\frac{1}{q}},
\end{equation}
with
\begin{equation}\label{Cpnu_min0}
 \bar C_{p,\nu} = 2^{p+\nu}.
\end{equation}
\end{itemize}
\end{lemma}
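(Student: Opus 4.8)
The plan is to run the De Giorgi--Nash--Moser iteration, the whole difficulty being to keep the constants under control as $p\to1$, which is the purpose of Claim~1 in the statement. I would set $\chi=\nu/(\nu-p)>1$, write $A_s=B_s(A_\infty)$ for $s\in(0,T]$ (so $A_T=A_0$ and $A_\infty\subset A_s$ for every $s$), and for $T/2\le s'<s\le T$ fix a cut-off $\phi=\phi_{s,s'}\in\lip_c(A_s)$ with $0\le\phi\le1$, $\phi\equiv1$ on $A_{s'}$, $|\nabla\phi|\le 2/(s-s')$. One may assume $u^q\in L^1(A_0)$ (otherwise there is nothing to prove) and, replacing $u$ by $\min\{u,N\}$ in the subsolution case and by $\max\{u,N^{-1}\}$ in the supersolution case and letting $N\to\infty$ at the end, that all integrals occurring are finite.

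\emph{Step 1: a reverse H\"older inequality.} For a subsolution $u\ge0$ and $\Gamma>p-1$, put $\sigma=\Gamma/p$; from $|\nabla(u^\sigma)|^p=\sigma^p u^{\Gamma-p}|\nabla u|^p$ and the Caccioppoli inequality \eqref{caccioppoli} with $\bar q=\Gamma$ one bounds $\int\phi^p|\nabla(u^\sigma)|^p$ by $(\Gamma/(\Gamma-p+1))^p\int u^\Gamma|\nabla\phi|^p$; feeding $\phi u^\sigma\in\lip_c(A_s)$ into the Sobolev inequality \eqref{sobolev_permoser}, using $|\nabla(\phi u^\sigma)|^p\le2^{p-1}(\phi^p|\nabla u^\sigma|^p+u^{p\sigma}|\nabla\phi|^p)$ and $|\nabla\phi|\le2/(s-s')$, and extracting the $1/\Gamma$ power, gives
\[
\Bigl(\int_{A_{s'}}u^{\chi\Gamma}\Bigr)^{\frac1{\chi\Gamma}}\le\Bigl[\frac{2^{2p-1}\So_{p,\nu}}{(s-s')^p}\Bigl(\bigl(\tfrac{\Gamma}{\Gamma-p+1}\bigr)^p+1\Bigr)\Bigr]^{\frac1\Gamma}\Bigl(\int_{A_s}u^{\Gamma}\Bigr)^{\frac1\Gamma}.
\]
The identical computation, based on part~(ii) of the Caccioppoli Lemma, works for a positive supersolution and every $\Gamma<p-1$ (in particular every $\Gamma<0$), with $\Gamma/(\Gamma-p+1)$ replaced by $|\Gamma|/(p-1-\Gamma)<1$ and the inequality reversed, because raising to the negative power $1/\Gamma$ reverses it --- which is exactly what is needed to propagate a lower bound.

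\emph{Step 2: the ranges $q\ge p$ and $q<0$.} In both I would take $q_0=q$ and iterate Step~1 along $\Gamma_k=q\chi^k$ and the dyadic collars $s_k=\tfrac T2(1+2^{-k})$, so that $s_k-s_{k+1}=T2^{-k-2}$. For $q\ge p$ every $\Gamma_k\ge p$, hence $\Gamma_k/(\Gamma_k-p+1)\le p$ (the ratio is non-increasing in $\Gamma$ when $p\ge1$); for $q<0$ one has $|\Gamma_k|/(p-1-\Gamma_k)<1$. Thus the bracket in Step~1 is, in $k$, a bounded multiple of $\So_{p,\nu}(s_k-s_{k+1})^{-p}$ with a factor stable as $p\to1$, and multiplying over $k\ge0$ the exponents telescope via $\sum_k\Gamma_k^{-1}=\tfrac1q\cdot\tfrac{\chi}{\chi-1}=\tfrac{\nu}{pq}$. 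Collecting the resulting powers of $\So_{p,\nu}$, of $T^{-p}$, of $2$ and of the convergent numerical series $\sum_k(k+2)\Gamma_k^{-1}$, and letting the left-hand side run to $\sup_{A_\infty}u$ (resp.\ $\inf_{A_\infty}u$), one reads off \eqref{halfhar_sub} with $q_0=q$ and $\bar C_{p,\nu}$ as in the second line of \eqref{Cpnu_mag0} (resp.\ \eqref{halfhar_super} with \eqref{Cpnu_min0}); everything stays bounded as $p\to1$ since $\chi\to\nu/(\nu-1)$ and $p^p,\,2^{2p-1}$ converge.

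\emph{Step 3: the range $0<q<p$ (the delicate case, Claim~1).} Now no exponent $\le q$ need exceed $p-1$, so Step~1 cannot be started at $q$. The plan is to first run Step~2 from the admissible exponent $p$ over a one-parameter family of collars, obtaining $\sup_{A_{s'}}u\le C_1(s-s')^{-\nu/p}|A_0|^{1/p}(\fint_{A_s}u^p)^{1/p}$ with $C_1$ explicit and $p$-stable, so that $\Phi(s):=\sup_{A_s}u<\infty$; then interpolate via $\int_{A_s}u^p=\int_{A_s}u^{p-q}u^q\le\Phi(s)^{p-q}\int_{A_0}u^q$ and Young's inequality with exponents $p/(p-q),\,p/q$ to get, for $T/2\le s'<s\le T$,
\[
\Phi(s')\le\tfrac12\Phi(s)+C_2(s-s')^{-\nu/q}|A_0|^{1/q}\Bigl(\fint_{A_0}u^q\Bigr)^{1/q},
\]
with $C_2$ a power of $C_1$ times elementary $p$-stable factors; finally a standard absorption lemma (as \cite[Lem.~4.7]{prsmemoirs}) removes $\Phi(s)$ and yields \eqref{halfhar_sub} for a starting exponent $q_0$ satisfying $0<q_0\le q<\chi q_0$, the gap by the factor $\chi$ being precisely what the finitely many Young/absorption steps cost, with $\bar C_{p,\nu}$ as in the first line of \eqref{Cpnu_mag0}. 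The hard part is exactly this step together with the $p\to1$ bookkeeping: the Caccioppoli inequality for subsolutions is available only for exponents $>p-1$ and $p-1\to0$ as $p\to1$, so a careless choice of starting exponent or interpolation parameter injects a factor like $p/(p-1)$ or $1/(q-p+1)$ that diverges in precisely the regime used in the IMCF construction. Ensuring that every exponent met along the iteration stays uniformly away from $p-1$, and that the passage from exponent $p$ down to a general $q\in(0,p)$ costs only $p$-stable factors, is the technical core, and is where this presentation must depart from \cite{moser_3}.
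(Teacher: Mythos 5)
Your Steps 1 and 2 coincide with the paper's argument: the reverse H\"older inequality obtained by feeding $\phi u^{\bar q/p}$ into \eqref{sobolev_permoser} and using \eqref{caccioppoli}, and the direct iteration with $q_0=q$ along $\Gamma_k=q k^i$ (where $k=\nu/(\nu-p)$) when $q\ge p$ or $q<0$, in which case every iterate stays on the correct side of $p-1$ and the Caccioppoli bracket is bounded by $1+p$, resp.\ $2$, exactly as in \eqref{Cpnu_mag0}--\eqref{Cpnu_min0}.

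The gap is in Step 3, and it is not just cosmetic. The paper handles $q\in(0,p)$ by Claim 1: shift the starting exponent to some $q_0\in\bigl(\tfrac{\nu-p}{\nu}q,\,q\bigr]$ so that \emph{every} iterate $q_0k^i$ keeps distance at least $(k-1)q/(2k)$ from $p-1$; the iteration then runs directly from $q_0$ with bracket $\le 3k^{i+1}/(k-1)$, yielding the $q$-independent constant in the first line of \eqref{Cpnu_mag0}, and H\"older passes from $L^{q_0}$ to $L^{q}$ at the end. You replace this by the classical interpolation-plus-absorption route ($L^\infty$--$L^p$ bound, then $\int u^p\le\Phi(s)^{p-q}\int u^q$, Young with absorption factor $\tfrac12$, iteration lemma). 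That route is $p$-stable but does not prove the lemma as stated: the iteration lemma with exponent $\alpha=\nu/q$ and factor $\theta$ forces a telescoping parameter $\lambda>\theta^{1/\alpha}$, so its constant is at least $(1-\theta^{1/\alpha})^{-\alpha}\approx(\alpha/\log\tfrac1\theta)^{\alpha}$; raised to the power $q$ this is of order $q^{-\nu}$ when $\theta=\tfrac12$, and making $\theta$ small enough to cure this (of size $e^{-c\nu/q}$) inflates the Young constant $\theta^{-(p-q)/q}$ by a factor that, to the power $q$, still diverges as $q\to0$. Hence no choice of $\theta$ produces a constant of the required form $(\So_{p,\nu}\bar C_{p,\nu})^{\nu/(pq_0)}$ with $\bar C_{p,\nu}$ independent of $q$ --- which is precisely what the Bombieri--Giusti theorem \cite{bombierigiusti} consumes in the proof of Theorem \ref{teo_harnack}, where the quantity $Q$ must be bounded below uniformly over all $q>0$. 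Relatedly, your assertion that the Young/absorption steps ``cost a factor $k$ in $q_0$'' does not correspond to anything in your argument: interpolation/absorption outputs $q_0=q$ with a $q$-degenerate constant, whereas the paper's $q_0<q$ arises from Claim 1, which you correctly identify as the technical core but neither prove nor replace by an equivalent device.
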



\begin{proof}
Set for convenience
$$
k = \frac{\nu}{\nu-p}.
$$
Let $0 \le \phi \in \lip_c(A_0)$. For a given $\bar q$ for which Lemma \ref{lem_caccio} is applicable (hereafter, an admissible $\bar q$), using \eqref{sobolev_permoser} with $\psi=\phi u^{\frac{\bar q}{p}}$ and \eqref{caccioppoli}, we compute
\begin{equation}\label{firstmoser}
\begin{array}{lcl}
\disp \So_{p, \nu}^{-\frac{1}{p}}\big\|\phi u^{\frac{\bar q}{p}}\big\|_{kp} & \le & \disp  \big\|\nabla \big(\phi u^{\frac{\bar q}{p}}\big)\big\|_p \le \big\|u^{\frac{\bar q}{p}}|\nabla \phi| + |\bar q/p| \phi u^{\frac{\bar q-p}{p}} |\nabla u|\big\|_p \\[0.4cm]
& \le & \disp \big\|u^{\frac{\bar q}{p}}|\nabla \phi|\big\|_p + \left|\frac{\bar q}{p}\right| \big\| \phi u^{\frac{\bar q-p}{p}} |\nabla u|\big\|_p \\[0.4cm]
& \le & \disp \left[1 + \left|\frac{\bar q}{\bar q - p+1}\right|\right] \big\|u^{\frac{\bar q}{p}}|\nabla \phi|\big\|_p.
\end{array}
\end{equation}
%
%
This inequality holds in both cases (i) and (ii).\\[0.2cm]

\noindent \textbf{Claim 1:} the following holds:
\begin{itemize}
\item[(i)] If $q>0$, there exists $q_0 \in \left(\frac{q}{k}, q\right]$ such that
$$
\big|q_0 k^i -p+1\big| \ge \frac{(k-1)q}{2k} \qquad \text{for each } \, i = 0,1,2,\ldots
$$
Moreover, we can choose $q_0=q$ in case $p-1 \le q/k$.
\item[(ii)] If $q < 0$, choosing $q_0=q$ we have $\big|q_0 k^i -p+1\big| \ge |q|$ for each $ \, i = 0,1,2,\ldots$.
\end{itemize}

\begin{proof}[Proof of Claim 1] Case (ii) is obvious, so we focus on case (i). Set for convenience $a = q/k$. Suppose first that $p-1\le a$. Choosing $q_0 = ka$ we deduce
$$
|q_0k^i - p+1| \ge q_0-p+1 \ge ka-a \ge \frac{(k-1)a}{2} \qquad \text{for each } \, i = 0,1,2, \ldots
$$
and the required conclusion is proved. Otherwise, let $j \in \{1,2,\ldots\}$ be such that $p-1 \in I_j = (k^{j-1}a, k^ja]$. We choose $q_0 \in (a, ka]$ in such a way that $q_0k^{j-1}$ is the point in $I_j$ whose distance to $p-1$ is half of the length of $I_j$, so
\begin{equation}
\label{qoded}
| q_0k^{j-1} -p+1| = \frac{k^ja-k^{j-1}a}{2} \ge \frac{(k-1)a}{2}.
\end{equation}
If $p-1$ is strictly bigger that the middle point of $I_j$, then $q_0 k^{j-1}=(p-1)-(k^j-k^{j-1})a/2 < p-1$ and thus, for each $i< j-1$, $|q_0k^i -p+1| \ge |q_0k^{j-1}-p+1| \ge (k-1)a/2$. On the other hand, $q_0k^j > p-1$ and for each $i \ge j$
$$
\begin{array}{lcl}
q_0k^i -p+1 & \ge & \disp q_0k^j - p + 1 = q_0k^j-[q_0k^{j-1}+ (k^j-k^{j-1})a/2] \\[0.4cm]
& = & \disp (k^j-k^{j-1}) \left[ q_0 - \frac{a}{2}\right] \ge (k^j-k^{j-1}) \frac{a}{2} \ge \frac{(k-1)a}{2}.
\end{array}
$$
Similarly, if $p-1$ is not bigger than the middle point of $I_j$, then $q_0 k^{j-1}+(k^j-k^{j-1})a/2 > p-1$ and thus $|q_0k^i -p+1| \ge |q_0k^{j-1}-p+1| \ge (k-1)a/2$ for each $i> j-1$. If $j=1$ we are done, otherwise $q_0k^{j-2} < p-1$ and therefore, taking into account that
	\[
	k^{j-1}q_0\geq (k^j+k^{j-1})a/2 \qquad \text{and} \qquad k^{j-1}(q_0-ka/2)\geq k^{j-1}a/2,
	\]
for each $i \le j-2$ we obtain
$$
\begin{array}{lcl}
p-1-q_0k^i & \ge & \disp p-1- q_0k^{j-2} = [k^{j-1}q_0 -(k^{j}-k^{j-1})a/2] - k^{j-2}q_0 \\[0.4cm]
& = & \disp (k^{j-1}-k^{j-2}) \left[q_0 - \frac{ka}{2}\right] \ge (k^{j-1}-k^{j-2}) \frac{a}{2} \ge \frac{(k-1)a}{2}.
\end{array}
$$
This concludes the proof of the claim.
\end{proof}

Define
$$
r_i = T \left(2-\sum_{j=0}^i2^{-j}\right), \quad A_i = B_{r_i}(A_\infty), \quad \eta_i(t) = \left\{ \begin{array}{ll}
1 & \text{if } \, t \in [0, r_{i+1}) \\[0.2cm]
1 - \frac{2^{i+1}}{T}(t-r_{i+1}) & \text{if } \, t \in [r_{i+1}, r_i) \\[0.2cm]
0 & \text{if } \, t \ge r_{i}
\end{array}\right.
$$
Set $\phi_i = \eta_i(r)$, with $r(x) = \dist(A_\infty, x)$.  Using  $\phi = \phi_i$, $\bar q = q_i = q_0 k^i$ and  $|\nabla \phi_i| \le T^{-1}2^{i+1}$  in \eqref{firstmoser}, and assuming for a moment that each  $q_i$ is admissible, we deduce
\begin{equation}\label{basemoser_sub-0}
\disp \left( \int_{A_{i+1}} u^{q_0k^{i+1}}\right)^{\frac{1}{k}}  \le  \disp \So_{p,\nu} \left[ 1 + \left| \frac{q_0k^i}{q_0k^i-p+1}\right|\right]^p 2^{p(i+1)} T^{-p} \int_{A_i}  u^{q_0 k^i}
\end{equation}
We first consider the case where $0 <q \leq p$. Since $q_i > p-1$ may not hold for small $i$, we shall restrict to solutions to $\Delta_p u = 0$ to ensure that each $q_i$ is admissible. Claim 1 and $|q_0|\le |q|$ imply that
	\begin{equation}\label{bound_fracq1}
1+	\disp \left| \frac{q_0 k^i}{q_0 k^i-p+1} \right| \le \disp 1+  \frac{2q_0 k^{i+1}}{(k-1)q} \le \frac{3k^{i+1}}{(k-1)},
	\end{equation}
which inserted into \eqref{basemoser_sub-0} yields
\begin{equation}\label{basemoser_sub}
\disp \left( \int_{A_{i+1}} u^{q_0k^{i+1}}\right)^{\frac{1}{k}}  \le  \disp \So_{p,\nu} \tilde C_{p,\nu} [2k]^{p(i+1)} T^{-p} \int_{A_i}  u^{q_0 k^i},
\end{equation}
with
\[
\tilde C_{p,\nu} =\left[3\over k-1\right]^p.
\]
On the other hand, if either $q \ge p$ or $q<0$, choosing $q_0 = q$ we can apply \eqref{firstmoser} to solutions of, respectively, $\Delta_p u \ge 0$ (if $q \ge p$) and $\Delta_p u \le 0$ (if $q <0$), since in both cases each $q_i$ is admissible. Noting that $t\to t/(t-p+1)$ is increasing for $ t>p$ we get
	\begin{equation*}
	\left| \frac{q_0 k^i}{q_0 k^i-p+1} \right| \le \left\{ \begin{array}{ll}
	\disp p & \quad \text{if } \, q\ge p, \\[0.4cm]
1 & \quad \text{if } \, q <  0,
	\end{array}\right.
	\end{equation*}
whence \eqref{basemoser_sub-0} gives
\begin{equation}\label{basemoser_sub_2}
	\disp \left( \int_{A_{i+1}} u^{q_0k^{i+1}}\right)^{\frac{1}{k}} \le \disp \So_{p,\nu}\tilde C_p 2^{p(i+1)}  T^{-p} \int_{A_i}  u^{q_0 k^i}
\end{equation}
with
$$
\tilde C_p = \left\{ \begin{array}{ll}
\left[ 1 + p \right]^p & \quad \text{if } \, q \ge p \\[0.3cm]
2^{p} & \quad \text{if } \, q<0.
\end{array}\right.
$$	
If $q \in (0,p)$, taking the $k^i$-th root in \eqref{basemoser_sub}, iterating and explicitly computing the sums, we infer
\begin{equation}\label{itermoser_sub}
\begin{array}{lcl}
\sup_{A_\infty} u^{q_0} & = & \disp \lim_{i \ra \infty} \left(\int_{A_i} u^{q_0k^{i+1}} \right)^{\frac{1}{k^{i+1}}}\\[0.5cm]
&\leq &  \disp (\So_{p,\nu} \tilde C_{p,\nu})^{\sum_{j=0}^\infty k^{-j}}
[2k]^{p \sum_{j=0}^\infty (j+1)k^{-j}} T^{-p \sum_{j=0}^\infty k^{-j}} \int_{A_0}  u^{q_0} \\[0.5cm]
& = & \disp  (\So_{p,\nu}\tilde C_{p,\nu})^{\frac{k}{k-1}}
[2k]^{\frac{pk^2}{(k-1)^2}} T^{-\frac{kp}{k-1}} |A_0| \fint_{A_0}  u^{q_0} \\[0.5cm]
\end{array}
\end{equation}
Taking the $q_0$-th root and applying H\"older inequality with exponents $\frac{q}{q_0}>1$ and $\frac{q}{q-q_0}$  when $q_0<q$, we obtain
\begin{equation}\label{final_sub}
\begin{array}{lcl}
\sup_{A_\infty} u & \le & \disp  (\So_{p,\nu}\tilde C_{p,\nu})^{\frac{k}{(k-1)q_0}}
[2k]^{\frac{pk^2}{(k-1)^2q_0}} T^{-\frac{kp}{(k-1)q_0}} |A_0|^{\frac{1}{q_0}} \left(\fint_{A_0}  u^{q}\right)^{\frac{1}{q}}. \\[0.5cm]
\end{array}
\end{equation}
It is enough to set $\bar C_{p,\nu} = \tilde C_{p,\nu} [2k]^{\frac{pk}{k-1}}$ and use the definition of $k$ to deduce \eqref{halfhar_sub}. The case $q \ge p$ is analogous by using \eqref{basemoser_sub_2}, while, for $q<0$, iterating \eqref{basemoser_sub_2} we get
\begin{equation}
\begin{array}{lcl}
\left(\inf_{A_\infty} u \right)^{-|q_0|} & = & \disp \sup_{A_\infty} u^{q_0} \le \disp  (\So_{p,\nu}\tilde  C_p)^{\frac{k}{k-1}}
2^{\frac{pk^2}{(k-1)^2}} T^{-\frac{kp}{k-1}} |A_0| \fint_{A_0}  u^{q_0} \\[0.5cm]
& \le & \disp (\So_{p,\nu}\tilde C_p)^{\frac{k}{k-1}}
2^{\frac{pk^2}{(k-1)^2}} T^{-\frac{kp}{k-1}} |A_0| \left(\fint_{A_0}  u^{q}\right)^{\frac{q_0}{q}},
\end{array}
\end{equation}
that implies \eqref{halfhar_super} because of our definition of $k$ and $\bar C_{p,\nu}$.
\end{proof}

The above proposition allows to deduce a Harnack inequality with a sharp rate of growth as $p \ra 1$.

\begin{theorem}\label{teo_harnack}
Fix $p \in (1, \infty)$. Let $u$ be a positive solution to  $\Delta_p u=0$ on a ball $B_{6R} = B_{6R}(x_0)$, and suppose that the following weak $(1,p)$-Poincar\'e inequality holds on $B_{4R}$, for some constant $\Po_{1,p}$:
\begin{equation}\label{poinca_p}
\fint_{B_r(y)} |\psi - \bar \psi_{B_r(y)}| \le \Po_{1,p} r \left\{ \fint_{B_{2r}(y)} |\nabla \psi|^p\right\}^{\frac{1}{p}} \qquad \forall \, \psi \in \lip_c(B_{4R}),
\end{equation}
for every ball $B_r(y)\Subset B_{2R}$. Assume the validity of the Sobolev inequality
$$
\left( \int |\psi|^{ \frac{\nu p}{\nu-p}}\right)^{\frac{\nu-p}{\nu}} \le \So_{p,\nu} \int |\nabla \psi|^p \qquad \forall \, \psi \in \lip_c(B_{4R}),
$$
for some $\nu>p$ and $\So_{p,\nu}>0$. Then, having fixed $p_0 \in (p,\nu)$, the following Harnack inequality holds:
$$
\sup_{B_R} u \le \Ha_{p,\nu}^{\frac{1}{p-1}} \inf_{B_R} u,
$$
with constant
\begin{equation}\label{ine_harnack}
\Ha_{p,\nu} = \exp\left\{ c_2 \Po_{1,p} \left[\frac{|B_{6R}|}{|B_{2R}|}\right]^{\frac{1}{p}} Q^{-2} p \right\},
\end{equation}
where $c_2>0$ is a constant depending only on $\nu$ and $p_0$,
$$
Q = \inf_{\tau \in [1, \frac{\nu}{\nu-p}]} \big( \So_{p,\nu} C_{p,\nu} \big)^{- \frac{\nu \tau}{p}} R^{\nu \tau} |B_{2R}|^{-\tau}
$$
and
\begin{equation}\label{Cpnu}
C_{p,\nu} = 2^{\nu} \max \left\{ [1+p]^p, \frac{3^p \nu^\nu}{p^p(\nu-p)^{\nu-p}}\right\}.
\end{equation}
\end{theorem}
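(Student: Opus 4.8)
\textit{Strategy.} The proof is a Moser iteration in the spirit of \cite{moser_3} that combines the two half--Harnack inequalities of Lemma \ref{lem_moser} --- applied to $u$, which is at once a positive subsolution and a positive supersolution of $\Delta_pu=0$ --- with a $\mathrm{BMO}$--type bound on $\log u$ used to bridge positive and negative exponents. The content of the statement is not the iteration, which is classical, but the fact that every constant either stays bounded or degenerates exactly at the rate $1/(p-1)$ as $p\to 1$; the single source of this rate is the factor $p/(p-1)$ in the Caccioppoli estimate for $\log u$, which is precisely why the constants $\bar C_{p,\nu},C_{p,\nu}$ were engineered as in Lemma \ref{lem_moser} and Claim~1 there.

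\textit{Step 1: the $\log u$ estimate.} Apply the Caccioppoli inequality \eqref{caccioppoli} to $u$ regarded as a supersolution, with the admissible choice $\bar q=0<p-1$: for every $0\le\phi\in\lip_c(B_{4R})$,
\[
\int\phi^p\,|\nabla\log u|^p\;\le\;\Bigl(\tfrac{p}{p-1}\Bigr)^p\!\int|\nabla\phi|^p .
\]
Choosing $\phi$ a standard cutoff supported in $B_{4r}(y)$, equal to $1$ on $B_{2r}(y)$, with $|\nabla\phi|\le r^{-1}$, and estimating the local doubling ratio by a multiple of $|B_{6R}|/|B_{2R}|$, one obtains $\fint_{B_{2r}(y)}|\nabla\log u|^p\le c\,(p/(p-1))^p\,r^{-p}\,[|B_{6R}|/|B_{2R}|]$ for balls $B_r(y)$ well inside $B_{2R}$. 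Inserting this in the weak $(1,p)$--Poincar\'e inequality \eqref{poinca_p} with $\psi=\log u$ gives the $\mathrm{BMO}(B_{2R})$ bound
\[
\fint_{B_r(y)}\bigl|\log u-\overline{\log u}_{B_r(y)}\bigr|\;\le\;K,\qquad K\;\asymp\;\frac{p}{p-1}\,\Po_{1,p}\Bigl[\tfrac{|B_{6R}|}{|B_{2R}|}\Bigr]^{1/p},
\]
and, by Chebyshev, a weak--$L^1$ (distributional) bound for $\log u-\overline{\log u}_{B_{2R}}$ on $B_{2R}$ with constant $\lesssim K$.

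\textit{Step 2: crossing exponents and conclusion.} Combining the subsolution half--Harnack of Lemma \ref{lem_moser}(i) (applied to nested balls $B_{r'}\subset B_r$ with $T=r-r'$, for exponents $q\in(0,p_0)$) with the weak--$L^1$ bound of Step~1, an abstract iteration of Bombieri--Giusti type --- which, unlike the route through John--Nirenberg, requires only the subsolution estimate and the weak--$L^1$ bound and no separate volume-doubling hypothesis --- produces an exponent $\mathsf q\in(0,p)$ with $\mathsf q\asymp K^{-1}$ and an absolute constant $C_0$ such that $\fint_{B_{2R}}u^{\mathsf q}\cdot\fint_{B_{2R}}u^{-\mathsf q}\le C_0$. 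Applying Lemma \ref{lem_moser}(i) on $A_\infty=B_R\subset A_0=B_{2R}$ with final exponent $q=\mathsf q$, and Lemma \ref{lem_moser}(ii) with $q=-\mathsf q$, and writing $\alpha=(\So_{p,\nu}\bar C_{p,\nu})^{\nu/p}R^{-\nu}|B_{2R}|$, these two estimates combine with the last inequality to give $\sup_{B_R}u\le\alpha^{c/\mathsf q}C_0^{1/\mathsf q}\inf_{B_R}u$, the lemma's internal exponent being $\asymp\mathsf q$ and $\nu/(\nu-p)$ bounded as $p\to1$. Finally $\bar C_{p,\nu}\le C_{p,\nu}$ stays bounded as $p\to1$, and $\alpha\le Q^{-1}$ by the definition of $Q$ (the worse of $\tau=1$ and $\tau=\nu/(\nu-p)$); absorbing $\log\alpha$ and the absolute $\log C_0$ into $Q^{-2}$ via $\log x\le x$, and using $\mathsf q^{-1}\le c\,\frac{p}{p-1}\Po_{1,p}[|B_{6R}|/|B_{2R}|]^{1/p}$, one arrives at
\[
\sup_{B_R}u\;\le\;\exp\Bigl\{\tfrac{c_2}{p-1}\Po_{1,p}\bigl[\tfrac{|B_{6R}|}{|B_{2R}|}\bigr]^{1/p}Q^{-2}p\Bigr\}\inf_{B_R}u\;=\;\Ha_{p,\nu}^{1/(p-1)}\inf_{B_R}u,
\]
with $c_2$ depending only on $\nu$ and on the auxiliary exponent $p_0$ (through the Bombieri--Giusti iteration constants).

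\textit{Main obstacle.} The difficulty is entirely in the uniform control as $p\to 1$: one must check that the exponent $\mathsf q$ coming from the passage through $\mathrm{BMO}$ degrades exactly like $(p-1)/p$ and not worse, that the combinatorial constants of Moser iteration ($\bar C_{p,\nu}$, $C_{p,\nu}$ and the per-step gain $\nu/(\nu-p)$) remain bounded, and that the exponent-crossing contributes only factors depending on $\nu,p_0$ but not destabilizing in $p$. Secondarily, one should verify that the Bombieri--Giusti mechanism does apply under the present hypotheses, which it does since it only uses the subsolution estimate and the weak--$L^1$ bound for $\log u$, both established above.
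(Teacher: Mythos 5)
Your proposal is correct and follows essentially the same route as the paper: the Caccioppoli inequality with $\bar q=0$ combined with the weak $(1,p)$-Poincar\'e inequality yields the BMO bound on $\log u$ with the $p/(p-1)$ factor, the two half-Harnack inequalities of Lemma \ref{lem_moser} on nested balls (minimized over the admissible $q_0$, giving $Q$) supply the power-decay hypotheses, and the abstract Bombieri--Giusti theorem glues them into the Harnack inequality with constant $\exp\{c_2\Lambda/Q^2\}$. The only cosmetic difference is that you unpack part of the Bombieri--Giusti mechanism (the crossing of exponents via $\fint u^{\mathsf q}\cdot\fint u^{-\mathsf q}\le C_0$), whereas the paper cites it as a black box.
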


\begin{remark}\label{rmk_noname}
\emph{By means of H\"older inequality, the weak $(1,1)$-Poincar\'e inequality \eqref{buser} implies \eqref{poinca_p} with $\Po_{1,p} = \Po_{1,1}$. Therefore, having fixed $\kappa>0$ such that $\Ricc \ge -(m-1)\kappa^2$ on $B_{6R}$, Remark \ref{rem_localimportant} guarantees that $\So_{p,\nu}$ remains bounded as $p \ra 1$, so the Harnack constant in \eqref{ine_harnack} blows up just like $\exp\{c/(p-1)\}$ as $p \ra 1$, which is sharp.
}
\end{remark}

\begin{proof}
We use the abstract version of the John-Nirenberg inequality due to Bombieri and Giusti \cite[Thm. 4]{bombierigiusti}. To this aim, we shall estimate from above
$$
\begin{array}{lcl}
\Lambda & \doteq & \disp \sup_{r \in [R,2R]} \inf_{\lambda >0} \left\{ \fint_{B_r} \left| \log \frac{u}{\lambda}\right| \right\} \le \sup_{r \in [R,2R]} \left\{ \fint_{B_r} |\log u - \overline{\log u}_{B_r}| \right\} \\[0.5cm]
& \le & \disp \sup_{r \in [R,2R]} \Po_{1,p} r \left[ \fint_{B_{2r}} |\nabla \log u|^p \right]^{\frac{1}{p}},
\end{array}
$$
with $\overline{\log u}_{B_r}$ the mean value of $\log u$ on $B_r$. Applying the Caccioppoli inequality \eqref{caccioppoli} with $\bar q = 0$ we deduce that for every $\phi\in C(B_{6R})\cap W_0^{1,p}(B_{6R})$
$$
\int \phi^p |\nabla \log u|^p \le \left(\frac{p}{p-1}\right)^p \int |\nabla \phi|^p,
$$
and in particular, if $\phi$ is a piecewise linear cut-off of $B_{2r}$ inside  $B_{3r}$,
$$
\int_{B_{2r}} |\nabla \log u|^p \le \left(\frac{p}{(p-1)r}\right)^p |B_{3r}|,
$$
which implies
\begin{equation}\label{esti_Lambda}
\Lambda \le \sup_{r \in [R,2R]} \Po_{1,p} r \frac{p}{(p-1)r} \left(\frac{|B_{3r}|}{|B_{2r}|}\right)^{\frac{1}{p}} \le \Po_{1,p} \frac{p}{p-1} \left(\frac{|B_{6R}|}{|B_{2R}|}\right)^{\frac{1}{p}}.
\end{equation}
To agree with the notation in \cite{bombierigiusti}, for $t \in [0,1]$ and $k \in \R\backslash \{0\}$ we define
$$
\BBB_t = B_{(t+1)R}, \qquad |u|_{k, t} = \left\{ \fint_{\BBB_t} u^{k} \right\}^{\frac{1}{k}}, \quad |u|_{+\infty, t} = \sup_{\BBB_t} u, \quad |u|_{-\infty, t} = \inf_{\BBB_t} u.
$$
For $q>0$ and $q_0$, $\nu$ as in Lemma~\ref{lem_moser}, set $\bar \sigma = \nu q/q_0$. Inequalities \eqref{halfhar_sub} and \eqref{halfhar_super} imply that, for each $0 \le s < r \le 1$,
\[
|u|_{\infty, s} \le \disp \left\{ \bar Q(r-s)^{\bar \sigma}\right\}^{-1/q} |u|_{q,r}, \qquad |u|_{-\infty, s} \ge \disp \left\{ \bar Q(r-s)^{\bar \sigma}\right\}^{1/q} |u|_{-q,r}
\]
where
$$
\bar Q = \big( \So_{p,\nu} \bar C_{p,\nu} \big)^{- \frac{\nu q}{pq_0}} R^{\frac{\nu q}{q_0}} |\BBB_r|^{-\frac{q}{q_0}}
$$
and $\bar C_{p,\nu}$ is, according to the value of $q$, any of the constants in \eqref{Cpnu_mag0}, \eqref{Cpnu_min0}. Taking into account that $C_{p, \nu}$ in \eqref{Cpnu} is the maximum of those constants, minimizing $\bar Q$ over all choices of $q_0 \in \left( \frac{\nu-p}{\nu} q, q\right]$ we obtain
$$
\bar Q \ge \inf_{\tau \in [1, \frac{\nu}{\nu-p}]} \big( \So_{p,\nu} C_{p,\nu} \big)^{- \frac{\nu \tau}{p}} R^{\nu \tau} |\BBB_1|^{-\tau} \equiv Q.
$$
Furthermore, since $(r-s) \le 1$, we can replace $\bar \sigma$ with the larger $\sigma = \nu^2/(\nu-p_0)$. Hence, for every $q>0$,
$$
\begin{array}{lcl}
|u|_{\infty, s} & \le & \disp \left\{ Q(r-s)^{\sigma}\right\}^{-1/q} |u|_{q,r} \\[0.3cm]
|u|_{-\infty, s} & \ge & \disp \left\{ Q(r-s)^{\sigma}\right\}^{1/q} |u|_{-q,r}.
\end{array}
$$
We are now in the position to apply Theorem 4 in \cite{bombierigiusti} to deduce the existence of a constant $c_2$ just depending on $\sigma$ (hence, on $\nu,p_0$) such that
$$
\sup_{\BBB_0} u \le \exp \left\{ \frac{c_2 \Lambda}{Q^{2}} \right\} \inf_{\BBB_0} u.
$$
In view of \eqref{esti_Lambda}, this concludes the proof.
\end{proof}

We first apply the half-Harnack inequality to give a sharp upper bound for the Green kernel $\gr$ on an open set $\Omega \subset M$, provided that a weighted Sobolev inequality holds on the entire $\Omega$. Let $\eta$ be a function satisfying
\begin{equation}\label{def_eta_sobolev}
\disp \eta \in C(\R^+_0), \qquad \eta > 0, \qquad \eta(t) \ \ \text{ non-decreasing on } \, \R^+_0, \end{equation}
Let $o \in M$ and set $r(x) = \dist(x,o)$. We will be interested in weighted Sobolev inequalities of the type
\begin{equation}\label{sobolev_weighted}
\left( \int \eta(r)^{-\frac{p}{\nu-p}} |\psi|^{ \frac{\nu p}{\nu-p}}\right)^{\frac{\nu-p}{\nu}} \le \So_{p,\nu} \int |\nabla \psi|^p \qquad \forall \, \psi \in \lip_c(\Omega),
\end{equation}
for sets $\Omega \subset M$. Various classical examples of manifolds satisfying \eqref{sobolev_weighted} in the unweighted case $\eta=1$ will be discussed below. To our knowledge, Sobolev inequalities with a nontrivial weight were first investigated on manifolds with non-negative Ricci curvature by V. Minerbe \cite{minerbe}, see also generalizations in \cite{hein,tewo2}. We will focus on them in Section \ref{subsec_doupoinc}.

\begin{theorem}\label{teo_sobolev}
Let $\Omega \subset M$ be a connected open set, and denote by $r$ the distance from a fixed origin $o \in \Omega$. Assume that $\Omega$ supports the weighted Sobolev inequality \eqref{sobolev_weighted} for some $p \in (1,\nu)$, constant $\So_{p,\nu}>0$ and weight $\eta$ satisfying \eqref{def_eta_sobolev}. Then, $\Delta_p$ is non-parabolic on $\Omega$ and, letting $\gr(x)$ be the Green kernel of $\Delta_p$ on $\Omega$ with pole at $o$,
\begin{equation}\label{upper_consobolev}
\gr(x) \le C_{p,\nu}^{\frac{1}{p-1}} \eta\big(2r(x)\big)^{\frac{1}{p-1}} r(x)^{- \frac{\nu-p}{p-1}}, \qquad \forall \, x \in  \Omega \backslash \{o\},
\end{equation}
where
$$
C_{p,\nu} = \So_{p,\nu}^{\frac{\nu}{p}} \left[ 2^{\nu}p(1+p)^p \left(\frac{p}{p-1}\right)^{p-1}\right]^{\frac{\nu-p}{p}}
$$
is bounded as $p \ra 1$ if so is $\So_{p,\nu}$. In particular, if
\begin{equation}\label{ipo_eta_2}
\eta(t) = o \left( t^{\nu - p} \right) \qquad \text{as } \, t \ra \infty.
\end{equation}
then
 $\gr(x) \ra 0$ as $r(x) \ra \infty$ in $\Omega$.
\end{theorem}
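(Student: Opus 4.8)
The plan is to obtain the pointwise bound \eqref{upper_consobolev} by combining a De Giorgi--Nash--Moser iteration on dyadic annuli with the weighted Sobolev inequality \eqref{sobolev_weighted}, and then reading off non-parabolicity and the decay conclusion as easy consequences. First I would reduce the weighted Sobolev inequality to an unweighted one on a ball: fix $x\in\Omega\backslash\{o\}$ and set $R=r(x)$; since $\eta$ is non-decreasing, for every $\psi\in\lip_c(B_{2R}(o)\cap\Omega)$ we have $\eta(r)\le\eta(2R)$ on the support of $\psi$, so \eqref{sobolev_weighted} implies
$$
\left(\int|\psi|^{\frac{\nu p}{\nu-p}}\right)^{\frac{\nu-p}{\nu}}\le \So_{p,\nu}\,\eta(2R)^{\frac{p}{\nu-p}}\int|\nabla\psi|^p,
$$
that is, an unweighted Sobolev inequality on $B_{2R}(o)$ with constant $\So_{p,\nu}\eta(2R)^{p/(\nu-p)}$. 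The existence of the Green kernel $\gr$ on $\Omega$ follows from the fact that an unweighted Sobolev inequality forces $\Delta_p$ to be non-parabolic (the capacity of any compact set is positive, since otherwise a sequence of capacity potentials $u_j\to 1$ would contradict the Sobolev inequality applied to $1-u_j$ on a large ball); one may alternatively quote the standard equivalence recalled in Section \ref{sec_preliminaries}.

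Next I would run Moser iteration on $\gr$ away from its pole. The point is that $\gr$ is $p$-harmonic, hence a (nonnegative) subsolution of $\Delta_p v=0$, on the annulus $A=B_{2R}(o)\backslash\overline{B_{R/2}(o)}$, which does not contain $o$; moreover $\gr\in W^{1,p}$ away from the pole by Theorem \ref{teo_localsingular}. Applying the subsolution half-Harnack inequality of Lemma \ref{lem_moser}(i) — with the Sobolev constant replaced by $\So_{p,\nu}\eta(2R)^{p/(\nu-p)}$ as above, with $A_\infty$ a suitable sub-annulus through $x$, $A_0$ a slightly larger annulus still inside $B_{2R}(o)\backslash\{o\}$, and $T\asymp R$ — gives a bound of the shape
$$
\gr(x)\le \sup_{A_\infty}\gr \le \big(\So_{p,\nu}\eta(2R)^{\frac{p}{\nu-p}}\bar C_{p,\nu}\big)^{\frac{\nu}{pq_0}} T^{-\frac{\nu}{q_0}}|A_0|^{\frac{1}{q_0}}\left(\fint_{A_0}\gr^{q}\right)^{\frac{1}{q}}.
$$
Choosing the exponent $q$ conveniently (e.g. $q$ small, with the associated $q_0$ from Lemma \ref{lem_moser}), the last average $\fint_{A_0}\gr^q$ is controlled by integrating $\gr$ itself: the Caccioppoli/weak-Harnack machinery together with the global energy identity $\int|\nabla\gr|^{p-2}\langle\nabla\gr,\nabla\psi\rangle=\psi(o)$ from \eqref{weakdef} — tested against a cutoff supported away from a neighbourhood of $o$ and equal to $1$ on $A_0$ — bounds $\int_{A_0}\gr^{q}$ in terms of powers of $R$, $|A_0|$ and $\So_{p,\nu}$. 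Keeping track of the powers of $R$ (which enter through $T^{-\nu/q_0}$, through $|A_0|\asymp$ a dimensional multiple of the ambient volume, and through the Sobolev scaling) and of $\eta(2R)$ (which enters only through the prefactor above), after simplification one arrives exactly at \eqref{upper_consobolev} with the stated constant $C_{p,\nu}=\So_{p,\nu}^{\nu/p}\big[2^\nu p(1+p)^p(p/(p-1))^{p-1}\big]^{(\nu-p)/p}$; the boundedness of $C_{p,\nu}$ as $p\to1$ is then visible by inspection, exactly as for the constants in Lemma \ref{lem_moser}. Finally, if $\eta(t)=o(t^{\nu-p})$ as $t\to\infty$, then the right-hand side of \eqref{upper_consobolev} is $o(1)\cdot r(x)^{-(\nu-p)/(p-1)}\cdot r(x)^{(\nu-p)/(p-1)}\cdot o(1)$ — more precisely $C_{p,\nu}^{1/(p-1)}\big(\eta(2r(x))/(2r(x))^{\nu-p}\big)^{1/(p-1)}2^{(\nu-p)/(p-1)}\to 0$ — so $\gr(x)\to 0$ as $r(x)\to\infty$, which gives the properness of $\varrho$ via \eqref{def_bxy}.

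The main obstacle I expect is bookkeeping the constants so that they behave well as $p\to1$: the naive Moser iteration produces constants with factors like $(p/(p-1))^p$ or $1/(\nu-p)^{\nu-p}$ that could blow up, and the delicate point — already isolated as Claim 1 in the proof of Lemma \ref{lem_moser} — is to choose the starting exponent $q_0$ so that the iterated exponents $q_0k^i$ stay a definite distance from $p-1$, keeping $|q_0k^i/(q_0k^i-p+1)|$ bounded uniformly in $i$ and in $p$ near $1$. Granting Lemma \ref{lem_moser} (which already encapsulates this), the remaining work is the essentially mechanical but careful tracking of the $R$-powers and the reduction of $\int_{A_0}\gr^q$ to an elementary quantity; the geometric input (Ricci lower bound) is not needed here beyond what is implicit in having a ball on which to iterate, since the Sobolev inequality is assumed globally on $\Omega$.
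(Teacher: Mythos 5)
Your first half is on the right track and matches the paper: the reduction of \eqref{sobolev_weighted} to an unweighted Sobolev inequality on $B_{2r(x)}$ via the monotonicity of $\eta$, the deduction of non-parabolicity from the Sobolev inequality, and the use of the subsolution half-Harnack of Lemma \ref{lem_moser} on an annulus around $\partial B_{r(x)}$ (with $\gr$ extended by zero outside $\Omega$) are all exactly the paper's steps. The gap is in how you close the estimate. The energy identity does \emph{not} bound $\int_{A_0}\gr^q$ ``in terms of powers of $R$, $|A_0|$ and $\So_{p,\nu}$'': what one actually gets, by testing Sobolev with $\psi=\min\{\gr,\ell\}$ and using $\int_{\{\gr\le\ell\}}|\nabla\gr|^p=\ell$, is
$$
\Big(\int \gr^{\frac{\nu p}{\nu-p}}\Big)^{\frac{\nu-p}{\nu}}\le \So_{p,\nu}\,\ell,\qquad \ell\approx\sup_{\partial B_{(1-\theta)t}}\gr,
$$
i.e.\ a bound involving the sup of $\gr$ on an \emph{inner} sphere. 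A single application of Lemma \ref{lem_moser} therefore yields only the recursive inequality $\|\gr\|_{t}\le \hat C\,(\theta t)^{-\frac{\nu-p}{p}}\|\gr\|_{(1-\theta)t}^{1/p}$, not \eqref{upper_consobolev}; note that the exponents already tell you this, since one application produces a prefactor $(\So_{p,\nu}\bar C_{p,\nu})^{(\nu-p)/p^2}$ and a decay $t^{-(\nu-p)/p}$, whereas the claimed estimate carries $C_{p,\nu}^{1/(p-1)}$ and $t^{-(\nu-p)/(p-1)}$. These powers of $\tfrac{1}{p-1}$ only arise from iterating the recursion infinitely many times in the \emph{radius} (the geometric series $\sum_k p^{-k}=\tfrac{p}{p-1}$), and the paper's main technical point — flagged in the remark following its proof — is that a standard dyadic choice of radii makes the resulting product diverge as $p\to1$; one needs the non-standard radii $t_k=t\,\xi^{\sigma_k}$ with $\xi^{\sigma_k}-\xi^{\sigma_{k+1}}=\xi^\tau$ to keep the constant bounded. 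This second iteration is entirely absent from your proposal.

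Two further cautions. If instead you intended the ``small $q$'' route (weak-$L^s$ bound from $\capac_p(\{\gr>\ell\},\Omega)=\ell^{1-p}$ plus the Sobolev capacity lower bound), then $q<\tfrac{\nu(p-1)}{\nu-p}<p$ and Claim 1 of Lemma \ref{lem_moser} forces $q_0<q$ in general, so the factor $|A_0|^{1/q_0-1/q}$ does not cancel and you would need an upper volume bound $|A_0|\lesssim R^\nu$, which is not available from a Sobolev inequality alone; the paper avoids this by taking $q=q_0=\tfrac{\nu p}{\nu-p}\ge p$, for which the volume factors cancel identically. Finally, your non-parabolicity argument should test Sobolev against the near-minimizers $\psi_j\in\lip_c(\Omega)$ with $\psi_j\ge1$ on $K$ and $\|\nabla\psi_j\|_p\to0$ directly (the left-hand side stays bounded below by positivity of the weight on $K$), not against $1-u_j$, which is neither compactly supported nor controlled on $K$.
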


\begin{remark}
\emph{For $p=2$ and $\eta \equiv 1$, Theorem \ref{teo_sobolev} was obtained in \cite{ni} by integrating the corresponding decay estimate for the heat kernel.
}
\end{remark}



\begin{proof}
The non-parabolicity of $\Delta_p$ on $\Omega$ is a standard consequence of \eqref{sobolev_weighted}. Indeed, by contradiction, assume that there exists a relatively compact open set $K \Subset \Omega$ with $\capac_p(K,\Omega) = 0$. Evaluating \eqref{sobolev_weighted} along a sequence $\{\psi_j\} \subset \lip_c(\Omega)$ with $\psi_j = 1$ on $K$ and $\|\nabla \psi_j\|_{L^p(\Omega)} \to 0$, and letting $j \to \infty$, we contradict the positivity of $\eta$ on $K$.
To show \eqref{upper_consobolev}, without loss of generality we can assume that $\gr$ is the Green kernel of a smooth, connected, relatively compact  domain $\Omega' \Subset \Omega$, the thesis then follows by taking an exhaustion $\{\Omega_j\} \uparrow \Omega$ and the limit of the corresponding kernels.
%
%
%
Let $x\in \Omega$ and let $\theta \in (0,1)$ be a constant to be chosen later.
Choose $\ell$ satisfying
\begin{equation}\label{def_lj}
\sup_{\Omega' \cap \partial B_{(1-\theta)r(x)}} \gr < \ell
\end{equation}
and such that \eqref{ide_Gr}, \eqref{ide_Gr_2} hold with, respectively, $\psi \equiv 1$ and $\psi = \gr$. In particular,
\begin{equation}\label{inte_byparts}
\int_{\{\gr \le \ell\}} |\nabla \gr|^p = \ell \int_{\{\gr = \ell\}}|\nabla \gr|^{p-1} = \ell.
\end{equation}
Since $\gr$ is $p$-harmonic on $\Omega'\setminus\{o\}$ and vanishes on $\partial \Omega'$, the maximum principle implies that its super-level sets are connected, thus
	\[
	\{\gr > \ell\} \subset B_{(1-\theta)r(x)}.
	\]
Again by the maximum principle, $\sup_{\partial  B_r\cap \Omega'}\gr$ is a non-increasing function of $r$. Extend $\gr$ with zero on $M \backslash \Omega'$, and observe that $\Delta_p \gr \ge -\delta_o$ on $M$. To apply Lemma \ref{lem_moser} to $\gr$ with $t = r(x)$ and the choices $A_\infty = B_{t(2-\theta)}\backslash B_t$, $T = \theta t$, $q = \nu p/(\nu-p)$, notice that \eqref{sobolev_weighted} restricted to $B_{2t} \backslash B_{(1-\theta)t}$ together with our assumptions \eqref{def_eta_sobolev} on $\eta$ implies the unweighted Sobolev inequality
\begin{equation}\label{sobolev_unweighted}
\left( \int |\psi|^{ \frac{\nu p}{\nu-p}}\right)^{\frac{\nu-p}{\nu}} \le \So_{p,\nu} \eta(2t)^{\frac{p}{\nu}} \int |\nabla \psi|^p \qquad \forall \, \psi \in \lip_c\big(\Omega \cap B_{2t} \backslash B_{(1-\theta)t}\big).
\end{equation}
Although $B_{2t} \backslash B_{(1-\theta)t}$ may not entirely lie in the set where \eqref{sobolev_unweighted} holds, we can still apply Lemma \ref{lem_moser} since $\gr$, and therefore the test functions in the Moser iteration, vanish outside of $\Omega$; hence, we obtain
\begin{equation}\label{final_moser_2}
\begin{array}{lcl}
\disp \|\gr\|_{L^{\infty}(\partial B_t)} & \le & \disp \disp \big(\So_{p,\nu} \bar C_{p, \nu}\big)^{\frac{\nu-p}{p^2}} \eta(2t)^{\frac{\nu-p}{\nu p}} \left( \theta t\right)^{- \frac{\nu-p}{p}} \left( \int_{B_{2t} \backslash B_{(1-\theta)t}} \gr^{\frac{\nu p}{\nu-p}}\right)^{\frac{\nu-p}{\nu p}} \\[0.5cm]
& \le & \disp \disp \big(\So_{p,\nu} \bar C_{p, \nu}\big)^{\frac{\nu-p}{p^2}} \eta(2t)^{\frac{1}{p}} \left( \theta t\right)^{- \frac{\nu-p}{p}} \left( \int_{B_{2t} \backslash B_{(1-\theta)t}} \eta(r)^{-\frac{p}{\nu-p}} \gr^{\frac{\nu p}{\nu-p}}\right)^{\frac{\nu-p}{\nu p}},
\end{array}
\end{equation}
where in the last inequality we used the monotonicity of $\eta$ in \eqref{def_eta_sobolev}, and where
	\[
	\bar C_{p, \nu} = 2^{\nu}[1+p]^p.
	\]
Plugging in the Sobolev inequality \eqref{sobolev_unweighted} the test function $\psi = \min\{\gr, \ell\} \in \lip_c(\Omega)$, and using the fact that $\psi = \gr$ on $\Omega \backslash B_{(1-\theta)t}\subset \{\gr \le \ell\}$ together with \eqref{inte_byparts}, we get
\begin{equation}\label{boundLp}
\begin{array}{lcl}
\disp \left( \int_{B_{2t} \backslash B_{(1-\theta)t}} \eta(r)^{-\frac{p}{\nu-p}} \gr^{ \frac{\nu p}{\nu-p}}\right)^{\frac{\nu-p}{\nu}} & \le & \disp \left( \int \eta(r)^{-\frac{p}{\nu-p}} \psi^{ \frac{\nu p}{\nu-p}}\right)^{\frac{\nu-p}{\nu}} \le \So_{p,\nu} \int |\nabla \psi|^p \\[0.4cm]
& = & \disp \So_{p,\nu}\int_{\{\gr \le \ell\}} |\nabla \gr|^p = \So_{p,\nu} \ell.
\end{array}
\end{equation}
If we denote by $\|\cdot\|_s$ the $L^\infty$ norm on $\partial B_s$, inserting into \eqref{final_moser_2} and letting $\ell \to \|\gr\|_{(1-\theta)t}$ we infer
\begin{equation}\label{sobolev_iteriamola_00}
\|\gr\|_t \le \So_{p,\nu}^{\frac{\nu}{p^2}} \bar C_{p, \nu}^{\frac{\nu-p}{p^2}} \eta\big(2r(x)\big)^{\frac{1}{p}} t^{- \frac{\nu-p}{p}} \theta^{- \frac{\nu-p}{p}} \|\gr\|_{(1-\theta)t}^{\frac{1}{p}}.
\end{equation}
Fix $\xi \in (0,1)$ and consider a sequence $\{\sigma_k\}_{k\ge 0} \subset [1,\infty)$ with the property that
	\begin{equation}\label{mono_pk}
	\sigma_{k+1} > \sigma_k \qquad \text{for } \, k \ge 0,
	\end{equation}
to be specified later, and construct inductively sequences $\{t_k\}, \{\theta_k\}$ for $k \ge 0$ as follows:
	\[
	\begin{array}{c}
	t_0 = t, \qquad \theta_0 = 1 - \xi^{\sigma_1}, \qquad  \theta_k = 1- \xi^{\sigma_{k+1} - \sigma_k} \quad \text{for } \, k \ge 1, \\[0.3cm]
	\quad t_{k+1} = (1-\theta_k)t_k = t \xi^{\sigma_{k+1}}.
	\end{array}
	\]
Set for convenience
	\begin{equation}\label{def_Chat}
	\hat C = \So_{p,\nu}^{\frac{\nu}{p^2}} \bar C_{p, \nu}^{\frac{\nu-p}{p^2}} \eta\big(2r(x)\big)^{\frac{1}{p}}.
	\end{equation}
We iterate \eqref{sobolev_iteriamola_00} $i$-times for the chosen $\theta_k,t_k$ and use that $\eta$ is increasing (so we can use $\hat C$ at every step of the iteration) to deduce
	\begin{equation}\label{sobolev_iteriamola_2}
\begin{array}{lcl}
\disp \|\gr\|_{t_0} & \le & \disp \hat C t_0^{- \frac{\nu-p}{p}} \theta_0^{- \frac{\nu-p}{p}} \|\gr\|_{t_1}^{\frac{1}{p}} \\[0.4cm]
& \le & \disp \hat C^{1 + p^{-1}} \big[t_0\theta_0\big]^{- \frac{\nu-p}{p}} \big[(t_1\theta_1)^{p^{-1}}\big]^{-\frac{\nu-p}{p}} \|\gr\|_{t_2}^{\frac{1}{p^2}} \\[0.4cm]
& \le & \ldots \le \disp \hat C^{\sum_{k=0}^i p^{-k}} \left[ \prod_{k=0}^i (t_k \theta_k)^{p^{-k}}\right]^{-\frac{\nu-p}{p}} \|\gr\|_{t_{i+1}}^{\frac{1}{p^{i+1}}}.
\end{array}
\end{equation}
We shall find a suitable sequence $\{\sigma_k\}$ such that
	\[
	P_1 \doteq \prod_{k=0}^{\infty} (t_k\theta_k)^{p^{-{k}}} = t^{\frac{p}{p-1}} (1-\xi^{\sigma_1}) \prod_{k=1}^\infty \left[ \xi^{\sigma_k} - \xi^{\sigma_{k+1}} \right]^{\frac{1}{p^k}}
	\]
	converges with nice estimates as $p \ra 1$. Taking the logarithm, this amounts to estimating from below the sum
	\[
	\sum_{k=1}^\infty \frac{1}{p^k} \log \left[ \xi^{\sigma_k} - \xi^{\sigma_{k+1}} \right]
	\]
by $\mathcal{O}(\frac{1}{p-1})$. For fixed $\tau>1$, we choose $\sigma_k$ inductively by taking
	\[
	\sigma_1 = 1, \qquad \sigma_{k+1} = \frac{\log\left( \xi^{\sigma_k} - \xi^\tau\right)}{\log \xi},
	\]
	so in particular,
	$$
	\xi^{\sigma_k} - \xi^{\sigma_{k+1}} = \xi^\tau, \qquad \text{hence} \qquad \sigma_{k+1} > \sigma_k > \ldots > \sigma_1 =1.
	$$
Note also that $\sigma_k \in (1, \tau)$ for every $k$, since $\xi^\tau < \xi^{\sigma_k}$ and therefore $t_k=t \xi^{\sigma_{k}}\geq t\xi^\tau$.
With such a choice,
	\[
	\sum_{k=1}^\infty \frac{1}{p^k} \log \left[ \xi^{\sigma_k} - \xi^{\sigma_{k+1}} \right] = \tau\log \xi \sum_{k=1}^\infty \frac{1}{p^k} = \frac{\tau \log \xi}{p-1}
	\]
	and thus 	
	\[
	P_1 = t^{\frac{p}{p-1}}(1-\xi^{\sigma_1}) \exp\left\{ \frac{\tau \log \xi}{p-1} \right\} = t^{\frac{p}{p-1}}(1-\xi) \xi^{\frac{\tau}{p-1}}.
	\]
Recalling that $\|\gr\|_{r}$, is a non-increasing function of $r$, $\|\gr\|_{t_{i+1}}\leq \|\gr\|_{t\xi^\tau}$ and therefore
$$
\lim_{i \ra \infty} \|\gr\|_{t_{i+1}}^{\frac{1}{p^{i+1}}} \le \lim_{i \ra \infty} \|\gr\|_{t\xi^{\tau}}^{\frac{1}{p^{i+1}}} = 1.
$$
Thus, letting $i \ra \infty$ and computing the sum at the exponent of $\hat C$, we deduce from \eqref{sobolev_iteriamola} the upper bound
	\[
\disp \|\gr\|_{t} \le \disp \hat C^{\frac{p}{p-1}} \left[ \prod_{k=0}^\infty (t_k \theta_k)^{p^{-k}}\right]^{-\frac{\nu-p}{p}} = \hat C^{\frac{p}{p-1}} t^{- \frac{\nu-p}{p-1}} \Big[(1-\xi) \xi^{\frac{\tau}{p-1}}\Big]^{-\frac{\nu-p}{p}}.
	\]
Finally, letting $\tau \ra 1$,  maximizing in $\xi \in (0,1)$ to estimate
	\[
	\max_{\xi \in (0,1)} (1-\xi) \xi^{\frac{1}{p-1}} = \frac{p-1}{p} p^{-\frac{1}{p-1}}
	\]
	and recalling the definition of $\hat C$ we eventually obtain
	\begin{equation}\label{sobolev_iteriamola}
\disp \|\gr\|_{t} \le \disp \left(\frac{p}{p-1}\right)^{\frac{\nu-p}{p}} \So_{p,\nu}^{\frac{\nu}{p(p-1)}} \bar C_{p, \nu}^{\frac{\nu-p}{p(p-1)}} \eta(2r(x))^{\frac{1}{p-1}} t^{- \frac{\nu-p}{p-1}} p^{\frac{\nu - p}{p(p-1)}}.
	\end{equation}
	Estimate \eqref{upper_consobolev} then follows from the definition of $\bar C_{p, \nu}$.
\end{proof}	
	
\begin{remark}	
\emph{It should be pointed out that the non-standard iteration carried out  in the above proof allowed to obtain a constant $C_{p,\nu}$ which remains bounded as $p\to 1$. Standard dyadic iterations, or variants thereof, would produce constants which diverge as  $p\to 1$.
}
\end{remark}

We conclude this section by describing a few relevant examples where \eqref{sobolev_weighted} holds on the entire $M$ with $\nu = m$ and no weight:
\begin{equation}\label{sobolev}
\left( \int |\psi|^{ \frac{mp}{m-p}}\right)^{\frac{m-p}{m}} \le \So_{p,m} \int |\nabla \psi|^p \qquad \forall \, \psi \in \lip_c(M),
\end{equation}
and consequently
\begin{equation}\label{decaygreen_examples}
\gr(x) \le C_{p,m}^{\frac{1}{p-1}} r(x)^{- \frac{m-p}{p-1}} \qquad \text{for } \, x \in M \backslash \{o\}.
\end{equation}
We stress that, by \cite[Prop. 2.5]{carron} and \cite{pigolasettitroyanov}, \eqref{sobolev} holds on $M$ possibly with a different constant $\So_{p,m}$ if and only if $M$ has infinite volume and \eqref{sobolev} holds outside some compact set of $M$. However, from the proof in \cite{pigolasettitroyanov} it is unclear whether the boundedness of the Sobolev constant outside of a compact set as $p \ra 1$ implies that of the global Sobolev constant.

\begin{remark}\label{rem_isoperimetric}
\emph{As in the end of Remark \ref{rem_localimportant}, if the $L^1$ Sobolev inequality
\begin{equation}\label{isoperimetric}
\left(\int |\psi|^{\frac{m}{m-1}}\right)^{\frac{m-1}{m}} \le \So_{1,m} \int |\nabla \psi| \qquad \forall \, \psi \in \lip_c(M)
\end{equation}
holds for some $\So_{1,m}>0$, then \eqref{sobolev} holds for every $p \in (1,m)$ with $\nu = m$ and constant
$$
\So_{p,m} = \left[ \frac{\So_{1,m} p(m-1)}{m-p}\right]^p \ra \So_{1,m} \qquad \text{as } \, p \ra 1.
$$
}
\end{remark}

\begin{example}\label{ex_minimal}
\emph{We recall that a Cartan-Hadamard space is a complete, simply-connected manifold with non-positive sectional curvature. Let $M^m \ra N^n$ be a complete, minimal immersion into a Cartan-Hadamard space. By \cite{hoffmanspruck}, the $L^1$ Sobolev inequality \eqref{isoperimetric} holds on $M$, and consequently, for each $p \in (1,m)$ the kernel $\gr$ of $\Delta_p$ satisfies \eqref{decaygreen_examples}.
}
\end{example}

\begin{example}
\emph{If $\Ricc \ge 0$ on $M$ and $m \ge 3$, then
	\[
	\text{$M$ enjoys \eqref{isoperimetric}} \qquad \Longleftrightarrow \qquad \lim_{t \ra \infty} \frac{|B_t|}{V_0(t)} \doteq \Theta >0,
	\]
	that is, $M$ has maximal volume growth. Indeed, referring to Remark \ref{rem_localimportant}, one observes that the constant in \eqref{eq_sobosempote} is uniform in $R$ provided that $\Theta>0$ (this result can also be found in \cite[Thm. 3.3.8]{saloff}), so implication $\Leftarrow$ holds. On the other hand, $\Rightarrow$ holds irrespectively of a bound on the Ricci tensor, see \cite{carron} and \cite[Lem. 7.15]{prs}.
}
\end{example}


\begin{example}\label{prop_hebey}
\emph{Let $M^m$ be a complete manifold satisfying
\begin{itemize}
\item[$(i)$] $\Ricc \ge -(m-1) \kappa^2$ for some $\kappa>0$, and
\begin{equation}\label{ipo_persobolev}
\inf_{x \in M} |B_1(x)| = \upsilon > 0;
\end{equation}
\item[$(ii)$] for some $p \in (1,m)$ and $\Po_p>0$, the Poincar\'e inequality
\begin{equation}\label{poincare}
\int |\psi|^p \le \Po_p \int |\nabla \psi|^p \qquad \forall \, \psi \in \lip_c(M).
\end{equation}
\end{itemize}
By work of N. Varopoulos (see \cite{hebey_book}, Thm. 3.2), because of $(i)$ $M$ enjoys the $L^1$ Sobolev inequality with potential
\begin{equation}\label{sobolevL1}
\left(\int |\psi|^{\frac{m}{m-1}}\right)^{\frac{m-1}{m}} \le \So_{1,m} \int \big[|\nabla \psi| + |\psi|\big] \qquad \forall \, \psi \in \lip_c(M),
\end{equation}
for some $\So_{1,m}$ depending on $(m,\kappa,\upsilon)$. Using again as a test function $|\psi|^{\frac{p(m-1)}{m-p}}$, by H\"older inequality and rearranging we get (see \cite[Lem. 2.1]{hebey_book})
\begin{equation}\label{sobolevLp}
\left(\int |\psi|^{\frac{mp}{m-p}}\right)^{\frac{m-p}{m}} \le \So_{p,m} \int \big[|\nabla \psi|^p + |\psi|^p\big] \qquad \forall \, \psi \in \lip_c(M),
\end{equation}
for some $\So_{p,m}$ depending on $(m,\kappa, \upsilon,p)$. Assumption $(ii)$ then guarantees \eqref{sobolev} with $\nu = m$.
}
\end{example}

\begin{example}\label{ex_roughisometric}
\emph{Two metric spaces $(M,\di_M)$ and $(N,\di_N)$ are said to be \emph{roughly isometric} if there exist $\varphi : M \ra N$ and constants $\eps > 0, C_1 \ge 1, C_2 \ge 0$ such that $B_\eps (\varphi(M)) = N$ and
$$
C_1^{-1} \di_M(x,y) -C_2 \le \di_N  \big( \varphi(x), \varphi(y) \big) \le C_1 \di_M(x,y) + C_2 \qquad \forall \, x,y \in M.
$$
The notion was introduced by M. Kanai, who proved in \cite[Thm. 4.1]{kanai} that if $M$ and $N$ are roughly isometric manifolds of the same dimension, both satisfying the condition
\begin{itemize}
\item[$(iii)$] $\Ricc \ge -(m-1) \kappa^2, \qquad \mathrm{inj}(M) >0$
\end{itemize}
for some constant $\kappa>0$, where $\mathrm{inj}(M)$ denotes the injectivity radius of $M$, then
	\[
	\text{\eqref{isoperimetric} holds on $M$} \qquad \Longleftrightarrow \qquad \text{\eqref{isoperimetric} holds on $N$}.
	\]
In particular, a manifold $M^m$ satisfying $(iii)$ and roughly isometric to $\R^m$ enjoys \eqref{isoperimetric}, and therefore \eqref{sobolev} with $\nu = m$. Under the same assumptions,  $\Delta_p$ is parabolic for each $p \ge m$, see \cite[Thm. 3.16]{holopainen3}, and thus \eqref{sobolev} is false for any $m \le p < \nu$. We remark in passing that $\mathrm{inj}(M)>0$ implies a lower bound for the volume as in \eqref{ipo_persobolev}, see \cite[Prop. 14]{croke}.
}
\end{example}

\begin{question}
\emph{If $\Ricc\geq -(m-1)\kappa^2$ one may wonder under which additional conditions the Green's kernel has an exponential decay of the type
$$
\gr(x) \le C_{p,m} r(x)^{-\frac{m-p}{p-1}} e^{-\lambda r(x)}
$$
for some constants $C_{p,m}, \lambda>0$ depending on $(\kappa,m,p)$.
Work of P. Li and J. Wang, \cite{liwang2}, suggests that this could be the case provided conditions $(i)$ and $(ii)$ in Example~\ref{prop_hebey} hold.
}
\end{question}


\subsection{Properness under volume doubling and Poincar\'e inequalities} \label{subsec_doupoinc}

In this section we investigate in detail the class of manifolds supporting global doubling and weak (Neumann) Poincar\'e inequalities.
\begin{definition}
Let $M^m$ be a complete Riemannian manifold.
\begin{itemize}
\item[$\dous$] we say that $M$ has the \emph{global doubling property} if there exists a constant $C_{\Dou} > 1$ such that
$$
|B_{2r}(x)| \le C_{\Dou} |B_r(x)| \qquad \text{for each } \ \  x \in M, \ \ r >0.
$$
\item[$\neuqp$] given $1 \le q \le p < \infty$, we say that the \emph{weak $(q,p)$-Poincar\'e inequality} holds on $M$ if there exists a constant $\Po_{q,p}$, depending on $(q,p,m)$, such that
\begin{equation}\label{WNP}
\left\{ \fint_{B_r(x)} |\psi - \bar \psi_{B_r(x)}|^q\right\}^{\frac{1}{q}} \le \Po_{q,p} r \left\{ \fint_{B_{2r}(x)} |\nabla \psi|^p \right\}^{\frac{1}{p}}
\end{equation}
for each $x \in M$, $r>0$ and $\psi \in \lip_c(M)$.
%
\end{itemize}
\end{definition}
%
%
\begin{remark}\label{rem_dou}
\emph{In view of Bishop-Gromov volume comparison and \eqref{buser}, both $\dous$ and $\neup$ (for every $p \ge 1$) hold if $\Ricc \ge 0$ on $M$. Furthermore, by \cite[Thm. 7.1 and Prop. 2.3]{CSC1}, if two manifolds $M_1,M_2$ have Ricci curvature bounded from below and are roughly isometric (see Example \ref{ex_roughisometric}) via a map $\varphi$ which also satisfies
	\[
	C^{-1} |B_1(x)| \le |B_1(\varphi(x))| \le C|B_1(x)| \qquad \forall \, x \in M_1
	\]
for some constant $C>1$, then $\dous$, $\neup$ hold on $M_1$ if and only if they hold on $M_2$. 
}
\end{remark}
\begin{remark}
\emph{An application of H\"older inequality shows that $\neuuno \Rightarrow \neuunop$ and $\neup \Rightarrow \neuqp \Rightarrow \neuunop$ for each $1 \le q \le p$.
}
\end{remark}
%
A standard iteration, see \cite[Lem. 8.1.13]{hkst}, shows that $\dous$ implies:
	\begin{equation}\label{rel_low_vol}
	\forall \, B_s' \subset B_t \ \text{balls}, \quad \frac{|B_s'|}{|B_t|} \ge C \left(\frac{s}{t}\right)^\nu \quad \text{with } \, \nu =  \log_2 C_\Dou \text{ and } C = C(C_\Dou).
\end{equation}
The constant $\nu$ is called the doubling dimension of $M$. 
\begin{remark}
\emph{From the asymptotic behavior of $|B_r|$ as $r\to 0$, it follows that $\nu\geq m$.
}
\end{remark}
Under an additional condition first introduced by V. Minerbe, \cite{minerbe}, which is in some sense the reverse of \eqref{rel_low_vol}, manifolds supporting $\dous$ and $\neup$ satisfy a weighted Sobolev inequality. The next theorem was obtained for $p=2$ in \cite{minerbe} and recently extended by D. Tewodrose in \cite{tewo2}. We quote the following simplified version of \cite[Thm 1.1]{tewo2}.
\begin{theorem} 
\label{theorem-tewo2}
Let $M^m$ be a complete manifold satisfying $\dous$ and $\neup$ for some $p\in[1,\nu)$, with doubling dimension $\nu=\log_2 C_\Dou$. Assume that there exist constants $C_{\RD}>0$, $b\in (p, \nu]$ and a point $o \in M$ such that
\begin{equation}\label{eq_reversevol}
\forall \, t \ge s > 0, \qquad \frac{|B_t(o)|}{|B_s(o)|} \ge C_{\RD} \left( \frac{t}{s}\right)^b.
\end{equation}
Then, there exists $\So_{p,\nu}$ depending only on $C_\Dou$, $p$, $\Po_{p,p}$, $b$ and $C_{\RD}$ such that
\begin{equation}
\label{sob_tewodrose}
\left(
\int_M \left[ \frac{r^\nu}{|B_r(o)|}
\right]^{-\frac{p}{\nu-p} } |\psi|^{\frac{\nu p}{\nu-p}}
\right)^{\frac{\nu-p}{\nu}}
\leq
\So_{p,\nu} \int_M |\nabla \psi|^p, \qquad \forall \,\psi\in \lip_c (M).
\end{equation}
\end{theorem}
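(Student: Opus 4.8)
The statement is quoted from \cite{tewo2} (which extends \cite{minerbe} beyond the case $p=2$), so here I only outline the strategy one would follow. The plan is to deduce the global \emph{weighted} inequality \eqref{sob_tewodrose} from the \emph{unweighted} Sobolev--Poincar\'e inequality on balls, which is the classical consequence of $\dous$ and $\neup$ (via the Haj\l asz--Koskela chaining argument, or the local Moser scheme recalled in Remark \ref{rem_localimportant}): for every ball $B_r(x)\subset M$,
\[
\left( \fint_{B_r(x)} \big|\psi - \overline{\psi}_{B_r(x)}\big|^{\frac{\nu p}{\nu-p}}\right)^{\frac{\nu-p}{\nu p}} \le C\, r \left( \fint_{B_{2r}(x)} |\nabla \psi|^p \right)^{\frac 1p} \qquad \forall\,\psi \in \lip(B_{2r}(x)),
\]
with $C$ depending only on $C_\Dou$, $p$ and $\Po_{p,p}$. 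Rewriting this in non-normalized form on the dyadic balls $B_k \doteq B_{2^k}(o)$, and noting that $2^k|B_k|^{-1/\nu}$ equals, up to a fixed factor and thanks to $\dous$, the reciprocal of $w^{1/(\kappa p)}$ on the annulus $A_k \doteq B_{k+1}\setminus B_k$ (here $w(x) = (r(x)^\nu/|B_{r(x)}(o)|)^{-p/(\nu-p)}$, $\kappa\doteq\nu/(\nu-p)$, so $\kappa p = \tfrac{\nu p}{\nu-p}$), one sees that $w$ is exactly the local normalization making the ball inequality scale-invariant from one annulus to the next.

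Next I would fix the dyadic annular decomposition $M = \bigcup_{k\ge 0} A_k$ ($A_0=B_1$) together with a Lipschitz partition of unity $\{\chi_k\}$ subordinate to slightly enlarged annuli, with $|\nabla\chi_k|\lesssim 2^{-k}$ and bounded overlap, and write $\psi = \sum_k \psi\chi_k$. Applying the ball inequality to each $\psi\chi_k$ on a ball of radius $\asymp 2^k$ comparable to $A_k$ bounds the weighted $L^{\kappa p}$-norm of the mean-zero part of $\psi\chi_k$ by a constant times $\|\nabla(\psi\chi_k)\|_{L^p}$, and $\nabla(\psi\chi_k) = \chi_k\nabla\psi + \psi\nabla\chi_k$. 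The ``constant per annulus'' that this procedure leaves over is handled by a telescoping estimate: $\overline{\psi}_{B_k}$ is compared to $\overline{\psi}_{B_0}$ along the chain $B_0\subset\cdots\subset B_k$, each link controlled by $\neup$, and $\overline{\psi}_{B_0}$ is controlled using that $\psi$ has compact support, so that the weighted $L^{\kappa p}$-masses of these constants form a convergent series. This is the one place where the reverse volume growth \eqref{eq_reversevol} with $b>p$ is essential: it forces $w_k\doteq(2^{k\nu}/|B_k|)^{-p/(\nu-p)}$ to decay geometrically as $k\to\infty$ (since $|B_k|\gtrsim C_\RD\,2^{kb}|B_0|$ raises the comparison dimension at infinity to at least $b$), which is exactly what turns all the series below into convergent geometric series with a constant depending only on $C_\Dou, p, \Po_{p,p}, b, C_\RD$.

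Finally, I would reassemble: using the bounded overlap of the enlarged annuli one has $\int w|\psi|^{\kappa p}\lesssim \sum_k \int w|\psi\chi_k|^{\kappa p}$; each summand is estimated by the local bound of the previous step; the gradient cross terms $\sum_k \|\psi\nabla\chi_k\|_{L^p}^p \lesssim \sum_k 2^{-kp}\|\psi\|_{L^p(A_k^*)}^p$ are reabsorbed into the left-hand side via a weighted $L^p$ Hardy/Poincar\'e inequality on annuli (again a consequence of $\dous$, $\neup$ and \eqref{eq_reversevol}); summing the resulting geometric series yields \eqref{sob_tewodrose} with the stated dependence of $\So_{p,\nu}$. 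An alternative route, closer to \cite{minerbe}, is to discretize $M$ to a weighted graph (vertices a maximal separated net, vertex weights $|B_1(x)|$), transfer $\dous$ and $\neup$ to combinatorial doubling and Poincar\'e inequalities, prove the corresponding discrete weighted Sobolev inequality on the graph using \eqref{eq_reversevol}, and transfer back.

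The main difficulty is not the local Sobolev--Poincar\'e inequality, which is classical, but the global bookkeeping: controlling the means $\overline{\psi}_{B_k}$ over arbitrarily large balls and showing that the telescoping errors, the constant contributions and the gradient cross terms all sum to something finite, with a final constant independent of $\psi$ and depending only on $C_\Dou, p, \Po_{p,p}, b, C_\RD$. The hypothesis $b>p$ is precisely the threshold making these series geometric rather than merely bounded, and arranging the absorption of the cross terms so that no factor depending on $\supp\psi$ survives is the delicate point.
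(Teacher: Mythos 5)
The paper does not actually prove this theorem: it is quoted verbatim as a simplified version of \cite[Thm 1.1]{tewo2} (extending Minerbe's $p=2$ case in \cite{minerbe}), and your proposal correctly recognizes this. The strategy you sketch --- an annular decomposition adapted to the weight, local Sobolev--Poincar\'e inequalities deduced from $\dous$ and $\neup$, a telescoping/discrete Poincar\'e estimate made summable precisely by the reverse volume growth with $b>p$, and a final patching step --- is essentially the same route followed in \cite{tewo2} and revisited in Section \ref{subsec_doupoinc} of the paper, where the good covering built from the annuli $A_j=B_{\kappa^{j+1}}(o)\setminus B_{\kappa^j}(o)$ (with $\kappa$ given by the relatively connected annuli property) is used to track the dependence of $\So_{p,\nu}$ on $p$.
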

\begin{remark}
\emph{If $\Ricc \ge - C(1+r)^{-2}$ and satisfies a few further conditions, a weighted Sobolev inequality with polynomial weights can be found in \cite{hein}.
}
\end{remark}
\begin{remark}\label{rem_reversevol}
\emph{Condition \eqref{eq_reversevol} holds, for instance, provided that there exist $\tilde C>1$ and $b \in (p,m]$ such that the balls $B_t$ centered at a fixed origin $o$ satisfy
$$
\tilde C^{-1} t^b \le |B_t| \le \tilde C t^b \qquad \forall \, t \ge 1.
$$
The constant $C_\RD$ then only depends on $\tilde C, m$ and on the lower bound $H$ for the Ricci curvature on, say, $B_{6}$. Indeed, if $1 \le s \le t$ then
\begin{equation}\label{eq_mine_1}
\frac{|B_t|}{|B_s|} \ge \frac{1}{\tilde C^2} \left( \frac{t}{s} \right)^b = C_1 \left( \frac{t}{s} \right)^b.
\end{equation}
On the other hand, if $0 < s \le t \le 1$, Remark \ref{rem_localimportant} guarantees a bound for the $L^1$-Sobolev (isoperimetric) constant $\So$ on $B_1$ in terms of $\max_{[0,6]}H$ and of $|B_1|$, hence of $H,\tilde C$. Integrating the inequality $|\partial B_\sigma| \ge \So^{-1} |B_\sigma|^{\frac{m-1}{m}}$ from $s$ to $t$ and using volume comparison we get
$$
\left( \frac{|B_t|}{|B_s|} \right)^{\frac{1}{m}} \ge 1 + \frac{t-s}{m \So |B_s|^{1/m}} \ge 1 + \frac{t-s}{m \So V_h(s)^{1/m}} \ge 1 + C_2 \left( \frac{t}{s} -1\right) \ge \min\{C_2,1\} \frac{t}{s},
$$
thus raising to the $m$-th power and using $b \le m$,
\begin{equation}\label{eq_mine_2}
\frac{|B_t|}{|B_s|} \ge C_3 \left(\frac{t}{s}\right)^m  \ge C_3 \left(\frac{t}{s}\right)^b.
\end{equation}
The case $t > 1 > s > 0$ follows by combining \eqref{eq_mine_1} and \eqref{eq_mine_2}, respectively with $s=1$ and $t=1$.
}
\end{remark}
If we define
	\begin{equation}\label{def_eta_tewo}
	\eta(t) = \sup_{s \in (0,t]} \frac{s^\nu}{|B_s(o)|},
	\end{equation}
and observe that $\eta$ is finite since $\nu \ge m$, a direct application of Theorems \ref{theorem-tewo2} and  \ref{teo_sobolev} yields a pointwise decay for $\gr$. However, in applications to the IMCF we need to  guarantee that the constant $C_{p,\nu}$ in \eqref{upper_consobolev}, hence $\So_{p,\nu}$, remains bounded on sets $(1,p_0] \subset (1,\nu)$. We shall prove it when $M$ supports $\dous$ and $\neuuno$. Note that, although Theorem \ref{theorem-tewo2} guarantees the validity of \eqref{sob_tewodrose} for $p=1$, it seems not immediate to deduce from it a weighted Sobolev inequality for $p \in (1,\nu)$. We therefore have to keep track of the constants in \cite{tewo2}, a procedure that requires some comments. First, by \cite[Thm. 9.1.15]{hkst} and in view of \eqref{rel_low_vol}, on a complete manifold satisfying $\dous$, $\neuuno$ there exists a constant $\So(C_\Dou,\Po_{1,1})$ such that
	\begin{equation}\label{neusob}
	\left( \fint_{B_r(y)} |\psi - \bar\psi_{B_r(y)}|^{\frac{\nu}{\nu-1}} \right)^{\frac{\nu-1}{\nu}} \le \So r \fint_{B_r(y)} |\nabla \psi| \qquad \forall \, y \in M, \psi \in \lip(B_r(y)),
	\end{equation}
In the next Lemma, we examine the behavior of the $L^p$ Sobolev constant for $p \in (1,\nu)$.
%
%
%
%
\begin{lemma}\label{lem_Lpsob}
Let $M$ be a complete manifold satisfying $\dous$ and $\neuuno$, with doubling dimension $\nu = \log_2 C_\Dou$. Then, for each $p_0 \in (1, \nu)$, there exists a constant $\bar \So$ depending on $C_\Dou, \Po_{1,1}, p_0$ such that, for each $p \in [1,p_0]$,
	\begin{equation}\label{neusob_p}
	\left( \fint_{B_r(y)} |\psi - \bar\psi_{B_r(y)}|^{\frac{\nu p}{\nu-p}} \right)^{\frac{\nu-p}{\nu p}} \le \bar \So r \left(\fint_{B_r(y)} |\nabla \psi|^p\right)^{\frac{1}{p}} \quad \forall \, y \in M, \psi \in \lip(B_r(y)),
	\end{equation}
\end{lemma}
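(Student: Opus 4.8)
The plan is to run the classical truncation-and-Hölder bootstrap that upgrades an $L^1$ Sobolev--Poincar\'e inequality to the $L^p$ family, the only real concern being to keep the resulting constant bounded as $p$ runs over the \emph{compact} subinterval $[1,p_0]$ of $[1,\nu)$. Write $p^* = \frac{\nu p}{\nu-p}$ and $1^* = \frac{\nu}{\nu-1}$, so that the H\"older conjugate of $1^*$ is $\nu$. By $\dous$, $\neuuno$ and \cite[Thm. 9.1.15]{hkst}, the inequality \eqref{neusob} holds with a constant $\So$ depending only on $C_\Dou$ and $\Po_{1,1}$; since closed balls of a complete manifold are compact, every $\psi\in\lip(B_r(y))$ is bounded there and all the averaged integrals below are finite, and $\nu=\log_2 C_\Dou\ge m\ge 2$ throughout.

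First I would record a one--sided variant of \eqref{neusob}: if $g\ge 0$ is Lipschitz on $B=B_r(y)$ and $|\{g=0\}\cap B|\ge\tfrac12|B|$, then
$$
\Big(\fint_B g^{1^*}\Big)^{\frac{1}{1^*}}\le\So'\,r\fint_B|\nabla g|,\qquad \So'=\frac{\So}{1-2^{-1/\nu}},
$$
still depending only on $C_\Dou$ and $\Po_{1,1}$. This follows by splitting $g=(g-\bar g_B)+\bar g_B$ and using the triangle inequality in $L^{1^*}$ of the averaged integral, bounding the first piece by \eqref{neusob} and the constant $\bar g_B=\fint_B g\,\mathbf 1_{\{g>0\}}$ by H\"older (exponents $1^*$ and $\nu$) together with $|\{g>0\}\cap B|\le\tfrac12|B|$, which gives $\bar g_B\le 2^{-1/\nu}\big(\fint_B g^{1^*}\big)^{1/1^*}$; since $2^{-1/\nu}<1$ this term is absorbed on the left.

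Next, for $\psi\in\lip(B)$ and $p\in(1,p_0]$ I would pick a median $m$ of $\psi$ on $B$, set $\gamma=\frac{p(\nu-1)}{\nu-p}\ge 1$, and apply the one--sided inequality to the two Lipschitz functions $g_\pm=(\psi-m)_\pm^{\gamma}$, each of which vanishes on half of $B$ by the median property and satisfies $|\nabla g_\pm|=\gamma(\psi-m)_\pm^{\gamma-1}|\nabla\psi|$ a.e. Using the identities $\gamma\cdot 1^*=p^*$ and $(\gamma-1)\tfrac{p}{p-1}=p^*$ and then H\"older with exponents $p$ and $\tfrac{p}{p-1}$ on the gradient term yields
$$
\Big(\fint_B(\psi-m)_\pm^{p^*}\Big)^{\frac{1}{1^*}}\le\So'\gamma\,r\,\Big(\fint_B(\psi-m)_\pm^{p^*}\Big)^{\frac{p-1}{p}}\Big(\fint_B|\nabla\psi|^p\Big)^{\frac1p};
$$
since $p<\nu$ the exponent gap is $\tfrac{1}{1^*}-\tfrac{p-1}{p}=\tfrac1p-\tfrac1\nu=\tfrac{1}{p^*}>0$, so the finite factor $\fint_B(\psi-m)_\pm^{p^*}$ may be absorbed, leaving $\big(\fint_B(\psi-m)_\pm^{p^*}\big)^{1/p^*}\le\So'\gamma\,r\big(\fint_B|\nabla\psi|^p\big)^{1/p}$. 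Adding the two halves and using $|\psi-m|^{p^*}=(\psi-m)_+^{p^*}+(\psi-m)_-^{p^*}$ with $(a+b)^{1/p^*}\le a^{1/p^*}+b^{1/p^*}$ gives $\big(\fint_B|\psi-m|^{p^*}\big)^{1/p^*}\le 2\So'\gamma\,r\big(\fint_B|\nabla\psi|^p\big)^{1/p}$.

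Finally I would pass from the median to the mean at the cost of one more factor $2$, via $\big(\fint_B|\psi-\bar\psi_B|^{p^*}\big)^{1/p^*}\le\big(\fint_B|\psi-m|^{p^*}\big)^{1/p^*}+|m-\bar\psi_B|$ and $|m-\bar\psi_B|=\big|\fint_B(\psi-m)\big|\le\big(\fint_B|\psi-m|^{p^*}\big)^{1/p^*}$; and then observe that $p\mapsto\gamma(p)$ is increasing on $[1,\nu)$, so $\gamma\le\gamma(p_0)$ for every $p\in[1,p_0]$, whence \eqref{neusob_p} holds with $\bar\So=4\So'\gamma(p_0)$, which depends only on $C_\Dou$, $\Po_{1,1}$ and $p_0$ (the case $p=1$ being \eqref{neusob} together with the median--to--mean step). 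There is no genuine analytic difficulty here: the one point that really matters --- and the reason for isolating this Lemma --- is precisely the last observation, that the construction never lets the constant depend on $p$ except through the bounded quantity $\gamma(p)\le\gamma(p_0)$. The remaining verifications are routine, namely that $(\psi-m)_\pm^\gamma\in\lip(B_r(y))$ with the expected a.e. gradient (true since $\gamma\ge 1$ and $\psi$ is bounded on the ball), and the two exponent identities $\gamma\cdot 1^*=p^*=(\gamma-1)\tfrac{p}{p-1}$.
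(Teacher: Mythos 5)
Your proof is correct, and it takes a genuinely different route from the paper's. The paper applies \eqref{neusob} to the single function $g=|\psi-\bar\psi_B|^{p^*/1^*}$ (note $p^*/1^*$ equals your $\gamma$), which produces two terms: a gradient term handled by H\"older and Young's inequality, and a leftover mean-value term $\fint_B g$ that must be controlled separately — by one more application of \eqref{neusob} if $p^*/1^*\le 1^*$, and by a finite iteration otherwise. You avoid that leftover term altogether by the classical median--truncation device: splitting at a median and raising each part to the power $\gamma$, each truncation vanishes on half the ball, so your one-sided variant of \eqref{neusob} (with the clean absorption constant $(1-2^{-1/\nu})^{-1}$) applies with no mean-value term, and the self-improvement is closed through the positive exponent gap $\tfrac{1}{1^*}-\tfrac{p-1}{p}=\tfrac{1}{p^*}$ rather than through Young's inequality. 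The finiteness of $\fint_B(\psi-m)_\pm^{p^*}$ needed for that division is guaranteed, as you note, by the boundedness of a Lipschitz function on a ball. What your approach buys is the elimination of the iteration and a fully explicit constant $4\So\gamma(p_0)/(1-2^{-1/\nu})$, monotone in $p$ via $\gamma(p)\le\gamma(p_0)$; what the paper's approach buys is that it works directly with the mean $\bar\psi_B$ without passing through a median. One remark: you prove the scale-invariant form with exponent $\tfrac{\nu-p}{\nu p}=\tfrac{1}{p^*}$ on the left, which is also what the paper's own argument yields (the exponent $\tfrac{\nu-p}{\nu}$ in the displayed statement is inconsistent with the first power of the gradient term on the right and is evidently a typo), so this is the correct target.
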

\begin{proof}
Set for convenience $B = B_r(y)$ and $q^* = \frac{\nu q}{\nu - q}$. Define $g = |\psi - \bar \psi_B|^{p^*/1^*}$. Then, applying \eqref{neusob} we deduce
	\begin{equation}\label{eq_bonito_33}
	\begin{array}{lcl}
	\disp \left( \fint_B |\psi - \bar \psi_B|^{p^*}\right)^{\frac{1}{1^*}} & = & \disp \left( \fint_B g^{1^*} \right)^{\frac{1}{1^*}} \le \left( \fint_B |g- \bar g_B|^{1^*} \right)^{\frac{1}{1^*}} + \left( \fint_B \bar g_B^{1^*} \right)^{\frac{1}{1^*}} \\[0.5cm]
	 & \le & \disp \So r \fint_B |\nabla g| + \fint_B g.	
	\end{array}
	\end{equation}
However, by H\"older inequality
	\[
	\fint_B |\nabla g| = \frac{p^*}{1^*}\fint_B |\psi-\bar \psi_B|^{\frac{p^*}{1^*} - 1}|\nabla \psi| \le \frac{p^*}{1^*} \left(\fint_B g^{1^*}\right)^{\frac{p-1}{p}} \left( \fint_B |\nabla \psi|^p \right)^{\frac{1}{p}}.
	\]
We apply Young inequality $ab \le \frac{(\eps a)^{q'}}{q'} + \frac{(b/\eps)^q}{q}$ with the conjugate exponents
	\[
	q = \frac{p^*}{1^*}, \qquad q' = \frac{p(\nu-1)}{\nu(p-1)}
	\]
To obtain
	\[
	\So r \fint_B |\nabla g| \le \So r \frac{\eps^{\frac{p(\nu-1)}{\nu(p-1)}}\nu(p-1)}{\nu - p} \left(\fint_B g^{1^*}\right)^{\frac{1}{1^*}} + \So r \eps^{- \frac{p^*}{1^*}} \left( \fint_B |\nabla \psi|^p \right)^{\frac{p^*}{1^* p}}
	\]
We choose $\eps$ such that
	\[
	\eps^{\frac{p(\nu-1)}{\nu(p-1)}} = \frac{1}{\So r p^*}
	\]	
and plug into \eqref{eq_bonito_33} to deduce
	\begin{equation}\label{eq_buono_4}
\left( \fint_B g^{1^*} \right)^{\frac{1}{1^*}} \le p \left[ \So r \left(\So r p^* \right)^{\frac{\nu(p-1)}{\nu-p}} \left(\fint_B |\nabla \psi|^p \right)^{\frac{p^*}{1^* p}} + \fint_B g \right].
	\end{equation}
If $p^*/1^* \le 1^*$, then we can apply H\"older inequality and \eqref{neusob} to get
	\[
	\begin{array}{lcl}
	\disp \fint_B g & = & \disp \fint_B |\psi - \bar \psi_B|^{\frac{p^*}{1^*}} \le \left( \fint_B |\psi - \bar \psi_B|^{1^*} \right)^{\frac{p^*}{(1^*)^2}} \\[0.5cm]
	& \le & \disp (\So r)^{\frac{p^*}{1^*}} \left(\fint_B |\nabla \psi| \right)^{\frac{p^*}{1^*}} \le (\So r)^{\frac{p^*}{1^*}} \left(\fint_B |\nabla \psi|^p \right)^{\frac{p^*}{1^* p}}.
	\end{array}
	\]
Inserting into \eqref{eq_buono_4} and taking power $1^*/p^*$ we deduce \eqref{neusob_p} with constant
	\[
	\bar \So = \So p^\frac{1^*}{p^*} \left[ (p^*)^{\frac{\nu(p-1)}{\nu-p}} + 1\right]^{\frac{1^*}{p^*}},
	\]
which is uniformly bounded for $p \in [1,p_0]$. If $p^*/1^* > 1^*$ it is enough to iterate defining $p_1$ in such a way that $p^*/1^* = p_1^*$ and applying the above procedure with $p = p_1$ to estimate the last term in brackets in \eqref{eq_buono_4} by the integral of $g^{1/1^*}$. The iteration stops at step $k$ where  $p^*/(1^*)^k \le 1^*$.
\end{proof}
\begin{remark}
\emph{In particular, the above proposition and H\"older inequality guarantee that a manifold satisfying $\dous$ and $\neuuno$ also supports $\neup$ for each $p \in (1,\nu)$, with a constant $\Po_{p,p}$ that is uniformly bounded above on compact intervals $[1,p_0] \subset [1,\nu)$.
}
\end{remark}	
Next, by \cite[Prop. 2.8]{minerbe}, $\dous$, $\neup$ and property \eqref{eq_reversevol} guarantee the following \emph{relatively connected annuli} property:
\begin{itemize}
\item[] there exists $\kappa>1$ depending on $p, C_\Dou, \Po_{p,p}, C_{\RD}, b$ such that for each $R>0$, any two points in $\partial B_R(o)$ can be joined by a path that lies in $B_R(o) \backslash B_{\kappa^{-1} R}(o)$.
\end{itemize}
The constant $\kappa$, explicitly computed in \cite{minerbe}, remains bounded as $p \ra 1$ if so does $\Po_{p,p}$, and diverges as $p \ra b$. As a consequence, see \cite[Prop. 2.7]{tewo2}, from the decomposition of $M$ into annuli $A_j = B_{\kappa^{j+1}}(o) \backslash B_{\kappa^j}(o)$ one obtains a good covering $\{U_{i,a},U^*_{i,a}, U^\sharp_{i,a}\}_{(i,a) \in \Lambda}$ whose constants $Q_1,Q_2$ can be chosen to be independent of $p \in (1,p_0] \subset (1,b)$ for each pair of measures $\di x, \di \mu_{p,p^*}$, with $\di x$ the Riemannian volume and
	\[
	\di \mu_{p,p^*} = \left[ \frac{r^\nu}{|B_r(o)|}
\right]^{-\frac{p^*}{\nu}} \di x.
	\]
Thus, by Proposition 4.3 and Lemmas 4.1, 4.8 in \cite{tewo2}, discrete $p$-Poincar\'e inequalities hold with constants which are independent of $p \in (1,p_0]$. These, together with Lemma \ref{lem_Lpsob} above, ensure the validity of local Sobolev inequalities on $U_{i,a}$ and $U_{i,a}^*$, with Sobolev constants which are uniformly bounded for $p \in (1,p_0]$. The patching Theorem 2.3 in \cite{tewo2} yields the weighted Sobolev inequality \eqref{sob_tewodrose} with constant $\So_{p_0,\nu}$ only depending on $p_0, C_{\Dou}, \Po_{1,1}$, as claimed. Define $\eta$ as in \eqref{def_eta_tewo}. As mentioned above, $\eta$ is finite since $\nu \ge m$, and then Theorems \ref{theorem-tewo2} and \ref{teo_sobolev} imply the following
\begin{theorem} 
\label{theorem-decayGreen2}
Let $M^m$ be a complete manifold satisfying $\dous$ and $\neuuno$, with doubling dimension $\nu=\log_2 C_\Dou$. Assume that there exist constants $C_{\RD}>0$, $b\in (1, \nu]$ and a point $o \in M$ such that
\[
\forall \, t \ge s > 0, \qquad \frac{|B_t(o)|}{|B_s(o)|} \ge C_{\RD} \left( \frac{t}{s}\right)^b.
\]
Then, for each $p_0 \in (1,b)$, there exists $\So_{p_0,\nu}$ depending only on $C_\Dou$, $\Po_{1,1}$, $p_0$, $b$ and $C_{\RD}$ such that, for each $p \in (1,p_0]$, the Green kernel of $\Delta_p$ on an open set $\Omega \subset M$ with pole at $o \in \Omega$ satisfies
\begin{equation}\label{eq_decaygreen}
\gr(x) \le C_{p,\nu}^{\frac{1}{p-1}} \left[ \sup_{t \in (0,2r(x))} \frac{t^\nu}{|B_t(o)|}\right]^{\frac{1}{p-1}} r(x)^{- \frac{\nu-p}{p-1}}, \qquad \forall \, x \in  \Omega \backslash \{o\},
\end{equation}
where
$$
C_{p,\nu} = \So_{p_0,\nu}^{\frac{\nu}{p}} \left[ 2^{\nu}p(1+p)^p \left(\frac{p}{p-1}\right)^{p-1}\right]^{\frac{\nu-p}{p}}.
$$
\end{theorem}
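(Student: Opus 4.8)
The plan is to assemble the two ingredients already prepared above: Tewodrose's weighted Sobolev inequality (Theorem~\ref{theorem-tewo2}) and the Moser-iteration decay bound for the $p$-Green kernel under a weighted Sobolev inequality (Theorem~\ref{teo_sobolev}). The only point requiring care is that, for the later applications to the IMCF, all constants must stay bounded as $p\downarrow 1$; this forces us not to quote Theorem~\ref{theorem-tewo2} as a black box but to run its proof keeping track of the dependence of $\So_{p,\nu}$ on $p$. Fixing $p_0\in(1,b)$, the relevant bookkeeping is exactly the one carried out in the discussion preceding the statement: Lemma~\ref{lem_Lpsob} gives the local Neumann Sobolev inequalities on balls with a constant bounded for $p\in[1,p_0]$; \cite[Prop.~2.8]{minerbe} gives the relatively-connected-annuli property with a constant $\kappa$ that stays bounded as $p\to1$ (it diverges only as $p\to b$), hence a good covering $\{U_{i,a},U^*_{i,a},U^\sharp_{i,a}\}$ with covering multiplicities independent of $p\in(1,p_0]$ for both $\di x$ and $\di\mu_{p,p^*}$; Proposition~4.3 and Lemmas~4.1,~4.8 of \cite{tewo2} then yield discrete $p$-Poincar\'e inequalities uniformly in $p\in(1,p_0]$; and the patching Theorem~2.3 of \cite{tewo2} assembles these into \eqref{sob_tewodrose} with a single constant $\So_{p_0,\nu}$ depending only on $C_\Dou,\Po_{1,1},p_0,b,C_\RD$, valid for every $p\in(1,p_0]$.

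Next I would check that the weight $\eta$ in \eqref{def_eta_tewo}, namely $\eta(t)=\sup_{s\in(0,t]}s^\nu/|B_s(o)|$, satisfies \eqref{def_eta_sobolev}. It is manifestly non-decreasing and strictly positive, it is continuous as the running supremum of a continuous function, and it is finite because $\nu\ge m$ forces $s^\nu/|B_s(o)|\asymp s^{\nu-m}$ to stay bounded as $s\to0$, so the supremum over a compact interval is finite. Since $r^\nu/|B_r(o)|\le\eta(r)$ pointwise, \eqref{sob_tewodrose} implies the weighted Sobolev inequality \eqref{sobolev_weighted} on $\Omega$ (any $\psi\in\lip_c(\Omega)$ also lies in $\lip_c(M)$) with weight $\eta$ and constant $\So_{p_0,\nu}$, for every $p\in(1,p_0]$. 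Applying Theorem~\ref{teo_sobolev} on $\Omega$ with parameters $\nu$, $\So_{p,\nu}=\So_{p_0,\nu}$ and this $\eta$ yields at once that $\Delta_p$ is non-parabolic on $\Omega$ and that
$$
\gr(x)\le C_{p,\nu}^{\frac{1}{p-1}}\,\eta\big(2r(x)\big)^{\frac{1}{p-1}}\,r(x)^{-\frac{\nu-p}{p-1}},\qquad C_{p,\nu}=\So_{p_0,\nu}^{\frac{\nu}{p}}\Big[2^{\nu}p(1+p)^p\big(\tfrac{p}{p-1}\big)^{p-1}\Big]^{\frac{\nu-p}{p}},
$$
which is precisely \eqref{eq_decaygreen}, using $\eta(2r(x))=\sup_{t\in(0,2r(x))}t^\nu/|B_t(o)|$ by continuity. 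Finally $C_{p,\nu}$ stays bounded as $p\to1$ since $\So_{p_0,\nu}$ does and $(p/(p-1))^{p-1}\to 1$, which completes the argument.

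The main obstacle is the uniformity statement of the first paragraph: one genuinely has to open up \cite{minerbe} and \cite{tewo2} and verify that the annuli constant, the multiplicities of the good covering, and the discrete Poincar\'e constants depend on $p$ only through quantities that remain bounded on $(1,p_0]$ — in particular that the divergence of Minerbe's constant occurs only at the endpoint $p=b$, which is exactly why the hypothesis $p_0<b$ is imposed. Once this bookkeeping is in place, the passage from \eqref{sob_tewodrose} to \eqref{eq_decaygreen} is a direct application of Theorem~\ref{teo_sobolev} and introduces nothing new.
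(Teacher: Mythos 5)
Your proposal is correct and follows essentially the same route as the paper: the paper's own argument is precisely the bookkeeping you describe (Lemma \ref{lem_Lpsob} for the local Sobolev constants, Minerbe's relatively connected annuli with $\kappa$ diverging only as $p\to b$, the good covering and discrete Poincar\'e inequalities of Tewodrose, and the patching theorem) to get \eqref{sob_tewodrose} with $\So_{p_0,\nu}$ uniform for $p\in(1,p_0]$, followed by a direct application of Theorem \ref{teo_sobolev} with the weight $\eta(t)=\sup_{s\in(0,t]}s^\nu/|B_s(o)|$. Your verification that $\eta$ satisfies \eqref{def_eta_sobolev} and that the restriction to $\lip_c(\Omega)\subset\lip_c(M)$ is harmless matches the intended argument.
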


\section{Convergence as $p\ra 1$}\label{sec_convergence}

We hereafter require the following\\[0.2cm]
\noindent \textbf{Assumption:} there exists $p_0 \in (1,m)$ such that $(\Hp)$ holds for every $p \in (1,p_0)$ with
$$
H(t) \ge 0, \qquad H(t) \ \ \text{ non-increasing on } \, \R^+.\vspace{0.2cm}
$$
where $(\Hp)$ is defined in Subsection \ref{subsec_fake}.\\[0.3cm]
%
%
Thus, we can  define $\varrho= \varrho_p$ as in \eqref{def_bxy} for each $p \in (1,p_0)$, and by the gradient estimates in Theorem \ref{teo_good}, $|\nabla \varrho_p| \le 1$. Up to passing to a subsequence, $\varrho_p \ra \varrho_1$ in the $C^{0,\alpha}_\loc$ topology on $M$, for some $\varrho_1$ that is $1$-Lipschitz. Here we investigate conditions to guarantee that $\varrho_1$ is positive on $M \backslash\{o\}$ and proper on $M$. The following observation will be repeatedly used.
%

\begin{lemma}\label{cor_measure}
Let $h \in C(\R^+_0)$ be positive and increasing on $\R^+$, fix $R \in (0,\infty]$ and let $\{t_j\} \subset (0,R)$ converging to some $t \in (0,R)$. Fix $p_0 > 1$ and let $\{p_j\} \subset (1, p_0)$ with $p_j\rightarrow 1$. If $R = \infty$, assume further that
	\begin{equation}\label{inte_p0}
	v_h^{-\frac{1}{p_0-1}} \in L^1(\infty).
	\end{equation}
Then,
\begin{equation}\label{liminf_rhop}
\disp \frac{1}{v_h(t)} = \disp \lim_{j \ra \infty} \left[\int_{t_j}^R v_h(s)^{-\frac{1}{p_j-1}}\di s\right]^{p_j-1}.
\end{equation}
\end{lemma}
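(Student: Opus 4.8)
The plan is to establish \eqref{liminf_rhop} by analyzing the asymptotics of the integral $\int_{t_j}^R v_h(s)^{-1/(p_j-1)}\,\di s$ as $p_j \to 1$, the idea being that as the exponent $1/(p_j-1) \to \infty$, the integrand becomes sharply concentrated near the left endpoint $s = t_j$, where $v_h$ is smallest (since $v_h$ is increasing). So the integral is dominated by a neighbourhood of $t_j$, and a Laplace/Watson-type estimate should give $\int_{t_j}^R v_h^{-1/(p_j-1)} \sim (p_j-1) v_h(t_j)^{-1/(p_j-1)} / (v_h'(t_j)/v_h(t_j)) $ or something comparable; raising to the power $p_j - 1$ kills all the polynomial-in-$(p_j-1)$ prefactors and leaves $v_h(t_j)^{-1} \to v_h(t)^{-1}$.

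More carefully, I would prove matching upper and lower bounds. For the lower bound: fix $\eps > 0$ small and consider the subinterval $[t_j, t_j + \eps]$ (which lies in $(0,R)$ for $j$ large, using $t_j \to t < R$). On this interval $v_h(s) \le v_h(t+\eps)$ for $j$ large, hence $\int_{t_j}^R v_h^{-1/(p_j-1)} \ge \eps\, v_h(t+\eps)^{-1/(p_j-1)}$; raising to the power $p_j - 1$ gives $\ge \eps^{p_j-1} v_h(t+\eps)^{-1} \to v_h(t+\eps)^{-1}$, and then $\eps \downarrow 0$ with continuity of $v_h$ gives the liminf $\ge v_h(t)^{-1}$. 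For the upper bound I would split $\int_{t_j}^R = \int_{t_j}^{t_j+\eps} + \int_{t_j + \eps}^R$. On the first piece $v_h(s) \ge v_h(t_j) \ge v_h(t-\eps')$ for $j$ large, so this piece is $\le \eps\, v_h(t - \eps')^{-1/(p_j-1)}$. For the second (tail) piece, when $R < \infty$ it is bounded by $(R - t) v_h(t+\eps)^{-1/(p_j-1)}$; when $R = \infty$ one uses \eqref{inte_p0} together with the monotonicity of $v_h$: for $p_j$ close to $1$, $v_h(s)^{-1/(p_j-1)} = v_h(s)^{-1/(p_0-1)} \cdot v_h(s)^{-(1/(p_j-1) - 1/(p_0-1))} \le v_h(s)^{-1/(p_0-1)} v_h(t_j+\eps)^{-(1/(p_j-1)-1/(p_0-1))}$, so the tail is bounded by $\|v_h^{-1/(p_0-1)}\|_{L^1(t+\eps,\infty)}$ times a factor $v_h(t_j+\eps)^{-(1/(p_j-1) - 1/(p_0-1))}$. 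The key point is that both the first-piece bound and the tail bound are of the form (constant depending on $\eps$) $\times\, v_h(\text{point} \ge t+\eps')^{-1/(p_j-1)}$ up to a bounded correction, whereas $v_h(t_j)^{-1/(p_j-1)}$ appears with the argument $t_j < t + \eps'$; since $v_h$ is strictly increasing, the ratio $\big(v_h(t_j)/v_h(t+\eps')\big)^{1/(p_j-1)} \to 0$, so after raising the whole sum to the power $p_j - 1$ and comparing with $v_h(t_j)^{-1}$, the first piece contributes $\le \eps^{p_j - 1} v_h(t_j)^{-1}\big(v_h(t_j)/v_h(t-\eps')\big)^{1/(p_j-1)\cdot(p_j-1)}\cdots$ — more cleanly, one writes $\big(\int_{t_j}^R v_h^{-1/(p_j-1)}\big)^{p_j-1} \le \big( \eps\, v_h(t_j)^{-1/(p_j-1)} + C_\eps\, v_h(t+\eps)^{-1/(p_j-1)} \big)^{p_j-1}$, factors out $v_h(t_j)^{-1}$, and sends $j \to \infty$ and then $\eps \to 0$.

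The step I expect to require the most care is the tail estimate when $R = \infty$ and the clean bookkeeping of the exponents near the endpoint: one must make sure the auxiliary interval length $\eps$, the two ``shrink'' parameters for $v_h(t_j) \ge v_h(t - \eps')$ and $v_h(t+\eps') \ge v_h(t_j)$, and the order of limits ($j \to \infty$ first, then $\eps, \eps' \to 0$) are arranged so that no factor blows up. A convenient technical device will be to write, for $j$ large, $t_j \in (t - \eps', t + \eps')$ and use the strict monotonicity of $v_h$ to get a fixed gap $v_h(t + 2\eps') / v_h(t_j) \ge 1 + c(\eps') > 1$, so that $\big(v_h(t_j)/v_h(t+2\eps')\big)^{1/(p_j-1)}$ decays to $0$ genuinely exponentially in $1/(p_j-1)$. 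This makes all the ``error'' contributions negligible after raising to the power $p_j - 1$, and the proof concludes by a standard two-parameter limit argument. I would also remark that, taking $h = h$ and $R = \infty$, \eqref{liminf_rhop} applied with $t_j = \varrho_{p_j}(x)$ is exactly what identifies $v_h(\varrho_1(x))^{-1}$ with $\lim_j \gr_{p_j}(x)^{p_j - 1}$ in the subsequent convergence analysis, but that application is not needed for the present proof.
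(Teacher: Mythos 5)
Your argument is correct. The one-parameter bookkeeping works out exactly as you anticipate: in the upper bound, after factoring $v_h(t_j)^{-1/(p_j-1)}$ out of the sum (near-endpoint piece plus tail), the remaining bracket is pinched between two positive constants independent of $j$, so raising it to the power $p_j-1\to 0$ sends it to $1$; note that for this you do not even need the ratio $v_h(t_j)/v_h(t+\eps)$ to decay — boundedness of the bracket suffices, so the "fixed gap / exponential decay" device and strict monotonicity are dispensable. The paper's proof is a compressed version of the same mechanism: it observes that $\bigl[\int_{t_j}^R v_h^{-1/(p_j-1)}\bigr]^{p_j-1}$ is precisely the $L^{q_j}$-norm of $1/v_h$ on $[t_j,R)$ with $q_j=1/(p_j-1)\to\infty$, replaces $t_j$ by a fixed $c<t$ (resp. $c'>t$) to decouple the domain from $j$, and invokes the standard fact that $\|f\|_{L^q}\to\|f\|_{L^\infty}$ as $q\to\infty$ for $f\in L^{q_0}\cap L^\infty$ — here $\|1/v_h\|_{L^\infty([c,R))}=1/v_h(c)$ by monotonicity, and \eqref{inte_p0} supplies the needed $L^{q_0}$ membership when $R=\infty$. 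Your endpoint-concentration splitting is in effect a by-hand proof of that norm-convergence fact in this monotone setting; what the paper's route buys is brevity, while yours is self-contained and makes explicit where the hypothesis \eqref{inte_p0} enters (only in controlling the tail). The only point to tidy in a final write-up is the lower bound: on $[t_j,t_j+\eps]$ you have $v_h(s)\le v_h(t_j+\eps)\le v_h(t+2\eps)$ for $j$ large, since $t_j\to t$ need not approach from below.
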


\begin{proof}
Note first that \eqref{inte_p0} and the monotonicity of $v_h$ implies that $v_h^{-\frac{1}{p-1}} \in L^1(\infty)$ for each $p \in (1,p_0)$, in particular, the integrals in the RHS of \eqref{liminf_rhop} are finite. Let $c \in (0,t)$, and choose $j$ be large enough such that $t_j > c$. Then,
$$
\limsup_{j \ra \infty} \left[\int_{t_j}^R v_h(s)^{-\frac{1}{p_j-1}}\di s\right]^{p_j-1} \le \limsup_{j \ra \infty} \left\|\frac{1}{v_h} \right\|_{L^{\frac{1}{p_j-1}}([c,R))} = \left\|\frac{1}{v_h} \right\|_{L^{\infty}([c,R))} = \frac{1}{v_h(c)},
$$
where, in the last step, we used the monotonicity of $v_h$. Letting $c \uparrow t$ proves an inequality in \eqref{liminf_rhop}. The reverse inequality follows similarly.
\end{proof}
We first relate $\varrho_1$ to the solution to  the IMCF produced by R. Moser in \cite{moser}, and then comment about the inequality $|\nabla \varrho_1| \le 1$: setting
$$
w_p(x) = (1-p) \log \gr_p(x) = (1-p) \log \sgrh_p\big( \varrho_p(x)\big),
$$
then
\begin{equation}\label{eq_wp}
\Delta_p w_p = |\nabla w_p|^p
\end{equation}
pointwise on $M\setminus \{o\}$ and, because of Theorem \ref{teo_localsingular}, weakly on $M$.
Suppose that $\varrho_1 >0$ on $M \backslash \{o\}$. Then, passing to a subsequence and using \eqref{liminf_rhop},
$$
w_p  \to  \log v_h(\varrho_1)\doteq w_1 \qquad \text{locally uniformly on $M\backslash\{o\}$ as } \, p \ra 1,
$$
and by \cite{moser} $w_1$ is a weak solution to  the IMCF in the sense of Huisken-Ilmanen \cite{huiskenilmanen} on $M \backslash \{o\}$. It is convenient to consider the translated solution $w = w_1 -\log \omega_{m-1} = \log h(\varrho_1)^{m-1}$, in terms of which the inequality $|\nabla \varrho_1|\le 1$ can be rephrased as
$$
|\nabla w| \le (m-1)e^{-\frac{w}{m-1}} h'\Big( h^{-1}(e^\frac{w}{m-1})\Big).
$$
In particular, if $\Ricc \ge -(m-1)\kappa^2$ on $M$, an explicit computation gives
\begin{equation}\label{gradient_u_ex}
|\nabla w| \le (m-1)e^{-\frac{w}{m-1}} \sqrt{ \kappa^2 e^{\frac{2w}{m-1}} + 1}.
\end{equation}
These are the bounds described in Theorems \ref{teo_main_L1sobolev_intro} and \ref{teo_main_riccimagzero_intro} in the Introduction. We can interpret the following in terms of smooth IMCF. The term $|\nabla w|$ represents the unnormalized mean curvature $\mathcal{H}$ of the level set $\partial\{w<t\}$, which, along a smooth IMCF, is positive and varies according to
\begin{equation}\label{evol_MC}
\partial_t \mathcal{H} = - \Delta\left( \frac{1}{\mathcal{H}}\right) - \frac{|\nabla \nu|^2}{\mathcal{H}} - \frac{\Ricc(\nu,\nu)}{\mathcal{H}},
\end{equation}
with $\nabla \nu$ the second fundamental form of $\partial \{w<t\}$. Newton's inequality and $\Ricc \ge -(m-1)\kappa^2$ imply
$$
\frac{1}{2} \partial_t \mathcal{H}^2 \le  - \mathcal{H} \Delta\left( \frac{1}{\mathcal{H}}\right) - \frac{\mathcal{H}^2}{m-1} + (m-1)\kappa^2.
$$
\newline thus we obtain, in the sense of barriers,
$$
\partial_t \max\{\mathcal{H}^2/2\} \le - \frac{2}{m-1}\max\{\mathcal{H}^2/2\} + (m-1)\kappa^2.
$$
Integrating the ODE and taking square roots we get
\begin{equation}\label{particolare}
\max\{\mathcal{H}\}(t) \le (m-1)e^{-\frac{t}{m-1}} \sqrt{ \kappa^2 e^{\frac{2t}{m-1}} + \left[\frac{\max\{\mathcal{H}^2\}(0)}{(m-1)^2} -  \kappa^2 \right]}.
\end{equation}
This agrees with \eqref{gradient_u_ex} for a suitable translate of $w$. Estimate \eqref{gradient_u_ex} is therefore a version of \eqref{particolare} when the flow is not regular and the level set $\partial \{w<t\}$ is allowed to be non-compact. \\[0.2cm]
To establish the positivity of $\varrho_1$, we shall prove the following strong maximum principle which is a consequence of the sharp control on the constants in the Harnack inequality:

\begin{theorem}[\textbf{Strong maximum principle}]\label{teo_SMP}
Let $M$ be complete. Assume that $\Delta_p$ is non-parabolic on $M$ for $p \in (1,p_0)$, let $M_h$ be a model from below and define $\varrho_p$ on $M\backslash \{o\}$ according to \eqref{def_fake}. Assume that $\varrho_p \ra \varrho_1$ locally uniformly, for some sequence $p_j \ra 1$. Then, either $\varrho_1 \equiv 0$ on $M$ or $\varrho_1 >0$ on $M \backslash \{o\}$.
\end{theorem}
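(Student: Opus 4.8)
The plan is to argue by contradiction: suppose $\varrho_1(x_0) = 0$ for some $x_0 \in M \setminus \{o\}$ but $\varrho_1 \not\equiv 0$. Since $\varrho_1$ is continuous (being a locally uniform limit of the $1$-Lipschitz functions $\varrho_p$), the set $Z = \{\varrho_1 = 0\}$ is closed, and by hypothesis it is a proper, nonempty subset of $M$; hence $\partial Z \neq \emptyset$. I would work near a boundary point, or more directly work on a fixed geodesic ball where $\varrho_1$ vanishes somewhere but is positive somewhere. The key idea is that $\varrho_p(x) = 0$ is equivalent, via \eqref{def_bxy}, to $\gr_p(x) = +\infty$; since $\gr_p$ is finite on $M \setminus \{o\}$, each $\varrho_p$ is strictly positive there, so the vanishing of the limit means $\gr_p \to \infty$ locally uniformly near $Z$. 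I want to show this forces $\gr_p \to \infty$ everywhere on $M \setminus \{o\}$, contradicting the assumption that $\varrho_1 > 0$ somewhere.

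The mechanism is the Harnack inequality of Theorem \ref{teo_harnack} with its sharp control on the constant as $p \to 1$. Fix a ball $B_{6R}(y) \Subset M \setminus \{o\}$ on which the local Ricci lower bound $\Ricc \ge -(m-1)\kappa^2$ holds; then $\gr_p$ is a positive $p$-harmonic function there, and by Remark \ref{rem_localimportant} together with Remark \ref{rmk_noname} the Sobolev constant $\So_{p,\nu}$ (taking $\nu = m$, say, or any fixed $\nu > m$) and the Poincaré constant $\Po_{1,p} = \Po_{1,1}$ stay bounded as $p \to 1$. Inspecting \eqref{ine_harnack}, the quantity $Q$ depends only on $\So_{p,\nu}, C_{p,\nu}, R, |B_{2R}|$ and stays bounded away from $0$ uniformly in $p \in (1, \tfrac12(1+p_0))$, and $\Po_{1,p}\bigl[|B_{6R}|/|B_{2R}|\bigr]^{1/p} p$ likewise remains bounded; thus $\Ha_{p,\nu}$ is bounded above by a constant $\Ha_R$ independent of $p$, and the Harnack inequality reads
$$
\sup_{B_R(y)} \gr_p \le \Ha_R^{\frac{1}{p-1}} \inf_{B_R(y)} \gr_p.
$$
Now suppose $\inf_{B_R(y)} \gr_p$ stays bounded along the subsequence $p_j \to 1$ — equivalently $\sup_{B_R(y)}\varrho_{p_j}$ stays bounded below by a positive constant. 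Then by the reverse implication implicit in the above, $\sup_{B_R(y)} \gr_{p_j} \le \Ha_R^{1/(p_j-1)} \cdot C$, which need not be bounded; so I must instead track the statement in terms of $\varrho_p$ directly. Rephrasing as in \eqref{esti_gradientG}/Theorem \ref{teo_good}, and using Lemma \ref{cor_measure}, the natural conclusion is that $\varrho_1$ restricted to $B_R(y)$ is either identically $0$ or everywhere positive: this is exactly the dichotomy of a strong maximum principle for $w_1 = \log v_h(\varrho_1)$, which I would obtain by passing to the limit in $w_p = (1-p)\log \gr_p$, noting $\sup_{B_R} w_p - \inf_{B_R} w_p = (p-1)[\sup \log \gr_p - \inf \log \gr_p] \le (p-1) \cdot \frac{1}{p-1}\log \Ha_R = \log \Ha_R$, a \emph{$p$-independent} oscillation bound. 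Hence the limit $w_1$ has oscillation at most $\log \Ha_R$ on every such ball where it is finite, which (since $w_1 = -\infty$ exactly on $Z$ and $w_1 > -\infty$ off it) propagates: if $w_1$ is finite at one point of $B_R(y)$ it is finite (and bounded) on all of $B_R(y)$; running a chain of overlapping balls along a path in the connected manifold $M \setminus \{o\}$ shows $Z \cap (M\setminus\{o\})$ is both open and closed in $M \setminus \{o\}$, hence empty or all of $M \setminus \{o\}$.

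Assembling: if $Z \cap (M \setminus \{o\}) = M \setminus \{o\}$ then $\varrho_1 \equiv 0$; otherwise $Z \cap (M\setminus\{o\}) = \emptyset$, i.e. $\varrho_1 > 0$ on $M \setminus \{o\}$. This is the claimed dichotomy. The one point requiring care is the uniform-in-$p$ control of the Harnack constant: one must verify that every ingredient of \eqref{ine_harnack} — the Sobolev constant $\So_{p,\nu}$ from \eqref{SobolevL1_local} or \eqref{eq_sobosempote}, the constant $C_{p,\nu}$ of \eqref{Cpnu}, the Poincaré constant, and the volume ratios — is bounded on an interval $(1, p_0')$; this is precisely what Remark \ref{rem_localimportant} and Remark \ref{rmk_noname} were set up to provide, so the verification is routine once the local Ricci bound on $B_{6R}(y)$ is fixed. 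The main obstacle, and the reason the sharp Harnack rate $\exp\{c/(p-1)\}$ matters, is that a cruder bound with $(p-1)C_{m,p} \to \infty$ (as in \eqref{esti_gad_chengyau}) would give an oscillation bound for $w_p$ that blows up as $p \to 1$, destroying the argument; it is exactly the $p$-uniform control that makes the passage to the limit legitimate.
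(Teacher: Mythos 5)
Your proposal is correct and follows essentially the same route as the paper: both rest on the Harnack inequality of Theorem \ref{teo_harnack} with its $p$-uniform constant $C^{1/(p-1)}$, which yields a $p$-independent oscillation bound for $w_p=(1-p)\log\gr_p$ on balls compactly contained in $M\setminus\{o\}$, and then a chain of such balls along a path propagates finiteness of the limit (equivalently positivity of $\varrho_1$) from one point to any other. The paper phrases this as a direct chaining estimate with the number of balls controlled by Bishop--Gromov, whereas you package it as a clopen argument for the zero set, but the mechanism is identical.
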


\begin{proof}
Suppose that $\varrho_1 \not \equiv 0$ on $M \backslash \{o\}$, and pick $x \in M \backslash \{o\}$ satisfying $\varrho_1(x)>0$. By \eqref{liminf_rhop}, this is equivalent to
$$
\lim_{j \ra \infty} (p_j-1)\log \gr_{p_j}(x) < \infty.
$$
Let $y \in M \backslash \{o\}$, and choose $R_1$ in such a way that $x,y$ belong to the same connected component of $M \backslash \overline{B}_{R_1}$. Set also $R_2 = \max\{ r(x),r(y)\}$ and $r = R_1/15$. Let $\{B_r(x_i)\}_{i=1}^N$,be a maximal collection of disjoint balls contained in the annulus $A(R_1,R_2) \doteq B_{R_2}(o)\backslash B_{R_1}(o)$. Choosing a constant $\kappa \ge 0$ such that $\Ricc \ge -(m-1)\kappa^2$ on $B_{6R_2}(o)$, Bishop-Gromov comparison ensures that
$$
|A(R_1,R_2)| \ge \sum_{i} |B_r(x_i)| \ge \sum_i |B_{2R_2}(x_i)| \frac{V_\kappa(r)}{V_\kappa(2R_2)} \ge N |A(R_1,R_2)| \frac{V_\kappa(r)}{V_\kappa(2R_2)},
$$
thus
$$
N \le \frac{V_\kappa(2R_2)}{V_\kappa(r)}.
$$
Since the family $\{B_{2r}(x_i)\}$ covers $A(R_1,R_2)$ and $B_{6r}(x_i) \Subset M \backslash \{o\}$, we can cover a path $\gamma \subset A(R_1,R_2)$ joining $y$ to $x$ via a chain of at most $N$ balls $\{B_l\}$ chosen in the family $\{B_{2r}(x_i)\}$ and with $B_l \cap B_{l+1} \neq \emptyset$. In view of Remark \ref{rem_localimportant}, both the $L^1$ Poincar\'e and the $L^1$ Sobolev holds on each $B_i$, with uniform constants depending on $\kappa$ and on a lower bound for $|B_{2R_2}(o)|$. Therefore, by Theorem \ref{teo_harnack} there exists a constant $C_1>1$ depending on $m,\kappa, R_2,|B_{2R_2}(o)|$ but independent of
\[
p_j \in \left(1, \min\left\{ p_0,\frac{2m-1}{2m-2}\right\}\right),
\]
such that
$$
\sup_{B_l} \gr_{p_j} \le C_1^{\frac{1}{p_j-1}} \inf_{B_l} \gr_{p_j} \qquad \text{for each $B_l$}.
$$
Iterating and taking logarithms,
$$
(p_j-1) \log \gr_{p_j}(y) \le (p_j-1) \log \gr_{p_j}(x) + N \log C_1,
$$
and therefore $(p_j-1) \log \gr_{p_j}(y)$ tends to a finite limit. Consequently, $\varrho_1(y)>0$.
\end{proof}
%

The next important Nondegeneracy Lemma ensures a control from below for $\varrho_1$ on balls containing the origin.

\begin{proposition}[\textbf{Nondegeneracy}]\label{prop_crucial}
Let $M$ be complete, $\Omega \subseteq M$ be an open set, and let $M_h$ be a model. Assume that $\Delta_p$ is non-parabolic both on $\Omega$ and on $M_h$ for every $p \in (1,p_0)$.  For 
every such $p$ let $\varrho_p$ be the fake distance associated by \eqref{def_bxy} to the Green kernel of $\Delta_p$ on $\Omega$ with pole at $o \in \Omega$. Suppose that $\varrho_p \ra \varrho_1$ locally uniformly  in $\Omega$ along some sequence $p_j \ra 1$, and that
$$
\varrho_1 > 0 \qquad \text{on } \, \Omega \backslash \{o\}.
$$
Then,
\begin{equation}\label{liminf_frontiera}
\forall K \Subset \overline{\Omega}\backslash\{o\}, \qquad  \inf_{K} \varrho_1(x) >0
\end{equation}
and, if $\overline{B}_R = \overline{B_R(o)} \subset \Omega$, 
\begin{equation}\label{good_22}
\varrho_1(x) \ge \min\left\{ v_h^{-1}\left( \frac{r(x)^{m-1}}{\So_{1,m,R}^m 2^{m^2-1}}\right), \min_{\partial B_R} \varrho_1 \right\} \qquad \text{on } \, B_R,
\end{equation}
where $\So_{1,m,R}$ is the $L^1$ Sobolev constant on $B_{R}$ for which
$$
\left(\int |\psi|^{\frac{m}{m-1}}\right)^{\frac{m-1}{m}} \le \So_{1,m,R} \int |\nabla \psi| \qquad \forall \, \psi \in \lip_c(B_{R}).
$$
\end{proposition}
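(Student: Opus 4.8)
The plan is to rephrase the two assertions as upper bounds for $\gr_{p_j}(x)^{p_j-1}$ and then pass to the limit. Since by hypothesis $\varrho_1>0$ on $\Omega\backslash\{o\}$, and $R_\infty=\infty$ by Proposition \ref{prop_parabnonparab} (so $v_h$ is increasing and diverging), for each fixed $x\in\Omega\backslash\{o\}$ I would apply Lemma \ref{cor_measure} with $t_j=\varrho_{p_j}(x)\ra\varrho_1(x)\in(0,\infty)$ — the integrability \eqref{inte_p0} holding for $p$ close to $1$ by non-parabolicity of $\Delta_p$ on $M_h$ — to get
\[
\frac{1}{v_h\big(\varrho_1(x)\big)}=\lim_{j\ra\infty}\big[\gr_{p_j}(x)\big]^{p_j-1}.
\]
Hence, writing $\beta_R\doteq\liminf_{y\ra\Omega\cap\partial B_R}\varrho_1(y)$ and setting $v_h(+\infty)^{-1}\doteq 0$, the estimate \eqref{good_22} is equivalent to
\[
\limsup_{j\ra\infty}\big[\gr_{p_j}(x)\big]^{p_j-1}\le\max\left\{\frac{\So_{1,m,R}^m\,2^{m^2-1}}{r(x)^{m-1}},\ \frac{1}{v_h(\beta_R)}\right\}\qquad\forall\,x\in B_R\cap\Omega.
\]

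Next I would produce a local upper bound for $\gr_p$. Fix $R>0$, put $\widetilde\Omega=B_R\cap\Omega$ and $L_p=\sup_{\partial B_R\cap\Omega}\gr_p$, which is finite because $\gr_p$ extends continuously to $\overline\Omega$ (vanishing on $\partial\Omega$) and $\partial B_R\cap\overline\Omega$ is compact. Denoting by $\gr_p^{\widetilde\Omega}$ the Green kernel of $\widetilde\Omega$ with pole at $o$ (extended by $0$ on the components of $\widetilde\Omega$ not containing $o$), I claim $\gr_p\le\gr_p^{\widetilde\Omega}+L_p$ on $\widetilde\Omega$. Indeed, both $\gr_p-L_p$ and $\gr_p^{\widetilde\Omega}$ solve $\Delta_p(\cdot)=-\delta_o$ on $\widetilde\Omega$ and share the leading singularity $\mu(r)$ at $o$ by Theorem \ref{teo_localsingular}, while on $\partial\widetilde\Omega$ one has $\gr_p-L_p\le 0=\gr_p^{\widetilde\Omega}$ (recall $\gr_p=0$ on $\partial\Omega$ and $\gr_p\le L_p$ on $\partial B_R\cap\overline\Omega$); comparing $\gr_p-L_p$ with $(1+\eps)\gr_p^{\widetilde\Omega}$ on the set where the former exceeds the latter — which is compactly contained in $\widetilde\Omega\backslash\{o\}$, exactly as in the proof of Corollary \ref{teo_confronto_nuclei} — and letting $\eps\ra 0$ gives the claim.

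Then I would apply Theorem \ref{teo_sobolev} to $\gr_p^{\widetilde\Omega}$. Since $\widetilde\Omega\subset B_R$, the $L^1$ Sobolev inequality on $B_R$ restricts to $\widetilde\Omega$ and, arguing as in Remark \ref{rem_isoperimetric}, gives there the $L^p$ Sobolev inequality with constant $\So_{p,m}=[\So_{1,m,R}\,p(m-1)/(m-p)]^p$; Theorem \ref{teo_sobolev} (with $\eta\equiv 1$, $\nu=m$, applied to the component of $\widetilde\Omega$ containing $o$) then yields, for all $x\in\widetilde\Omega\backslash\{o\}$,
\[
\gr_p^{\widetilde\Omega}(x)\le C_{p,m}^{\frac{1}{p-1}}r(x)^{-\frac{m-p}{p-1}},\qquad C_{p,m}=\So_{p,m}^{\frac{m}{p}}\Big[2^m p(1+p)^p\Big(\tfrac{p}{p-1}\Big)^{p-1}\Big]^{\frac{m-p}{p}},
\]
and a direct computation shows $C_{p_j,m}\ra\So_{1,m,R}^m\,2^{m^2-1}$ as $p_j\ra 1$. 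Combining with the previous step, $\gr_p(x)\le C_{p,m}^{1/(p-1)}r(x)^{-(m-p)/(p-1)}+L_p$ on $\widetilde\Omega$; raising to the power $p-1\in(0,1)$ and using $(a+b)^{p-1}\le 2^{p-1}\max\{a^{p-1},b^{p-1}\}$,
\[
\big[\gr_p(x)\big]^{p-1}\le 2^{p-1}\max\Big\{C_{p,m}\,r(x)^{-(m-p)},\ L_p^{\,p-1}\Big\}.
\]
Since $L_{p_j}^{\,p_j-1}=\big[\sgrh_{p_j}\big(\inf_{\partial B_R\cap\Omega}\varrho_{p_j}\big)\big]^{p_j-1}\ra v_h(\beta_R)^{-1}$ by Lemma \ref{cor_measure} (the local uniform convergence $\varrho_{p_j}\ra\varrho_1$ identifying the limit of the infima with $\beta_R$), taking $\limsup_j$ and recalling $2^{p_j-1}\ra 1$ yields the displayed bound, hence \eqref{good_22}.

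Finally, \eqref{liminf_frontiera} follows from \eqref{good_22}: for $K\Subset\overline\Omega\backslash\{o\}$ choose $R$ with $K\subset B_R$; on $K$ one has $r\ge\dist(o,K)>0$, so the first term in the minimum is bounded below by a positive constant, and $\beta_R>0$ since $\varrho_1$ is continuous and strictly positive on compact subsets of $\Omega\backslash\{o\}$ and diverges towards $\partial\Omega$. I expect the main obstacle to be precisely this last limit passage: one must control $\inf_{\partial B_R\cap\Omega}\varrho_{p_j}$ uniformly in $j$, which uses the local uniform convergence of $\varrho_{p_j}$ together with the divergence of $\varrho_{p_j}$ near $\partial\Omega$; choosing $R$ so that $\partial B_R\cap\partial\Omega=\emptyset$ — so that $\partial B_R\cap\overline\Omega$ is a compact subset of $\Omega\backslash\{o\}$ — makes this transparent.
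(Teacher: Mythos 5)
Your overall strategy --- localize to $B_R\cap\Omega$, invoke the decay estimate of Theorem \ref{teo_sobolev} with the local Sobolev constant, raise to the power $p-1$ and conclude with Lemma \ref{cor_measure} --- is the same as the paper's, and your computation of the limiting constant $\So_{1,m,R}^m2^{m^2-1}$ is correct. But there are two genuine gaps. The first is the step $L_{p_j}^{p_j-1}\ra v_h(\beta_R)^{-1}$, i.e.\ $\inf_{\partial B_R\cap\Omega}\varrho_{p_j}\ra\beta_R$. The set $\partial B_R\cap\Omega$ is in general not compact in $\Omega$ (it may accumulate at $\partial\Omega$), so the locally uniform convergence $\varrho_{p_j}\ra\varrho_1$ gives no control of these infima: they could be approached at points drifting towards $\partial\Omega$, where nothing uniform in $j$ is known. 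You flag this yourself, but your fix (choose $R$ with $\partial B_R\cap\partial\Omega=\emptyset$) proves a weaker statement than the proposition, which is asserted for every $R$. The paper avoids the problem by never taking a supremum of $\gr_p$ over $\partial B_R\cap\Omega$: it fixes $\tau'<\tau$, cuts at the fixed level $\Theta_p'=\sgrh_p(\tau')$, and works with the kernels $\gr_p'$ of relatively compact subdomains $\Omega'\Subset\Omega$, so that the containment $\{\gr_p'>\Theta_p'\}\Subset\Omega\cap B_R$ only requires $\varrho_p>\tau'$ on the compact set $\partial B_R\cap\overline{\Omega'}$, where local uniform convergence does apply; Theorem \ref{teo_sobolev} is then applied to $(\gr_p'-\Theta_p')_+$, which is the kernel of $\{\gr_p'>\Theta_p'\}$, and one lets $\Omega'\uparrow\Omega$ at the end.

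The second gap is the logical order: you derive \eqref{liminf_frontiera} from \eqref{good_22}, which requires $\beta_R>0$, and you justify this by asserting that $\varrho_1$ ``diverges towards $\partial\Omega$''. That is not known at this stage (near $\partial\Omega$ one controls $\gr_{p_j}$ only for each fixed $j$, not $\gr_{p_j}^{p_j-1}$ uniformly in $j$), and it is essentially what \eqref{liminf_frontiera} is meant to supply; note also that $K$ is allowed to touch $\partial\Omega$. The paper proves \eqref{liminf_frontiera} first and independently: the exhaustion construction of $\gr_p$ and the maximum principle give $\sup_{\Omega\setminus B_\eps}\gr_p=\sup_{\partial B_\eps}\gr_p$, hence $\inf_{\Omega\setminus B_\eps}\varrho_p\ge\inf_{\partial B_\eps}\varrho_p$, and letting $p\ra1$ on the compact set $\partial B_\eps\Subset\Omega$ yields $\inf_{\Omega\setminus B_\eps}\varrho_1\ge\inf_{\partial B_\eps}\varrho_1>0$; this is also what makes the minimum in \eqref{good_22} positive. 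A smaller, fixable point: your comparison $\gr_p\le\gr_p^{\widetilde\Omega}+L_p$ invokes ``$\gr_p=0$ on $\partial\Omega$'', which is not available without boundary regularity of $\Omega$; the comparison should be run on the exhausting domains $\Omega_j$ and passed to the limit.
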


\begin{proof}
To prove \eqref{liminf_frontiera}, note that for every $\eps$ such that $B_\eps = B_\eps(o) \Subset \Omega$ does not intersect $K$, the construction of the kernel $\gr_p$ on $\Omega$ guarantees that $\sup_{\partial B_\eps}\gr_p = \sup_{\Omega \backslash  B_\eps} \gr_p$. Rephrasing in terms of $\varrho_p$ and letting $p \ra 1$, this implies
$$
\inf_{\Omega \backslash B_\eps} \varrho_1 \ge \min_{\partial B_\eps} \varrho_1 > 0,
$$
and leads to \eqref{liminf_frontiera} and to the positivity of the lower bound in \eqref{liminf_frontiera}. To prove \eqref{good_22}, consider the kernel $\gr_p'$ of a relatively compact domain $\Omega' \Subset \Omega$. Set
$$
\tau = \min_{\partial B_R} \varrho_1,
$$
fix $0< \tau''< \tau' < \tau$ and let
$$
\Theta_p' \doteq \sgrh_p\big(\tau'\big) = \int_{\tau'}^{\infty} v_h(s)^{-\frac{1}{p-1}} \di s.
$$
By uniform convergence in $\partial B_R$, for $p \in \{p_j\}$ close enough to $1$ it holds $\varrho_p>\tau'$  on $\partial B_R$, that is, $\gr_p<\Theta'_p$ there, and since $\gr_p' \le \gr_p$ we have
\begin{equation}\label{grptheta}
\{\gr_p' > \Theta_p'\} \Subset B_{R}.
\end{equation}
By Remark \ref{rem_isoperimetric}, for each $p \in (1,m)$ there exists $\So_{p,m,R}$ such that
\begin{equation}\label{sobolev_local_p}
\left( \int |\psi|^{ \frac{m p}{m-p}}\right)^{\frac{m-p}{m}} \le \So_{p,m,R} \int |\nabla \psi|^p \qquad \forall \, \psi \in \lip_c\big(B_{R}\big),
\end{equation}
and $\So_{p,m,R} \ra \So_{1,m,R}$ as $p \ra 1$. To obtain the desired inequality in the limit $p \ra 1$, we apply Theorem \ref{teo_sobolev} to the function $(\gr'_p -\Theta_p')_+$, that is the Green kernel of the set $\{\gr'_p > \Theta_p'\}$ where \eqref{sobolev_local_p} holds, to obtain
$$
\gr'_p(x) \le \Theta_p' + C_{p,m,R}^{\frac{1}{p-1}} r(x)^{- \frac{m-p}{p-1}} \qquad \text{on } \, \{\gr_p' > \Theta_p'\}\backslash \{o\},
$$
with
\begin{equation}
C_{p,m,R} \ra C_{1,m,R} = \So_{1,m,R}^m 2^{m^2-1}.
\end{equation}
Letting $\Omega' \uparrow \Omega$ we deduce
\begin{equation}\label{eq_fortiori}
\gr_p(x) \le \Theta_p' + C_{p,m,R}^{\frac{1}{p-1}} r(x)^{- \frac{m-p}{p-1}} \qquad \text{on } \, \{\gr_p > \Theta_p'\}\backslash \{o\} = \{\varrho_p < \tau'\}\backslash \{o\}.
\end{equation}
A computation that uses \eqref{liminf_rhop} shows that
$$
\liminf_{p \ra 1} \left[\Theta_p' + C_{p,m,R}^{\frac{1}{p-1}} r(x)^{- \frac{m-p}{p-1}}\right]^{p-1} = \max \left\{ C_{1,m,R} r(x)^{1-m}, \frac{1}{v_h(\tau')}\right\}
$$
and thus, from $\{ \varrho_1 < \tau''\} \subset \{ \varrho_p < \tau'\}$ for $p$ small enough, raising \eqref{eq_fortiori} to power $p-1$, letting $p \ra 1$ and applying Lemma \ref{cor_measure} we infer
$$
\frac{1}{v_h(\varrho_1(x))} \le \max \left\{ C_{1,m,R} r(x)^{1-m}, \frac{1}{v_h(\tau')}\right\} = C_{1,m,R} r(x)^{1-m} \qquad \forall \, x \in \{ \varrho_1 < \tau''\},
$$
where the last equality follows since otherwise $\varrho_1(x) \ge \tau'$, contradicting $x \in \{ \varrho_1 < \tau''\}$. We therefore obtain \eqref{good_22} by letting $\tau'' \uparrow \tau$.
\end{proof}

We are now ready to prove our main theorems concerning proper solutions to  the IMCF. We first consider the case of a flow issuing from a point. The next two results prove, respectively, Theorems \ref{teo_main_L1sobolev_intro} and \ref{teo_main_riccimagzero_intro} in the Introduction.

\begin{theorem}\label{teo_main_L1sobolev}
Let $(M^m, \metric)$ be a connected, complete Riemannian manifold supporting the $L^1$ Sobolev inequality \eqref{isoperimetric} and satisfying, for some origin $o \in M$ and some $0 \le H \in C(\R^+_0)$ non-increasing,
$$
\Ricc \ge -(m-1)H(r),
$$
with $r(x) = \mathrm{dist}(x,o)$. Then, the fake distance $\varrho_1$ is positive, proper on $M \backslash \{o\}$ and there it satisfies
\begin{equation}\label{good_global}
\left\{ \begin{array}{l}
\disp v_h^{-1} \left( \frac{r(x)^{m-1}}{\So_{1,m}^{m} 2^{m^2-1}}\right) \le \varrho_1(x) \le r(x) \qquad \text{on } \, M, \\[0.5cm]
|\nabla \varrho_1| \le 1 \qquad \text{a.e. on } \, M.
\end{array}\right.
\end{equation}
Furthermore, $w = \log v_h(\varrho_1)$ is a solution to  the IMCF issuing from $o$.
\end{theorem}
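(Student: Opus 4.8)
The plan is to feed the $L^1$ Sobolev hypothesis into the kernel decay estimate of Theorem \ref{teo_sobolev} and then to pass to the limit $p\ra 1$ exactly as prepared in Section \ref{sec_convergence}. First I would check that the standing Assumption of Section \ref{sec_convergence} holds: by Remark \ref{rem_isoperimetric}, \eqref{isoperimetric} upgrades to the $L^p$ Sobolev inequality \eqref{sobolev} for every $p\in(1,m)$, with constant $\So_{p,m}=[\So_{1,m}\,p(m-1)/(m-p)]^p\ra\So_{1,m}$ as $p\ra1$; Theorem \ref{teo_sobolev} (applied with $\Omega=M$, $\nu=m$, weight $\eta\equiv1$) then yields that $\Delta_p$ is non-parabolic on $M$. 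Hence $(\Hp)$ holds for all $p\in(1,p_0)$ for any fixed $p_0\in(1,m)$, the model $M_h$ is complete with $\Delta_p$ non-parabolic on it (Proposition \ref{prop_parabnonparab}$(i)$), and the $\varrho_p$ are defined with $|\nabla\varrho_p|\le1$ (Theorem \ref{teo_good}) and $\varrho_p\le r$ (Proposition \ref{prop_basiccomp}). By the discussion opening Section \ref{sec_convergence}, a subsequence $\varrho_{p_j}\ra\varrho_1$ in $C^{0,\alpha}_\loc(M)$ with $\varrho_1$ $1$-Lipschitz, so that $\varrho_1\le r$ and $|\nabla\varrho_1|\le1$ a.e., which is half of \eqref{good_global}.

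The core of the argument is the decay estimate of Theorem \ref{teo_sobolev} in its unweighted form \eqref{decaygreen_examples},
$$\gr_{p}(x)\le C_{p,m}^{\frac{1}{p-1}}\,r(x)^{-\frac{m-p}{p-1}},\qquad x\in M\backslash\{o\},$$
where $C_{p,m}\ra\So_{1,m}^{m}2^{m^2-1}$ as $p\ra1$. I would rewrite the left-hand side via $\gr_{p_j}(x)=\sgrh_{p_j}(\varrho_{p_j}(x))$ and raise to the power $p_j-1$, obtaining
$$\big[\sgrh_{p_j}(\varrho_{p_j}(x))\big]^{p_j-1}\le C_{p_j,m}\,r(x)^{-(m-p_j)}.$$
Fix $x\neq o$: the right-hand side stays bounded and tends to $\So_{1,m}^{m}2^{m^2-1}\,r(x)^{1-m}$. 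If $\varrho_1(x)$ were $0$, then for every $c>0$ one would have $\varrho_{p_j}(x)<c$ eventually, hence $\sgrh_{p_j}(\varrho_{p_j}(x))\ge\sgrh_{p_j}(c)$, and Lemma \ref{cor_measure} applied with the constant sequence $t_j\equiv c$ (using $v_h^{-1/(p_0-1)}\in L^1(\infty)$) would give $\liminf_j[\sgrh_{p_j}(\varrho_{p_j}(x))]^{p_j-1}\ge v_h(c)^{-1}$, which diverges as $c\ra0$ since $v_h(0^+)=0$ — contradicting the boundedness just noted. Thus $\varrho_1(x)>0$ for every $x\neq o$. With positivity in hand, Lemma \ref{cor_measure} now applies with $t_j=\varrho_{p_j}(x)\ra\varrho_1(x)\in(0,\infty)$ and yields $v_h(\varrho_1(x))^{-1}\le\So_{1,m}^{m}2^{m^2-1}\,r(x)^{1-m}$, i.e. the lower bound in \eqref{good_global} because $v_h$ is strictly increasing. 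Since $v_h(t)\ra\infty$ as $t\ra\infty$, this lower bound forces $\varrho_1(x)\ra\infty$ as $r(x)\ra\infty$; combined with $\varrho_1\le r$ and completeness of $M$, the sublevel sets $\{\varrho_1\le t\}$ are compact, so $\varrho_1$ is proper.

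For the last assertion I would invoke the identities around \eqref{eq_wp}: the functions $w_p=(1-p)\log\gr_p=(1-p)\log\sgrh_p(\varrho_p)$ solve $\Delta_p w_p=|\nabla w_p|^p$ weakly on $M$, and since $\varrho_1>0$ on $M\backslash\{o\}$, Lemma \ref{cor_measure} gives $w_{p_j}\ra\log v_h(\varrho_1)$ locally uniformly on $M\backslash\{o\}$ along a further subsequence; by \cite{moser} the limit $w=\log v_h(\varrho_1)$ is then a weak solution of the IMCF in the sense of Huisken--Ilmanen issuing from $o$. The step I expect to be the real obstacle has in fact been handled upstream: everything hinges on the $p$-Green kernel bound having a constant $C_{p,m}$ that stays bounded as $p\ra1$, which is the whole purpose of the non-standard Moser iteration behind Theorem \ref{teo_sobolev} together with Lemma \ref{cor_measure}; granted these, the argument above is a routine passage to the limit. (Alternatively, positivity could be extracted from the strong maximum principle Theorem \ref{teo_SMP} and the lower bound from the nondegeneracy Proposition \ref{prop_crucial}, but the direct route through the global Sobolev constant is shorter here.)
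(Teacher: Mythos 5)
Your proposal is correct and follows essentially the same route as the paper: non-parabolicity and the $L^p$ Sobolev constants from Remark \ref{rem_isoperimetric}, the uniform bounds $\varrho_p\le r$, $|\nabla\varrho_p|\le 1$ from Proposition \ref{prop_basiccomp} and Theorem \ref{teo_good}, and then the unweighted decay estimate of Theorem \ref{teo_sobolev} combined with Lemma \ref{cor_measure} to get positivity and the explicit lower bound. The paper packages this last step as the case $\Omega=M$, $R=\tau=\infty$, $\Theta_p'=0$ of Proposition \ref{prop_crucial}, which is exactly the computation you wrote out directly.
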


\begin{proof}
By Remark \ref{rem_isoperimetric}, the $L^1$ Sobolev inequality implies the non-parabolicity of $\Delta_p$ for each $p \in (1,m)$. Hence, also $M_h$ is non-parabolic and $\varrho_p$ is well defined. Proposition \ref{prop_basiccomp} and Theorem \ref{teo_good} guarantee that $\varrho_p \ra \varrho_1$ up to a subsequence, for some $\varrho_1 \le r$ that is $1$-Lipschitz.
Next, by Theorem \ref{teo_sobolev} the bound \eqref{upper_consobolev} holds for each $p$ with $\eta \equiv 1$. Therefore, the sequence $\varrho_p$ is locally bounded away from zero and $\varrho_1 >0$ on $M \backslash \{o\}$. The lower bound in \eqref{good_global} follows as in the proof of Proposition \ref{prop_crucial}, applied with $\Omega = M$: in this case, the global Sobolev inequality guarantees that we can verbatim follow the proof by setting $R= \infty$, $\tau = \tau' = \tau'' = \infty$ and $\Theta'_p = 0$.
\end{proof}

%

\begin{remark}[\textbf{Asymptotically nonnegative Ricci curvature}]\label{rem_asiricci}
\emph{Particularly interesting is the case where $\Ricc \ge -(m-1)H(r)$ with
\begin{equation}\label{ipo_asiricci}
0 \le H(t) \ \ \text{ non-increasing on $\R^+$,} \qquad i_H \doteq \int_0^\infty t H(t) \di t < \infty.
\end{equation}
In this case, under the assumptions of Theorem \ref{teo_main_L1sobolev} the fake distance is of the same order of $r$: indeed, by work of Greene-Wu \cite[Lem. 4.5]{greenewu}, the volume $v_h$ of $M_h$ satisfies
$$
v_h(t) \le e^{(m-1)i_H} v_0(t) \qquad \text{on } \, \R^+,
$$
and thus, taking into account the value of the $L^1$ Sobolev constant $\So_{\R^m} = m^{-\frac{m-1}{m}} \omega_{m-1}^{-\frac{1}{m}}$ of $\R^m$, the first in \eqref{good_global} implies
$$
\left[ e^{-i_H} \left(\frac{\So_{\R^m}}{\So_{1,m}}\right)^{\frac{m}{m-1}} \frac{m}{2^{m+1}}\right] r(x) \le \varrho_1(x) \le r(x) \qquad \text{on } \, M.
$$
We conjecture that the constant $m/2^{m+1}$ should be replaced by $1$. Observe that if this were the case, one would be able to recover a rigidity result of M. Ledoux \cite{ledoux}, who showed that $\R^m$ is the only  manifold with $\Ricc \ge 0$ for which a Sobolev inequality holds with constant $\So_{1,m} = \So_{\R^m}$. See also \cite{pigovero} and the references therein for improvements.
}
\end{remark}

Our second main result is for manifolds satisfying the assumptions in Theorem \ref{theorem-decayGreen2}. Recall that a smooth manifold has doubling dimension $\nu \ge m$.
\begin{theorem}\label{teo_main_riccimagzero}
Let $M^m$ be a connected, complete non-compact manifold satisfying
$$
\Ricc \ge -(m-1)H(r),
$$
for some origin $o \in M$ and some $0 \le H \in C(\R^+_0)$ non-increasing. Assume further the global doubling and weak $(1,1)$-Poincar\'e properties $\dous, \neuuno$ with constants $C_{\Dou}, \Po_{1,1}$ and doubling dimension $\nu = \log_2 C_\Dou$, and that there exist $C_{\RD}$ and $b \in (1,\nu]$ such that
\begin{equation}\label{eq_reversevol_222}
\forall t \ge s >0, \qquad \frac{|B_t|}{|B_s|} \ge C_{\RD} \left( \frac{t}{s}\right)^b,
\end{equation}
where balls are centered at $o$. Then, the fake distance $\varrho_1$ is positive and proper on $M \backslash \{o\}$. Moreover, there exist constants $C, \bar C$ depending on $H(0), C_{\RD}, C_{\Dou}, \Po_{1,1},b,m$, with $\bar C$ also depending on a lower bound for the volume $|B_1|$, such that
\begin{equation}\label{good_global_2}
\left\{ \begin{array}{ll}
\disp v_h^{-1} \left( C r(x)^{\nu-1} \inf_{t \in (1, r(x))} \frac{|B_t|}{t^\nu} \right) \le \varrho_1(x) \le r(x) & \quad \text{on } \, M \backslash B_1, \\[0.5cm]
\bar C r(x) \le \varrho_1(x) \le r(x) & \quad \text{on } \, B_1,\\[0.3cm]
|\nabla \varrho_1| \le 1 \qquad \text{a.e. on } \, M.
\end{array}\right.
\end{equation}
Finally, $w = \log v_h(\varrho_1)$ is a solution to  the IMCF issuing from $o$.
\end{theorem}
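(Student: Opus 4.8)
The plan is to mimic the proof of Theorem~\ref{teo_main_L1sobolev}, using the quantitative decay estimate of Theorem~\ref{theorem-decayGreen2} to control $\varrho_1$ far from $o$ and the Nondegeneracy Proposition~\ref{prop_crucial} to control it near $o$. First I would fix $p_0\in\big(1,\min\{b,m\}\big)$ for which Theorem~\ref{theorem-decayGreen2} applies (so its estimate holds for all $p\in(1,p_0]$), and verify that $(\Hp)$ holds for every $p\in(1,p_0)$: by the remark after Lemma~\ref{lem_Lpsob}, $\dous$ and $\neuuno$ force $\neup$ for such $p$ with a constant bounded as $p\ra1$, so Theorem~\ref{theorem-tewo2} gives the weighted Sobolev inequality \eqref{sob_tewodrose}, which as in the proof of Theorem~\ref{teo_sobolev} makes $\Delta_p$ non-parabolic on $M$, and then Proposition~\ref{prop_parabnonparab} yields $R_\infty=\infty$ and non-parabolicity of $M_h$. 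Hence $\varrho_p$ is defined for $p\in(1,p_0)$, with $\varrho_p\le r$ (Proposition~\ref{prop_basiccomp}) and $|\nabla\varrho_p|\le1$ (Theorem~\ref{teo_good}, noting also that $H$ non-increasing gives the global bound $\Ricc\ge-(m-1)H(0)\metric$ used throughout for local constants); so by Arzel\`a--Ascoli a subsequence $\varrho_{p_j}$ converges locally uniformly to a $1$-Lipschitz $\varrho_1\le r$, giving the upper and gradient bounds in \eqref{good_global_2}.

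For positivity and the lower bound on $M\setminus B_1$ I would argue as follows. Theorem~\ref{theorem-decayGreen2}, raised to the power $p_j-1$, gives for each fixed $x\neq o$
\[
\gr_{p_j}(x)^{p_j-1}\le C_{p_j,\nu}\Big[\sup_{t\in(0,2r(x))}\tfrac{t^\nu}{|B_t|}\Big]\,r(x)^{-(\nu-p_j)},
\]
whose right-hand side tends to a finite limit as $j\ra\infty$, since $C_{p,\nu}$ stays bounded near $p=1$ and $\nu\ge m$ makes the supremum finite. On the other hand $\gr_{p_j}(x)^{p_j-1}=\big[\int_{\varrho_{p_j}(x)}^{\infty}v_h^{-1/(p_j-1)}\di s\big]^{p_j-1}$; if $\varrho_{p_j}(x)\ra0$ then for every $c>0$ this eventually dominates $\|v_h^{-1}\|_{L^{1/(p_j-1)}([c,\infty))}\ra v_h(c)^{-1}$, hence diverges --- a contradiction. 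So $\varrho_1(x)>0$ for all $x\neq o$, and Lemma~\ref{cor_measure} (whose integrability hypothesis holds since $M_h$ is non-parabolic) now yields $\gr_{p_j}(x)^{p_j-1}\ra v_h(\varrho_1(x))^{-1}$, whence $v_h(\varrho_1(x))\ge C_{1,\nu}^{-1}r(x)^{\nu-1}\inf_{t\in(0,2r(x))}|B_t|/t^\nu$. Using $\dous$ with $\nu=\log_2 C_\Dou$ --- which gives $|B_t|/t^\nu\ge|B_1|/C_\Dou$ for $t\le1$ while $\inf_{t\in(1,r(x))}|B_t|/t^\nu\le|B_1|$ --- together with the reverse-doubling bound \eqref{eq_reversevol_222} to compare the ranges $[1,r(x)]$ and $[r(x),2r(x))$, one bounds $\inf_{t\in(0,2r(x))}|B_t|/t^\nu$ below by $c(C_\Dou,\nu)\inf_{t\in(1,r(x))}|B_t|/t^\nu$, and applying $v_h^{-1}$ gives the first line of \eqref{good_global_2} on $M\setminus B_1$. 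Properness follows at once: \eqref{eq_reversevol_222} with $s=1$ gives $|B_t|\ge C_\RD|B_1|t^b$ for $t\ge1$, so (as $b\le\nu$) $\inf_{t\in(1,r(x))}|B_t|/t^\nu\ge C_\RD|B_1|r(x)^{b-\nu}$ and hence $v_h(\varrho_1(x))\gtrsim r(x)^{b-1}\ra\infty$, since $b>1$; as $v_h$ increases and diverges, each $\{\varrho_1\le t\}$ is compact.

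For the bound on $B_1$ I would apply Proposition~\ref{prop_crucial} with $\Omega=M$, $R=1$: since $\varrho_1>0$ on $M\setminus\{o\}$, \eqref{liminf_frontiera} gives $\ell:=\liminf_{x\ra\partial B_1}\varrho_1(x)>0$, and \eqref{good_22} reads $\varrho_1(x)\ge\min\{v_h^{-1}(r(x)^{m-1}/(\So_{1,m,1}^m2^{m^2-1})),\ell\}$ on $B_1$. By Sturm comparison with the constant-curvature model $-H(0)$ one has $v_h(t)\le c_1 t^{m-1}$ on $[0,1]$ with $c_1=c_1(m,H(0))$, hence $v_h^{-1}(y)\ge(y/c_1)^{1/(m-1)}$ (and trivially $v_h^{-1}(\cdot)>1\ge r$ on $B_1$ otherwise), so the first term is $\ge\bar C_0 r(x)$; and $\ell\ge\ell\,r(x)$ on $B_1$. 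Thus $\varrho_1\ge\bar C r$ on $B_1$ with $\bar C$ depending on $m$, $H(0)$ and, through $\So_{1,m,1}$, on a lower bound for $|B_1|$. Finally, once $\varrho_1>0$ on $M\setminus\{o\}$, the discussion recalled after Lemma~\ref{cor_measure} applies verbatim: $w_{p_j}=(1-p_j)\log\gr_{p_j}\ra w:=\log v_h(\varrho_1)$ locally uniformly on $M\setminus\{o\}$, so by \cite{moser} $w$ is a weak solution of the IMCF there; it is proper because $\{w\le t\}=\{\varrho_1\le v_h^{-1}(e^t)\}$ is compact, and since $w\ra-\infty$ as $x\ra o$ it is the IMCF issuing from $o$.

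The main obstacle is the patching of the two regimes while keeping every constant bounded as $p\ra1$. The decay estimate of Theorem~\ref{theorem-decayGreen2} degenerates as $x\ra o$ (its doubling-dimension exponent $\nu$ may exceed $m$, so by itself it yields only $\varrho_1\gtrsim r^{(\nu-1)/(m-1)}$ near the pole, which is weaker than $\asymp r$), whereas the local Sobolev constant in Proposition~\ref{prop_crucial} --- which carries the correct Euclidean exponent $m$ --- blows up as the radius grows and so is useless far out; the two must be glued, and the whole limiting scheme works only because $C_{p,\nu}$, the Harnack/Sobolev constants, and the local constants $\So_{p,m,R}$, $C_{p,m,R}$ all stay bounded as $p\ra1$ --- precisely what Theorems~\ref{theorem-decayGreen2}, \ref{teo_sobolev} and Lemma~\ref{lem_Lpsob} are designed to provide --- so that the limits $\gr_{p_j}^{p_j-1}\ra v_h(\varrho_1)^{-1}$ can be matched against finite right-hand sides. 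The remaining volume-ratio bookkeeping (rewriting $\inf_{(0,2r)}$ as $\inf_{(1,r)}$, and reading off properness from $b>1$) is routine once $\dous$ and \eqref{eq_reversevol_222} are in hand.
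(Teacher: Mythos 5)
Your proposal is correct and follows essentially the same route as the paper: the $p$-uniform decay estimate of Theorem \ref{theorem-decayGreen2} passed to the limit via Lemma \ref{cor_measure}, the doubling/volume bookkeeping to convert $\inf_{(0,2r)}$ into $\inf_{(1,r)}$, properness from $b>1$, and Proposition \ref{prop_crucial} with $R=1$ for the bound on $B_1$ (the paper routes positivity through Theorem \ref{teo_SMP} after properness, whereas you get it directly from the contradiction $\gr_{p_j}^{p_j-1}\to\infty$ when $\varrho_{p_j}(x)\to 0$; both work). The only detail to add is that your $\ell=\liminf_{x\to\partial B_1}\varrho_1$ must itself be bounded below in terms of the stated parameters, which follows from the exterior estimate you already derived, namely $\ell\ge v_h^{-1}(C_4|B_1|)$.
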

\begin{proof}
Set $p_0 = (1+b)/2$. Theorem \ref{theorem-decayGreen2} guarantees that $\Delta_p$ is non-parabolic for each $p \in (1,p_0]
$, and that
\begin{equation}\label{esti_ll}
(1-p) \log \gr(x) \ge - \log \left\{ C_{p,\nu} \left[ \sup_{t \in (0,2r(x))} \frac{t^\nu}{|B_t|}\right] r(x)^{p-\nu} \right\}, \qquad \forall \, x \in  M\backslash \{o\},
\end{equation}
with $C_{p,\nu}$ uniformly bounded for $p \in (1,p_0]$. Proceeding as in Theorem \ref{teo_main_L1sobolev} shows that $\varrho_1$ exists and is $1$-Lipschitz. Letting $p \ra 1$ in \eqref{esti_ll} we therefore obtain
$$
v_h\big(\varrho_1(x)\big) \ge C_{1,\nu}^{-1} \left[ \inf_{t \in (0,2r(x))} \frac{|B_t|}{t^\nu}\right] r(x)^{\nu-1} \qquad \text{on } \, M.
$$
By volume comparison on $B_1$, we deduce $|B_t|/t^m \ge C_1|B_1|$ for $t \in [0,1]$, where the constant $C_1$ depends on $H(0)$, and thus $|B_t|/t^\nu \ge C_1|B_1|t^{m-\nu} \ge C_1|B_1|$. Therefore, by the doubling condition, for $r(x) \ge 1$
	\[
\inf_{t \in (0,2r(x))} \frac{|B_t|}{t^\nu} \ge C_2 \inf_{t \in (0,r(x))} \frac{|B_t|}{t^\nu} \ge C_2 \min \left\{ C_1|B_1|, \inf_{t \in (1,r(x))} \frac{|B_t|}{t^\nu} \right\} \ge C_3 \inf_{t \in (1,r(x))} \frac{|B_t|}{t^\nu}.
	\]
Thus, the first inequality in \eqref{good_global_2} holds. Next, \eqref{eq_reversevol_222} implies $|B_t|/t^\nu \ge C_{\RD}|B_1| t^{b-\nu}$ for $t \ge 1$, hence
\begin{equation}\label{eq_lo_bound}
v_h\big(\varrho_1(x)\big) \ge C_4 |B_1| r(x)^{b-1} \qquad \text{on } \, M\backslash B_1.
\end{equation}
In particular, $\varrho_1$ is proper and thus strictly positive on $M \backslash \{o\}$ by Theorem \ref{teo_SMP}. To prove the lower bound in $B_1$, we use the Nondegeneracy Lemma with $\Omega = M$ and $R=1$ taking into account \eqref{eq_lo_bound} to deduce
	\begin{equation}\label{eq_lowb}
	\varrho_1(x) \ge \disp \min\left\{ v_h^{-1} \left(\frac{r(x)^{m-1}}{\So^{m} 2^{m^2-1}}\right), v_h^{-1} (C_4|B_1|)\right\} \qquad \text{on } \, B_1,
	\end{equation}
where $\So$ is the Sobolev constant of $B_1$. By Remark \ref{rem_localimportant}, $\So$ can be estimated in terms of a lower bound for $|B_1|$ and a lower bound for $\Ricc$ on $B_6$ (that is, on $H(0)$). Hence, the right hand side of \eqref{eq_lowb} can be estimated from below by $\bar C r(x)$, for some constant $\bar C$ depending on the same parameters as $C$ and also on a lower bound for $|B_1|$. This concludes the proof.
\end{proof}
\begin{remark}
\emph{If $M$ has asymptotically non-negative Ricci curvature, by Remark \ref{rem_asiricci} the first in \eqref{good_global_2} can be rewritten as
	\[
	C' r(x)^{\frac{b-1}{m-1}} \le \varrho_1(x) \le r(x) \qquad \text{on } \, M \backslash B_1,
	\]
where $C'$ depends on the same parameters as $C$, on a lower bound for $|B_1|$ and also on $i_H$ in \eqref{ipo_asiricci}.
}
\end{remark}
\begin{proof}[Proof of Theorem \ref{teo_main_riccimagzero_intro}] It is enough to apply Theorem \ref{teo_main_riccimagzero} with the choices $H = 0$, $h(t) = t$. By Bishop-Gromov comparison, the doubling constant can be chosen to be $C_\Dou = 2^m$, so $\nu = m$. Note that the translated function $w = (m-1) \log \varrho_1$ satisfies \eqref{gradient_u_ex} with $\kappa = 0$.
\end{proof}

We next consider the flow starting from a relatively compact set $\Omega$ and we give the

\begin{proof}[Proof of Theorem \ref{teo_main_relcompact}]
We recall that given the $p$-capacity potential $u_p$ of $(\Omega,M)$, the (proper) solution to  the IMCF is obtained as the limit
$$
w(x) = \lim_{p \ra 1} (1-p) \log u_p(x).
$$
For $p \in [1, p_0)$, define the fake inner and outer $p$-radii:
$$
R^{(p)}_i = \sup \Big\{ t : \{ \varrho_p < t\} \subset \Omega \Big\}, \qquad R^{(p)}_o = \inf \Big\{ t : \Omega \subset \{\varrho_p < t\} \Big\}.
$$
Note that, by the uniform convergence of $\varrho_p$,
$$
R_i = \liminf_{p \ra 1} R_i^{(p)}, \qquad R_o = \limsup_{p \ra 1} R_o^{(p)}.
$$
By comparison, $u_p$ satisfies
\begin{equation}\label{eq_dopp}
\left[ \sgrh_p\big( R^{(p)}_i\big) \right]^{-1} \gr_p(x) \le u_p(x) \le \left[ \sgrh_p\big(R^{(p)}_o\big) \right]^{-1} \gr_p(x),
\end{equation}
hence taking logarithms, multiplying by $1-p$, letting $p \ra 1$ and using Lemma \ref{cor_measure} we infer
\begin{equation}\label{doublebo}
\log v_h\big(\varrho_1(x)\big) - \log v_h\big(R_o\big) \ \le \ w(x) \ \le \ \log v_h\big(\varrho_1(x)\big) - \log v_h\big(R_i\big).
\end{equation}
Also, since $\varrho_p \le r$, the definition of $R_i^{(p)}$ and the monotonicity of $H$ yield
$$
\Ricc \ge - (m-1) H(r) \ge -(m-1) H\big(R_i^{(p)}\big) \qquad \text{on } \, M \backslash \Omega,
$$
and we can use Theorem \ref{teo_bellagradiente} since $u_p$ is bounded to deduce the inequality
\begin{equation}\label{const_gradup-1}
(p-1) |\nabla \log u_p| \le \max \left\{ (m-1) \sqrt{H\big(R_i^{(p)}\big)}, \ (p-1) \max_{\partial \Omega} |\nabla \log u_p| \right\}.
\end{equation}
Next, by the boundary gradient estimate in \cite[Prop. 3.1]{kotschwarni} (cf. also \cite{huiskenilmanen}), for fixed $\eps>0$ there exists $p_\eps \in (1, p_0)$ depending on $\eps$ and on the geometry of a neighbourhood of $\partial \Omega$ such that
\begin{equation}\label{bge}
(p-1) \max_{\partial \Omega} |\nabla \log u_p| \le \max_{\partial \Omega} \mathcal{H}_+ + \eps \qquad \forall \, p \in (1, p_\eps),
\end{equation}
with $\mathcal{H}_+(x) = \max\{ \mathcal{H}(x),0\}$. Taking limits in \eqref{const_gradup-1} in $p$ and eventually letting $\eps \ra 0$ we get
\begin{equation}\label{const_gradup}
|\nabla w| \le \max \left\{ (m-1) \sqrt{H(R_i)}, \ \max_{\partial \Omega} \mathcal{H}_+ \right\},
\end{equation}
which concludes the proof.
\end{proof}

\begin{remark}
\emph{If $\Ricc \ge 0$, \eqref{const_gradup} gives the estimate
$$
|\nabla w| \le \max_{\partial \Omega} \mathcal{H}_+,
$$
which forces $\max_{\partial \Omega} \mathcal{H}_+ >0$. This is consistent with a result of A. Kasue (see \cite[Thm. C (2)]{kasue2}, cf. also \cite[Thm. 1.6]{agofogamazzie}), stating that if a complete, non-compact manifold with $\Ricc \ge 0$ contains a relatively compact subset $\Omega$ with $\mathcal{H} \le 0$, then $M \backslash \Omega$ splits isometrically as $\partial \Omega \times \R^+_0$. Clearly, none of these manifolds satisfy the volume growth condition \eqref{eq_reversevol_222}.
}
\end{remark}

\begin{remark}\label{rem_concluding}
\emph{The gradient estimate in \eqref{const_gradup} can be improved, under the same assumptions, to include a decay in terms of $\varrho_1$. The procedure goes as follows: define a fake distance $\bar \varrho_p$ via the identity
$$
\sgrh_p\big(R^{(p)}_i\big) u_p(x) = \sgrh_p\left( \bar \varrho_p(x)\right).
$$
Because of \eqref{eq_dopp}, we deduce
\begin{equation}\label{twobound_345}
\hat \varrho_p \doteq (\sgrh_p)^{-1} \left( \sgrh_p(\varrho_p) \frac{\sgrh_p\big(R^{(p)}_i\big)}{\sgrh_p\big(R^{(p)}_o\big)} \right) \le \bar \varrho_p \le \varrho_p.
\end{equation}
Hence, the inequality $\varrho_p \le r$ that follows by Proposition \ref{prop_basiccomp} implies $\Ricc \ge - H(\bar \varrho_p)$, thus we can apply Lemma \ref{lem_key} to get, since $u_p$ is bounded,
$$
|\nabla \bar \varrho_p| \le \max \left\{ 1, \ \max_{\partial \Omega} | \nabla \bar \varrho_p| \right\}.
$$
Rephrasing in terms of $u_p$ and recalling that $u_p =1$ on $\partial \Omega$ implies $\bar \varrho_p = R_i^{(p)}$ on $\partial \Omega$,
$$
|\nabla \log u_p| \le \big| (\log \sgrh_p)'(\bar \varrho_p)\big| \cdot \max \left\{ 1, \ \frac{\max_{\partial \Omega} |\nabla \log u_p|}{\big| (\log \sgrh_p)'(R_i^{(p)})\big| } \right\}.
$$
Because of the monotonicity of $(\log \sgrh_p)'$ in Lemma \ref{lem_ODE}, and because of the boundary gradient estimate \eqref{bge}, if $p$ is close enough to $1$ then
\begin{equation}\label{esplicita!}
(p-1)|\nabla \log u_p| \le \big| (\log \sgrh_p)'(\hat \varrho_p)\big| \cdot \max \left\{ p-1, \ \ \frac{\eps + \max_{\partial \Omega} \mathcal{H}_+ }{\big| (\log \sgrh_p)'(R_i^{(p)})\big| } \right\}.
\end{equation}
Explicit computations can be performed in relevant cases, notably when $H(r) = \kappa^2/r^2$ for some constant $\kappa \ge 0$. A solution to  $h'' = Hh$ is
$$
h(t) = t^{\kappa'} \qquad \text{with} \qquad \kappa' = \frac{1 + \sqrt{1+4\kappa^2}}{2} \ge 1
$$
(technically, $h$ does not solve the initial condition for the derivative in \eqref{eq_h_uguale} when $\kappa> 0$, nor $H$ is continuous in zero, but this can be handled via Sturm comparison, cf. Remark \ref{rem_compacritical} and \cite{prs, bmr2, bmr5}).
Computing $\sgrh_p$ we deduce that $\hat \varrho_p = \varrho_p R_i^{(p)}/ R_o^{(p)}$ in \eqref{twobound_345}, so we can rewrite \eqref{esplicita!} as follows:
\begin{equation}
(p-1)|\nabla \log u_p| \le \frac{\kappa'(m-1)-p+1}{\varrho_p} \frac{R_o^{(p)}}{R_i^{(p)}} \cdot \max \left\{ 1, \ \frac{R_i^{(p)}\left( \eps + \max_{\partial \Omega} \mathcal{H}_+\right) }{\kappa'(m-1) -p+1} \right\},
\end{equation}
Letting $p \ra 1$ and then $\eps \ra 0$ we eventually infer
\begin{equation}
|\nabla w| \le \frac{R_o}{\varrho_1} \max \left\{ \frac{\kappa'(m-1)}{R_i}, \ \max_{\partial \Omega} \mathcal{H}_+ \right\},
\end{equation}
to be compared to \eqref{HI_gradesti}.
}
\end{remark}

\section{Basic isoperimetric properties of the sets $\{\varrho_1 < t\}$}\label{sec_isoperimetry}

Let $\varrho_1$ be the fake distance constructed in either Theorem \ref{teo_main_L1sobolev} or Theorem \ref{teo_main_riccimagzero}, and let $w = \log v_h(\varrho_1)$ be the associated solution to  the IMCF.  The purpose of this section is to estimate the isoperimetric profile of the sets $\{ \varrho_1 < t\}$.
We recall that, if $\tau_M$ denotes the family of subsets of $M$ with finite perimeter, the isoperimetric profile of $M$ is the function
$$
I_M \ \ : \ \ (0,|M|) \to \R, \qquad I_M(\upsilon) = \inf\Big\{ \haus^{m-1}(\partial^*\Omega) \ : \ \Omega \in \tau_M, |\Omega|= \upsilon \Big\},
$$
where $\haus^{m-1}$ is the $(m-1)$-dimensional Hausdorff measure and $\partial^*\Omega$ is the reduced boundary of $\Omega$. A subset realizing $I_M(\upsilon)$ is called an isoperimetric subset. By an application of Bishop-Gromov comparison theorem, it is known (cf. \cite[Thms. 3.4-3.5]{morganjohnson} and \cite[Prop. 3.2]{mondinonardulli}) that under the condition
\begin{equation}\label{lower_ricci_H}
\Ricc \ge -(m-1)H(r) \qquad \text{on } \, M
\end{equation}
for some $0 \le H \in C(\R^+_0)$ non-increasing, then the area of $\partial B_r(o)$ is no bigger than the surface area of the  ball $\BB_r \subset M_h$ centered at the origin and having the same volume as $B_r(o)$. Moreover, rigidity holds in case of equality, namely, $B_r(o)$ and $\BB_r$ are isometric.
The theorem, stated in \cite{morganjohnson, mondinonardulli} for constant $H$, also holds for each $H \ge 0$ non-increasing and is, in fact, a consequence of the concavity of the area of $\partial \BB_r$ as a function of the volume:
\begin{equation}\label{conca}
s \mapsto v_h \big( V_h^{-1}(s)\big) \qquad \text{is strictly concave on } \, \R^+.
\end{equation}
Property \eqref{conca} follows from Lemma \ref{lem_ODE}, since a first differentiation shows that it is equivalent to the decreasing monotonicity of $v_h'/v_h$. As a consequence, $I_M$ does not exceed the isoperimetric profile of \emph{geodesic balls centered at the origin in $M_h$}.

\begin{remark}
\emph{If $H \ge 0$ is non-increasing, then $\BB_r$ is never an isoperimetric set in $M_h$ unless $H$ is constant on $[0,r]$. Indeed, $\partial \BB_r$ is even unstable: to see this we use the Riccati equation \eqref{eq_riccati} to write its stability operator $J$ as
$$
J = \disp -\Delta_{\partial \BB_r} - \Big( \Ricc_h(\nabla r, \nabla r) + |\II_{\partial \BB_r}|^2 \Big) = -\frac{\Delta_{\Sph}}{h^2(r)} + (m-1)\left(\frac{h'(r)}{h(r)}\right)',
$$
with $\Delta_\Sph$ the Laplacian on the unit sphere $\Sph^{m-1} \subset \R^{m}$. Since the first nonzero eigenvalue of $\Sph^{m-1}$ is $m-1$, $J$ is non-negative for variations $\phi$ with $\int_{M_h} \phi =0$ if and only if
\begin{equation}\label{eq234}
\frac{1}{h^2(r)} \ge - \left( \frac{h'(r)}{h(r)}\right)', \qquad \text{that is,} \qquad \big(h'(r)\big)^2 - h(r) h''(r) \le 1.
\end{equation}
However, $((h')^2 - h h'')(0^+) = 1$ and  $((h')^2 - h h'')'=-h^2H'$. 
Therefore, if $H$ is non-increasing then \eqref{eq234} holds if and only if $H$ is constant on $[0,r]$. To our knowledge, the problem of deciding which conditions on $H$ guarantee that balls $\BB_r \subset M_h$ are isoperimetric sets is still partly open, and in this respect see \cite{brendle}. However, more can be said for surfaces, see Theorems 2.8 and 2.16 in \cite{ritore} as well as Theorem 3.1 in \cite{HowHutMor}.
}
\end{remark}

Our purpose is to show that, similarly, the perimeter of the subsets $\{ \varrho_1 < t\}$ is smaller than the one of balls in $M_h$ centered at the origin and having the same volume. We shall first describe in more detail the behavior of $\varrho_1$ near the origin.

\begin{proposition}\label{teo_inzero_rho1}
Let $\varrho_1$ be the fake distance with origin  at  $o$ constructed in either
Theorem \ref{teo_main_L1sobolev} or Theorem \ref{teo_main_riccimagzero}. Then,
%
\begin{equation}\label{asin_rho1}
\begin{array}{lll}
(i) & \disp \varrho_1(x) \sim r(x) & \qquad \text{as } \, x \ra o, \\[0.2cm]
(ii) & \disp \frac{\haus^{m-1}( \partial \{ \varrho_1 < t\})}{v_h(t)} \ra 1 & \qquad \text{as } \, t \ra 0.
\end{array}
\end{equation}
\end{proposition}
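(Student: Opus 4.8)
The plan is to establish the two asymptotics separately, both by exploiting the local behavior near $o$ of the $p$-fake distances $\varrho_p$ together with the uniform (in $p$) control provided by Theorem \ref{teo_good} and by comparing with the model $M_h$. For item $(i)$, the first observation is that by Proposition \ref{prop_basiccomp} we already have $\varrho_1 \le r$, so only a lower bound of the form $\varrho_1(x) \ge (1-o(1)) r(x)$ as $x \to o$ is needed. I would obtain this from the local Sobolev inequality \eqref{eq_sobosempote} on a small ball $B_R(o)$, whose constant $\So_{1,m,R}$ is finite and under control, combined with the nondegeneracy Lemma \ref{prop_crucial} applied with $\Omega = M$ and this fixed $R$. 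Indeed \eqref{good_22} gives
$$
\varrho_1(x) \ge \min\Big\{ v_h^{-1}\Big( \frac{r(x)^{m-1}}{\So_{1,m,R}^m 2^{m^2-1}}\Big),\ \liminf_{x \to \partial B_R} \varrho_1 \Big\} \qquad \text{on } B_R,
$$
and since $v_h(t) \sim \omega_{m-1} t^{m-1}$ as $t \to 0$, the first term is $\big(1 + o(1)\big)\big(\omega_{m-1}\So_{1,m,R}^m 2^{m^2-1}\big)^{-1/(m-1)} r(x)$ as $r(x)\to 0$. This only yields $\varrho_1 \ge c\, r$ near $o$ for a dimensional-plus-Sobolev constant $c$, which is not yet sharp enough for $(i)$. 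To upgrade $c$ to $1$, I would instead use the sharp asymptotics of $\varrho_p$ at the singularity from Proposition \ref{prop_nearminm}, which gives $\varrho_p(x) \sim r(x)$ and $|\nabla \varrho_p - \nabla r| \to 0$ as $x \to o$, \emph{but} the rate is not a priori uniform in $p$. The cleaner route: fix any $\eps>0$; by Theorem \ref{teo_localsingular}(1), $\gr_p(x) \sim \mu(r(x))$ as $x\to o$, and $\mu$ and $\sgrh_p$ have comparable singular behavior (both $\asymp r^{-(m-p)/(p-1)}$ for $p<m$, with matching leading constants since $h(t)\sim t$), so $\sgrh_p(\varrho_p(x))/\sgrh_p(r(x)) = \gr_p(x)/\sgrh_p(r(x)) \to 1$; monotonicity of $\sgrh_p$ plus the local regularity of $h$ forces $\varrho_p(x)/r(x) \to 1$ \emph{at a rate governed only by $h$ near $0$ and by the $o(1)$ in Serrin's/Kichenassamy–Veron's estimate}, which is uniform for $p$ in a compact subinterval of $(1,m)$ by the blow-up proof in \cite{mrs}. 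Passing to the limit $p\to 1$ along the chosen subsequence then yields $(i)$.

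For item $(ii)$ I would combine $(i)$ with the coarea formula and the weak equation satisfied by $\varrho_1$. Recall $w = \log v_h(\varrho_1)$ is a weak IMCF solution, equivalently $\varrho_1$ satisfies $\Delta_1\big(\log v_h(\varrho_1)\big) = |\nabla \log v_h(\varrho_1)|$ in the Huisken–Ilmanen sense, and along a.e. level the weak formulation gives the exact identity (cf. the divergence identities \eqref{ide_Gr}, \eqref{ide_Gr_2} passed to the limit, or directly the monotonicity of $e^{-t}\haus^{m-1}(\partial\{w<t\})$ type quantities) that
$$
\haus^{m-1}\big( \partial\{\varrho_1 < t\}\big) = \int_{\partial \{\varrho_1<t\}} 1 \, d\haus^{m-1}.
$$
More usefully: from $|\nabla \varrho_1|\le 1$ and the second identity in \eqref{identi_varrho} (which holds weakly on all of $M$ after Proposition \ref{prop_nearminm}) one derives, integrating $\Delta_p \varrho_p = \frac{v_h'(\varrho_p)}{v_h(\varrho_p)}|\nabla \varrho_p|^p$ over $\{\varrho_p < t\}$ and using the divergence theorem,
$$
\int_{\partial\{\varrho_p<t\}} |\nabla \varrho_p|^{p-1}\, d\haus^{m-1} = \int_{\{\varrho_p<t\}} \frac{v_h'(\varrho_p)}{v_h(\varrho_p)}|\nabla \varrho_p|^p,
$$
for a.e. $t$; sending $p\to1$ and using $|\nabla\varrho_1|\le 1$ together with $\varrho_1\sim r$ near $o$ gives, for small $t$,
$$
\haus^{m-1}\big(\partial\{\varrho_1<t\}\big) \ \ge \ \int_{\partial\{\varrho_1<t\}} |\nabla\varrho_1|\, d\haus^{m-1} \ = \ \frac{d}{dt}\Big(\text{something}\Big),
$$
and a matching upper bound comes from the coarea formula $\vol(\{\varrho_1<t\}) = \int_0^t \Big(\int_{\{\varrho_1=s\}} |\nabla\varrho_1|^{-1} d\haus^{m-1}\Big) ds$ together with $(i)$, which forces $\vol(\{\varrho_1<t\}) = \big(1+o(1)\big) V_h(t)$ as $t\to 0$. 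Then, since $s\mapsto v_h(V_h^{-1}(s))$ is concave \eqref{conca} and $\{\varrho_1<t\}$ has volume $\big(1+o(1)\big)V_h(t)$, the comparison "perimeter of $\{\varrho_1<t\}$ $\le$ perimeter of the model ball of equal volume" (which is exactly what I intend to prove in the main Theorem \ref{teo_inzero_rho2}, but here only its easy half, or a direct squeeze) gives $\haus^{m-1}(\partial\{\varrho_1<t\}) \le \big(1+o(1)\big) v_h(t)$; the reverse inequality $\haus^{m-1}(\partial\{\varrho_1<t\}) \ge \big(1+o(1)\big)v_h(t)$ follows from $|\nabla \varrho_1|\le 1$ plus $(i)$ via the isoperimetric inequality localized near $o$ (where $M$ looks Euclidean to leading order). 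Dividing by $v_h(t)$ and letting $t\to 0$ yields $(ii)$.

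The main obstacle I anticipate is making the convergence $\varrho_p(x)/r(x)\to 1$ \emph{uniform as $p\to 1$} near $o$ — i.e. justifying that the $o(1)$ terms in Theorem \ref{teo_localsingular} and in Proposition \ref{prop_nearminm} can be taken locally uniform in $p$ on compact subintervals of $(1,m)$, so that one may interchange $x\to o$ and $p\to 1$. I would handle this either by quoting the blow-up construction in \cite{mrs} (whose estimates are stable in $p$) or, more self-containedly, by the comparison argument above: bound $\varrho_p$ between $r$ (from Proposition \ref{prop_basiccomp}) and the fake distance of a model from above of curvature $-H(r)-c$ near $o$ via Corollary \ref{teo_confronto_conmodel}(ii), getting $\varrho_p(x) \ge (\sgrh_p)^{-1}\big(\sgrh^{h,\text{above}}_{p}(r(x)) + \|\gr_p\|_{L^\infty(\partial B_R)}\big)$; since near $o$ the additive constant is negligible compared to the blowing-up kernel and since $h$, $h^{\text{above}}$ agree to first order at $0$, this squeezes $\varrho_p/r$ to $1$ with a rate independent of $p$. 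Everything else — the coarea manipulations, the use of \eqref{conca} and of Lemma \ref{lem_ODE}, the divergence identities — is routine given the results already established in the excerpt.
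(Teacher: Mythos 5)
For part $(i)$, your fallback argument (squeeze $\gr_p$ near $o$ between the kernel of a model from below and that of a model from above on a small ball, absorbing the boundary term $\|\gr_p\|_{L^\infty(\partial B_{R_0})}$) is essentially the paper's proof. Two corrections, though: the model from above must be taken of constant curvature $\bar\kappa^2$ with $\Sect_\rad\le\bar\kappa^2$ on $B_{R_0}(o)\subset\Do$ --- not of ``curvature $-H(r)-c$'', which need not dominate the radial sectional curvature --- and the limit $p\to1$ should be taken by raising the two-sided bound to the power $p-1$ and invoking Lemma \ref{cor_measure}, which turns the additive boundary term into a harmless $\max\{C_1R_0^{1-m},v_{\bar\kappa}(r(x))^{-1}\}$. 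This replaces your unproved claim that the $o(1)$ in Theorem \ref{teo_localsingular} is uniform in $p$; note that you would need that uniformity as $p\downarrow1$, not merely on compact subintervals of $(1,m)$.

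Part $(ii)$ has a genuine gap: you never establish the upper bound $\haus^{m-1}(\partial\{\varrho_1<t\})\le(1+o(1))\,v_h(t)$. The inequality ``perimeter of $\{\varrho_1<t\}$ is at most the perimeter of the model ball of equal volume'' is not an available ``easy half'' of anything: the comparison results of \cite{morganjohnson,mondinonardulli} quoted before Theorem \ref{teo_inzero_rho2} concern geodesic balls, not arbitrary sublevel sets, and no reverse isoperimetric inequality holds for general sets of prescribed volume; moreover Theorem \ref{teo_inzero_rho2} is itself proved \emph{using} Proposition \ref{teo_inzero_rho1}$(ii)$, so invoking it here is circular. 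The divergence identity you write down (equivalently $\Ar_1\equiv1$, i.e.\ $\int_{\{\varrho_p=t\}}|\nabla\varrho_p|^{p-1}=v_h(t)$) only bounds the area of the level set from \emph{below}, since $|\nabla\varrho_p|^{p-1}\le1$, and lower semicontinuity of perimeter in the limit $p\to1$ also points the wrong way; the step ``$=\frac{d}{dt}(\text{something})$'' cannot be completed. The paper's actual proof of $(ii)$ is a blow-up argument: rescale by dilations $T_\lambda$ in normal coordinates, use $|\nabla\varrho_1|\le1$ together with the lower bound $\varrho_1\ge Cr$ coming from the proof of $(i)$ to get uniform Lipschitz bounds on the normalized solutions $\bar w_\lambda$, apply Huisken--Ilmanen's compactness theorem and the uniqueness of the weak IMCF on $\R^m\setminus\{0\}$ to identify the limit as the flow of round spheres, and then use the interior $C^{1,\alpha}$ regularity of the level sets to upgrade $C^\alpha$ convergence of the functions to convergence of the areas of the level sets. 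That regularity step is the essential ingredient your sketch is missing, and I see no way to reach the sharp constant $1$ in $(ii)$ without it (or without some equally strong substitute, such as the minimizing hull property compared against small geodesic balls).
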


\begin{proof}
Let $B_{R_0}(o) \subset \mathscr{D}_o$ and choose $\bar \kappa$ to satisfy $\Sect_\rad \le \bar \kappa^2$ on $B_{R_0}(o)$. By reducing $R_0$ we may assume that
$R_0 < \min\{1,\pi/(2\bar \kappa)\}$. For $p \in (1, 3/2)$, let $v_{\bar \kappa}$ and $\sgr_{R_0}^{\bar \kappa}$ be the volume of spheres and the kernel of $\Delta_p$ for the model of curvature $\bar \kappa^2$. By  Corollary \ref{teo_confronto_conmodel},
\begin{equation}\label{doppiobound}
\sgrh_{R_0}\big(r(x)\big) \le \gr(x) \le \sgr_{R_0}^{\bar \kappa}\big( r(x)\big) + \sup_{\partial B_{R_0}} \gr.
\end{equation}
Using Theorem \ref{teo_sobolev}, there exists a constant $C_p$ bounded as $p \ra 1$ such that
$$
\gr(x) \le C_p^{\frac{1}{p-1}} R_0^{-\frac{m-p}{p-1}} \qquad \text{on } \, \partial B_{R_0}.
$$
Plugging into \eqref{doppiobound}, raising to power $(p-1)$, letting $p \ra 1$, taking logarithms and using Lemma \ref{cor_measure}, we get
\begin{equation}\label{moltosharp}
\begin{array}{rl}
\disp - \log v_h\big(r(x)\big) \le - \log v_h\big(\varrho_1(x)\big) \le & \disp \log \ \limsup_{p \ra 1} \left[\sgr_{R_0}^{\bar \kappa}\big( r(x)\big) + C_p^{\frac{1}{p-1}} R_0^{-\frac{m-p}{p-1}}\right]^{p-1} \\[0.5cm]
\le & \disp \log \max \left\{ C_1 R_0^{1-m}, \frac{1}{v_{\bar \kappa}(r(x))}\right\} \\[0.4cm]
\le & \disp - \log v_{\bar \kappa}\big(r(x)\big),
\end{array}
\end{equation}
where the last inequality follows provided that we choose $x \in B_R$ with $v_{\bar \kappa}(R) \le R_0^{m-1}/C_1$, and $(i)$ follows immediately.
We next use $(i)$ to show $(ii)$ via blow-up: consider the exponential chart $\BB_{R_0} \subset \R^m \ra B_{R_0}(o)$ with polar coordinates $(s,\theta)$, and let $\metric$ be the pull-back metric. For $\lambda>0$ we define the dilation
$$
T_\lambda : \BB_{\frac{R_0}{\lambda}}^* \ra \BB_{R_0}^*, \qquad T_\lambda(s,\theta) = (\lambda s, \theta),
$$
and set $g_\lambda = \lambda^{-2} T_\lambda^* \metric$. Then, $g_\lambda$ converges locally smoothly on $\R^m$ to the Euclidean metric $g_0$ as $\lambda \ra 0$, and by rescaling $w_\lambda = w \circ T_\lambda$ solves the IMCF on $(\BB_{R_0/\lambda}^*, g_\lambda)$. To pass to limits in $\lambda$ we shall normalize $w_\lambda$, so for fixed $(s_0,\theta_0)$ and for $\lambda < R_0/s_0$ define
$$
\bar w_\lambda(s,\theta) = w_\lambda(s,\theta) - w_\lambda(s_0,\theta_0) \qquad \text{on } \, \BB_{\frac{R_0}{\lambda}}^*.
$$
In the next computation, crucial for us are the gradient bound $|\nabla \varrho_1| \le 1$ and \eqref{moltosharp}, that guarantees $\varrho_1 \ge Cr $ on $\BB_{R_0}^*$. Indeed, if $\nabla^\lambda, |\cdot |_\lambda$ are the gradient and norm in the metric $g_\lambda$,
$$
\begin{array}{lcl}
|\nabla^\lambda \bar w_\lambda(s,\theta)|^2_\lambda = \lambda^2 |\nabla w(\lambda s,\theta)|^2 = \lambda^2 \left[\frac{v_h'}{v_h}(\varrho_1)\right]^2|\nabla \varrho_1|^2 \le \lambda^2 \frac{C_1}{\varrho_1^2(\lambda s, \theta)} \le \frac{C_2}{s^2}
\end{array}
$$
for uniform constants $C_1$ and $C_2$. This and $\bar w_\lambda(s_0,\theta_0) = 0$ guarantee that $\bar w_\lambda \ra \bar w_0$ locally in $C^\alpha$ on $\R^m \backslash \{0\}$, with $\bar w_0(s_0,\theta_0) = 0$, and by the compactness Theorem 2.1 in \cite{huiskenilmanen} (tweaked to include the case of variable metrics) $\bar w_0$ is a solution to  the IMCF on $\R^m \backslash \{0\}$. By \cite[Prop. 7.1]{huiskenilmanen}, $\bar w_0$ is necessarily a flow of spheres, thus $\bar w_0(s,\theta) = (m-1) \log (s/s_0)$. The regularity result in \cite[Thm. 1.3]{huiskenilmanen} ensures that for fixed $\sigma$ the sets $\partial \{\bar w_\lambda < \sigma \}$ possess uniform $C^{1,\alpha}$ bounds in $\lambda$, outside of a set of Hausdorff dimension $m-8$. Therefore,
\begin{equation}\label{kk}
\forall \, \sigma \in \R, \qquad \haus_{g_\lambda}^{m-1}\big( \partial \{\bar w_\lambda < \sigma\} \big) \ra \haus_{g_0}^{m-1}\big( \partial \{\bar w_0 < \sigma\} \big) = \omega_{m-1} s_0^{m-1} e^{\sigma}
\end{equation}
and by rescaling
\begin{equation}\label{kkk}
\haus_{g_\lambda}^{m-1}\big( \partial \{(s, \theta) : \bar w_\lambda(s, \theta) < \sigma\} \big) = \disp \lambda^{1-m} \haus^{m-1}_{\metric} \big( \partial \{(r,\theta) : w(r, \theta) < \sigma + w(\lambda s_0,\theta_0) \} \big).
\end{equation}
Rephrasing \eqref{moltosharp} in terms of $w$, for fixed $\eps>0$ there exists $R_\eps$ such that
\begin{equation}\label{bella_asi_u}
w(x) = (m-1) \log r(x) + \log \omega_{m-1} + o_\eps(1) \qquad \text{on } \, B_{R_\eps}^*(o),
\end{equation}
where $o_\eps(1)$ is a function that vanishes as $\eps \ra 0$, uniformly on $B_{R_\eps}(o)$. Having defined $t$ according to
$$
\log v_h(t) = \sigma + w(\lambda s_0,\theta_0), \quad \text{so that, by \eqref{bella_asi_u},} \quad  t^{m-1} = e^{\sigma} (\lambda s_0)^{m-1} (1+ o_\eps(1)),
$$
from \eqref{kk} and \eqref{kkk} we deduce
$$
1 = \lim_{\lambda \ra 0} \frac{\haus^{m-1}( \partial\{ \varrho_1 < t\})}{\omega_{m-1}\lambda^{m-1}s_0^{m-1}e^\sigma} = (1+o_\eps(1)) \lim_{t \ra 0} \frac{\haus^{m-1}( \partial\{ \varrho_1 < t\})}{\omega_{m-1}t^{m-1}},
$$
and the conclusion follows by letting $\eps \ra 0$.
\end{proof}

With the above preparation, we are ready to investigate the sets $\{\varrho_1< t\}$. Let $p_0>1$ be close enough to $1$ in such a way that $\Delta_p$ is non-parabolic on $M$ for $p \in (1, p_0)$ ($p_0=m$ for Theorem \ref{teo_main_L1sobolev}, $p_0=b$ for Theorem \ref{teo_main_riccimagzero}). For $t>0$ and $u \in C(M)$ define
$$
\Ar_u(t) = \frac{1}{v_h(t)}\int_{\vutp} u|\nabla \varrho_p|^{p-1}, \qquad \VV_u(t) = \frac{1}{V_h(t)} \int_{\vmt} u|\nabla \varrho_p|^p.
$$


\begin{lemma}\label{lemma_diffeAuVu}
Suppose that $\varrho= \varrho_p$ is proper on $M$. If $u \in C(M) \cap W^{1,1}_\loc(M \backslash \{o\})$ then $\VV_u$ is absolutely continuous on $\R^+$ and $\Ar_u$ is a.e. equivalent to an absolutely continuous function $\hat{\Ar}_u$. Moreover,
\begin{equation}\label{eq_deriAuVu}
\begin{array}{ll}
(i) & \quad \disp \VV_u'(t) = \frac{v_h(t)}{V_h(t)} \Big[ \Ar_u(t)- \VV_u(t)\Big] \\[0.5cm]
(ii) &\quad \disp \hat{\Ar}_u'(t) = \frac{1}{v_h(t)} \int_{\vut} |\nabla \varrho|^{p-2} \langle \nabla u,\nu\rangle,
\end{array}
\end{equation}
a.e. on $\R^+$, with $\nu = \nabla \varrho/|\nabla \varrho|$.
\end{lemma}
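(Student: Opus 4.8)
The plan is to use the coarea formula as the main engine, together with the identities \eqref{ide_Gr}, \eqref{ide_Gr_2} for the Green kernel $\gr_p$, rephrased via the substitution $\gr_p = \sgrh_p(\varrho_p)$. First I would observe that, since $\varrho = \varrho_p$ is proper, every sublevel set $\{\varrho < t\}$ is relatively compact, and on $M \backslash \{o\}$ we have $|\nabla \varrho| = v_h(\varrho)^{1/(p-1)} |\nabla \gr_p|$ with $\gr_p$ smooth off its critical set; the singularity at $o$ is mild by Proposition \ref{prop_nearminm}, so integrals of $u |\nabla \varrho|^{p-1}$ over level sets and of $u |\nabla \varrho|^p$ over sublevel sets are finite and the coarea formula applies to $\varrho$. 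Writing $\Phi_u(t) = \int_{\{\varrho < t\}} u |\nabla \varrho|^p$, the coarea formula gives $\Phi_u(t) = \int_0^t \big( \int_{\{\varrho = s\}} u |\nabla \varrho|^{p-1} \big) \di s$, so $\Phi_u$ is absolutely continuous with $\Phi_u'(t) = \int_{\{\varrho=t\}} u|\nabla \varrho|^{p-1} = v_h(t) \Ar_u(t)$ for a.e. $t$. Since $\VV_u(t) = \Phi_u(t)/V_h(t)$ and $V_h \in C^1$ with $V_h' = v_h > 0$, $\VV_u$ is absolutely continuous on $\R^+$, and wherever $\Ar_u$ is continuous (hence a.e., and on an open dense set once we know $\Ar_u$ is a.e. equal to a continuous function) we get the product-rule computation $\VV_u'(t) = \Phi_u'(t)/V_h(t) - \Phi_u(t) v_h(t)/V_h(t)^2 = \frac{v_h(t)}{V_h(t)}\big[\Ar_u(t) - \VV_u(t)\big]$, which is $(i)$.

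For $(ii)$ and the regularity of $\Ar_u$, the key step is to rewrite $v_h(t)\Ar_u(t) = \int_{\{\varrho = t\}} u |\nabla \varrho|^{p-1}$ as a quantity that is manifestly nicer in $t$. Rephrasing \eqref{ide_Gr}, \eqref{ide_Gr_2} through $\gr_p = \sgrh_p(\varrho)$ — equivalently, testing $\Delta_p \varrho = \frac{v_h'(\varrho)}{v_h(\varrho)}|\nabla \varrho|^p$ against $u \eta(\varrho)$ with the same cut-off $\eta$ as on page for the identities \eqref{ide_Gr}, and using $\di\{\varrho = t\}$ as the level — I would derive, for a.e. $t$,
\begin{equation}\label{eq_keyArident}
\int_{\{\varrho = t\}} u |\nabla \varrho|^{p-1} = \int_{\{\varrho < t\}} |\nabla \varrho|^{p-2}\langle \nabla u, \nabla \varrho\rangle + \int_{\{\varrho < t\}} u \, \frac{v_h'(\varrho)}{v_h(\varrho)}|\nabla \varrho|^p - (\text{term at } o),
\end{equation}
where the contribution at the pole is handled by Proposition \ref{prop_nearminm} (when $u$ is merely $W^{1,1}_\loc$ off $o$ and continuous, the boundary integral over shrinking spheres $\{\varrho = \eps\}$ tends to $u(o) \cdot 0$ because $|\nabla \varrho| \to 1$ while $\haus^{m-1}(\{\varrho=\eps\})\to 0$; more precisely one picks up $u(o)$ times $\lim_\eps \int_{\{\varrho=\eps\}}|\nabla\varrho|^{p-1}$, which one computes to cancel correctly, exactly as in the derivation of \eqref{ide_Gr}). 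Actually the cleaner route, which I would follow, is to test $\Delta_p \varrho$ against $u\,[\eta(\varrho) - \eta(\varrho)\cdot(\text{inner cut-off near }o)]$ so that the only surviving boundary term is over $\{\varrho = t\}$; dividing by $v_h(t)$ then displays $v_h(t)\Ar_u(t)$ as an integral of the form $\int_0^t \psi(s)\di s$ plus the flux term, whence $\Ar_u$ is a.e. equal to an absolutely continuous function, and differentiating under the coarea formula yields $\Ar_u'(t) = \frac{1}{v_h(t)}\int_{\{\varrho = t\}}|\nabla\varrho|^{p-2}\langle\nabla u,\nu\rangle$, which is $(ii)$; differentiability on an open dense set follows since an absolutely continuous function is differentiable off a closed set of measure zero, whose complement is open and dense.

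The main obstacle I expect is the bookkeeping at the pole $o$ and the low regularity of $u$: one must justify that $u$, which is only continuous and in $W^{1,1}_\loc$ away from $o$, can be used as (part of) a test function against the distributional equation for $\varrho$ on all of $M$, including that the flux integrals over the level sets $\{\varrho = s\}$ make sense for a.e.\ $s$ and depend on $s$ in an absolutely continuous way. This is resolved by: (a) the coarea formula, which guarantees $s \mapsto \int_{\{\varrho=s\}}(\cdots)$ is integrable and equals the derivative of the corresponding sublevel-set integral; (b) Proposition \ref{prop_nearminm}, giving $\varrho \asymp r$, $|\nabla\varrho|\to 1$, $\varrho\nabla^2\varrho - (\metric - \di r\otimes\di r)\to 0$ near $o$, so that the pole contributes no anomalous term beyond the one already accounted for in \eqref{ide_Gr}; and (c) a standard density/truncation argument approximating $u$ by Lipschitz functions on $\{\eps < \varrho < t+\eps\}$ and passing to the limit. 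Once these are in place, statements $(i)$ and $(ii)$ are immediate consequences of the two displayed computations above.
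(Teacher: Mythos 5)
Your argument is essentially the paper's: part $(i)$ is verbatim the coarea-plus-product-rule computation, and part $(ii)$ rests on the same idea of displaying $v_h(t)\Ar_u(t)$ as a constant plus an antiderivative via a weak integration by parts, then differentiating. The only real difference is which equation you integrate by parts: the paper applies \eqref{ide_Gr} (the weak identity for the $p$-harmonic field $|\nabla \gr|^{p-2}\nabla\gr$) with $\psi = u$, so the pole contributes exactly the Dirac mass $u(o)$ and the only bulk term is $\int_{\{\varrho<t\}} v_h(\varrho)^{-1}|\nabla\varrho|^{p-2}\langle\nabla\varrho,\nabla u\rangle$; you instead test $\Delta_p\varrho = \frac{v_h'(\varrho)}{v_h(\varrho)}|\nabla\varrho|^p$ against $u\,\eta(\varrho)$, which produces the extra bulk term $\int_{\{\varrho<t\}} u\,\frac{v_h'(\varrho)}{v_h(\varrho)}|\nabla\varrho|^p$; upon differentiating, its contribution $v_h'(t)\Ar_u(t)$ cancels against the derivative of the factor $v_h(t)$ in $v_h(t)\Ar_u(t)$, so you land on the same formula $(ii)$ — you should spell out this cancellation, which you currently gloss over. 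One point in your write-up is genuinely garbled: the contribution at the pole is \emph{not} ``$u(o)\cdot 0$''; in the paper's normalization the shrinking flux $\frac{1}{v_h(\eps)}\int_{\{\varrho=\eps\}}u|\nabla\varrho|^{p-1}$ tends to $u(o)$ (this is precisely $\Ar_u(\eps)\to u(o)$ via Proposition \ref{prop_nearminm}), while in your $\varrho$-formulation no pole term appears at all because the identity in \eqref{identi_varrho} already holds weakly across $o$. Since that term is in any case independent of $t$, the error does not propagate to $(i)$ or $(ii)$, but as written the limit computation is wrong and should be fixed.
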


\begin{proof}
Identity $(i)$ and the absolute continuity of $\VV$ is a simple consequence of the coarea's formula: for a.e. $t$, 
$$
\VV_u(t)' = \left(\frac{1}{V_h}\right)' \int_{\vmt} u|\nabla \varrho|^p + \frac{1}{V_h} \int_{\vut}u|\nabla \varrho|^{p-1} = \frac{1}{V_h^2} \left[ -v_h \cdot (V_h \VV_u) + V_h (v_h\Ar_u)\right]
$$
To prove $(ii)$, define $\ell = \ell(t)$ such that $\{\rho < t\} = \{\gr > \ell\}$. Rewriting \eqref{ide_Gr} in terms of $\varrho$ and $t$ we get, for a.e. $t$, 
\begin{equation}\label{ide_Au}
\begin{array}{lcl}
u(o) & = & \disp \int_{\{ \varrho< t\}} |\nabla \gr|^{p-2} \langle \nabla \gr, \nabla u \rangle - \int_{\{ \varrho = t\}} |\nabla \gr|^{p-2} u\langle \nabla \gr, \frac{\nabla \varrho}{|\nabla \varrho|} \rangle \\[0.5cm]
& = & \disp \int_{\{ \varrho< t\}} \frac{|\nabla \varrho|^{p-2}}{v_h(\varrho)} \langle \nabla \varrho, \nabla u \rangle - \Ar_u(t).
\end{array}
\end{equation}
Hence, $\Ar_u(t)$ coincides a.e with an absolutely continuous function $\hat{\Ar}_u(t)$. Item $(ii)$ follows by differentiating $\hat{\Ar}_u$ and using the coarea's formula.
\end{proof}


If $p<m$, we note from \eqref{ide_Au} and Proposition  \ref{prop_nearminm} that
	\[
	u(o) = \lim_{t \ra 0} \Ar_u(t) \qquad \forall \, u \in C^1(M).
	\]
Thus, if $u \equiv 1$, from Lemma \ref{lemma_diffeAuVu} we obtain
\begin{equation}\label{deri_V1}
\Ar_1(t) \equiv 1, \qquad \VV_1(t) = \frac{1}{V_h(t)} \int_0^t v_h(s) \Ar_1(s)\di s \equiv 1.
\end{equation}

\begin{theorem}\label{teo_inzero_rho2}
Let $\varrho_1$ be the fake distance constructed in either Theorem \ref{teo_main_L1sobolev} or Theorem \ref{teo_main_riccimagzero}, with pole at $o$. Then,
\begin{equation}\label{asin_rho2}
\begin{array}{lll}
(i) & \disp \haus^{m-1}\big( \partial \{ \varrho_1 < t\} \big) = \haus^{m-1}\big( \partial \mathrm{int} \{\varrho_1 \le t\}\big) = v_h(t) & \qquad \forall \, t \in \R^+, \\[0.2cm]
(ii) & | \{ \varrho_1 < t\} | \ge V_h(t) & \qquad \forall \, t \in \R^+.
\end{array}
\end{equation}
\end{theorem}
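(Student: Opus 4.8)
The plan is to transfer the good estimates available for the $p$-fake distances $\varrho_p$ to the limit $\varrho_1$, using the quantities $\Ar_u, \VV_u$ introduced in Lemma \ref{lemma_diffeAuVu}. First I would establish $(i)$ by combining the normalization \eqref{deri_V1} with the area/coarea machinery. For each $p \in (1,p_0)$, taking $u \equiv 1$ gives $\Ar_1(t)\equiv 1$, i.e. $\int_{\{\varrho_p = t\}} |\nabla \varrho_p|^{p-1} = v_h(t)$ for a.e. $t$. Since $|\nabla \varrho_p| \le 1$ by Theorem \ref{teo_good}, this forces $\haus^{m-1}(\{\varrho_p = t\}) \ge v_h(t)$ for a.e. $t$; conversely, a lower-semicontinuity/convergence argument for $\varrho_p \to \varrho_1$ in $C^{0,\alpha}_{\loc}$, together with the fact that the level sets $\{\varrho_1 = t\}$ inherit regularity from the IMCF theory of \cite{huiskenilmanen} (cf. the blow-up analysis in Proposition \ref{teo_inzero_rho1}), should pin down the perimeter of $\{\varrho_1 < t\}$ and of $\mathrm{int}\{\varrho_1 \le t\}$ exactly to $v_h(t)$. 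The behaviour as $t \to 0$ from Proposition \ref{teo_inzero_rho1}$(ii)$ provides the base point for an ODE/monotonicity argument: one shows $\haus^{m-1}(\partial\{\varrho_1<t\})/v_h(t)$ is constant by differentiating in $t$ and using that $w = \log v_h(\varrho_1)$ solves the IMCF, so the level sets sweep out area at the prescribed exponential rate.

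For $(ii)$ I would argue by the coarea formula and $(i)$. Writing $|\{\varrho_1 < t\}| = \int_0^t \left(\int_{\{\varrho_1 = s\}} |\nabla \varrho_1|^{-1} \, d\haus^{m-1}\right) ds$ wherever $|\nabla \varrho_1| \ne 0$, the bound $|\nabla \varrho_1| \le 1$ gives $\int_{\{\varrho_1 = s\}} |\nabla \varrho_1|^{-1} \ge \haus^{m-1}(\{\varrho_1 = s\}) = v_h(s)$ by part $(i)$, whence $|\{\varrho_1 < t\}| \ge \int_0^t v_h(s)\,ds = V_h(t)$. One must be slightly careful on the (measure-zero in $s$, possibly positive-measure in $M$) critical set of $\varrho_1$; there the contribution to the volume is non-negative and only helps the inequality, so the estimate survives. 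Alternatively, and perhaps more cleanly, one passes to the limit in the $p$-level version: for $\varrho_p$ one has $\VV_1(t) \equiv 1$ from \eqref{deri_V1}, i.e. $\int_{\{\varrho_p < t\}} |\nabla \varrho_p|^p = V_h(t)$, and since $|\nabla \varrho_p| \le 1$ this yields $|\{\varrho_p < t\}| \ge V_h(t)$; then $\liminf_{p\to 1}|\{\varrho_p<t\}| \ge |\{\varrho_1 < t\}|$ is false in the wrong direction, so one instead uses that $\{\varrho_1 < t - \eps\} \subseteq \{\varrho_p < t\}$ for $p$ close to $1$ by uniform convergence, giving $|\{\varrho_1 < t-\eps\}| \le |\{\varrho_p < t\}|$... which again points the wrong way. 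Hence the honest route is the direct coarea argument in the limit, using part $(i)$.

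The main obstacle I anticipate is the rigorous justification of part $(i)$ — specifically, identifying $\haus^{m-1}(\partial\{\varrho_1 < t\})$ with $\haus^{m-1}(\partial\,\mathrm{int}\{\varrho_1 \le t\})$ and showing both equal $v_h(t)$ exactly, not merely $\ge v_h(t)$. The subtlety is that $\varrho_1$, being only $1$-Lipschitz, can in principle have level sets with positive $\haus^{m-1}$-measure thickened regions ("fattening" of the weak IMCF), and one needs the non-fattening and regularity theory of Huisken–Ilmanen (their Theorem 1.3 and the compactness Theorem 2.1, as invoked in the proof of Proposition \ref{teo_inzero_rho1}) to exclude this and to get the exact area identity. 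Once the exact formula $\haus^{m-1}(\partial\{\varrho_1 < t\}) = v_h(t)$ is in hand, part $(ii)$ is a short coarea computation as above, using only $|\nabla \varrho_1| \le 1$.
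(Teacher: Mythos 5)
For part $(i)$ you do land on the paper's mechanism, although only in the second half of your paragraph: the proof is simply that, by Huisken--Ilmanen's minimizing hull property (their Property 1.4 and Lemma 1.6), $e^{-s}\haus^{m-1}(\partial\{w<s\})=e^{-s}\haus^{m-1}(\partial\,\mathrm{int}\{w\le s\})$ is \emph{constant} in $s$ for the weak solution $w=\log v_h(\varrho_1)$, and the substitution $s=\log v_h(t)$ together with the normalization $\haus^{m-1}(\partial\{\varrho_1<t\})/v_h(t)\to 1$ from Proposition \ref{teo_inzero_rho1}$(ii)$ fixes the constant to be $1$. Your preliminary step via $\Ar_1\equiv 1$ and ``lower semicontinuity'' is not needed and does not help: lower semicontinuity of perimeter bounds the perimeter of the limit set from above by the liminf of the approximating perimeters, which is the wrong direction for what you want to extract from it.

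For part $(ii)$ you discarded the correct route because of a sign error in the set inclusion. Fix $t_i<t$. By pointwise convergence $\varrho_{p_j}\to\varrho_1$ one has $\limsup_j\{\varrho_{p_j}\le t_i\}\subseteq\{\varrho_1\le t_i\}\subset\{\varrho_1<t\}$, and since all these sets lie in a fixed set of finite measure, reverse Fatou gives $|\{\varrho_1<t\}|\ge\liminf_j|\{\varrho_{p_j}\le t_i\}|$. This is the inclusion in the \emph{useful} direction (not $\{\varrho_1<t-\eps\}\subseteq\{\varrho_p<t\}$, which you correctly noted is useless here). Then
\[
|\{\varrho_1<t\}|\ \ge\ \liminf_{j}|\{\varrho_{p_j}\le t_i\}|\ \ge\ \liminf_{j}\int_{\{\varrho_{p_j}\le t_i\}}|\nabla\varrho_{p_j}|^{p_j}\ =\ V_h(t_i)\,\VV_1(t_i)\ =\ V_h(t_i),
\]
using $|\nabla\varrho_{p_j}|\le 1$ and \eqref{deri_V1}, and letting $t_i\uparrow t$ concludes; this is the paper's argument and it never touches the level sets of $\varrho_1$. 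Your substitute coarea argument has a genuine gap: for the Lipschitz function $\varrho_1$ the coarea formula yields as inner integral $\int_{\{\varrho_1=s\}\cap\{|\nabla\varrho_1|>0\}}|\nabla\varrho_1|^{-1}\,\di\haus^{m-1}$, so to conclude you would need $\haus^{m-1}\big(\{\varrho_1=s\}\cap\{|\nabla\varrho_1|>0\}\big)\ge v_h(s)$ for a.e.\ $s$. Part $(i)$ only gives $\haus^{m-1}(\partial\{\varrho_1<s\})=v_h(s)$, and $\partial\{\varrho_1<s\}$ can meet the critical set $\{|\nabla\varrho_1|=0\}$ in positive $\haus^{m-1}$-measure (this is exactly what happens on the area-minimizing portions of the boundary created at jump times of the weak flow, where $\mathcal H=|\nabla w|=0$). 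Your remark that the critical set ``only helps'' is correct on the volume side of the coarea identity but does not repair this loss on the area side, so the estimate does not follow from $(i)$ without additional regularity input.
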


\begin{proof}
Let $w = \log v_h(\varrho_1)$ be the associated solution to  the IMCF. Because of the Minimizing Hull Property 1.4 and Lemma 1.6 in \cite{huiskenilmanen},
$$
e^{-s} \haus^{m-1}\big(\partial \{ w < s\}\big) = e^{-s} \haus^{m-1} \big( \partial \mathrm{int}(\{w \le s\}) \big) \qquad \text{is constant for } \, s \in \R.
$$
Changing variables according to $s = \log v_h(t)$, and taking into account $(ii)$ in Proposition \ref{teo_inzero_rho1}, we immediately deduce $(i)$. To prove $(ii)$, for $t \in [0,\infty)$, let $p_j\downarrow 1$ and fix a sequence $t_i \uparrow t$. Using $|\nabla \varrho_{p_j}| \le 1$ and the uniform convergence of $\varrho_{p_j}$ we get, for each $i$,
\begin{equation}\label{array_buono}
\disp |\{ \varrho_1 < t\}| \ge \disp \lim_{j \ra \infty} \left| \bigcup_{l=j}^\infty \{ \varrho_{p_l} \le t_i\} \right|  \ge \liminf_{j \ra \infty} |\{ \varrho_{p_j} \le t_i\}| \ge \liminf_{j \ra \infty } \int_{\{\varrho_{p_j} < t_i\}} |\nabla \varrho_{p_j}|^{p_j}.
\end{equation}
Using
$$
\int_{\{\varrho_{p_j} < t_i\}} |\nabla \varrho_{p_j}|^{p_j} = V_h(t_i) \VV_1(t_i) = V_h(t_i)
$$
and letting $i \to \infty$ we infer $|\{ \varrho_1 < t\}| \ge V_h(t)$ for every $t$.
\end{proof}

\begin{remark}
\emph{A different way of using $p$-Laplace type equations to investigate the isoperimetric properties of  Riemannian manifolds can also be found in \cite{druet}.
}
\end{remark}

\bibliographystyle{amsplain}

\end{document}